\documentclass[11pt, twoside]{article}
\usepackage{textcomp}
\usepackage[utf8]{inputenc}
\usepackage[english]{babel} 
\usepackage[T1]{fontenc} 
\usepackage{soul}
\usepackage[normalem]{ulem}
\usepackage{fancybox}
\usepackage{fancyvrb}
\usepackage{listings}
\usepackage{moreverb}
\usepackage{url}
\usepackage{graphicx}
\graphicspath{ {./images/} }
\usepackage{textcomp}
\usepackage{lmodern}
\usepackage{eurosym}
\usepackage[cyr]{aeguill}
\usepackage{amsmath}
\usepackage{wrapfig} 
\usepackage{makeidx} 
\usepackage{picinpar} 
\usepackage[final]{pdfpages} 
\usepackage{array,multirow,tabularx}
\usepackage{amssymb}
\usepackage[a4paper]{geometry}
\usepackage{amsthm}
\usepackage{tikz}
\usepackage{fancyhdr}
\usepackage{colortbl}
\usepackage{color}
\usepackage{setspace}
\usepackage{longtable}
\usepackage{hhline}
\usepackage{arydshln}
\usepackage{makecell}
\usepackage{mathtools}
\usepackage{tkz-tab}
\usepackage{tikz-cd}
\usepackage{multicol}
\usepackage{enumerate} 
\usepackage{hyperref}
\usepackage[normalem]{ulem}
\tikzset{node distance=2.0cm, auto}
\usepackage{fancyhdr}
\usepackage{mathtools}
\usepackage{microtype}
\renewcommand{\sectionmark}[1]{}
\renewcommand{\subsectionmark}[1]{}

\usepackage[section]{placeins}

\DeclareMathOperator{\p}{\mathcal{P}}

\DeclareMathOperator{\ad}{ad}
\DeclareMathOperator{\g}{\mathfrak{g}}

\allowdisplaybreaks

\title{Cohomology, Extensions and Deformations of restricted Lie triple systems}

\author{Jon Beristain and Abdenacer Makhlouf}

\date{{\small{Universit\'e de Haute-Alsace, IRIMAS UR 7499, F-68100 Mulhouse, France,\\
jon.beristain@uha.fr, abdenacer.makhlouf@uha.fr}}}

\begin{document}
	
	\newtheorem{thm}{Theorem}[section]
	\newtheorem{prop}[thm]{Proposition}
	\newtheorem{lem}[thm]{Lemma}
	\newtheorem{cor}[thm]{Corollary}
	\theoremstyle{definition}
	\newtheorem{defi}[thm]{Definition}
	\newtheorem*{rmq}{Remark}
	\newtheorem{ex}[thm]{Example}
	\maketitle
	\begin{abstract}
		The main purpose of this paper is to provide a cohomology theory of restricted Lie triple systems in low degrees and their algebraic interpretations. 
        We define restricted cochains up to degree
5, together with the corresponding coboundary operators. Moreover, we provide 
interpretations of the low-degree restricted cohomology groups in terms of restricted derivations,
extensions of restricted modules, formal deformations, and extensions of restricted Lie triple systems.
	\end{abstract}
	
	\noindent\textbf{Keywords: }Lie triple system, restricted Lie triple system, cohomology, restricted derivation, restricted module, deformation, extension. \\
	\noindent\textbf{MSC 2020 classification:} 17A40, 17B56, 17B15.
	\tableofcontents
     \markboth{Beristain and Makhlouf}{Cohomology, Extensions and Deformations of restricted Lie triple systems}
     \markright{Cohomology, Extensions and Deformations of restricted Lie triple systems}

\section{Introduction}
Lie triple systems were introduced by Jacobson in order to study representations of Jordan algebras (\cite{J51}). They are related to symmetric spaces in the same way that Lie algebras are related to Lie groups; they correspond to tangent spaces of symmetric spaces. Moreover, Jacobson showed that Lie triple systems are exactly the subspaces of Lie algebras closed under the bracket  $[[\cdot,\cdot],\cdot] $. Indeed, a subspace $S$ of a Lie algebra $L$ such that $[[x,y],z] $ lies in $S$ for all $x,y,z$ in $S$ is a Lie triple system for this operation. Moreover, it was shown in \cite{J51} that we can always embed a Lie triple system $T$ into a Lie algebra $\mathcal{G} $ in such a way that $T$ is a subspace of $\mathcal{G} $ closed under the bracket $[[\cdot,\cdot],\cdot] $. Lie triple systems were strongly studied by Lister in \cite{L51}.

In positive characteristic $p>0$, an additional structure arises naturally on certain Lie algebras. This was noticed by Jacobson, who observed that in an associative algebra $A$ over a base field of characteristic $p>0$, for every derivation $D$, $D^p$ is again a derivation (\cite{J37}). Moreover, the Frobenius map $x\in A\mapsto x^p$ is linked with the Lie bracket induced by the Jacobson identities. This led Jacobson to define an abstract $p$-map and the notion of  restricted Lie algebra (\cite{J41}). Later, Hodge generalized this notion to Lie triple systems and defined restricted Lie triple systems (\cite{H01}).

The cohomology of Lie algebras was first defined through the Chevalley-Eilenberg construction. This approach provides an explicit definition of a cochain complex together with the corresponding coboundary operators. With the development of category theory and homological algebra, another construction of Lie algebra cohomology was introduced in terms of the derived cohomology associated with a free resolution. This is the Cartan-Eilenberg cohomology. It was subsequently shown that these two constructions yield the same cohomology, the Chevalley-Eilenberg complex being obtained from a free resolution of the trivial module.

For restricted Lie algebras, a definition in terms of free resolutions was given by Hochschild (\cite{H54}). However, this construction is not suitable for explicit cohomological computations. In his thesis (\cite{E96}), Evans proposed an explicit construction of the first terms of a free resolution of the trivial module in the category of $U_{\mathrm{res}}$-modules for abelian restricted Lie algebras, up to degree $p$. For general restricted Lie algebras, he provided an explicit construction of the cochains up to degree $3$. However, it was not shown that this complex, up to degree $3$, admits an interpretation in terms of the derived cohomology associated with a free resolution.

Yamaguti and Harris studied the cohomology of Lie triple systems at roughly the same time, although their approaches differ. In \cite{H60}, Harris considered Lie triple systems with the $(-1)$-eigenspaces of involutive automorphisms of Lie algebras. He then defined the cohomology of Lie algebras with involution and obtained the cohomology of Lie triple systems as a particular case. This cohomology is also defined in terms of derived cohomology associated with a free resolution. In \cite{Y60}, Yamaguti introduced the notion of  representation of a Lie triple system and explicitly constructed two cochain complexes associated with such a representation. This construction has the advantage of being explicit and provides meaningful algebraic interpretations of the low-degree cohomology spaces.

The cohomology of restricted Lie triple systems was studied by Hodge and Parshall in \cite{HP02}. In the present work, we revisit Yamaguti’s definition of the cohomology of Lie triple systems and define the cohomology of restricted Lie triple systems by adapting Evans’s ideas.

The paper is organized as follows. In Sections \eqref{s2} and \eqref{s3}, we recall the basics on Chevalley–Eilenberg cohomology for Lie algebras and Evans’s explicit construction of restricted Lie algebra cohomology, together with their algebraic interpretations. In Section \eqref{s4}, we recall the definition of Lie triple systems and Yamaguti’s cohomology theory. We extend Yamaguti’s complex to degree $-1$ and give interpretations of low-degree cohomology groups in terms of derivations, extensions of modules and extensions of Lie triple systems. Our main contributions are presented in Sections \eqref{s5} and \eqref{s6}. In Section \eqref{s5}, we define restricted cochains up to degree $5$, together with the corresponding coboundary operators. In Section \eqref{s6}, we provide algebraic interpretations of the low-degree restricted cohomology groups in terms of restricted derivations, extensions of restricted modules, deformations, and extensions of restricted Lie triple systems. 

\section{Cohomology of Lie algebras}\label{s2}

\subsection{Definition}

Let $L$ be a Lie algebra over a field $\mathbf{K} $, and let $M$ be an  $L$-module. Chevalley and Eilenberg constructed a complex for computing Lie algebra cohomology as follows. A $q$-dimensional cochain of $L$ with coefficients in $M$ is a skew-symmetric, $q$-linear map on $L$ taking values in $M$. The set of all such maps forms a vector space $C^q(L,M)= \mathrm{Hom}_\mathbf{K}(\Lambda^qL,M) $ over $\mathbf{K}$ under pointwise addition and scalar multiplication. We set $C^q(L,M)=0 $ if $q<0$ and if $q=0 $, we identify $C^0(L,M) $ with $M\cong \mathrm{Hom}_\mathbf{K}(\mathbf{K},M) $. If $\varphi \in C^q(L,M) $, then $\varphi$ determines an element $d^q\varphi\in C^{q+1}(L,M) $ by the formula \begin{align*}
    d^q\varphi(x_1,\ldots,x_{q+1})=& \sum\limits_{1\leq s<t\leq q+1}(-1)^{s+t-1}\varphi([x_s,x_t],x_1,\ldots,\widehat{x_s},\ldots,\widehat{x_t},\ldots,x_{q+1}) \\ & + \sum\limits_{1\leq s\leq q+1} (-1)^sx_s\varphi(x_1,\ldots,\widehat{x_s},\ldots,x_{q+1}) ,
\end{align*}  where the symbols $\widehat{x_s} $ indicate that this term is to be omitted. The map $d^q: C^q(L,m)\to C^{q+1}(L,M),$ $\varphi\mapsto d^q\varphi$ is a linear map  satisfying $d^{q+1}\circ d^q=0 $. Therefore, $\left\{C^\star(L,M),d \right\}$ is a complex and its $q$-th cohomology group is called the $q$-dimensional cohomology group of $L$ with coefficients in $M$ and is denoted by $H^q(L,M) $. We will denote the $q$-dimensional cocycles and coboundaries by $Z^q(L,M) $ and $B^q(L,M) $, respectively. In degrees $q=0,1$ and $2$, the coboundary operator formula reduces to \begin{align*}
    d^0(m)(g) =&-gm ,\\ d^1\varphi(g,h) =& -g\varphi(h)+ h\varphi(g)+\varphi([g,h]), \\ d^2\varphi(g,h,f) =& \varphi([g,h],f)-\varphi([g,f],h)+ \varphi([h,f],g) -g\varphi(h,f)+h\varphi(g,f)-f\varphi(g,h),\\  d^3\varphi(g,h,f,u)=& \varphi([g,h],f,u)-\varphi([g,f],h,u)+ \varphi([g,u],h,f)+\varphi([h,f],g,u)-\varphi([h,u],g,f)\\ & +\varphi([f,u],g,h) - g\varphi(h,f,u)+h\varphi(g,f,u)-f\varphi(g,h,u)+ u\varphi(g,h,f).
\end{align*}

\subsection{Algebraic interpretations}

\begin{defi}
    Let $L$ be a Lie algebra. A derivation of $L$ is a linear map $D:L\to L$ such that $$D([x,y])=[Dx,y]+[x,Dy],  \forall x\in L.  $$ We denote by $\mathrm{Der}(L)$ the vector space of derivations.
\end{defi}

\begin{ex}
    Let $x$ be an element of a Lie algebra $L$. The map $y\in L\mapsto[x,y] $ is a derivation, called an inner derivation. We denote by $\mathrm{InnDer}(L)$ the space spanned by inner derivations.
\end{ex}

\begin{prop}
   The first cohomology group $H^1(L,L) $ is canonically isomorphic to the space $\mathrm{Der}(L)/\mathrm{InnDer}(L) $.
\end{prop}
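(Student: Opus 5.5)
The plan is to compute $Z^1(L,L)$ and $B^1(L,L)$ directly from the explicit formulas for $d^0$ and $d^1$ given above, where $L$ is regarded as an $L$-module via the adjoint action $g\cdot m=[g,m]$. We will show that $Z^1(L,L)=\mathrm{Der}(L)$ and $B^1(L,L)=\mathrm{InnDer}(L)$ as subspaces of $C^1(L,L)=\mathrm{Hom}_\mathbf{K}(L,L)$. Taking the quotient then gives the isomorphism, and it is canonical because it is induced by the identity map on $\mathrm{Hom}_\mathbf{K}(L,L)$.

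\textbf{Cocycles.} Let $\varphi\in C^1(L,L)$. With the adjoint action, the formula for $d^1$ reads
\begin{align*}
d^1\varphi(g,h) = -[g,\varphi(h)] + [h,\varphi(g)] + \varphi([g,h]) = \varphi([g,h]) - [g,\varphi(h)] - [\varphi(g),h].
\end{align*}
Here we used skew-symmetry, $[h,\varphi(g)]=-[\varphi(g),h]$. Hence $d^1\varphi=0$ holds exactly when $\varphi([g,h])=[\varphi(g),h]+[g,\varphi(h)]$ for all $g,h\in L$, that is, when $\varphi$ is a derivation. So $Z^1(L,L)=\mathrm{Der}(L)$.

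\textbf{Coboundaries.} For $m\in C^0(L,L)=L$ we have $d^0(m)(g)=-[g,m]=[m,g]=\ad_m(g)$. Thus $d^0(m)$ is the inner derivation $\ad_m$, and $B^1(L,L)=\{\ad_m : m\in L\}$. This set is already a linear subspace, since $m\mapsto \ad_m$ is linear, so it coincides with the span $\mathrm{InnDer}(L)$. As a consistency check, $\ad_m$ is a derivation by the Jacobi identity, which also follows from $d^1\circ d^0=0$.

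Combining the two steps, $H^1(L,L)=Z^1(L,L)/B^1(L,L)=\mathrm{Der}(L)/\mathrm{InnDer}(L)$. There is no genuine obstacle here. The only points needing care are the sign conventions in $d^0$ and $d^1$, and checking that the image of $d^0$ equals the span of the inner derivations rather than a smaller set; linearity of $\ad$ settles the latter.
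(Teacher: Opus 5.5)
Your proof is correct. With the paper's sign conventions, $d^0(m)=\ad_m$, and $d^1\varphi=0$ is exactly the derivation identity, so $Z^1(L,L)=\mathrm{Der}(L)$ and $B^1(L,L)=\mathrm{InnDer}(L)$. The paper states this proposition without proof as a standard fact, but your argument is the same direct cocycle/coboundary identification that it carries out for the Lie triple system analogue $H^1(T,T)=\mathrm{Der}(T)/\mathrm{InnDer}(T)$.
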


A one-dimensional right extension of $L$ is a short exact sequence of Lie algebras and their morphisms $$0\to L\to L'\to \mathbf{K}\to 0, $$ with the Lie bracket in $L'=L\oplus \mathbf{K}$, defined by $$[(g_1,\lambda_1),(g_2,\lambda_2)]= ([g_1,g_2]-\lambda_1c(g_2)+\lambda_2c(g_1),0), $$ where $c:L\to L$ is a linear map. Two one-dimensional right extensions of $L$ are equivalent if they can be included in a commutative diagram \[\begin{tikzcd}
	0 & L & {L'} & {\mathbf{K}} & 0 \\
	0 & L & {L''} & {\mathbf{K}} & 0
	\arrow[from=1-1, to=1-2]
	\arrow[from=1-2, to=1-3]
	\arrow[from=1-2, to=2-2]
	\arrow[from=1-3, to=1-4]
	\arrow[from=1-3, to=2-3]
	\arrow[from=1-4, to=1-5]
	\arrow[from=1-4, to=2-4]
	\arrow[from=2-1, to=2-2]
	\arrow[from=2-2, to=2-3]
	\arrow[from=2-3, to=2-4]
	\arrow[from=2-4, to=2-5]
\end{tikzcd}\] The $1$-cochain $c\in C^1(L,L)$ is a cocycle if and only if the bracket in $L\oplus \mathbf{K}$ satisfies the Jacobi identity. 

\begin{prop}
    There is a one-to-one correspondence between $H^1(L,L)$ and equivalence classes of one-dimensional right extensions of $L$.
\end{prop}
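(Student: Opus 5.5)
The plan is to build the bijection explicitly through the cochain $c$ that defines the bracket on $L'=L\oplus\mathbf{K}$, and to check that cocycles match brackets and coboundaries match equivalences. Throughout, I read "$c$ is a cocycle" as $c\in Z^1(L,L)$, with $L$ acting on itself by the adjoint action. With the sign convention of $d^1$ this gives
$$d^1c(g,h)=-[g,c(h)]+[h,c(g)]+c([g,h]).$$

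\textbf{Step 1: extensions correspond exactly to 1-cocycles.} The bracket above is bilinear and skew-symmetric for any linear $c$, so only the Jacobi identity needs checking. By trilinearity and skew-symmetry it suffices to test:
\begin{itemize}
\item triples of elements of $L$, where Jacobi holds because $L$ is a Lie algebra;
\item triples $(g,0),(h,0),(0,1)$;
\item triples containing two copies of $(0,1)$, which vanish since $[(0,1),(0,1)]=0$ and the bracket is skew.
\end{itemize}
For the middle case, compute $[(0,1),(g,0)]=(-c(g),0)$. Expanding the Jacobi sum gives, up to an overall sign,
$$c([g,h])-[c(g),h]-[g,c(h)]=0.$$
This says exactly that $c$ is a derivation, i.e. $d^1c=0$. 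So the set of bracket-defining maps $c$ is precisely $Z^1(L,L)=\mathrm{Der}(L)$.

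\textbf{Step 2: equivalence corresponds to differing by a coboundary.} An equivalence is a Lie algebra isomorphism $\Phi:L\oplus\mathbf{K}\to L\oplus\mathbf{K}$ commuting with the inclusion of $L$ and the projection to $\mathbf{K}$, with identity maps on the outer terms. Such a map has the form $\Phi(g,\lambda)=(g+\lambda a,\lambda)$ for some $a\in L$. Imposing $\Phi([(g,0),(0,1)]')=[\Phi(g,0),\Phi(0,1)]''$ gives
$$c'(g)=c(g)+[g,a].$$
Conversely, any such $a$ yields a morphism, since the remaining bracket conditions are automatic. Since $d^0(a)(g)=-[g,a]$, the maps $c$ and $c'$ are related exactly when $c-c'\in B^1(L,L)=\mathrm{InnDer}(L)$. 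Hence $[c]\mapsto[L\oplus_c\mathbf{K}]$ is a well-defined bijection from $H^1(L,L)$ to equivalence classes of extensions.

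\textbf{Expected obstacle.} The only delicate point is sign bookkeeping: matching the sign conventions in the bracket formula and in $d^0,d^1$ so that the condition from Step 2 is exactly a coboundary and not its negative. This is harmless, because $B^1$ is a subspace. I would also note that, as the statement implicitly assumes, every one-dimensional right extension is of the given split form. This holds because any vector-space section of $L'\to\mathbf{K}$ puts the bracket into that shape, using that $L$ is an ideal in $L'$.
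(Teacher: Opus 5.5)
Your argument is correct and follows the route the paper indicates. The paper gives no proof and only records that $c$ is a $1$-cocycle iff the bracket on $L\oplus\mathbf{K}$ satisfies Jacobi; you supply the remaining steps (equivalences $\Phi(g,\lambda)=(g+\lambda a,\lambda)$ correspond to coboundaries, and every extension splits in the stated form). There is one small sign slip: the morphism condition actually gives $c'(g)=c(g)-[g,a]$ rather than $c(g)+[g,a]$, which, as you anticipated, is harmless since $B^1(L,L)$ is a subspace.
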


A one-dimensional right extension of an $L$-module $M$ is a short exact sequence $$0\to M\to M'\to \mathbf{K}\to 0 $$ of $L$-modules and their morphisms. 

 \begin{prop}
     The first cohomology group $H^1(L,M)$ is naturally isomorphic to the set of equivalence classes of one-dimensional right extensions of $M$.
 \end{prop}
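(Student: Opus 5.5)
The plan is to identify each one-dimensional right extension with a $1$-cochain, exactly as for the one-dimensional right extensions of $L$ in the previous proposition. Throughout, $\mathbf{K}$ carries the trivial $L$-action. Given an extension $0\to M\xrightarrow{i} M'\xrightarrow{\pi}\mathbf{K}\to 0$ of $L$-modules, I first choose a $\mathbf{K}$-linear section $s:\mathbf{K}\to M'$ of $\pi$. This identifies $M'$ with $M\oplus\mathbf{K}$ as a vector space. Since $\pi$ is an $L$-module map onto a trivial module, $\pi(g\cdot s(1))=0$, so $g\cdot s(1)$ lies in $i(M)$. I then define $c(g)\in M$ by $g\cdot s(1)=i(c(g))$. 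The action on $M'$ becomes
\[
g\cdot(m,\lambda)=(gm+\lambda c(g),0),
\]
and $c$ is a linear map $L\to M$, that is, $c\in C^1(L,M)$.

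The key steps, in order, are as follows.

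\textbf{Cocycle condition.} I impose the module axiom $[g,h]\cdot x=g\cdot(h\cdot x)-h\cdot(g\cdot x)$ on $x=(0,1)$. This gives $c([g,h])=g c(h)-h c(g)$. By the formula for $d^1$ this is exactly $d^1c=0$. Conversely, for any $c\in Z^1(L,M)$ the displayed formula defines an $L$-module structure on $M\oplus\mathbf{K}$: the axiom on vectors $(m,0)$ is the module axiom of $M$, and on $(0,1)$ it is the cocycle identity. The inclusion $m\mapsto(m,0)$ and the projection $(m,\lambda)\mapsto \lambda$ are then $L$-module maps, so every cocycle gives an extension.

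\textbf{Independence of the section.} Any other section has the form $s'(1)=s(1)+i(m_0)$ with $m_0\in M$. The new cochain is then $c'(g)=c(g)+gm_0=c(g)-(d^0m_0)(g)$. So the class $[c]\in H^1(L,M)$ depends only on the extension.

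\textbf{Equivalence of extensions.} Suppose $\phi:M'\to M''$ is a morphism of extensions, i.e.\ it is the identity on $M$ and on $\mathbf{K}$. In the coordinates above it has the form $(m,\lambda)\mapsto(m+\lambda m_0,\lambda)$. Checking $L$-linearity on $(0,1)$ shows it is a module map if and only if $c''-c=-d^0m_0$. Hence equivalent extensions give cohomologous cocycles. Conversely, cohomologous cocycles give equivalent extensions via this explicit $\phi$.

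Together these steps show that the map sending an extension to $[c]$ is well defined on equivalence classes, injective and surjective. Naturality comes from the fact that no choices survive in the final class. If one wants the isomorphism to be one of vector spaces, one also checks that the Baer sum of extensions corresponds to the sum of cocycles. This is immediate, because the Baer sum of $M\oplus\mathbf{K}$ with cocycles $c_1,c_2$ is again $M\oplus\mathbf{K}$ with cocycle $c_1+c_2$.

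I expect no conceptual obstacle. The only delicate points are keeping the sign conventions of the paper's $d^0(m)(g)=-gm$ and $d^1$ consistent, and verifying that equivalence of extensions coincides exactly with differing by a coboundary rather than by some rescaling. Rescaling is excluded because the diagram forces the identity on $\mathbf{K}$.
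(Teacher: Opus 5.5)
Your argument is correct and is the standard one. The paper recalls this proposition without proof, but its proof of the Lie triple system analogue in Section~4 (one-dimensional right pseudo-extensions versus $H^2(T,V)$) follows your scheme exactly: a section produces a cocycle, a change of section or an equivalence of extensions alters it by a coboundary, and the inverse construction is made on $M\oplus\mathbf{K}$ using $(m,\lambda)\mapsto(m+\lambda m_0,\lambda)$. The only slip is a harmless sign: with the paper's convention $d^0(m)(g)=-gm$, the $L$-linearity of $\phi$ gives $c''(g)-c(g)=-gm_0$, that is, $c''-c=d^0m_0$ rather than $-d^0m_0$.
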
 

A central extension of a Lie algebra $L$ is an exact sequence of Lie algebras $$0\to \mathbf{K}\to L'\to L\to 0 ,$$ with the Lie bracket in $L'=\mathbf{K}\oplus L$ given by $$[(\lambda_1,x_1),(\lambda_2,x_2)]= (c(x_1,x_2),[x_1,x_2]) ,$$ where $c:L\times L\to \mathbf{K}$ is a skew-symmetric bilinear map. Here, the Jacobi identity of the bracket is equivalent to $c\in C^2(L,\mathbf{K})$ being a cocycle. 

\begin{prop}
    The second cohomology group $H^2(L,\mathbf{K})$ is naturally isomorphic to the set of equivalence classes of central extensions of $L$.
\end{prop}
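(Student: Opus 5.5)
We argue directly with the bracket on $L'=\mathbf{K}\oplus L$ given by $[(\lambda_1,x_1),(\lambda_2,x_2)]=(c(x_1,x_2),[x_1,x_2])$. Two central extensions are \emph{equivalent} if there is a Lie algebra isomorphism $\phi:L'\to L''$ restricting to the identity on $\mathbf{K}$ and inducing the identity on $L$. The plan has three steps: attach a cocycle to every central extension, show that equivalent extensions give cohomologous cocycles, and show conversely that cohomologous cocycles give equivalent extensions. Here $\mathbf{K}$ is the trivial $L$-module, so in $d^1$ and $d^2$ all terms of the form $g\varphi(\cdot)$ vanish.

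First, for a cocycle $c\in C^2(L,\mathbf{K})$, skew-symmetry of $c$ gives skew-symmetry of the bracket. Expanding the Jacobiator on $L'$, the $L$-component is the Jacobiator of $L$, which vanishes. The $\mathbf{K}$-component is
\[c([x,y],z)+c([y,z],x)+c([z,x],y),\]
and with trivial action this equals $d^2c(x,y,z)$ up to sign. So $L'$ is a Lie algebra exactly when $c\in Z^2(L,\mathbf{K})$, as the paper already notes. Conversely, given an abstract central extension $0\to\mathbf{K}\to L'\xrightarrow{\pi} L\to 0$ (with $\mathbf{K}$ central), choose a linear section $s$ of $\pi$. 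Then $[s(x),s(y)]-s([x,y])$ lies in $\ker\pi=\mathbf{K}$; call it $c(x,y)$. This identifies $L'\cong\mathbf{K}\oplus L$ with the bracket above, and $c$ is a cocycle by the Jacobi identity in $L'$.

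Second, for well-definedness and injectivity, take an equivalence $\phi:\mathbf{K}\oplus_c L\to\mathbf{K}\oplus_{c'}L$. Compatibility with the diagram forces $\phi(\lambda,x)=(\lambda+f(x),x)$ for a linear form $f\in C^1(L,\mathbf{K})$. Writing out $\phi([\,\cdot,\cdot\,])=[\phi(\cdot),\phi(\cdot)]$ gives
\[c(x,y)+f([x,y])=c'(x,y),\]
since $\mathbf{K}$ is central and the terms $f(x)$, $f(y)$ drop out of the bracket. With trivial action, $d^1f(x,y)=f([x,y])$, so $c'-c=d^1f\in B^2(L,\mathbf{K})$. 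Reading the computation backwards, any $f$ with $c'-c=d^1f$ defines an equivalence $\phi$. A change of section $s\mapsto s+f$ similarly changes $c$ by $d^1 f$. Hence the map from extension classes to $H^2(L,\mathbf{K})$ is well defined and injective. Surjectivity is the first step applied to an arbitrary cocycle.

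Finally, "naturally isomorphic" needs the set of classes to carry a vector space structure. I would either transport it through the bijection, or define it intrinsically by the Baer sum (pullback along the diagonal, then pushout along addition $\mathbf{K}\oplus\mathbf{K}\to\mathbf{K}$) and check that the cocycles add. The only real obstacle is bookkeeping: fixing the sign convention of $d^2$ so that it matches the Jacobi identity, and pinning down the form of $\phi$.
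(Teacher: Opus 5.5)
Your argument is correct and is the standard one. The paper gives no separate proof of this recalled classical fact beyond noting that the Jacobi identity on $\mathbf{K}\oplus L$ is equivalent to $c$ being a $2$-cocycle, and your section/cocycle, change-of-section and equivalence computations follow the same scheme the paper uses for the analogous $H^3$ classification of (restricted) abelian and central extensions of Lie triple systems. The only slips are cosmetic: replacing $s$ by $s+f$ changes $c$ by $-d^1f$ rather than $d^1f$, which is still a coboundary, and the $\mathbf{K}$-component of the Jacobiator equals $d^2c(x,y,z)$ exactly, not merely up to sign.
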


Finally, the second cohomology group controls the infinitesimal deformations of $L$. An infinitesimal deformation of $L$ is a map $\eta:L\times L\times \mathbf{K}\to L $ which can be written  $$\eta(g,h,t)= [g,h]_t =[g,h]+tc(g,h) ,$$ where $c\in C^2(L,L)$ and $[g,h]_t$ is a Lie bracket (mod $t^2$) in $L$ for all $t\in \mathbf{K}$. 

\begin{prop}
    There is a natural identification between the set of equivalent classes of infinitesimal deformations and $H^2(L,L)$.
\end{prop}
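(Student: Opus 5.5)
We identify infinitesimal deformations with cocycles in $Z^2(L,L)$ and equivalences with coboundaries $B^2(L,L)$; the statement then follows by passing to the quotient. Throughout we compute modulo $t^2$. The notion of equivalence we use is the standard one: two infinitesimal deformations $[\cdot,\cdot]_t=[\cdot,\cdot]+tc$ and $[\cdot,\cdot]'_t=[\cdot,\cdot]+tc'$ are equivalent if there is a linear map $\phi\in C^1(L,L)$ such that $\Phi_t=\mathrm{id}+t\phi$ satisfies
\[
\Phi_t[g,h]_t=[\Phi_t g,\Phi_t h]'_t \pmod{t^2}.
\]

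\emph{Step 1: the Jacobi identity forces $c$ to be a cocycle.} Skew-symmetry of $[\cdot,\cdot]_t$ for all $t$ forces $c$ to be skew-symmetric, so $c\in C^2(L,L)$. We expand the Jacobi identity
\[
[[g,h]_t,f]_t+[[h,f]_t,g]_t+[[f,g]_t,h]_t=0 \pmod{t^2}.
\]
The $t^0$ term is the Jacobi identity of $L$. The $t^1$ term is
\[
c([g,h],f)+c([h,f],g)+c([f,g],h)+[c(g,h),f]+[c(h,f),g]+[c(f,g),h]=0.
\]
We compare this with $d^2c(g,h,f)$ computed from the explicit degree-two formula, using the adjoint action $xm=[x,m]$ of $L$ on $M=L$. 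The bracket terms $-[g,c(h,f)]+[h,c(g,f)]-[f,c(g,h)]$ become $[c(h,f),g]+[c(f,g),h]+[c(g,h),f]$ after using skew-symmetry twice. The $c$-terms $c([g,h],f)-c([g,f],h)+c([h,f],g)$ match the first three terms above, since $-c([g,f],h)=c([f,g],h)$. So the $t^1$ condition is exactly $d^2c=0$. Conversely, every cocycle $c$ defines an infinitesimal deformation.

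\emph{Step 2: equivalence corresponds to differing by a coboundary.} We expand
\[
(\mathrm{id}+t\phi)([g,h]+tc(g,h))=[g+t\phi g,\,h+t\phi h]+tc'(g,h) \pmod{t^2}.
\]
The $t^1$ coefficient gives
\[
c(g,h)-c'(g,h)=[\phi g,h]+[g,\phi h]-\phi([g,h]).
\]
By the degree-one formula,
\[
d^1\phi(g,h)=-[g,\phi h]+[h,\phi g]+\phi([g,h]),
\]
and $[h,\phi g]=-[\phi g,h]$, so the right-hand side above is exactly $-d^1\phi(g,h)$. Hence $c'=c+d^1\phi$. Conversely, given $c'=c+d^1\phi$, the map $\mathrm{id}+t\phi$ realises the equivalence.

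\emph{Step 3: conclusion.} The map sending $[\cdot,\cdot]_t\mapsto c$ is a bijection from infinitesimal deformations to $Z^2(L,L)$. Its fibres over equivalence classes are exactly the cosets of $B^2(L,L)$. It therefore induces the natural identification with $H^2(L,L)$.

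The only delicate point is the sign bookkeeping in Step 1, matching the Jacobi expansion term by term with $d^2$. Step 2 is immediate once the sign convention $d^0(m)(g)=-gm$ is respected.
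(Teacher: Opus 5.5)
Your proof is correct: with the paper's sign conventions, the $t$-coefficient of the Jacobi identity is exactly $d^2c=0$, and the $t$-coefficient of $(\mathrm{id}+t\phi)[g,h]_t=[(\mathrm{id}+t\phi)g,(\mathrm{id}+t\phi)h]'_t$ gives $c'=c+d^1\phi$, so equivalence classes are precisely the cosets of $B^2(L,L)$ in $Z^2(L,L)$. The paper recalls this proposition as background without giving a proof, but your argument is the standard one. It follows the same coefficient-comparison route the paper uses in its deformation subsection for restricted Lie triple systems, where it expands $\psi_t([x,y,z]_t)=[\psi_t x,\psi_t y,\psi_t z]'_t$ modulo $t^2$.
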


\section{Cohomology of restricted Lie algebras}\label{s3}

\subsection{Definition}

 As in the ordinary case, we construct the restricted universal enveloping algebra $U_{\mathrm{res.}}(L) $ by taking the quotient of $U(L)$ by the two-sided ideal generated by all elements of the form $x^{[p]}-x^p $. In order to construct the cohomology theory in the same way as the Cartan-Eilenberg cohomology, we attempt to construct a free resolution of $\mathbf{K}$ in the category of $U_{\mathrm{res.}}(L) $-modules. In the abelian case, Evans constructed a complex explicitly such that $H^k(L,M)= \mathrm{Ext}_{U_{\mathrm{res.}}(L)}^k(\mathbf{K},M) $ for $0\leq k<p$. 

Evans gave an explicit construction of the restricted cochains up to degree $3$, although it has not been proved that $H^k(L,M)= \mathrm{Ext}_{U_{\mathrm{res.}}(L)}^k(\mathbf{K},M) $  for $0\leq k<3  $. The construction is the following: 

For $k\leq 1$, define $C_\mathrm{res}^k(L,M)=C^k(L,M)$ and $d^0: C_\mathrm{res}^0(L,M)\to C_\mathrm{res}^1(L,M) $ by $d_\mathrm{res}^0=d^0$. It follows immediately that $H_\mathrm{res}^0(L,M)=H^0(L,M) $. 

\begin{defi}\cite{E96}
    Let $\varphi:\Lambda^2L\to M $ be a skew-symmetric bilinear form on $L$ with values in $M$ and $\omega:L\to M$ be a map. We say that $\omega$ has the $\ast $-property with respect to $\varphi$ if for all $\lambda\in \mathbf{K}$ and all $g,h\in L$, \begin{enumerate}
    \item $\omega(\lambda g)=\lambda^pw(g)$,
    \item \begin{align*}
        \omega(g+h)= &\omega(g)+\omega(h) \\ & + \sum_{\substack{g_j\in \left\{g,h \right\},\\g_1=g,g_2=h}}\frac{1}{\#(g)}\sum\limits_{k=0}^{p-2}(-1)^kg_p\cdots g_{p-k+1}\varphi([g_1,\ldots,g_{p-k-1}],g_{p-k}).
    \end{align*} 
\end{enumerate} The space of $2$-dimensional cochains is  defined as $$C_\mathrm{res}^2(L,M)=\left\{(\varphi,\omega)\mid \varphi:\Lambda^2L\to M, \omega:L\to M \quad \text{has the $\ast$-property w.r.t $\varphi$} \right\}. $$
\end{defi}

The space $C_\mathrm{res}^2(L,M)$ is a vector space over $\mathbf{K} $ by pointwise addition and scalar multiplication. Let $\psi:L\to M $ be an element of $C^1(L,M) $. We define $\tilde{\psi}:L\to M $ by the formula $$\tilde{\psi}(g)= \psi(g^{[p]})- g^{p-1}\psi(g).$$ Then $\tilde{\psi} $ has the $\ast$-property with respect to $d\psi $. We then define the coboundary operator $$d_\mathrm{res}^1:C_\mathrm{res}^1(L,M)\to C_\mathrm{res}^2(L,M), d_\mathrm{res}^1:\psi\mapsto (d^1\psi,\tilde{\psi}). $$

 \begin{thm}\cite{E96}
 We have $d^1_\mathrm{res}d_\mathrm{res}^0=0 $ and $H_\mathrm{res}^1(L,M)=\mathrm{Ker}(d_\mathrm{res}^1)/\mathrm{Im}(d_\mathrm{res}^0) $ induced  as a subspace of $H^1(L,M) $.
 \end{thm}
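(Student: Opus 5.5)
To prove the statement I would check two things in turn: that the restricted coboundaries compose to zero, and that the restricted first cohomology sits inside the ordinary one.

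\textbf{Step 1: $d^1_\mathrm{res}d^0_\mathrm{res}=0$.} Since $C^k_\mathrm{res}(L,M)=C^k(L,M)$ for $k\le 1$ and $d^0_\mathrm{res}=d^0$, we have, for $m\in M$,
$$d^1_\mathrm{res}d^0_\mathrm{res}(m)=\bigl(d^1d^0 m,\widetilde{d^0m}\bigr).$$
The first component vanishes because $d^1\circ d^0=0$ in the Chevalley--Eilenberg complex. For the second component, put $\psi=d^0m$, so $\psi(g)=-gm$. Then
$$\tilde\psi(g)=\psi(g^{[p]})-g^{p-1}\psi(g)=-g^{[p]}m+g^{p-1}(gm)=-g^{[p]}m+g^pm .$$
This vanishes because $M$ is a restricted module, i.e.\ $g^{[p]}$ acts on $M$ as the $p$-th power of the action of $g$. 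This is precisely where the restricted-module hypothesis is used.

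\textbf{Step 2: cocycles and coboundaries.} The restricted $1$-cocycles are
$$\mathrm{Ker}(d^1_\mathrm{res})=\{\psi\in C^1(L,M)\mid d^1\psi=0,\ \psi(g^{[p]})=g^{p-1}\psi(g)\ \forall g\}.$$
This is a subspace of $Z^1(L,M)$. The restricted coboundaries are $\mathrm{Im}(d^0_\mathrm{res})=\mathrm{Im}(d^0)=B^1(L,M)$, and Step 1 shows that they lie in $\mathrm{Ker}(d^1_\mathrm{res})$.

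\textbf{Step 3: the inclusion into $H^1(L,M)$.} The inclusion $\mathrm{Ker}(d^1_\mathrm{res})\hookrightarrow Z^1(L,M)$ descends to a map
$$H^1_\mathrm{res}(L,M)=\mathrm{Ker}(d^1_\mathrm{res})/B^1(L,M)\longrightarrow Z^1(L,M)/B^1(L,M)=H^1(L,M).$$
This map is injective, since its kernel is $\bigl(\mathrm{Ker}(d^1_\mathrm{res})\cap B^1(L,M)\bigr)/B^1(L,M)=0$, the two quotients being taken by the same coboundary space. Hence $H^1_\mathrm{res}(L,M)$ is identified with the subspace of $H^1(L,M)$ consisting of the classes of cocycles $\psi$ with $\psi(g^{[p]})=g^{p-1}\psi(g)$. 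This condition is well defined on classes because coboundaries satisfy it by Step 1.

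\textbf{Main obstacle.} The computation above is routine. The one implicit point is that $d^1_\mathrm{res}$ really lands in $C^2_\mathrm{res}(L,M)$, namely that $\tilde\psi$ has the $\ast$-property with respect to $d^1\psi$. The paper asserts this just before the theorem, so it can be taken as given. Verifying it directly would require expanding $(g+h)^{[p]}$ and $(g+h)^{p-1}$ via Jacobson's formula.
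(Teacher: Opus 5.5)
Your proof is correct and takes essentially the same route as the paper. The paper cites Evans for this theorem but carries out the identical argument for its Lie triple system analogue: the composite vanishes because $\widetilde{d^0m}(g)=-g^{[p]}m+g^pm=0$ for a restricted module. Injectivity into $H^1(L,M)$ holds because the restricted and ordinary coboundary spaces coincide, and since $\psi\mapsto\tilde\psi$ is linear, the condition $\tilde\psi=0$ does not depend on the chosen representative.
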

 
A cohomology class $ \alpha\in H^1(L,M)$ represents a restricted cohomology class if and only if $\tilde{\psi}=0 $ for all representative $\psi\in \alpha$. 

The situation for $C_\mathrm{res}^3(L,M)$ is more complicated. 

\begin{defi}\cite{E96}
    Let $\alpha:\Lambda^3L\to M$ be a skew-symmetric multilinear map on $L$ and $\beta:L\times L\to M $ be a map.  We say that $\beta$ has the $\ast\ast$-property with respect to $\alpha$ if the following conditions hold: \begin{enumerate}
    \item $\beta(g,h)$ is linear with respect to $g$, 
    \item $\beta(g,\lambda h)= \lambda^p\beta(g,h) $ for all $\lambda\in \mathbf{K}$,
    \item \begin{align*}
        &\beta(g,h_1+h_2)\\ &=\beta(g,h_1)+\beta(g,h_2)-\\& \sum_{\substack{l_i\in \left\{1,2\right\},\\ l_1=1,l_2=2}}\frac{1}{\#\left\{l_i=1\right\}}\sum\limits_{j=0}^{p-2}(-1)^j\sum\limits_{k=0}^j\tbinom{j}{k}h_{l_p}\cdots h_{l_{p-k+1}}\\ & \,\,\,\,\,\,\,\,\,\,\,\,\,\,\,\,\,\,\,\,\,\,\,\,\,\,\,\,\,\,\,\,\,\,\,\,\,\,\,\,\,\,\,\,\,\,\,\,\,\,\,\,\,\,\,\,\,\,\,\,\,\,\,\,\,\,\,\,\,\,\,\,\,\,\,\,\,\,\,\,\,\,\,\,\,\,\,\,\,\,\,\,\,\,\,\,\,\,\,\,\,\,\,\,\,\,\,\alpha([g,h_{l_{p-k}},\ldots,h_{l_{p-j+1}}],[h_{l_1},\ldots,h_{l_{p-j-1}}],h_{l_{p-j}}).
    \end{align*}
\end{enumerate} The space of $3$-dimensional cochains is then defined as $$C_\mathrm{res}^3(L,M)=\left\{ (\alpha,\beta)\mid \alpha\in C^3(L,M), \beta:L\times L\to M \quad \text{has the $\ast\ast$-property w.r.t. $\alpha$}  \right\}. $$
\end{defi}

An element $(\varphi,\omega)\in C_\mathrm{res}^2(L,M)$ induces a map $\beta:L\times L\to M $ by the formula $$\beta(g,h)=\varphi(g,h^{[p]})-\sum\limits_{i+j=p-1}(-1)^ih^i\varphi([g,h,\ldots,h],h)+ g\omega(h). $$ If $(\varphi,\omega)\in C_\mathrm{res}^2(L,M) $, the map $\beta$ defined above satisfies the $\ast\ast $-property with respect to $d^2\varphi $. We define the coboundary operator $d_\mathrm{res}^2:C_\mathrm{res}^2(L,M)\to C_\mathrm{res}^3(L,M)$ by the formula $$d_\mathrm{res}^2:(\varphi,\omega)\mapsto (d^2\varphi,\beta) $$

\begin{thm}\cite{E96}
    We have $d_\mathrm{res}^2d_\mathrm{res}^1=0 $. Thus, the quotient $H_\mathrm{res}^2(L,M)=\mathrm{Ker}(d_\mathrm{res}^2)/\mathrm{Im}(d_\mathrm{res}^1) $ is well defined.
\end{thm}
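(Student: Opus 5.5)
We must show $d^2_{\mathrm{res}}\circ d^1_{\mathrm{res}}=0$. Fix $\psi\in C^1_{\mathrm{res}}(L,M)=C^1(L,M)$. By definition $d^1_{\mathrm{res}}\psi=(d^1\psi,\tilde\psi)$, so
$$d^2_{\mathrm{res}}d^1_{\mathrm{res}}\psi=(d^2d^1\psi,\beta),$$
where $\beta$ is built from the pair $(\varphi,\omega)=(d^1\psi,\tilde\psi)$. The first component vanishes by the classical Chevalley–Eilenberg identity $d^{q+1}\circ d^q=0$ recalled in Section~\ref{s2}. The whole proof therefore reduces to showing that, for all $g,h\in L$,
$$\beta(g,h)=d^1\psi(g,h^{[p]})-\sum_{i+j=p-1}(-1)^i h^i\, d^1\psi([g,h,\ldots,h],h)+g\tilde\psi(h)=0 ,$$
where the iterated bracket in the $j$-th term contains $j$ copies of $h$.

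The plan is a direct expansion. First write $d^1\psi(g,h^{[p]})=-g\psi(h^{[p]})+h^{[p]}\psi(g)+\psi([g,h^{[p]}])$, and $g\tilde\psi(h)=g\psi(h^{[p]})-gh^{p-1}\psi(h)$. The terms $\pm g\psi(h^{[p]})$ cancel immediately. Next use the restricted Lie algebra axioms: $[g,h^{[p]}]=\operatorname{ad}(h)^p$ applied to $g$ (up to the sign convention $[g,h,\ldots,h]$), and, since $M$ is a restricted module, $h^{[p]}$ acts on $M$ as $h^p$. This gives
$$\beta(g,h)=h^p\psi(g)+\psi([g,\underbrace{h,\ldots,h}_{p}])-gh^{p-1}\psi(h)-\sum_{i+j=p-1}(-1)^i h^i\, d^1\psi(g_j,h),$$
where $g_j=[g,h,\ldots,h]$ with $j$ copies of $h$.

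Then expand each $d^1\psi(g_j,h)=-g_j\psi(h)+h\psi(g_j)+\psi(g_{j+1})$ and regroup the sum into two pieces.
\begin{itemize}
\item[(a)] The terms $(-1)^i\bigl(h^{i+1}\psi(g_j)+h^i\psi(g_{j+1})\bigr)$ with $i+j=p-1$ telescope. Reindexing by $j$, consecutive terms cancel and only the endpoints survive, namely $h^p\psi(g_0)$ and $(-1)^{p-1}\psi(g_p)$. For $p$ odd we have $(-1)^{p-1}=1$, and for $p=2$ signs are irrelevant in characteristic $2$. These boundary terms exactly cancel $h^p\psi(g)+\psi([g,h,\ldots,h])$.
\item[(b)] The remaining terms are $\sum_{i+j=p-1}(-1)^i h^i g_j\,\psi(h)$. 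Using $g_j=(-\operatorname{ad}h)^j g$ as operators on $M$, this becomes the operator identity
$$gh^{p-1}=\sum_{i+j=p-1}(-1)^{i}\,h^{i}\,\bigl((\operatorname{ad}(-h))^{j}g\bigr)\quad\text{in }U(L),$$
up to matching the sign conventions. This is a consequence of the standard identity $\sum_{i+j=n}(-1)^i\binom{n}{i}$ combined with the fact that $\binom{p-1}{i}\equiv(-1)^i \pmod p$. Concretely, $\operatorname{ad}(h)^{j}=\sum_k\binom{j}{k}(-1)^k h^{j-k}\cdot h^{k}$ in $\mathrm{End}$, and summing over $j$ reduces to $gh^{p-1}$.
\end{itemize}

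The main obstacle is step (b): getting signs and orientation of the iterated brackets to match Evans's conventions, and verifying the mod-$p$ binomial identity. I would first check the cases $p=2$ and $p=3$ by hand to fix the conventions. I would then prove (b) by induction, or more cleanly by writing $\operatorname{ad}h=L_h-R_h$ in $U(L)$ and using $\sum_{i+j=p-1}L_h^{\,i}(L_h-R_h)^{j}$-type identities, i.e.\ $(x-y)^{p-1}=\sum_i x^iy^{p-1-i}$ in characteristic $p$ for commuting $x,y$. Combined with (a), this gives $\beta=0$, hence $d^2_{\mathrm{res}}d^1_{\mathrm{res}}=0$, and the quotient $H^2_{\mathrm{res}}(L,M)$ is well defined.
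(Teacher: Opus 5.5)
Your proof is correct. The paper gives no proof of this statement; it is quoted from Evans's thesis. The natural comparison is therefore with the paper's proof of the triple-system analogue $\delta^3_{\mathrm{res}}\circ\delta^1_{\mathrm{res}}=0$, which follows your scheme. There, $\beta$ is expanded with the first-coboundary formula, the terms involving $\psi(\cdot^{[p]})$ cancel, the restricted axioms for the $p$-map and the module are applied, a chain of terms telescopes, and a mod-$p$ binomial identity finishes the argument. In the Lie algebra setting the last step collapses to one identity in an associative algebra, $\sum_{i+j=p-1}h^i(\mathrm{ad}\,h)^j(g)=g\,h^{p-1}$. To see this, note $(-1)^i g_j=(-1)^{i+j}(\mathrm{ad}\,h)^j(g)=(\mathrm{ad}\,h)^j(g)$ because $i+j=p-1$ is even (signs are irrelevant when $p=2$). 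The triple-system proof instead needs Proposition~\ref{p5.14} and a long count of coefficients.

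Two small corrections.

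In (a), the surviving endpoint carrying the sign $(-1)^{p-1}$ is $h^p\psi(g)$, not $\psi(g_p)$. This is harmless, since the sign equals $1$.

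In (b), the binomial fact you cite, $\sum(-1)^i\binom{n}{i}$, is not what makes your explicit expansion work. Expanding $(\mathrm{ad}\,h)^j(g)=\sum_k\binom{j}{k}(-1)^kh^{j-k}gh^k$ and summing over $j$ uses the hockey-stick identity $\sum_{j=k}^{p-1}\binom{j}{k}=\binom{p}{k+1}$. This gives $\sum_k(-1)^k\binom{p}{k+1}h^{p-1-k}gh^k$, in which only $k=p-1$ survives mod $p$.

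The shortest route is the one you sketch at the end. In $\mathbb{F}_p[x,y]$ one has $y\sum_{i+j=p-1}x^i(x-y)^j=x^p-(x-y)^p=y^p$, so $\sum_{i+j=p-1}x^i(x-y)^j=y^{p-1}$. Substituting the commuting operators $x=L_h$, $y=R_h$ (so that $x-y=\mathrm{ad}\,h$) gives exactly the identity you need. This is your formula $(x-y)^{p-1}=\sum_i x^iy^{p-1-i}$ with $y$ replaced by $x-y$. Be careful here: substituting $x=L_h$, $y=R_h$ directly into that formula gives Jacobson's identity $(\mathrm{ad}\,h)^{p-1}(g)=\sum_i h^igh^{p-1-i}$ instead, which is not the one required.
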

 
We have the following commutative diagram \[\begin{tikzcd}
	0 & {C_\mathrm{res}^0(L,M)} & {C_\mathrm{res}^1(L,M)} & {C_\mathrm{res}^2(L,M)} & {C_\mathrm{res}^3(L,M)} \\
	0 & {C^0(L,M)} & {C^1(L,M)} & {C^2(L,M)} & {C^3(L,M)}
	\arrow[from=1-1, to=1-2]
	\arrow[from=1-2, to=1-3]
	\arrow["{\mathrm{id}}"', from=1-2, to=2-2]
	\arrow[from=1-3, to=1-4]
	\arrow["{\mathrm{id}}"', from=1-3, to=2-3]
	\arrow[from=1-4, to=1-5]
	\arrow[from=1-4, to=2-4]
	\arrow[from=1-5, to=2-5]
	\arrow[from=2-1, to=2-2]
	\arrow[from=2-2, to=2-3]
	\arrow[from=2-3, to=2-4]
	\arrow[from=2-4, to=2-5]
\end{tikzcd}\] where the horizontal maps are the coboundary operators. This diagram shows that we have a map $H_\mathrm{res}^k(L,M)\to H^k(L,M) $ for $k\leq 2$. This map is injective for $k=1$, but the map fails to be injective for $k=2$. Indeed, if $(\varphi,w)\in C^2_\mathrm{res}(L,M) $ is such that $\varphi=d^1f $, we do not have necessary $\tilde{f}=w $. However, we can give a necessary and sufficient condition for this map to be injective. Let $$H^1(L,M)\to \mathrm{Hom}_{p-\mathrm{sl}}(L,M^L), \overline{\varphi}\mapsto \tilde{\varphi} $$ where $\mathrm{Hom}_{p-\mathrm{sl}}(L,M^L) $ is the space of $p$-semilinear maps taking values in $M^L$ and $$M^L=\left\{ u\in M\mid \rho(x)(u)=0, \forall x\in L\right\}.$$ This map is well-defined. Indeed, for every $m\in M$, $\widetilde{d^0(m)}=0$ because $M$ is a restricted $L$-module. Moreover, if $\varphi $ is a $1$-cocycle, then $\tilde{\varphi} $ has the $*$-property with respect to $0$, and hence is a $p$-semilinear map. Finally, $d^2_\mathrm{res}(0,\tilde{\varphi})=0 $ thus $\rho(x)\tilde{\varphi}(y)=0 $ for all $x,y\in L $ which means that $\tilde{\varphi} $ takes values in $M^L$. Assume that this map is surjective and let $\overline{(\varphi,w)}\in H^2_\mathrm{res}(L,M) $ be an element such that $\overline{\varphi}=0 $ in $H^2(L,M)$. There exists $f\in C^1(L,M)$ such that $\varphi=d^1f $. Thus, $(\varphi,w)-d^1_\mathrm{res}f=(0,w-\tilde{f}) $. Since $d^2_\mathrm{res}(0,w-\tilde{f})=0 $, $w-\tilde{f} $ takes values in $M^L$. Moreover, since $w$ and $\tilde{f}$ has the $*$-property with respect to $\varphi$, $w-\tilde{f}$ has the $*$-property with respect to $0$ and so is $p$-semilinear. Hence, there exists $k\in Z^1(L,M)$ such that $\tilde{k}=w-\tilde{f} $. Then, $d^1_\mathrm{res}(f+k)=(\varphi,w) $ which means that $\overline{(\varphi,w)}=0 $ in $H^2_\mathrm{res}(L,M)$ and the map $H^2_\mathrm{res}(L,M)\to H^2(L,M) $ is injective. Conversely, assume that the map $H^2_\mathrm{res}(L,M)\to H^2(L,M) $ is injective. Let $w=L\to M^L $ be a $p$-semilinear map. Since, $w $ is $p$-semilinear, $w$ has the $*$-property with respect to $0$. Moreover, $w $ takes values in $M^L$, thus $d^2_\mathrm{res}(0,w)=0 $ and $(0,w)\in Z^2_\mathrm{res}(L,M) $. Since $\overline{(0,w)} $ is map to $0$, by injectivity of the map, there exists $\varphi\in C^1(L,M)$ such that $w=\tilde{\varphi} $. Hence, $H^1(L,M)\to \mathrm{Hom}_{p-\mathrm{sl}}(L,M^L) $ is surjective.

\begin{prop}\label{propinj}
    The map $H^2_\mathrm{res}(L,M)\to H^2(L,M) $ is injective if and only if the map $H^1(L,M)\to \mathrm{Hom}_{p-\mathrm{sl}}(L,M^L) $ is surjective. 
\end{prop}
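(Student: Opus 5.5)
The plan is to show both directions directly by chasing cochains, using three earlier facts. First, $\tilde{\psi}$ has the $\ast$-property with respect to $d^1\psi$. Second, $d^1_\mathrm{res}\psi=(d^1\psi,\tilde{\psi})$. Third, for a pair $(0,w)$ the cocycle condition $d^2_\mathrm{res}(0,w)=0$ reduces to $\beta(g,h)=g\,w(h)=0$, because $\varphi=0$ kills the remaining terms of $\beta$. Hence $(0,w)$ is a restricted $2$-cocycle exactly when $w$ takes values in $M^L$. I would also record two preliminary points. The assignment $\overline{\varphi}\mapsto\tilde{\varphi}$ is well defined on $H^1(L,M)$, since $\widetilde{d^0m}=0$ for a restricted module and $\psi\mapsto\tilde\psi$ is linear. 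Moreover, a map with the $\ast$-property with respect to $0$ is exactly a $p$-semilinear map, because the correction sum vanishes when $\varphi=0$.

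For ``surjective $\Rightarrow$ injective'', take a class $\overline{(\varphi,w)}\in H^2_\mathrm{res}(L,M)$ mapping to $0$, so $\varphi=d^1f$ for some $f\in C^1(L,M)$.
\begin{enumerate}
\item Subtract $d^1_\mathrm{res}f$ to obtain the restricted cocycle $(0,w-\tilde f)$.
\item The difference $w-\tilde f$ of two maps with the $\ast$-property with respect to the same $\varphi$ has the $\ast$-property with respect to $0$, since the correction term is linear in $\varphi$. So $w-\tilde f$ is $p$-semilinear.
\item Being a cocycle, $(0,w-\tilde f)$ forces $w-\tilde f$ to take values in $M^L$.
\item Surjectivity gives $k\in Z^1(L,M)$ with $\tilde k=w-\tilde f$.
\end{enumerate}
Then $d^1_\mathrm{res}(f+k)=(d^1f,\tilde f+\tilde k)=(\varphi,w)$, so the class is zero.

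For ``injective $\Rightarrow$ surjective'', take a $p$-semilinear $w:L\to M^L$. Then $(0,w)\in C^2_\mathrm{res}(L,M)$, and by the computation of $\beta$ above it is a cocycle. Its image in $H^2(L,M)$ is $\overline{0}$, so injectivity gives $\psi\in C^1(L,M)$ with $(0,w)=d^1_\mathrm{res}\psi$. This means $d^1\psi=0$ and $\tilde\psi=w$, so $\overline{\psi}\in H^1(L,M)$ maps to $w$.

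The main obstacle is the justification that $d^2_\mathrm{res}(0,w)=(0,\beta)$ with $\beta(g,h)=g\,w(h)$. For this I would inspect the formula for $\beta$: every term except $g\omega(h)$ involves $\varphi$, so setting $\varphi=0$ leaves only $g\,w(h)$. The other point needing care is that the $\ast$-property is additive in the pair $(\varphi,\omega)$. Both are immediate from the definitions, so the argument is essentially formal.
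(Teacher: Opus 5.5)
Your proposal is correct and follows the paper's argument essentially step for step. In one direction you subtract $d^1_{\mathrm{res}}f$, observe that $w-\tilde f$ is $p$-semilinear with values in $M^L$, and lift it through surjectivity; in the other you note that $(0,w)$ is a restricted $2$-cocycle with trivial image in $H^2(L,M)$, so injectivity yields $\psi\in Z^1(L,M)$ with $\tilde\psi=w$. The one detail the paper spells out that you leave implicit is that $\tilde\varphi$ takes values in $M^L$ for $\varphi\in Z^1(L,M)$, which is what makes the map land in $\mathrm{Hom}_{p-\mathrm{sl}}(L,M^L)$; it follows from your third fact applied to the restricted cocycle $d^1_{\mathrm{res}}\varphi=(0,\tilde\varphi)$.
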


\subsection{Algebraic interpretations}

We have algebraic interpretations of low dimensional cohomology for restricted Lie algebras similar to those in the non-restricted case. As $H_\mathrm{res}^1(L,M) $ injects in $H^1(L,M) $, which is equal to $\mathrm{Der}(L)/\mathrm{InnDer}(L) $, Evans gave the following definition.

\begin{defi}\cite{E96}
    Let $L$ be a restricted Lie algebra. A linear map $D:L\to L$ is called a restricted derivation of $L$ if it is a derivation of $L$ and $$D(g^{[p]})=(\ad g)^{p-1}D(g) $$ for all $g\in L$. We denote the set of all restricted derivations of $L$ by $\mathrm{Der}_\mathrm{res.}(L)$.
\end{defi}

For all $g\in L$, $\ad g$ is a restricted derivation and the map $\ad:L\to \mathrm{Der}_\mathrm{res.}(L) $ is a morphism of restricted Lie algebras. This map gives $L$ the structure of an $L$-module and the notation $C_\mathrm{res}^k(L,L) $ will mean this particular structure. 

\begin{prop}
    The space $\mathrm{Der}_\mathrm{res.}(L)/\ad (L) $ is equal to $H_\mathrm{res}^1(L,L)$.
\end{prop}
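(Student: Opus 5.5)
The approach is to realize $H^1_\mathrm{res}(L,L)$ as a subspace of $H^1(L,L)$, using the theorem of \cite{E96} that $H^1_\mathrm{res}(L,M)=\mathrm{Ker}(d^1_\mathrm{res})/\mathrm{Im}(d^0_\mathrm{res})$ sits inside $H^1(L,M)$. We then check that under the identification $H^1(L,L)\cong\mathrm{Der}(L)/\mathrm{InnDer}(L)$ this subspace is exactly $\mathrm{Der}_\mathrm{res.}(L)/\ad(L)$. Concretely, we take $M=L$ with the module structure given by $\ad$, so that $x\cdot m=[x,m]$.

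The first step is the cocycle condition. For $\psi\in C^1_\mathrm{res}(L,L)=C^1(L,L)$ we have $d^1_\mathrm{res}\psi=(d^1\psi,\tilde\psi)$. From the explicit formula
$$d^1\psi(g,h)=-[g,\psi(h)]+[h,\psi(g)]+\psi([g,h]),$$
the vanishing of $d^1\psi$ is exactly the derivation identity $\psi([g,h])=[\psi g,h]+[g,\psi h]$. The vanishing of $\tilde\psi$ reads $\psi(g^{[p]})-g^{p-1}\psi(g)=0$. Here $g^{p-1}\psi(g)$ means the $(p-1)$-fold action of $g$ on $\psi(g)$ in the module $L$, which is $(\ad g)^{p-1}\psi(g)$. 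So the condition is precisely $\psi(g^{[p]})=(\ad g)^{p-1}\psi(g)$. Hence $\mathrm{Ker}(d^1_\mathrm{res})=\mathrm{Der}_\mathrm{res.}(L)$.

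The second step concerns coboundaries. We have $d^0_\mathrm{res}(m)(g)=-[g,m]=[m,g]$, so $\mathrm{Im}(d^0_\mathrm{res})=\{\ad m\mid m\in L\}=\ad(L)$. This is a subspace, since $\ad$ is linear. It lies in $\mathrm{Der}_\mathrm{res.}(L)$, as already noted after the definition, or as follows directly from $d^1_\mathrm{res}d^0_\mathrm{res}=0$. Taking the quotient gives $H^1_\mathrm{res}(L,L)=\mathrm{Der}_\mathrm{res.}(L)/\ad(L)$.

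The only point needing care is the identification of the $(p-1)$-fold action $g^{p-1}\psi(g)$ in $U_\mathrm{res}(L)$ acting on the module $L$ with $(\ad g)^{p-1}\psi(g)$. This holds because the module structure is $\ad$, and the action of a power of $g$ in the enveloping algebra is the corresponding power of $\ad g$. It is also worth noting that $L$ is a restricted module, since $\ad(g^{[p]})=(\ad g)^p$, so the restricted complex is legitimately defined for this $M$.
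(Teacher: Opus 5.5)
Your proposal is correct and follows essentially the same route the paper takes in its Lie triple system analogue (the Lie algebra statement itself is only quoted from Evans): $d^1\psi=0$ is the derivation identity, $\tilde\psi=0$ is the restricted condition once $g^{p-1}\psi(g)$ is read as $(\ad g)^{p-1}\psi(g)$, and $\mathrm{Im}(d^0_\mathrm{res})=\ad(L)$. Your opening appeal to the injection $H^1_\mathrm{res}(L,L)\hookrightarrow H^1(L,L)$ is not needed, since your direct computation of the kernel and the image already yields the quotient.
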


 As in the case of ordinary Lie algebras, $H_\mathrm{res}^1(L,L)$ has another interpretation.

\begin{defi}\cite{E96}
    If $M,N$ are restricted $L$-modules, then a restricted extension of $N$ by $M$ is an exact sequence $$0\to M\to E\to N\to 0 $$ of restricted $L$-modules and morphisms.
\end{defi}

\begin{lem}\cite{E96}
    If $M$ and $N$ are restricted $L$-modules, then the space $\mathrm{Hom}_\mathbf{K}(N,M)$ is a restricted $L$-module, where we define for $g\in L$, $\varphi\in \mathrm{Hom}_\mathbf{K}(N,M)$ and $n\in N$,  $$(g\varphi)(n)= g\varphi(n)- \varphi(gn).$$
\end{lem}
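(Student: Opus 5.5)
We have to check two things: that the formula $(g\varphi)(n)=g\varphi(n)-\varphi(gn)$ makes $\mathrm{Hom}_\mathbf{K}(N,M)$ an $L$-module, and that it is restricted, i.e. $g^{[p]}\varphi = g^p\varphi$, where the right-hand side means the action of $g$ applied $p$ times. Write $\rho_M(g)$ and $\rho_N(g)$ for the actions of $g$ on $M$ and $N$. The action on $\mathrm{Hom}_\mathbf{K}(N,M)$ is then
$$\rho(g)\varphi = L_g\varphi - R_g\varphi,\qquad L_g\varphi=\rho_M(g)\circ\varphi,\quad R_g\varphi=\varphi\circ\rho_N(g).$$
Both $L_g$ and $R_g$ are linear endomorphisms of the associative algebra $\mathrm{End}_\mathbf{K}(\mathrm{Hom}_\mathbf{K}(N,M))$, and for all $g,h\in L$ they commute: $L_gR_h=R_hL_g$, since one composes on the left and the other on the right.

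For the Lie module axiom I compute $[\rho(g),\rho(h)]$. Because the $L$'s commute with the $R$'s, the cross terms cancel and
$$[\rho(g),\rho(h)]=[L_g,L_h]+[R_g,R_h].$$
Here $[L_g,L_h]=L_{[g,h]}$, because $\rho_M$ is a representation. Also $R_gR_h\varphi=\varphi\rho_N(h)\rho_N(g)$, which gives $[R_g,R_h]=-R_{[g,h]}$. Hence $[\rho(g),\rho(h)]=L_{[g,h]}-R_{[g,h]}=\rho([g,h])$. Linearity of $\rho$ in $g$ is clear.

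For the restricted condition, $L_g$ and $R_g$ commute, so the binomial theorem applies in characteristic $p$:
$$\rho(g)^p=(L_g-R_g)^p=L_g^p-R_g^p,$$
using that the middle binomial coefficients vanish and that $(-1)^p=-1$ for odd $p$; for $p=2$, $-1=1$, so the sign is harmless. Now $L_g^p\varphi=\rho_M(g)^p\varphi=\rho_M(g^{[p]})\varphi$ and $R_g^p\varphi=\varphi\rho_N(g)^p=\varphi\rho_N(g^{[p]})$, because $M$ and $N$ are restricted. Thus $\rho(g)^p=\rho(g^{[p]})$.

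The only delicate point is making sure $L_g$ and $R_g$ genuinely commute, so that the Frobenius binomial identity is legitimate. This holds because left and right composition commute by associativity. Everything else is routine.
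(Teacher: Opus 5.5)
Your proof is correct and complete. The decomposition $\rho(g)=L_g-R_g$ into commuting left and right composition gives the bracket relation, and the Frobenius binomial identity then gives $\rho(g)^p=L_{g^{[p]}}-R_{g^{[p]}}=\rho(g^{[p]})$.

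The paper does not prove this lemma; it quotes it from Evans's thesis. Your argument is the standard direct verification, written in operator form rather than as the pointwise expansion $(g^p\varphi)(n)=\sum_{k=0}^{p}(-1)^k\binom{p}{k}\,g^{p-k}\varphi(g^kn)$.
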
 

\begin{thm}\cite{E96}
    If $M$ and $N$ are restricted $L$-modules, then the set $\mathrm{Ext}(N,M)$ of equivalence classes of restricted extensions of $N$ by $M$ is in one to one correspondence with  \newline$H_\mathrm{res}^1(L,\mathrm{Hom}_\mathbf{K}(N,M))$.
\end{thm}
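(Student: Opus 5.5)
We follow the classical argument for ordinary Lie algebras (an extension of $N$ by $M$ splits as vector spaces, and the action on the complement is measured by a $1$-cocycle) and then check that the restrictedness of $E$ corresponds exactly to the vanishing of $\tilde{\psi}$, which is the extra condition defining $\mathrm{Ker}(d^1_\mathrm{res})$.

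\emph{From extensions to cocycles.} Let $0\to M\to E\to N\to 0$ be a restricted extension. We choose a $\mathbf{K}$-linear section and identify $E\cong M\oplus N$ as vector spaces. The $L$-action on $E$ then has the form
$$g(m,n)=(gm+\psi(g)(n),\,gn),$$
where $\psi:L\to\mathrm{Hom}_\mathbf{K}(N,M)$ is linear. The identity $[g,h]e=g(he)-h(ge)$, written in the $M$-component, says
$$\psi([g,h])=g\psi(h)-h\psi(g)$$
for the module structure on $\mathrm{Hom}_\mathbf{K}(N,M)$ given in the preceding Lemma. This is exactly $d^1\psi=0$.

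\emph{The restricted condition.} Next we impose $g^{[p]}e=g^pe$ on $E$. Iterating the action gives
$$g^k(m,n)=\Big(g^km+\sum_{i+j=k-1}g^i\psi(g)g^j n,\;g^kn\Big).$$
We then use the identity
$$\sum_{i+j=p-1}g^i\,\psi(g)\,g^j=(\ad g)^{p-1}\psi(g)=g^{p-1}\cdot\psi(g),$$
where the right-hand side is the $(p-1)$-fold action of $g$ on $\mathrm{Hom}_\mathbf{K}(N,M)$. This holds because $\binom{p-1}{i}(-1)^i\equiv 1\pmod p$. Since $M$ and $N$ are already restricted modules, the condition $g^{[p]}e=g^pe$ reduces to
$$\psi(g^{[p]})=g^{p-1}\psi(g),$$
that is, $\tilde{\psi}=0$. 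Together with the previous step, $\psi\in\mathrm{Ker}(d^1_\mathrm{res})$.

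\emph{Change of section.} Changing the section by some $f\in\mathrm{Hom}_\mathbf{K}(N,M)$ replaces $\psi$ by $\psi\pm gf$, that is, by $\psi$ plus an element of $\mathrm{Im}(d^0_\mathrm{res})$. More generally, an equivalence of extensions, which is the identity on $M$ and $N$, has the form $(m,n)\mapsto(m+f(n),n)$ and changes $\psi$ by exactly such a coboundary. Conversely, cohomologous cocycles give isomorphic extensions via this same map. So the class of $\psi$ in $H^1_\mathrm{res}(L,\mathrm{Hom}_\mathbf{K}(N,M))$ is well defined on equivalence classes, and the assignment is injective.

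\emph{Surjectivity.} Given $\psi$ with $d^1\psi=0$ and $\tilde{\psi}=0$, we define the action on $M\oplus N$ by the formula above. The same computations, read backwards, show that this is a restricted $L$-module. The inclusion of $M$ and the projection to $N$ are then module maps, so we obtain a restricted extension whose class maps to $\bar\psi$.

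The main obstacle is the restricted condition step: computing $g^p$ on $E$ and verifying the binomial identity identifying $\sum_{i+j=p-1}g^i\psi(g)g^j$ with the $(p-1)$-fold action of $g$ on $\mathrm{Hom}_\mathbf{K}(N,M)$. Everything else is the standard non-restricted argument.
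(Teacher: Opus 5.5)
Your argument is correct and complete. The $M$-component of $[g,h]e=g(he)-h(ge)$ gives exactly $d^1\psi=0$ in the paper's sign convention. The iteration formula, together with $\binom{p-1}{i}(-1)^i\equiv 1 \pmod p$, turns $g^{[p]}e=g^pe$ into $\psi(g^{[p]})=g^{p-1}\cdot\psi(g)$, i.e.\ $\tilde{\psi}=0$. Finally, the maps $(m,n)\mapsto(m+f(n),n)$ take care of both well-definedness and injectivity. To tighten your ``$\psi\pm gf$'': the change of section gives precisely $\psi\mapsto\psi+g\cdot f=\psi-d^0f$, with $d^0(f)(g)=-g\cdot f$.

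The paper gives no proof of this statement; it is quoted from Evans's thesis. Your route is the standard one, and it follows the same scheme the paper uses for its Lie triple system analogue in the subsection on restricted extensions of modules in Section~\ref{s6}. There, the cocycle is read off from a preimage of $1$, and restrictedness of the middle term is equivalent to the vanishing of the extra component $\overline{\varphi}$. In that analogue the quotient is the trivial one-dimensional module $\mathbf{K}$, so the binomial identity you need for a general $N$ does not arise.
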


\begin{cor}\cite{E96}
   The space $H_\mathrm{res}^1(L,M)$ is in one-to-one correspondence with equivalence classes of one dimensional right extensions of the restricted $L$-module $M$.
\end{cor}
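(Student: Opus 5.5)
The plan is to specialize the preceding theorem to the case $N=\mathbf{K}$, where $\mathbf{K}$ carries the trivial restricted $L$-module structure $g\cdot\lambda=0$. This is a restricted module, since $g^{[p]}$ acts as $0=0^p$. A one-dimensional right extension of the restricted module $M$ is by definition an exact sequence $0\to M\to E\to \mathbf{K}\to 0$ of restricted $L$-modules. By the theorem, the set $\mathrm{Ext}(\mathbf{K},M)$ of equivalence classes of such extensions is in bijection with $H_\mathrm{res}^1(L,\mathrm{Hom}_\mathbf{K}(\mathbf{K},M))$. So the whole proof reduces to identifying $\mathrm{Hom}_\mathbf{K}(\mathbf{K},M)$ with $M$ as restricted $L$-modules.

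For that identification I would use the linear isomorphism $\mathrm{ev}:\varphi\mapsto\varphi(1)$. Take the module structure from the preceding lemma: $(g\varphi)(\lambda)=g\varphi(\lambda)-\varphi(g\lambda)$. Since $\mathbf{K}$ is trivial, $g\lambda=0$, so $(g\varphi)(1)=g\varphi(1)$. Thus $\mathrm{ev}$ is $L$-equivariant, and it automatically respects the restricted action of $g^{[p]}$. An isomorphism of restricted modules induces an isomorphism on the restricted cochains $C^k_\mathrm{res}$ for $k\le 2$, and it commutes with $d^0_\mathrm{res}$ and $d^1_\mathrm{res}$. The formulas $d^1\psi$ and $\tilde\psi(g)=\psi(g^{[p]})-g^{p-1}\psi(g)$ involve only the action, so composing with an equivariant isomorphism commutes with them. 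Hence $H^1_\mathrm{res}(L,\mathrm{Hom}_\mathbf{K}(\mathbf{K},M))\cong H^1_\mathrm{res}(L,M)$.

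For completeness I would also describe the correspondence directly. Write $E=M\oplus\mathbf{K}$ with action $g(m,\lambda)=(gm+\lambda\psi(g),0)$ for a linear map $\psi:L\to M$. The module axiom $[g,h]e=g(he)-h(ge)$ is equivalent to $d^1\psi=0$, up to the sign conventions of $d^1$. The restrictedness condition $g^{[p]}e=g^pe$ applied to $(0,1)$ gives $\psi(g^{[p]})=g^{p-1}\psi(g)$, that is, $\tilde\psi=0$. Changing the splitting by $m_0\in M$ changes $\psi$ by $d^0m_0$.

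The only real point of care is this restrictedness computation. One must check that $g^p(0,1)=(g^{p-1}\psi(g),0)$, which follows by iterating the action, since the $\mathbf{K}$-component is killed after the first step. This matches the definition of $\tilde\psi$ exactly. Given the theorem, the rest is routine.
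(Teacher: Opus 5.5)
Your proposal is correct and takes essentially the paper's route: the paper gives no separate proof, citing the result from Evans's thesis as an immediate corollary of the preceding theorem with $N=\mathbf{K}$ carrying the trivial action, and your identification $\mathrm{Hom}_\mathbf{K}(\mathbf{K},M)\cong M$ via evaluation at $1$ is exactly the step that specialization needs. Your direct check is a sound supplementary verification: the module axiom is equivalent to $d^1\psi=0$, restrictedness on $(0,1)$ is equivalent to $\tilde\psi=0$, and changing the splitting changes $\psi$ by a coboundary. With the paper's convention $d^1\psi(g,h)=-g\psi(h)+h\psi(g)+\psi([g,h])$, the module axiom gives exactly $d^1\psi=0$, so no sign adjustment is needed.
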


\begin{cor}\cite{E96}
   The space  $H_\mathrm{res}^1(L,L)$ is in one-to-one correspondence with equivalence classes of one dimensional right extensions of the restricted Lie algebra $L$.
\end{cor}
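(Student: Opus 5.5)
The plan is to deduce this from the preceding corollary on restricted modules, applied to $L$ regarded as a restricted $L$-module via $\ad$, and then check that the module-theoretic data coincide with the Lie-algebra data of a one-dimensional right extension.

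\textbf{Setting up the extension.} A one-dimensional right extension $0\to L\to L'\to \mathbf{K}\to 0$ of the restricted Lie algebra $L$ is written as $L'=L\oplus\mathbf{K}$ with bracket
\[
[(g_1,\lambda_1),(g_2,\lambda_2)]=([g_1,g_2]-\lambda_1c(g_2)+\lambda_2c(g_1),0).
\]
It also carries a $p$-map extending $L^{[p]}$, and we require $L\to L'\to\mathbf{K}$ to be restricted morphisms, $\mathbf{K}$ being the abelian restricted algebra with zero $p$-map. The first step is to write the $p$-map on $(0,1)$ as $(0,1)^{[p]}=(a,0)$ for some $a\in L$. The Jacobi identity is equivalent to $c\in Z^1(L,L)$, as recalled in the ordinary case. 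The restricted condition $\ad((0,1)^{[p]})=(\ad(0,1))^p$ then gives $[a,g]=\pm c^p(g)$. Compatibility of the $p$-map with sums, using Jacobson's formula on $(g,0)+(0,\lambda)$, should force
\[
c(g^{[p]})=(\ad g)^{p-1}c(g).
\]
That is, $\tilde c=0$, so $c$ is a restricted derivation and $c\in Z^1_\mathrm{res}(L,L)$.

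\textbf{Equivalences and coboundaries.} Next, I compute how an isomorphism $(g,\lambda)\mapsto(g+\lambda x,\lambda)$ realising an equivalence changes $c$. It replaces $c$ by $c+\ad x$, up to sign, that is, by $c+d^0_\mathrm{res}(\mp x)$, and it adjusts $a$ accordingly. Conversely, given a restricted derivation $c$, I must construct a compatible $p$-map on $L\oplus\mathbf{K}$. This is the step I expect to be the main obstacle: one needs an $a\in L$ with $\ad a=c^p$ (suitably signed) and $[a,c(\cdot)]$ consistent with $c(a)=0$.

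\textbf{Plan for the obstacle.} Here I would lean on the preceding corollary and theorem via the identification $H^1_\mathrm{res}(L,L)\cong\mathrm{Der}_\mathrm{res.}(L)/\ad(L)$. The extension $L'$ viewed as a restricted $L$-module extension of $\mathbf{K}$ by $L$ is exactly the data in the corollary. The correspondence of classes is then obtained by matching the cocycle $c$ on both sides, once one notes that the $p$-map on $L'$ is determined up to the choice of $a$ in the centre-adjusted class. If existence of $a$ is not automatic, I would take $p$-map choices with $a$ defined through the restricted $L$-module structure $\rho(x)^p=\rho(x^{[p]})$ supplied by the corollary, and check injectivity and surjectivity of the map from extension classes to $\mathrm{Der}_\mathrm{res.}(L)/\ad(L)$ directly.
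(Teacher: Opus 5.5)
\textbf{The gap.} You read ``one-dimensional right extension of the restricted Lie algebra $L$'' as requiring $L'=L\oplus\mathbf{K}$ to carry a $p$-map that extends the one on $L$, with $\mathbf{K}$ given the zero $p$-map. Under that reading the statement is false, so the obstacle you flag is not a technicality.

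Take $L\neq 0$ strongly abelian. Every linear map $L\to L$ is then a restricted derivation and $\ad(L)=0$, so $H^1_\mathrm{res}(L,L)=\mathrm{End}(L)$.
\begin{itemize}
\item Surjectivity fails. For $c=\mathrm{id}_L$ you would need $(0,1)^{[p]}=(a,0)$ with $\ad(a)|_L=(\ad(0,1))^p|_L=-c^p=-\mathrm{id}_L$. This is impossible, since $\ad(a)|_L=0$.
\item Injectivity fails. For $c=0$, each map $(g,\lambda)\mapsto(\lambda^p a,0)$ with $a\in L$ is an admissible $p$-map on the abelian algebra $L'$. An equivalence has the form $(g,\lambda)\mapsto(g+\lambda x,\lambda)$. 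It sends $(0,1)^{[p]}=(a,0)$ to $(a,0)$, while $(x,1)^{[p]'}=(a',0)$. So distinct $a$ give inequivalent extensions lying over the same (zero) class.
\end{itemize}
Your proposed remedy cannot close this. The identity $\rho(x)^p=\rho(x^{[p]})$ of a restricted $L$-module only involves $x\in L$. It imposes exactly the restricted-derivation condition on $c$ and says nothing about a $p$-th power of the new element $(0,1)$. Moreover, the ``choice of $a$'' you mention is precisely the freedom that destroys injectivity.

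\textbf{What the statement means.} The paper gives no proof; it records the statement, cited from Evans, as the preceding corollary specialised to $M=L$ with the adjoint restricted module structure coming from $\ad:L\to\mathrm{Der}_\mathrm{res.}(L)$. So the extension should be read as follows: $L'=L\oplus\mathbf{K}$ carries the bracket determined by $c$, and $0\to L\to L'\to\mathbf{K}\to0$ is an exact sequence of restricted $L$-modules. No $p$-map on $L'$ is involved.

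With that reading, your first two steps are essentially the whole proof:
\begin{itemize}
\item The formula $\ad(x,0)(g,\lambda)=([x,g]+\lambda c(x),0)$ defines an $L$-module structure if and only if $c$ is a derivation.
\item One has $\ad(x,0)^p(0,1)=((\ad x)^{p-1}c(x),0)$ and $\ad(x^{[p]},0)(0,1)=(c(x^{[p]}),0)$. These agree if and only if $\tilde c=0$. This is where the restricted condition comes from, rather than from Jacobson's formula applied to $(g,0)+(0,\lambda)$.
\item An equivalence $(g,\lambda)\mapsto(g+\lambda v,\lambda)$ changes $c$ by $d^0_\mathrm{res}(v)$.
\end{itemize}
Hence equivalence classes correspond to $\mathrm{Der}_\mathrm{res.}(L)/\ad(L)=H^1_\mathrm{res}(L,L)$.
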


As in the non-restricted case, $H_\mathrm{res}^2(L,M) $ has interpretations with extensions of restricted Lie algebras. We say that a restricted Lie algebra $A$ is strongly abelian if in addition to $[A,A]=0$, we also have $A^{[p]}=0$.

\begin{defi}\cite{E96}
    If $L$ is a restricted Lie algebra and $A$ is a strongly abelian restricted Lie algebra, then a restricted extension of $L$ by $A$ is an exact sequence $$0\to A\to E\to L\to 0 $$ of restricted Lie algebras and their morphisms. We say that the extension is central if $A $ is in the center of $E$.
\end{defi} 

\begin{thm}\cite{E96}
Let $L$ be a restricted Lie algebra and $A$ be a strongly abelian Lie algebra regarded as a trivial $L$-module. The set of equivalence classes of restricted central extensions of $L$ by $A$ is in one-to-one correspondence with $H_\mathrm{res}^2(L,A) $. 
\end{thm}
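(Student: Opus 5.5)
We prove the statement by constructing mutually inverse maps between $Z^2_\mathrm{res}(L,A)$ and restricted central extensions split as vector spaces, and then checking that coboundaries correspond exactly to equivalences. Given $(\varphi,\omega)\in C^2_\mathrm{res}(L,A)$, put on $E=A\oplus L$ the bracket
$$[(a_1,x_1),(a_2,x_2)]=(\varphi(x_1,x_2),[x_1,x_2])$$
and the $p$-map
$$(a,x)^{[p]}=(\omega(x),x^{[p]}).$$
Since $A$ is a trivial module, $d^2\varphi$ reduces to the cyclic sum of $\varphi([\cdot,\cdot],\cdot)$. Hence the Jacobi identity for $E$ is equivalent to $d^2\varphi=0$, exactly as in the ordinary central extension case. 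Strong abelianness ($a^{[p]}=0$, $[A,A]=0$) is built in: the $A$-component contributes nothing to the $p$-map. This is forced, since $A$ is central, so $(a+x)^{[p]}=a^{[p]}+x^{[p]}$ by Jacobson's formula.

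Next, verify that $E$ is a restricted Lie algebra. Jacobson's axioms are $(\lambda u)^{[p]}=\lambda^pu^{[p]}$, $\ad(u^{[p]})=(\ad u)^p$, and the sum formula $(u+v)^{[p]}=u^{[p]}+v^{[p]}+\sum s_i(u,v)$. Semilinearity is condition 1 of the $\ast$-property. The sum formula for $(0,g)+(0,h)$ has $L$-component given by the formula in $L$. Its $A$-component consists of iterated brackets in $E$, whose only $A$-part comes from the last bracket, namely $\varphi([g_1,\dots],g_k)$. Since $A$ is trivial, the terms $g_p\cdots g_{p-k+1}\varphi$ in the $\ast$-property vanish except for $k=0$. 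Condition 2 then becomes exactly the required additivity. The key step here is matching the combinatorial coefficients $\frac{1}{\#(g)}$ with $s_i$; I would take this identification from Evans' derivation of the $\ast$-property, which was designed for precisely this purpose. The axiom $\ad((a,x)^{[p]})=(\ad(a,x))^p$ has $L$-part true in $L$, and its $A$-part reads
$$\varphi(y,x^{[p]})=\varphi([y,x,\dots,x],x).$$
With trivial action and $\omega$ multiplied by a zero action, this is precisely the vanishing of the $\beta$-component of $d^2_\mathrm{res}(\varphi,\omega)$. So $(\varphi,\omega)$ is a restricted cocycle if and only if $E$ is a restricted Lie algebra. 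The inclusion $A\to E$ and projection $E\to L$ are clearly restricted morphisms, and $A$ is central.

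Conversely, given a restricted central extension $0\to A\to E\xrightarrow{\pi} L\to 0$, choose a linear section $s$. Define
$$\varphi(x,y)=[s x,s y]-s[x,y],\qquad \omega(x)=(s x)^{[p]}-s(x^{[p]}).$$
Both lie in $A$ because $\pi$ is a restricted morphism. Running the previous computations backwards shows that $\varphi$ is skew, that $\omega$ has the $\ast$-property with respect to $\varphi$, and that $d^2_\mathrm{res}(\varphi,\omega)=0$. The map $(a,x)\mapsto a+s x$ then identifies $E$ with the model built from $(\varphi,\omega)$.

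Finally, consider changing the section to $s'=s+f$ with $f\in C^1(L,A)$. The bracket cocycle changes by $-f([x,y])=d^1f(x,y)$ (up to sign, since the action is trivial). The $p$-map cochain changes by
$$(s x+f x)^{[p]}-(s x)^{[p]}-f(x^{[p]})=-f(x^{[p]}),$$
because $f x$ is central with $(f x)^{[p]}=0$. This equals $-\tilde f(x)$, since $x^{p-1}f(x)=0$ for the trivial action. So the change is exactly $\pm d^1_\mathrm{res}f$. An equivalence of extensions must have the form $(a,x)\mapsto(a+f(x),x)$, so equivalent extensions correspond to cohomologous cocycles and vice versa. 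Both constructions are independent of the choices made, and they are mutually inverse, which gives the bijection with $H^2_\mathrm{res}(L,A)$. The main obstacle is the sum-formula verification in the second paragraph. I would handle it by expanding $(sg+sh)^{[p]}$ in $E$ via Jacobson's formula and observing that each iterated commutator equals $s$ of the $L$-commutator plus $\varphi$ applied to the last bracket.
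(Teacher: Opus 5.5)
Your proof is correct. For this statement the paper gives no proof of its own and only cites Evans, but your argument is the Lie-algebra counterpart of the section-based proof the paper carries out for restricted Lie triple systems in its last subsection: the cocycle $(f,w)$ with $w(x)=l(x)^{[p]}-l(x^{[p]})$, a change of section producing $\tilde g$, and the central model with $p$-map $(w(x),x^{[p]})$.

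One sign slip to fix. For a trivial module $d^1f(x,y)=f([x,y])$, so replacing $s$ by $s+f$ changes $\varphi$ by $-d^1f$, not $+d^1f$. Both components then change by the same sign, and the difference is exactly $-d^1_{\mathrm{res}}f$; this matching of signs is what makes the difference a restricted coboundary.
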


\begin{defi}\cite{E96}
    A restricted infinitesimal deformation of a restricted Lie algebra $L$ is a skew-symmetric bilinear map $\varphi:L\times L\to L$ together with a map $\omega:L\to L$ such that for all $t\in \mathbf{K}$,  the maps $$(g,h)\mapsto [g,h]_t= [g,h]+\varphi(g,h)t $$ $$g\mapsto g^{[p]_t}= g^{[p]}+\omega(g)t $$ give the vector space $L$ a restricted $L$-module structure (mod $t^2$).
\end{defi} 

Equivalently, a restricted infinitesimal deformation is a restricted Lie algebra structure on the tensor product $(\mathbf{K}[t]/(t^2))\otimes_\mathbf{K}L $.

\begin{thm}\cite{E96}
    The equivalence classes of restricted infinitesimal deformations of a restricted Lie algebra $L$ coincide with elements of $H_\mathrm{res}^2(L,L)$.
\end{thm}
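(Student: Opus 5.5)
We will construct a bijection between equivalence classes of restricted infinitesimal deformations and $H^2_\mathrm{res}(L,L)$, in three steps.

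\textbf{Step 1: deformations are cocycles.} Given a restricted infinitesimal deformation $(\varphi,\omega)$, we show $(\varphi,\omega)\in Z^2_\mathrm{res}(L,L)$. Work in $L_t=(\mathbf{K}[t]/(t^2))\otimes_\mathbf{K}L$. The classical argument recalled in Section \ref{s2} applies: the Jacobi identity for $[\cdot,\cdot]_t$, read modulo $t^2$, is equivalent to $d^2\varphi=0$.

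The restricted axioms are used as follows.
\begin{itemize}
\item The axiom $(\lambda g)^{[p]_t}=\lambda^p g^{[p]_t}$ gives the $p$-semilinearity of $\omega$.
\item Jacobson's formula $(g+h)^{[p]_t}=g^{[p]_t}+h^{[p]_t}+\sum s_i(g,h)_t$ is expanded to first order in $t$. The $t^0$ part is the identity in $L$. The $t^1$ part says that $\omega(g+h)-\omega(g)-\omega(h)$ equals the derivative in $t$ of the Lie words $s_i$. Differentiating a nested bracket $[g_1,\ldots,g_p]_t$ produces, by the Leibniz rule, the sum $\sum_k(-1)^k g_p\cdots g_{p-k+1}\varphi([g_1,\ldots,g_{p-k-1}],g_{p-k})$ appearing in the $\ast$-property, where $g$ acts by $\ad$. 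The combinatorial factor $1/\#(g)$ comes from writing $s_i$ through $\ad$ words, as in Jacobson's expression $s_i=\frac{1}{i}\times$(coefficient).
\item The axiom $[g,h^{[p]_t}]_t=(\ad_t h)^p g$ is expanded to first order. The left side gives $\varphi(g,h^{[p]})+[g,\omega(h)]$. The right side gives $\sum_{i+j=p-1}(\ad h)^i\varphi((\ad h)^j g,h)$ with appropriate signs. Their difference is precisely the second component $\beta$ of $d^2_\mathrm{res}(\varphi,\omega)$, so this axiom says $\beta=0$.
\end{itemize}
Hence $d^2_\mathrm{res}(\varphi,\omega)=0$. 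Conversely, every $(\varphi,\omega)\in Z^2_\mathrm{res}(L,L)$ defines a restricted structure modulo $t^2$, by reading the same computations backwards.

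\textbf{Step 2: equivalent deformations are cohomologous.} Two deformations are equivalent when there is a $\mathbf{K}[t]/(t^2)$-linear isomorphism $\Phi_t=\mathrm{id}+tf$ intertwining both the brackets and the $p$-maps. Intertwining the brackets gives $\varphi'-\varphi=d^1f$, as in the Lie algebra case. Intertwining the $p$-maps, $\Phi_t(g^{[p]_t})=(\Phi_t g)^{[p]_t'}$, is expanded using Jacobson's formula for $(g+tf(g))^{[p]}$ in $L_t$. Since $t^2=0$, only the terms linear in $tf(g)$ survive, namely $(\ad g)^{p-1}(tf(g))$ up to sign. This gives $\omega'-\omega=f(g^{[p]})-(\ad g)^{p-1}f(g)=\tilde f(g)$. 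Therefore $(\varphi',\omega')-(\varphi,\omega)=d^1_\mathrm{res}f$. Every step reverses, so cohomologous cocycles give equivalent deformations, and the map is a bijection.

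\textbf{Main obstacle.} The delicate point is matching the $t$-linear part of Jacobson's formula with the exact combinatorial expression in the $\ast$-property, including the factors $1/\#(g)$ and the signs. For this we would use the identity $\sum_i i\,s_i(g,h)\lambda^{i-1}=\mathrm{ad}(\lambda g+h)^{p-1}(g)$. Differentiating it in $t$ expresses $\partial_t s_i$ through the Leibniz expansion of $\ad_t^{p-1}$. We would then regroup the result into the summation over words $g_j\in\{g,h\}$ with $g_1=g$, $g_2=h$. The analogous check for $\beta$ is easier, since no symmetrization is needed there.
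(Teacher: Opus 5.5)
Your argument is correct, and it is the Lie-algebra counterpart of what the paper does for restricted Lie triple systems in its subsection on formal deformations; for the present statement the paper gives no proof and only cites Evans. As there, reading off the $t$-coefficients of Jacobson's sum formula, of the axiom for $h^{[p]_t}$, and of $\Phi_t(g^{[p]_t})=(\Phi_t g)^{[p]_t'}$ (where only the term linear in $tf(g)$ survives) gives, respectively, the $\ast$-property, $\beta=0$, and $\omega'-\omega=\tilde f$. Three minor fixes: the axiom should read $[g,h^{[p]_t}]_t=[g,h,\ldots,h]_t=-(\ad_t h)^p g$ (your sign is off, although the $\beta$ you extract is the right one); $\omega$ is $p$-homogeneous rather than $p$-semilinear; and your ``main obstacle'' disappears, because $\sum_i s_i=\sum\frac{1}{\#(g)}[g_1,\ldots,g_p]$ is a Lie-polynomial identity valid verbatim in $L_t$, so its $t$-coefficient is literally the $\ast$-property.
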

 
\section{Cohomology of Lie triple systems}\label{s4}

\subsection{Definition}

In this section, we recall the definition of Lie triple systems, their representations and the cohomology defined by Yamaguti in \cite{Y60}.
In \cite{H60}, Harris defined the cohomology of Lie triple systems by identifying them with the $-1$-eigenspace of involutive automorphism of Lie algebras. Then, he gave interpretations of the low-degree cohomology groups. Yamaguti defined, in a different point of view, the cohomology of Lie triple systems by providing  an explicit complex. Then, he gave an interpretation of low-degree cohomology groups as well. These two points of view are not equivalent in general; a module in Harris's sense does not give rise in general to a module in Yamaguti's sense.

\begin{defi}
    A Lie triple system is a pair  $(T,[\cdot,\cdot,\cdot]) $ consisting of a vector space $T$ and a trilinear map $[\cdot,\cdot,\cdot]:T\times T\times T\to T $ such that, for all $u,v,x,y,z\in T$, the following identities are satisfied: \begin{enumerate}
        \item  $[x,y,z]= -[y,x,z] $,
        \item  $[x,y,z]+ [y,z,x]+[z,x,y]=0 $,
        \item $[u,v,[x,y,z]]=[[u,v,x],y,z]+ [x,[u,v,y],z] + [x,y,[u,v,z]] $. 
    \end{enumerate} 
\end{defi}

\begin{ex}
     Let $L$ be a Lie algebra. Then any subspace $T$ of $L$ closed under the operation $(x,y,z) \mapsto[[x,y],z] $, for all $x,y,z \in T $, is a Lie triple system denoted by $L_\mathrm{trip} $.
\end{ex}

\begin{defi}
     Let $T$ be a Lie triple system. The center of $T$ is defined by \begin{center}
        $Z(T)=\left\{x\in T \mid [x,y,z]=0, \forall y,z\in T \right\}  $.
    \end{center}
\end{defi}

\begin{defi}\cite{Y60}
    Let $T$ be a Lie triple system, $V$ be a $\mathbf{K}$-vector space and $\theta:T\times T\to \mathrm{End}(V) $ be a bilinear map. A pair $(V,\theta)$ is called a representation of $T$, if for all $x,y,z,t\in T$, the following identities hold:
           \begin{enumerate}
           \item 
               \begin{equation}\label{rep1}
                   \theta(z,t)\theta(x,y)-\theta(y,t)\theta(x,z)-\theta(x,[y,z,t]) + D(y,z)\theta(x,t)=0,
               \end{equation}
               \item \begin{equation}\label{rep2}
                   \theta(z,t)D(x,y)-D(x,y)\theta(z,t)+\theta([x,y,z],t)+ \theta(z,[x,y,t])=0,
               \end{equation}
          \end{enumerate} where $D(x,y)=\theta(y,x)-\theta(x,y) $.
\end{defi}

\begin{ex}
    The pair $(T,\theta)$, where $\theta(x,y)(z)=[z,x,y] $, is a representation, called the adjoint representation.
\end{ex}

Here is a computational result which will be used in the next two sections.

\begin{prop}\label{p5.14}\cite{BM26}
    Let $T$ be a Lie triple system. Let $x,y\in T $ and $\theta$ be a representation of $T$. Then we have \begin{align*}
      &  \theta(y,(x,y,\dots,y))= \sum\limits_{k=0}^n\tbinom{2n}{2k}\theta(y,y)^{n-k}\theta(y,x)\theta(y,y)^k - \sum\limits_{k=0}^{n-1}\tbinom{2n}{2k+1}\theta(y,y)^k\theta(x,y)\theta(y,y)^{n-k} ,
    \end{align*} and \begin{align*}
       & \theta((x,y,\dots,y),y)= \sum\limits_{k=0}^n\tbinom{2n}{2k}\theta(y,y)^k\theta(x,y)\theta(y,y)^{n-k} - \sum\limits_{k=0}^{n-1}\tbinom{2n}{2k+1}\theta(y,y)^{n-k}\theta(y,x)\theta(y,y)^k ,
    \end{align*} where $y$ occurs $2n$ times in the brackets.
\end{prop}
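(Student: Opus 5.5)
We prove both identities together by induction on $n$. Here $(x,y,\dots,y)$ means the iterated bracket $[\cdots[[x,y,y],y,y]\cdots,y,y]$ with $y$ occurring $2n$ times. Write $a_n=(x,y,\dots,y)$, so that $a_{n+1}=[a_n,y,y]$. Write also $A=\theta(y,y)$, $B=\theta(y,x)$, $C=\theta(x,y)$. For $n=0$ both formulas reduce to $\theta(y,x)=\theta(y,x)$ and $\theta(x,y)=\theta(x,y)$, the subtracted sums being empty.

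\textbf{Recursions.} The first task is to express $\theta(y,a_{n+1})$ and $\theta(a_{n+1},y)$ through $\theta(y,a_n)$, $\theta(a_n,y)$ and $A$.
\begin{itemize}
\item Apply \eqref{rep1} with $(x,y,z,t)\mapsto(y,a_n,y,y)$. Since $[a_n,y,y]=a_{n+1}$, this gives $\theta(y,a_{n+1})=A\,\theta(y,a_n)-\theta(a_n,y)A+D(a_n,y)A$.
\item Substituting $D(a_n,y)=\theta(y,a_n)-\theta(a_n,y)$ yields
\[
\theta(y,a_{n+1})=A\,\theta(y,a_n)+\theta(y,a_n)A-2\,\theta(a_n,y)A .
\]
\item For the other slot, apply \eqref{rep2} with $(x,y,z,t)\mapsto(a_n,y,y,y)$. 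Using $[a_n,y,y]=a_{n+1}$ and $[a_n,y,y]$ again in the last slot, we get $\theta(a_{n+1},y)+\theta(y,a_{n+1})=D(a_n,y)A-A\,D(a_n,y)$.
\item Combining this with the previous line expresses $\theta(a_{n+1},y)$ as a linear combination of $A\theta(y,a_n)$, $\theta(y,a_n)A$, $A\theta(a_n,y)$ and $\theta(a_n,y)A$.
\end{itemize}
Before trusting the exact coefficients, I will check the $n=0\to1$ step against the claimed formulas. The claimed values are $\theta(y,a_1)=AB+BA-2CA$ and $\theta(a_1,y)=CA+AC-2AB$, which is consistent with the first recursion.

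\textbf{Inductive step.} Assume $\theta(y,a_n)=\sum_k\binom{2n}{2k}A^{n-k}BA^k-\sum_k\binom{2n}{2k+1}A^kCA^{n-k}$, and similarly for $\theta(a_n,y)$. Substitute into the two recursions and collect coefficients of the monomials $A^{n+1-k}BA^k$ and $A^kCA^{n+1-k}$. The collected coefficients should combine by Pascal's rule, applied twice ($\binom{2n}{j-2}+2\binom{2n}{j-1}+\binom{2n}{j}=\binom{2n+2}{j}$), into the coefficients $\binom{2n+2}{\cdot}$. The two formulas are exchanged by the symmetry $B\leftrightarrow C$ together with reversing the order of products, which halves the bookkeeping.

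\textbf{Main obstacle.} The main difficulty is getting the exact recursions, with their signs and the factor $2$, from \eqref{rep1}–\eqref{rep2}. In particular, the second relation mixes both slots, so the two sequences must be handled simultaneously rather than one at a time. If the coefficient matching fails, I would instead derive the recursion for the pair by the reverse route, applying \eqref{rep1} with $(x,y,z,t)\mapsto(y,y,a_n,y)$ or using skew-symmetry $[a_n,y,y]=-[y,a_n,y]$, and recheck with the $n=1$ case.
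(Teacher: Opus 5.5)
Your proof is correct as it stands, and the fallback route you mention is not needed. This paper does not prove the statement; it quotes it from \cite{BM26}, so there is no in-paper argument to compare with. Your induction on the bracket length through \eqref{rep1} and \eqref{rep2} works the same way as the paper's own analogous computations (Lemma~\ref{lemcalc}, Proposition~\ref{prop6.18}).

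The step you left implicit closes cleanly. Write $P_n=\theta(y,a_n)$ and $Q_n=\theta(a_n,y)$. Your two relations give
\[
P_{n+1}=AP_n+P_nA-2Q_nA,\qquad Q_{n+1}=AQ_n+Q_nA-2AP_n .
\]
In the second recursion the terms $P_nA$ cancel.

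In $P_{n+1}$, the coefficient of $A^{n+1-k}BA^k$ is $\binom{2n}{2k}+2\binom{2n}{2k-1}+\binom{2n}{2k-2}=\binom{2n+2}{2k}$. The coefficient of $A^kCA^{n+1-k}$ is $-\bigl(\binom{2n}{2k-1}+2\binom{2n}{2k}+\binom{2n}{2k+1}\bigr)=-\binom{2n+2}{2k+1}$. The boundary cases are handled automatically by vanishing binomial coefficients.

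Consider the map on the free algebra generated by $A,B,C$ that reverses products and swaps $B\leftrightarrow C$. It is an involutive anti-automorphism, and it sends the claimed formula for $P_n$ to the claimed formula for $Q_n$. It also sends the recursion for $P_{n+1}$ to the recursion for $Q_{n+1}$. So the second identity does follow from the first computation, as you said.
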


\begin{prop}\label{p5.16}\cite{BM26}
    Let $T$ be a Lie triple system, $x,y,z\in T $. Then we have \begin{align*}
        \theta((x,y,\dots,y),z)= &\theta(y,z)\sum\limits_{k=1}^n \tbinom{2n}{2k}\theta(y,y)^{k-1}\theta(x,y)\theta(y,y)^{n-k} \\ &- \theta(y,z)\sum\limits_{k=0}^{n-1}\tbinom{2n}{2k+1}\theta(y,y)^{n-k-1}\theta(y,x)\theta(y,y)^k + \theta(x,z)\theta(y,y)^n ,
    \end{align*} where $y$ occurs $2n$ times in the bracket.
\end{prop}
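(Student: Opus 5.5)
We argue by induction on $n$. Write $x_0=x$ and $x_k=[x_{k-1},y,y]$, so that $(x,y,\dots,y)$ with $2n$ copies of $y$ is $x_n$. (I am assuming the bracket with $2n$ entries means this left-normed iteration; this should be checked against the notation of \cite{BM26}.) For $n=0$ the formula is trivial.

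\textbf{Basic identity.} The main step is a formula for $\theta([a,y,y],z)$ valid for an arbitrary $a\in T$. First apply identity \eqref{rep2} with $(x,y,z,t)\mapsto(a,y,y,z)$:
\[
\theta([a,y,y],z)=D(a,y)\theta(y,z)-\theta(y,z)D(a,y)-\theta(y,[a,y,z]).
\]
Next, rewrite the last term with identity \eqref{rep1}, taking $(x,y,z,t)\mapsto(y,a,y,z)$:
\[
\theta(y,[a,y,z])=\theta(y,z)\theta(y,a)-\theta(a,z)\theta(y,y)+D(a,y)\theta(y,z).
\]
Substituting, the terms $D(a,y)\theta(y,z)$ cancel. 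Using $D(a,y)=\theta(y,a)-\theta(a,y)$ we get
\[
\theta([a,y,y],z)=\theta(y,z)\bigl(\theta(a,y)-2\theta(y,a)\bigr)+\theta(a,z)\theta(y,y).
\]
This is exactly the claimed formula for $n=1$.

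\textbf{Inductive step.} Apply the basic identity with $a=x_{n-1}$. This gives
\[
\theta(x_n,z)=\theta(y,z)\bigl(\theta(x_{n-1},y)-2\theta(y,x_{n-1})\bigr)+\theta(x_{n-1},z)\theta(y,y).
\]
Now substitute two known expressions:
\begin{itemize}
\item for $\theta(x_{n-1},y)$ and $\theta(y,x_{n-1})$, the two formulas of Proposition \ref{p5.14} at level $n-1$;
\item for $\theta(x_{n-1},z)$, the induction hypothesis.
\end{itemize}
The induction hypothesis contributes $\theta(x,z)\theta(y,y)^{n-1}\cdot\theta(y,y)=\theta(x,z)\theta(y,y)^n$. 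It also contributes $\theta(y,z)$ times the level-$(n-1)$ sums, right-multiplied by $\theta(y,y)$. Every remaining term has $\theta(y,z)$ as its left factor. Hence it only remains to compare the coefficients of the monomials $\theta(y,y)^{k-1}\theta(x,y)\theta(y,y)^{n-k}$ and $\theta(y,y)^{n-k-1}\theta(y,x)\theta(y,y)^{k}$.

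\textbf{Main obstacle: the coefficient comparison.} The $\theta(y,y)$-shifted terms from the induction hypothesis carry coefficients $\binom{2n-2}{2k}$ and $\binom{2n-2}{2k+1}$. The terms from Proposition \ref{p5.14} carry coefficients $\binom{2n-2}{2k}$ and $\binom{2n-2}{2k+1}$, multiplied by $1$ or $-2$. They must add up to $\binom{2n}{2k}$ and $\binom{2n}{2k+1}$. I would verify this using Pascal's rule applied twice, $\binom{2n}{m}=\binom{2n-2}{m}+2\binom{2n-2}{m-1}+\binom{2n-2}{m-2}$, while carefully tracking the index shifts (including the boundary terms $k=0$ and $k=n$).

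If the coefficients fail to line up directly, the fallback is to first rewrite products such as $\theta(y,y)^{j}\theta(x,y)\theta(y,y)^{i}$ appearing after $\theta(y,z)$ using \eqref{rep1} with repeated arguments. However, I expect the bare binomial bookkeeping to suffice, since the case $n=1$ already matches exactly.
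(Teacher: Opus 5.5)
Your proof is correct. The paper quotes this statement from \cite{BM26} without proof, so there is no in-text argument to compare against. Your route is the natural one: first derive
\[
\theta([a,y,y],z)=\theta(y,z)\bigl(\theta(a,y)-2\theta(y,a)\bigr)+\theta(a,z)\theta(y,y)
\]
from \eqref{rep2} and \eqref{rep1}, then induct on $n$ using Proposition~\ref{p5.14}. Both substitutions you wrote check out, and your reading of $(x,y,\dots,y)$ as the left-normed iteration $x_n$ matches the paper's definition.

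The coefficient comparison you left pending closes directly, so the fallback is unnecessary. Set $m=n-1$ and take binomial coefficients with out-of-range lower index to be zero.

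For the monomial $\theta(y,z)\theta(y,y)^{j}\theta(x,y)\theta(y,y)^{m-j}$, with $0\le j\le m$, the three contributions are:
\begin{itemize}
\item $\binom{2m}{2j}$ from $\theta(x_m,y)$;
\item $2\binom{2m}{2j+1}$ from $-2\theta(y,x_m)$, where the minus sign of the $\theta(x,y)$-terms in Proposition~\ref{p5.14} flips the $-2$;
\item $\binom{2m}{2j+2}$ from the induction hypothesis multiplied on the right by $\theta(y,y)$.
\end{itemize}
These sum to $\binom{2n}{2j+2}$, as required.

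For the monomial $\theta(y,z)\theta(y,y)^{m-j}\theta(y,x)\theta(y,y)^{j}$, the three contributions sum to $-\bigl(\binom{2m}{2j+1}+2\binom{2m}{2j}+\binom{2m}{2j-1}\bigr)=-\binom{2n}{2j+1}$, as required.

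Both identities are the double Pascal rule $\binom{N+2}{r}=\binom{N}{r}+2\binom{N}{r-1}+\binom{N}{r-2}$, exactly as you anticipated. The boundary cases $j=0$ and $j=m$ are absorbed by the zero convention. Since no division occurs, the identity holds in every characteristic.
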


\begin{prop}
    Let $(V,\theta)$ be a representation of a Lie triple system $T$. Consider the operation $[\cdot,\cdot,\cdot]_V: (T\oplus V)\times (T\oplus V)\times (T\oplus V)\to T\oplus V  $ defined by \begin{center}
        $[(x,u),(y,v),(z,w)]_V=([x,y,z], \theta(y,z)(u)-\theta(x,z)(v)+ D(x,y)(w)) $.
    \end{center} Then, $T\oplus V$ is a Lie triple system, called the semi-direct product of $T$ by the representation $(V,\theta)$.
\end{prop}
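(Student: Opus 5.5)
We verify the three Lie triple system axioms for $[\cdot,\cdot,\cdot]_V$ directly. The $T$-component of every bracket is $[x,y,z]$, and $T$ is a Lie triple system, so in each identity only the $V$-components need checking. By trilinearity we may treat the three kinds of terms separately. A convenient way to do this is to let exactly one argument lie in $V$ and the rest in $T$. Any term containing two elements of $V$ vanishes, since the $V$-component is linear in $(u,v,w)$ with coefficients depending only on $T$-entries, and brackets landing in $V$ have zero $T$-part.

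\emph{Skew-symmetry.} Swapping $(x,u)$ and $(y,v)$ turns the $V$-part into $\theta(x,z)v-\theta(y,z)u+D(y,x)w$. Since $D(y,x)=-D(x,y)$ by definition of $D$, this is the negative of the original $V$-part.

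\emph{Cyclic identity.} Take $u$ as the only nonzero $V$-entry, placed in each slot in turn. Collecting the coefficient of $u$ in the cyclic sum gives
\[
\theta(y,z)u+D(y,z)u-\theta(z,y)u .
\]
This vanishes because $D(y,z)=\theta(z,y)-\theta(y,z)$. The other placements follow by symmetry.

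\emph{Fundamental identity.} This is the main step. Writing $[u,v,[x,y,z]]$ with one $V$-entry, there are five positions for it. By the symmetry of the identity these reduce to three cases.

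In the first case the $V$-entry sits in the innermost slot $z$, i.e.\ $(z,w)$. The required identity becomes
\[
D(u,v)D(x,y)w = D([u,v,x],y)w + D(x,[u,v,y])w + D(x,y)D(u,v)w .
\]
This says that $D$ is a derivation-compatible action, $[D(u,v),D(x,y)]=D([u,v,x],y)+D(x,[u,v,y])$. It follows by antisymmetrizing \eqref{rep2} in $z,t$: substitute $(z,t)\mapsto(x,y)$ and $(y,x)$ and subtract.

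In the second case the $V$-entry sits in the slot $x$ or $y$. The identity becomes
\[
D(u,v)\theta(y,z)=\theta(y,z)D(u,v)+\theta([u,v,y],z)+\theta(y,[u,v,z]) ,
\]
which is exactly \eqref{rep2} up to sign.

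In the third case the $V$-entry sits in an outer slot $u$ or $v$. The left side is $\theta(v,[x,y,z])$ applied to the element. The right side expands into terms such as $\theta(y,z)\theta(v,x)$, $\theta(x,z)\theta(v,y)$ and $D(x,y)\theta(v,z)$. These match \eqref{rep1} after renaming, possibly combined with \eqref{rep1} with two variables exchanged to produce the $D(x,y)\theta$ term.

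The main obstacle is this last case. There one must carefully track signs and check that \eqref{rep1}, together with \eqref{rep2}, exactly accounts for all terms. I would first expand it explicitly and then match it with \eqref{rep1} written as
\[
\theta(x,[y,z,t])=\theta(z,t)\theta(x,y)-\theta(y,t)\theta(x,z)+D(y,z)\theta(x,t) ,
\]
with the substitution $(x,y,z,t)\to(v,x,y,z)$.
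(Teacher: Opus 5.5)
Your proof is correct. The paper states this proposition without proof, as a standard fact; it is only used again as the $f=0$ case of the bracket on $\mathcal{M}\times\mathcal{U}$ in the section on abelian extensions. Your componentwise check is the argument being taken for granted: reduce by multilinearity to a single $V$-entry, then settle skew-symmetry and the cyclic identity via $D(y,x)=-D(x,y)$ and $D(y,z)=\theta(z,y)-\theta(y,z)$.

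One simplification: the case you flag as the main obstacle is actually immediate. With the $V$-entry $a$ in the slot $u$, the right-hand side produces exactly $\theta(y,z)\theta(v,x)a$, $-\theta(x,z)\theta(v,y)a$ and $D(x,y)\theta(v,z)a$. So the required identity
\[
\theta(v,[x,y,z])=\theta(y,z)\theta(v,x)-\theta(x,z)\theta(v,y)+D(x,y)\theta(v,z)
\]
is verbatim \eqref{rep1} under your substitution $(x,y,z,t)\mapsto(v,x,y,z)$. Neither \eqref{rep2} nor an exchange of variables is needed there. The slot $v$ gives the same identity with $v$ renamed $u$.

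The only place where an identity must be derived rather than read off is your first case, the relation $[D(u,v),D(x,y)]=D([u,v,x],y)+D(x,[u,v,y])$. Your antisymmetrization of \eqref{rep2} in its last two arguments handles it correctly.
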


Yamaguti defined $n$-cochains and coboundary maps for all $n\in \mathbf{N}$.

Let $(V,\theta) $ be a representation of a Lie triple system $T$. For $n\geq 1$, denote by $C^{n}(T,V) $ the vector space of all $n$-linear maps $\omega:T\times \cdots\times T\to V$ satisfying $$\omega(x_1,x_2,\ldots,x_{n-3},x,x,y)=0 $$ and \begin{equation}\label{eqcycl}
    \omega(x_1,x_2,\ldots,x_{n-3},x,y,z)+\omega(x_1,x_2,\ldots,x_{n-3},y,z,x)+\omega(x_1,x_2,\ldots,x_{n-3},z,x,y)=0. 
\end{equation} The Yamaguti coboundary operator $\delta^{n}:C^{n}(T,V)\to C^{n+2}(T,V) $ is defined as follows: If $\omega\in C^0(T,V)=V$,
\begin{align*}
    \delta^0\omega(x_1,x_2)= \theta(x_1,x_2)(\omega).
\end{align*} If $\omega\in C^{2n-1}(T,V) $,
\begin{align*}
    \delta^{2n-1}\omega(x_1,&\ldots,x_{2n+1})\\=& \theta(x_{2n},x_{2n+1})\omega(x_1,\ldots,x_{2n-1}) - \theta(x_{2n-1},x_{2n+1})\omega(x_1,\ldots,x_{2n-2},x_{2n}) \\& + \sum\limits_{k=1}^n(-1)^{n+k}D(x_{2k-1},x_{2k})\omega(x_1,\ldots,\widehat{x_{2k-1}},\widehat{x_{2k}},\ldots,x_{2n+1}) \\ &+ \sum\limits_{k=1}^n\sum\limits_{j=2k+1}^{2n+1}(-1)^{n+k+1}\omega(x_1,x_2,\ldots,\widehat{x_{2k-1}},\widehat{x_{2k}},\ldots,[x_{2k-1},x_{2k},x_j],\ldots,x_{2n+1}).
\end{align*}  If $\omega\in C^{2n}(T,V) $,
 \begin{align*}
    \delta^{2n}\omega(y,x_1,x_2&,\ldots,x_{2n+1})\\ =& \theta(x_{2n},x_{2n+1})\omega(y,x_1,\ldots,x_{2n-1}) - \theta(x_{2n-1},x_{2n+1})\omega(y,x_1,\ldots,x_{2n-2},x_{2n}) \\& + \sum\limits_{k=1}^n(-1)^{n+k}D(x_{2k-1},x_{2k})\omega(y,x_1,\ldots,\widehat{x_{2k-1}},\widehat{x_{2k}},\ldots,x_{2n+1}) \\ &+ \sum\limits_{k=1}^n\sum\limits_{j=2k+1}^{2n+1}(-1)^{n+k+1}\omega(y,x_1,x_2,\ldots,\widehat{x_{2k-1}},\widehat{x_{2k}},\ldots,[x_{2k-1},x_{2k},x_j],\ldots,x_{2n+1}),
\end{align*}  where $\,\hat{}\,$ denotes the omission. \\ We define $Z^{n}(T,V)= \mathrm{Ker}(\delta^{n}) $ the $n$-cocycles and $B^{n}=\mathrm{Im}(\delta^{n-2}) $ the $n$-coboundaries.

\begin{thm}\cite{Y60}
    Let $T$ be a Lie triple system and $(V,\theta) $ be a representation of $T$. With the above notation, $\delta^{n+2}\circ \delta^{n} =0$ for all $n\in \mathbf{N}$. Hence, we get the cohomology group by $H^{n}(T,V)= Z^{n}(T,V)/B^{n}(T,V) $.
\end{thm}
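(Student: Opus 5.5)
The plan is to prove $\delta^{n+2}\circ\delta^{n}=0$ by realizing Yamaguti's complex as a piece of a Chevalley--Eilenberg complex, which turns the statement into the classical identity $d\circ d=0$ recalled in Section \ref{s2}. Let $\mathcal{G}=T\oplus D(T,T)$ be the standard enveloping Lie algebra of $T$: here $D(T,T)$ is the span of the inner derivations $D(x,y)=[x,y,\cdot]$. It carries the involution $\sigma=-\mathrm{id}$ on $T$ and $\mathrm{id}$ on $D(T,T)$. From a representation $(V,\theta)$ I would build a $\mathcal{G}$-module (or a module over a suitable extension of $\mathcal{G}$) on which $T$ acts through $\theta$ and $D(x,y)$ acts through $D(x,y)=\theta(y,x)-\theta(x,y)$. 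Identities \eqref{rep1} and \eqref{rep2} are exactly what make this action respect the brackets $[D,D]$, $[D,T]$ and $[T,T]$. This is essentially the semi-direct product $T\oplus V$ becoming a Lie triple system. An odd cochain $\omega\in C^{2n-1}(T,V)$ would then be sent to a Lie-algebra cochain, using the skew-symmetry condition and \eqref{eqcycl} to extend $\omega$ consistently when pairs of arguments are contracted through $D(x_{2k-1},x_{2k})$. Under this dictionary $\delta^{2n-1}$ should be the restriction of a two-step operator.

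Since transporting $d\circ d=0$ through this dictionary requires carefully matching signs, I would in practice run a direct verification in parallel. I treat the odd and even cases together, because the even formula is the odd formula with an inert first argument $y$. So it suffices to show $\delta^{2n+1}\delta^{2n-1}\omega=0$ for $\omega\in C^{2n-1}(T,V)$, after checking that $\delta^{2n-1}\omega$ again satisfies the defining conditions of $C^{2n+1}(T,V)$. That membership check follows from skew-symmetry of $[\cdot,\cdot,\cdot]$ and $\theta$-identity \eqref{rep1} applied to the last three slots. In $\delta^{2n+1}\delta^{2n-1}\omega(x_1,\ldots,x_{2n+3})$ I would sort the terms into four types:
\begin{enumerate}
\item terms quadratic in $\theta$/$D$ acting on $\omega$;
\item terms with one operator and one bracket inside $\omega$;
\item terms with a bracket applied to a bracket;
\item terms with a bracket passed to $\theta$ or $D$.
\end{enumerate}

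Type (3) cancels by the fundamental identity (iii) of a Lie triple system, exactly as the Jacobi identity does in the Chevalley--Eilenberg case. The mixed terms of type (2) cancel in pairs, once one notes that $D(x_{2k-1},x_{2k})$ acting as a derivation commutes with moving brackets among the other slots. The pure operator terms of types (1) and (4) combine into expressions of the form
\[
\theta(z,t)\theta(x,y)-\theta(y,t)\theta(x,z)-\theta(x,[y,z,t])+D(y,z)\theta(x,t),
\]
together with $[D(x,y),\theta(z,t)]+\theta([x,y,z],t)+\theta(z,[x,y,t])$ and $[D,D]$ commutators. The first vanishes by \eqref{rep1} and the second by \eqref{rep2}. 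The $[D,D]$ commutators vanish because $[D(a,b),D(c,d)]=D([a,b,c],d)+D(c,[a,b,d])$, which is itself a consequence of \eqref{rep2}.

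The main obstacle is the bookkeeping in the step handling types (1) and (4). This concerns the first two terms of $\delta$, those with $\theta(x_{2n},x_{2n+1})$ and $\theta(x_{2n-1},x_{2n+1})$, which are not symmetric with the $D$-terms. Applying $\delta$ twice produces $\theta\theta$ products whose last arguments are permuted among $x_{2n+1},x_{2n+2},x_{2n+3}$. Grouping these so that \eqref{rep1} applies requires using the cyclic condition \eqref{eqcycl} on $\omega$ in its last three variables. I would first check the cases $n=0$ and $n=1$ explicitly to fix signs, then do induction on $n$. For the inductive step I would isolate the pairs $(x_{2k-1},x_{2k})$ with $k\le n-1$, which behave like the Chevalley--Eilenberg part and cancel by the Lie-algebra argument, from the last block of three variables, where the genuinely new identities \eqref{rep1}, \eqref{rep2} are used.
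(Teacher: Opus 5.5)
The paper gives no proof of this statement; it is quoted from Yamaguti, so your sketch has to stand on its own. The identities you list are the right ones: the fundamental identity (iii), \eqref{rep1}, \eqref{rep2}, and $[D(a,b),D(c,d)]=D([a,b,c],d)+D(c,[a,b,d])$. However, both mechanisms you propose for organizing the general computation fail as described, so the general-$n$ cancellation is asserted rather than proved.

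\emph{The enveloping-algebra route breaks at the first step.} $\theta$ is bilinear in $T$, so $T$ cannot act linearly on $V$ ``through $\theta$''. Already for the adjoint representation, $\theta(x,y)z=[[z,x],y]$ in $T\oplus D(T,T)$, and $[z,x]$ lies in $D(T,T)$, outside $V=T$. The standard embedding of $T\oplus V$ only gives a graded module with an extra even part. That leads towards Harris' cohomology, which the paper notes is not equivalent to Yamaguti's. Moreover, Yamaguti cochains are not alternating, so $\delta$ is not the restriction of a Chevalley--Eilenberg differential.

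\emph{Your inductive step has the same problem.} You cannot dispose of the pairs $(x_{2k-1},x_{2k})$, $k\le n-1$, ``by the Lie-algebra argument'': the cochains are not skew in these pairs, and the pairs live in $T\otimes T$, not in a Lie algebra.

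\emph{The obstacle you single out is misdiagnosed.} \eqref{eqcycl} plays no role in $\delta\delta=0$. It is only needed for $\delta$ to preserve the cochain spaces, and that check uses the skew-symmetry and cyclic identity of the bracket, not \eqref{rep1}. The $\theta\theta$-terms of $\delta^{2n+1}\delta^{2n-1}\omega$ multiplying $\omega(x_1,\ldots,x_{2n-1})$ add up, with no rearrangement, to the left side of \eqref{rep1} at $(x_{2n},x_{2n+1},x_{2n+2},x_{2n+3})$. For $n=1$, $f\in C^1(T,V)$ satisfies no condition at all.

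The structure that makes the bookkeeping close is Leibniz, not Lie. Let $\mathfrak{L}=T\otimes T$ with $[a\otimes b,c\otimes d]=[a,b,c]\otimes d+c\otimes[a,b,d]$; this is a left Leibniz algebra by (iii). Let $M=\mathrm{Hom}(T,V)$, and for $X=a\otimes b$ and $\phi\in M$ set
\[
(X\cdot\phi)(z)=D(a,b)\phi(z)-\phi([a,b,z]),\qquad (\phi\cdot X)(z)=\delta^1\phi(a,b,z).
\]
Write $\omega\in C^{2n-1}(T,V)$ as $\hat\omega(X_1,\ldots,X_{n-1})\in M$ with $X_k=x_{2k-1}\otimes x_{2k}$. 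Then Yamaguti's formula reads exactly
\begin{align*}
\widehat{\delta\omega}(X_1,\ldots,X_n)={}&\hat\omega(X_1,\ldots,X_{n-1})\cdot X_n+\sum_{k<n}(-1)^{n+k}X_k\cdot\hat\omega(\ldots,\widehat{X_k},\ldots)\\
&+\sum_{k<l\le n}(-1)^{n+k+1}\hat\omega(\ldots,\widehat{X_k},\ldots,[X_k,X_l],\ldots).
\end{align*}
This is $(-1)^n$ times the Loday--Pirashvili coboundary of $\mathfrak L$, with bracket $-[\cdot,\cdot]$, left action $-X\cdot$ and right action $\cdot X$. Hence $\delta\delta=0$ holds on all multilinear maps once three bimodule identities on $M$ are checked:
\begin{enumerate}
\item[(a)] $X\cdot(Y\cdot\phi)-Y\cdot(X\cdot\phi)=[X,Y]\cdot\phi$, which follows from (iii) and the $[D,D]$ relation;
\item[(b)] $X\cdot(\phi\cdot Y)=(X\cdot\phi)\cdot Y+\phi\cdot[X,Y]$, which says that $\delta^1$ commutes with the action of inner derivations and follows from \eqref{rep2} and (iii);
\item[(c)] $(\phi\cdot X-X\cdot\phi)\cdot Y=0$, which holds because $\phi\cdot(a\otimes b)-(a\otimes b)\cdot\phi=\delta^{-1}(b\otimes\phi(a)-a\otimes\phi(b))$ and $\delta^1\delta^{-1}=0$ by \eqref{rep1}.
\end{enumerate}
Your types (2) and (3) are the pairwise cancellations and the Leibniz-identity step in the standard proof that this coboundary squares to zero. (a)--(c) are precisely what your types (1) and (4) must reduce to. With this in place, your inert-variable reduction of the even degrees finishes the argument, using $\delta^0v(y,\cdot)=\delta^{-1}(y\otimes v)$ for the case $\delta^2\delta^0$.
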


In low degrees, the operator $\delta$ is given as follows: 

If $f\in C^1(T,V)$,  \begin{equation}\label{lwo1}
    \delta^1 f(x_1,x_2,x_3)= \theta(x_2,x_3)(f(x_1))- \theta(x_1,x_3)(f(x_2))+D(x_1,x_2)(f(x_3))- f([x_1,x_2,x_3]). 
\end{equation}If $f\in C^2(T,V)$, \begin{align}
    \delta^2 f(x_1,x_2,x_3,x_4)= \theta&(x_3,x_4)f(x_1,x_2)-\theta(x_2,x_4)f(x_1,x_3)+ D(x_2,x_3)f(x_1,x_4)\notag \\&- f(x_1,[x_2,x_3,x_4]).\label{lwo2}
\end{align}  If $f\in C^3(T,V)$, \begin{align}
    \delta^3 f(x_1,x_2,x_3,x_4,x_5)=& \theta(x_4,x_5)f(x_1,x_2,x_3)-\theta(x_3,x_5)f(x_1,x_2,x_4)-D(x_1,x_2)f(x_3,x_4,x_5)\notag\\ &+ D(x_3,x_4)f(x_1,x_2,x_5)+ f([x_1,x_2,x_3],x_4,x_5)+ f(x_3,[x_1,x_2,x_4],x_5)\notag\\ &+f(x_3,x_4,[x_1,x_2,x_5])- f(x_1,x_2,[x_3,x_4,x_5]).
\end{align}

We propose here to extend  Yamaguti's complex in order to define the first cohomology group $H^1(T,V)$ and give an algebraic interpretation of $H^1(T,T) $.

We define $C^{-1}(T,V)= T\otimes V $ and $$\delta^{-1}:C^{-1}(T,V)\to C^1(T,V) ,\delta^{-1}(x\otimes u)=(t\mapsto \theta(x,t)(u)). $$ Then, we have $\delta^1\circ\delta^{-1}=0 $. Indeed, let $x,x_1,x_2,x_3\in T, u\in V$, \begin{align*}
    \delta^1\delta^{-1}(x\otimes u)(x_1,x_2,x_3)=& \theta(x_2,x_3)(\theta(x,x_1)(u))- \theta(x_1,x_3)(\theta(x,x_2)(u))\\&+D(x_1,x_2)(\theta(x,x_3)(u))- \theta(x,[x_1,x_2,x_3])(u) \\ =&0,
\end{align*} since $\theta$ is a representation. Hence, we obtain the following complex $$C^{-1}(T,V)\to C^1(T,V)\to C^3(T,V)\to \cdots \to C^{2n+1}(T,V). $$

\begin{defi}
    Let $T$ be a Lie triple system and $(V,\theta),(W,\theta')$ be two representations of $T$. A morphism of representations is a linear map $\varphi:V\to W $ such that $$\varphi(\theta(x,y)(v))= \theta'(x,y)(\varphi(v)) $$ for all $x,y\in T, v\in V$.
\end{defi}

A natural question is whether the cohomologies induced by two isomorphic representations are isomorphic. Let $(V,\theta)$ and $(W,\theta')$ be two representations of a Lie triple system $T$. Let $f:V\to W$ be an isomorphism, which is a morphism of representations. Then, for all $n\in \mathbf{N}$, there is a natural map $$C^n(T,V)\to C^n(T,W), \,\, w\mapsto f\circ w ,$$ which is an isomorphism. We define $C^{-1}(T,V)\to C^{-1}(T,W), x\otimes v\mapsto x\otimes f(v) $ and it is also an isomorphism. Since $f$ is a morphism of representations, we easily see that the following diagram is commutative \[\begin{tikzcd}
	{C^{n-2}(T,V)} & {C^n(T,V)} & {C^{n+2}(T,V)} \\
	{C^{n-2}(T,W)} & {C^{n}(T,W)} & {C^{n+2}(T,W)}
	\arrow["{\delta^{n-2}}", from=1-1, to=1-2]
	\arrow["\simeq", from=1-1, to=2-1]
	\arrow["{\delta^n}", from=1-2, to=1-3]
	\arrow["\simeq", from=1-2, to=2-2]
	\arrow["\simeq", from=1-3, to=2-3]
	\arrow["{\delta^{n-2}}"', from=2-1, to=2-2]
	\arrow["{\delta^n}"', from=2-2, to=2-3]
\end{tikzcd}\] and  induces an isomorphism $H^n(T,V)\simeq H^n(T,W) $.

\subsection{Connection between  cohomology of a Lie algebras and  cohomology of the induced Lie triple systems}\label{subs4}

Another natural question is about the connection between  Chevalley-Eilenberg cohomology of Lie algebras and  Yamaguti cohomology of Lie triple systems. Let $L$ be a Lie algebra, and let $\rho:L\to \mathrm{End}(M)$ be a representation of $L$. We know that $L_\mathrm{trip} $ is a Lie triple system and that the pair $(M,\theta) $, where $\theta(x,y)(m)=\rho(y)\rho(x)(m) $, is a representation of $L_\mathrm{trip} $. Moreover,  we may define the cochains $C^n(L,M) $ and $C^n(L_\mathrm{trip},M) $. Let $\varphi\in C^2(L,M) $ be a $2$-cochain of $L$. We define $\phi_2{\varphi}:T\times T\times T\to M $ by \begin{equation}\label{eqtilde}
    \phi_2\varphi(x,y,z)=\rho(z)\varphi(x,y)-\varphi([x,y],z).
\end{equation}  Then, we have  for $\varphi\in C^1(L,M)$, \begin{equation}\label{eqcobor1}
    \phi_2d^1\varphi=\delta^1\varphi.
\end{equation}  Thus, the map $\phi_2$ commutes with the coboundary operators $d^1$ and $\delta^1 $. Moreover, let $\varphi\in C^3(L,M) $ be a $3$-cochain of $L$. We define $\phi_3{\varphi}:T^{\otimes 5}\to M $ by $$\phi_3\varphi(x_1,x_2,x_3,x_4,x_5)= \rho(x_5)\varphi([x_1,x_2],x_3,x_4)- \varphi([x_1,x_2],[x_3,x_4],x_5 ). $$ We have  \begin{equation}\label{eqcobor2}
    \delta^3(\phi_2{\varphi})=\phi_3({d^2\varphi}).
\end{equation}  However, for arbitrary $\varphi\in C^2(L,M)$ and $\psi\in C^3(L,M)$, the maps $\phi_2\varphi$ and $\phi_3\psi$ do not necessarily satisfy the cochain conditions of the Yamaguti complex. Nevertheless, we have that $$\phi_2{\varphi}(x,y,z)+\phi_2{\varphi}(y,z,x)+\phi_2{\varphi}(z,x,y)=-d^2\varphi(x,y,z). $$ Hence, if $\varphi\in Z^2(L,M) $ then $\phi_2\varphi\in C^3(L_\mathrm{trip},M) $. Together with \eqref{eqcobor1} and \eqref{eqcobor2} this shows that $\phi_2$ induces a map $Z^2(L,M)\to Z^3(L_\mathrm{trip},M) $. Hence, we obtain the following commutative diagram \[\begin{tikzcd}
	{C^1(L,M)} & {Z^2(L,M)} & {C^3(L,M)} \\
	{C^1(L_\mathrm{trip},M)} & {Z^3(L_\mathrm{trip},M)} & {\mathrm{Hom}(T^{\otimes 5},M)}
	\arrow["{d^1}", from=1-1, to=1-2]
	\arrow["{\mathrm{id}}"', from=1-1, to=2-1]
	\arrow["{d^2}", from=1-2, to=1-3]
	\arrow[from=1-2, to=2-2]
	\arrow[from=1-3, to=2-3]
	\arrow["{\delta^1}"', from=2-1, to=2-2]
	\arrow["{\delta^3}"', from=2-2, to=2-3]
\end{tikzcd}\] Therefore, $\phi_2$ induces a linear map $\overline{\phi_2}:H^2(L,M)\to H^3(L_\mathrm{trip},M),\overline{\varphi}\mapsto\overline{\phi_2{\varphi}} $, which in each class of $\varphi\in H^2(L,M)$ associates the class of $\phi_2{\varphi}$ in $H^3(L_\mathrm{trip},M)$. In general, this map is neither injective nor surjective.

\begin{ex}
    Let $L=\mathbf{K}e_1\oplus\mathbf{K}e_2$ be an abelian Lie algebra and assume that $\mathbf{K} $ is endowed with the trivial $L$-module structure. Since $d^1=0 $, $$H^2(L,\mathbf{K})=\Lambda^2L^*\neq 0. $$ Moreover, $\phi_2(\varphi)=0$ for every $\varphi\in C^2(L,\mathbf{K})$. Hence $H^2(L,\mathbf{K})\to H^3(L_\mathrm{trip},\mathbf{K})$ is not injective. On the other hand, $L_\mathrm{trip}$ is an abelian Lie triple system and the induced representation is trivial. Therefore, $H^3(L_\mathrm{trip},\mathbf{K})=C^3(L_\mathrm{trip},\mathbf{K})\neq 0 $. Since $\overline{\phi_2}=0 $, the map is not surjective either.
\end{ex}

Let $\varphi\in Z^2(L,M)$ such that $\overline{\phi_2\varphi}=0. $ Then there exists $f:L\to M$ such that $\phi_2\varphi=\delta^1f $. Since $\delta^1f=\phi_2(d^1f)$, the cocycle $g=\varphi-d^1f $ represents the same cohomology class as $\varphi $ and satisfies $\phi_2(g)=0$. Therefore, every element of $\mathrm{Ker}(\overline{\phi_2})$ admits a representative $g\in Z^2(L,M)$ satisfying \begin{equation}\label{eqI}
    \rho(z)g(x,y)=g([x,y],z),
\end{equation} for all $x,y,z\in L$. Assume now that $L=[L,L]+Z(L) $ and let $g$ be a cocycle satisfying \eqref{eqI}. Consider the surjective linear map $b:\Lambda^2L\to [L,L] $. If $(x,y)\in \mathrm{Ker}(b)$, then \eqref{eqI} implies that $\rho(z)g(x,y)=g([x,y],z)=0 $ for every $z\in L$. Hence, $g(x,y)\in M^L $, where $$M^L=\left\{ u\in M\mid \rho(x)(u)=0, \forall x\in L\right\}.$$ Let $\pi:M\to M/M^L $ denote the canonical projection. Then, $\pi\circ g $ vanishes on $\mathrm{Ker}(b) $ and therefore factors uniquely through $b$. Thus, there exists a unique linear map $\tilde{h}:[L,L]\to M/M^L $ such that the following diagram is commutative \[\begin{tikzcd}
	{\Lambda^2L} & M \\
	{[L,L]} & {M/M^L}
	\arrow["g", from=1-1, to=1-2]
	\arrow["b"', from=1-1, to=2-1]
	\arrow["\pi", from=1-2, to=2-2]
	\arrow["{\tilde{h}}"', dashed, from=2-1, to=2-2]
\end{tikzcd}\] Since $L=[L,L]+Z(L) $, there exists a subspace $C\subset Z(L)$ such that $L=[L,L]\oplus C $. We extend $\tilde{h}:[L,L]\to M/M^L $ to a linear map $\overline{h}:L\to M/M^L $ by setting $\overline{h}([x,y]+c)=\tilde{h}([x,y]) $ for all $x,y,c\in L$. The quotient $M/M^L$ is naturally an $L$-module via $\rho(x)(\pi(m))=\pi(\rho(x)(m)) $, which is well-defined since $M^L$ is an $L$-submodule. By construction, $\overline{h}([x,y])=\pi(g(x,y)) $ for all $x,y\in L$. Let $u=\sum\limits_i[x_i,y_i] \in [L,L]$. Then, for every $z\in L$, \begin{align*}
     \rho(z)\overline{h}(u)&=\rho(z)\sum\limits_i\pi(g(x_i,y_i))\\&=\pi(\sum\limits_ig([x_i,y_i],z))\\&= \pi (g(u,z)).
\end{align*} Thus, for $u_1,u_2\in [L,L]$, \begin{align*}
    d^1\overline{h}(u_1,u_2)&= -\rho(u_1)\overline{h}(u_2)+\rho(u_2)\overline{h}(u_1)+\overline{h}([u_1,u_2])\\ &= 3\pi(g(u_1,u_2)).
\end{align*} Moreover, if $u\in [L,L],c\in C $ then $[u,c]=0 $ and $\rho(y)g(c,u)=g([c,u],y)=0 $ for all $y\in L$, which means that $g(c,u)\in M^L$ and $\pi(g(c,u))=0 $. Thus, \begin{align*}
    d^1\overline{h}(u,c)&= -\rho(u)\overline{h}(c)+\rho(c)\overline{h}(u)+\overline{h}([u,c])\\ &=0\\ &= 3\pi(g(u,c)).
\end{align*}  Finally, if $c_1,c_2\in C $, we have $$d^1\overline{h}(c_1,c_2)=0=3\pi(g(c_1,c_2)). $$ Combining the three cases, we obtain $d^1\overline{h}=3\pi\circ g $. If the characteristic of the base field $\mathbf{K}$ is not $3$, we can conclude that $\pi\circ g=\frac{1}{3}d^1\overline{h} $. We have the following short exact sequence of $L$-modules \[\begin{tikzcd}
	0 & {M^L} & M & {M/M^L} & 0
	\arrow[from=1-1, to=1-2]
	\arrow["\iota", from=1-2, to=1-3]
	\arrow["\pi", from=1-3, to=1-4]
	\arrow[from=1-4, to=1-5]
\end{tikzcd}\] As $\iota $ and $\pi$ are morphism of modules, this induces a short exact sequence of complexes \[\begin{tikzcd}
	0 & {C^\bullet (L,M^L)} & {C^\bullet(L,M)} & {C^\bullet(L,M/M^L)} & 0
	\arrow[from=1-1, to=1-2]
	\arrow["{\iota^\bullet}", from=1-2, to=1-3]
	\arrow["{\pi^\bullet}", from=1-3, to=1-4]
	\arrow[from=1-4, to=1-5]
\end{tikzcd}\] which gives in particular the following exact sequence of cohomology \[\begin{tikzcd}
	 {H^2(L,M^L)} & {H^2(L,M)} & {H^2(L,M/M^L)} 
	\arrow["{\iota^*}", from=1-1, to=1-2]
	\arrow["{\pi^*}", from=1-2, to=1-3]
\end{tikzcd}\] We have seen that if $g\in Z^2(L,M)$ satisfies \eqref{eqI}, then the class of $g $ is in the image $\iota^*$. Then, the class of $g $ is in the kernel of $\pi^* $. Thus, we have obtain the following result.

\begin{prop}\label{propinj1}
    If $L=[L,L]+Z(L) $ and the characteristic of the base field is not $3$, then $$\mathrm{Ker}(\overline{\phi_2})\subset \mathrm{Im}(H^2(L,M^L)\to H^2(L,M)). $$ In particular, if $L=[L,L]+Z(L) $, $\mathrm{char}(\mathbf{K})\neq 3$, and the map $H^2(L,M^L)\to H^2(L,M) $ is zero, then the map $H^2(L,M)\to H^3(L_\mathrm{trip},M)$ is injective.
\end{prop}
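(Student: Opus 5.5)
The plan is to assemble the argument carried out just before the statement; I do not expect to need new ideas. Take a class $\overline{\varphi}\in\mathrm{Ker}(\overline{\phi_2})$ with $\varphi\in Z^2(L,M)$. Then $\phi_2\varphi=\delta^1 f$ for some $f\in C^1(L_\mathrm{trip},M)=C^1(L,M)$. Equation \eqref{eqcobor1} gives $\delta^1 f=\phi_2(d^1 f)$, so by linearity of $\phi_2$ the cocycle $g=\varphi-d^1f$ lies in the same class and satisfies $\phi_2 g=0$. This is exactly condition \eqref{eqI}: $\rho(z)g(x,y)=g([x,y],z)$ for all $x,y,z\in L$.

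Next I would show that $\pi\circ g$ is a coboundary in $C^2(L,M/M^L)$, where $\pi:M\to M/M^L$ is the projection.
\begin{itemize}
\item For $\sum_i x_i\wedge y_i\in\mathrm{Ker}(b)$, condition \eqref{eqI} gives $\rho(z)\sum_i g(x_i,y_i)=0$ for every $z$, so the sum lies in $M^L$. Hence $\pi\circ g$ factors through $b:\Lambda^2L\to[L,L]$ as a map $\tilde h:[L,L]\to M/M^L$.
\item The hypothesis $L=[L,L]+Z(L)$ gives a complement $C\subset Z(L)$ with $L=[L,L]\oplus C$. Extend $\tilde h$ by zero on $C$ to obtain $\overline h:L\to M/M^L$.
\item By bilinearity it suffices to check $d^1\overline h=3\pi\circ g$ on the three types of pairs. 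For $u_1,u_2\in[L,L]$, use $\rho(z)\overline h(u)=\pi(g(u,z))$ together with skew-symmetry of $g$. For $u\in[L,L]$, $c\in C$, both sides vanish because $[u,c]=0$, which forces $g(c,u)\in M^L$. For $c_1,c_2\in C$, both sides vanish for the same reason.
\item Since $\mathrm{char}\,\mathbf{K}\neq 3$, we get $\pi\circ g=d^1\!\left(\tfrac13\overline h\right)$. So $\pi^*\overline g=0$ in $H^2(L,M/M^L)$.
\end{itemize}

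Now apply the long exact cohomology sequence attached to $0\to M^L\to M\to M/M^L\to0$. This sequence exists because $\iota$ and $\pi$ are module maps, so the induced maps on cochains give a short exact sequence of complexes. Exactness at $H^2(L,M)$ gives $\overline g=\overline\varphi\in\mathrm{Im}(\iota^*)$, which is the first claim. For the \emph{in particular} part: if $\iota^*=0$, then every element of $\mathrm{Ker}(\overline{\phi_2})$ is zero, so $\overline{\phi_2}$ is injective.

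The main point needing care is the computation $d^1\overline h(u_1,u_2)=3\pi(g(u_1,u_2))$. Write $u_1=\sum[x_i,y_i]$ and expand. The terms $-\rho(u_1)\overline h(u_2)$ and $\rho(u_2)\overline h(u_1)$ become $-\pi g(u_2,u_1)=\pi g(u_1,u_2)$ and $\pi g(u_1,u_2)$. The term $\overline h([u_1,u_2])=\pi g(u_1,u_2)$ follows from the definition of $\tilde h$. Together these give the factor $3$. I would also check that $\overline h$ is well defined, which holds because $\tilde h$ is defined on all of $[L,L]$ independently of how $u$ is written as a sum of brackets.
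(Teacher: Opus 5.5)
Your proposal is correct and follows the paper's own argument step by step. You pass to a representative $g=\varphi-d^1f$ with $\phi_2 g=0$, factor $\pi\circ g$ through $b:\Lambda^2L\to[L,L]$, extend by zero on a central complement $C$, check $d^1\overline h=3\pi\circ g$, divide by $3$, and conclude by exactness of $H^2(L,M^L)\to H^2(L,M)\to H^2(L,M/M^L)$. Your handling of $\mathrm{Ker}(b)$ through general sums $\sum_i x_i\wedge y_i$, rather than single pairs as the paper writes, is if anything slightly more careful.
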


\begin{cor}
    If $L=[L,L] $, $M^L=0$ and $\mathrm{char}(\mathbf{K})\neq 3 $, then the map $H^2(L,M)\to H^3(L_\mathrm{trip},M)$ is injective.
\end{cor}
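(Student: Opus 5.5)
This follows directly from Proposition \ref{propinj1}; the plan is to check its hypotheses and show that the map $H^2(L,M^L)\to H^2(L,M)$ is zero. Since $L=[L,L]$, we certainly have $L=[L,L]+Z(L)$, and $\mathrm{char}(\mathbf{K})\neq 3$ holds by assumption. The only remaining point is the vanishing of $\iota^*:H^2(L,M^L)\to H^2(L,M)$. Under the hypothesis $M^L=0$ this is immediate: $C^\bullet(L,M^L)=C^\bullet(L,0)=0$, so $H^2(L,M^L)=0$ and its image in $H^2(L,M)$ is zero. Proposition \ref{propinj1} then gives injectivity of $\overline{\phi_2}:H^2(L,M)\to H^3(L_\mathrm{trip},M)$.

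To keep the argument transparent, I would also recall directly why the kernel is trivial, following the discussion before Proposition \ref{propinj1}. Take a class in $\mathrm{Ker}(\overline{\phi_2})$. It has a representative $g\in Z^2(L,M)$ satisfying \eqref{eqI}. Since $M^L=0$, the projection $\pi:M\to M/M^L$ is the identity. The construction above therefore produces $\overline{h}:L\to M$ with $d^1\overline{h}=3g$. Because $L=[L,L]$, the complement $C$ is $0$, so only the case $u_1,u_2\in[L,L]$ of that computation is needed. Dividing by $3$, which is allowed since $\mathrm{char}(\mathbf{K})\neq 3$, gives $g=d^1(\tfrac13\overline{h})$, a coboundary.

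There is no genuine obstacle here. The only care needed is the degenerate case: $\overline{h}$ factors through $b$ because $\rho(z)g(x,y)=0$ for all $z$ forces $g(x,y)\in M^L=0$ whenever $(x,y)\in\mathrm{Ker}(b)$. This is exactly where the hypothesis $M^L=0$ is used.
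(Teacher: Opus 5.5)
Your proposal is correct and follows the paper's route: the corollary is obtained from Proposition~\ref{propinj1} by noting that $L=[L,L]$ gives $L=[L,L]+Z(L)$ and that $M^L=0$ gives $H^2(L,M^L)=0$, so the map $H^2(L,M^L)\to H^2(L,M)$ is zero. Your direct recap (with $\pi=\mathrm{id}$, $C=0$, $d^1\overline{h}=3g$, and division by $3$) is also accurate and makes the role of each hypothesis explicit.
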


We now give another sufficient condition for injectivity, which is independent of the characteristic of the base field. Let $g:L\times L\to M$ satisfying \eqref{eqI} and assume that $M^{Z(L)}=0 $. Let $c\in Z(L),x,y\in L $. By \eqref{eqI}, $\rho(y)g(c,x)=g([c,x],y)=0 $. Hence, $g(c,x)\in M^L\subset M^{Z(L)}$ and therefore \begin{equation}\label{eqjsp}
    g(c,x)=0.
\end{equation} Moreover, $\rho(c)g(x,y)=g([x,y],c)=0 $ by \eqref{eqjsp}. Thus, $g(x,y)\in M^{Z(L)} $ and consequently $g(x,y)=0 $. 

\begin{prop}
    If $M^{Z(L)}=0 $, then the map $H^2(L,M)\to H^3(L_\mathrm{trip},M) $ is injective.
\end{prop}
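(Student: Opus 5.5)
To show that $\overline{\phi_2}$ is injective, it suffices to show that any $\varphi\in Z^2(L,M)$ with $\overline{\phi_2\varphi}=0$ in $H^3(L_\mathrm{trip},M)$ is already a coboundary in $H^2(L,M)$. The paragraph preceding the statement contains most of the argument, and I would assemble it as follows.

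First, reduce to a normalized representative. Suppose $\overline{\phi_2\varphi}=0$. Then there is $f\in C^1(L_\mathrm{trip},M)=C^1(L,M)$ with $\phi_2\varphi=\delta^1 f$. By \eqref{eqcobor1}, $\delta^1 f=\phi_2(d^1f)$. Hence $g:=\varphi-d^1f\in Z^2(L,M)$ lies in the same class as $\varphi$, and by linearity of $\phi_2$ it satisfies $\phi_2 g=0$. This is exactly identity \eqref{eqI}: $\rho(z)g(x,y)=g([x,y],z)$ for all $x,y,z\in L$.

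Second, show that such a $g$ vanishes identically under the hypothesis $M^{Z(L)}=0$.
\begin{itemize}
\item Take $c\in Z(L)$ and $x\in L$. Applying \eqref{eqI} with the pair $(c,x)$ gives $\rho(y)g(c,x)=g([c,x],y)=g(0,y)=0$ for every $y\in L$. So $g(c,x)\in M^L\subset M^{Z(L)}=0$, that is, $g(c,x)=0$ for all $c\in Z(L)$ and $x\in L$. By skew-symmetry, also $g(x,c)=0$.
\item For arbitrary $x,y\in L$ and $c\in Z(L)$, apply \eqref{eqI} with $z=c$: $\rho(c)g(x,y)=g([x,y],c)=0$ by the previous point. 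Thus $g(x,y)\in M^{Z(L)}=0$.
\end{itemize}
Therefore $g=0$, so $\varphi=d^1f$ is a coboundary and $\overline{\varphi}=0$ in $H^2(L,M)$.

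The argument holds in any characteristic, since no division is involved. The only delicate point is the first step. There one must check that $f$, a priori a Yamaguti $1$-cochain, is a legitimate Chevalley–Eilenberg $1$-cochain, and that $\phi_2$ is linear so that $\phi_2(\varphi-d^1f)=\phi_2\varphi-\delta^1f$. Both hold because $C^1$ is simply $\mathrm{Hom}(L,M)$ in both complexes and $\phi_2$ is visibly linear.
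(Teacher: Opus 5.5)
Your proposal is correct and follows the paper's argument essentially verbatim. You pass to the representative $g=\varphi-d^1f$ with $\phi_2 g=0$, i.e.\ satisfying \eqref{eqI}. You then use $M^L\subset M^{Z(L)}=0$ to get $g(c,x)=0$ for $c\in Z(L)$, and finally $\rho(c)g(x,y)=g([x,y],c)=0$ to conclude $g=0$, with no division involved, exactly as in the paper.
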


We can similarly define a map $H^1(L,M)\to H^2(L_\mathrm{trip},M)$. For $\varphi\in C^1(L,M)$, define $\phi_1\varphi\in C^2(L_\mathrm{trip},M)$ by $$\phi_1\varphi(x,y)=-\rho(y)\varphi(x). $$ For every $m\in M$, $$\phi_1(d^0m)=\delta^0. $$ Moreover, $\phi_1\varphi$ is a $2$-cocycle for every $\varphi\in C^1(L,M)$. Hence, $\phi_1$ induces a linear map $$\overline{\phi_1}:H^1(L,M)\to H^2(L_\mathrm{trip},M), \overline{\varphi}\mapsto \overline{\phi_1\varphi}.  $$ Let $\overline{\varphi}\in \mathrm{Ker}(\overline{\phi_1})$ with $\varphi\in Z^1(L,M)$. Then, there exists $m\in M$ such that $\phi_1\varphi=\delta^0(m)$. Therefore, $-\rho(y)\varphi(x)=\rho(y)\rho(x)(m) $ for all $x,y\in L$. Then, \begin{equation}\label{eqML}
    \rho(y)(\varphi(x)+\rho(x)(m))=0
\end{equation} for all $x,y\in L$. Define $$g:L\to M, x\mapsto \varphi(x)+\rho(x)(m).$$ Equation \eqref{eqML} implies that $g(x)\in M^L$ for every $x\in L$. Since $\varphi$ is a $1$-cocycle, so is $g$. Moreover, $\varphi(x)-g(x)=\rho(x)(m)=d^0(-m)$, so $\varphi$ and $g$ represent the same class in $H^1(L,M)$. Conversely, suppose that $\overline{\varphi}$ admits a representative $g:L\to M^L$. Then $\rho(y)g(x)=0 $ for all $x,y\in L$ and hence $\phi_1(g)=0$. Therefore, $\overline{\phi_1g}=\overline{\phi_1\varphi}=0 $. We conclude that $\mathrm{Ker}\overline{\phi_1}= \mathrm{Im}(H^1(L,M^L)\to H^1(L,M))$. 

\begin{prop}\label{propinj2}
    The map $H^1(L,M)\to H^2(L_\mathrm{trip},M)$ is injective if and only if the map $H^1(L,M^L)\to H^1(L,M) $ is zero.
\end{prop}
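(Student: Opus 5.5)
The plan is to use the description of the kernel of $\overline{\phi_1}$ obtained in the discussion just before the statement, namely
$$\mathrm{Ker}(\overline{\phi_1})=\mathrm{Im}\bigl(H^1(L,M^L)\to H^1(L,M)\bigr).$$
Once this equality is established carefully, the proposition is immediate. A linear map is injective exactly when its kernel is zero. So $\overline{\phi_1}$ is injective if and only if the image of $H^1(L,M^L)\to H^1(L,M)$ is zero, that is, if and only if this map is the zero map.

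First I will check that $\overline{\phi_1}$ is well defined.
\begin{enumerate}
\item For $m\in M$ we have $\phi_1(d^0m)(x,y)=\rho(y)\rho(x)m=\theta(x,y)m=\delta^0 m(x,y)$, using $d^0m(x)=-\rho(x)m$. So $\phi_1$ sends coboundaries to coboundaries.
\item $\phi_1\varphi$ lies in $C^2(L_\mathrm{trip},M)$. For $n=2$ the cochain conditions are vacuous, so any bilinear map qualifies.
\item $\phi_1\varphi$ is a $2$-cocycle. I will verify $\delta^2\phi_1\varphi=0$ with formula \eqref{lwo2} by expanding $\theta(x,y)=\rho(y)\rho(x)$ and $D(x_2,x_3)=\rho(x_3)\rho(x_2)-\rho(x_2)\rho(x_3)$. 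The $\varphi(x_1)$ factor can be pulled out, since every term is an operator built from $x_2,x_3,x_4$ applied to $\varphi(x_1)$. What remains is exactly the identity \eqref{rep1} evaluated at $x=x_1$, and it vanishes because $\theta$ is a representation of $L_\mathrm{trip}$.
\end{enumerate}
This expansion is the only genuinely computational step, and I expect it to be the main (though mild) obstacle. It amounts to the Jacobi-type identity $\rho([[x_2,x_3],x_4])=[[\rho(x_2),\rho(x_3)],\rho(x_4)]$ acting on $\varphi(x_1)$.

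Next I prove the kernel equality in both directions.
\begin{enumerate}
\item ($\subseteq$) If $\phi_1\varphi=\delta^0m$ with $\varphi\in Z^1(L,M)$, then \eqref{eqML} holds. Put $g=\varphi-d^0m$, so $g(x)=\varphi(x)+\rho(x)m$. Then $g$ is a $1$-cocycle with values in $M^L$ and represents the same class as $\varphi$. Since $M^L$ is a submodule, $g\in Z^1(L,M^L)$, and therefore $\overline{\varphi}$ lies in the image of $H^1(L,M^L)$.
\item ($\supseteq$) Any class coming from $Z^1(L,M^L)$ has a representative $g$ with $\rho(y)g(x)=0$ for all $x,y$. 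Hence $\phi_1 g=0$, and the class lies in the kernel.
\end{enumerate}
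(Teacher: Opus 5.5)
Your argument is correct and matches the paper's: you show $\overline{\phi_1}$ is well defined, identify $\mathrm{Ker}(\overline{\phi_1})$ with $\mathrm{Im}(H^1(L,M^L)\to H^1(L,M))$ via the representative $g=\varphi-d^0m$, and conclude because a linear map is injective if and only if its kernel vanishes. One small fix is needed in the cocycle check.

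\begin{enumerate}
\item The sign of $D$ is reversed: in fact $D(x_2,x_3)=\rho(x_2)\rho(x_3)-\rho(x_3)\rho(x_2)=\rho([x_2,x_3])$.
\item The vanishing of $\delta^2\phi_1\varphi$ does not follow from \eqref{rep1} alone. With $\theta(x,y)=\rho(y)\rho(x)$, \eqref{rep1} only kills the relevant operator after composing with $\rho(x_1)$ on the right. Here that operator acts on the arbitrary vector $\varphi(x_1)$. What you actually need is the identity $[\rho([x_2,x_3]),\rho(x_4)]=\rho([[x_2,x_3],x_4])$, which you state correctly in your last sentence.
\end{enumerate}
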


\hyperref[propinj1]{Proposition~\ref*{propinj1}} and \hyperref[propinj2]{Proposition~\ref*{propinj2}} show that, in both cases, the obstruction to injectivity is related to cohomology classes with values in the invariant submodule $M^L$.

\subsection{Algebraic interpretations}

\subsubsection{Derivations}

\begin{defi}
    Let $T$ be a Lie triple system. Then a linear map $D:T\to T$ is a derivation if, for all $x,y,z \in T$, we have \begin{center}
        $D([x,y,z])= [D(x),y,z]+ [x,D(y),z]+ [x,y,D(z)] $.
    \end{center}  We denote by $\mathrm{Der}(T)$ the vector space of all  derivations.
\end{defi}

\begin{ex}
    Maps of the form $x\mapsto\sum\limits_i[a_i,b_i,x]  $ are derivations, called inner derivations. We denote by $\mathrm{InnDer}(T)$ the vector space of all inner derivations.
\end{ex}

\begin{prop}
   The first cohomology group $H^1(T,T) $ is equal to the space $\mathrm{Der}(T)/\mathrm{InnDer}(T) $.
\end{prop}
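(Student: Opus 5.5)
Here $H^1(T,T)$ is the cohomology at $C^1(T,T)$ of the extended complex $C^{-1}(T,T)\to C^1(T,T)\to C^3(T,T)$, computed with the adjoint representation $\theta(x,y)(z)=[z,x,y]$, so $H^1(T,T)=\mathrm{Ker}(\delta^1)/\mathrm{Im}(\delta^{-1})$. The plan is to show two things: $\mathrm{Ker}(\delta^1)=\mathrm{Der}(T)$ and $\mathrm{Im}(\delta^{-1})=\mathrm{InnDer}(T)$. The equality of quotients then follows at once.

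\textbf{Cocycles are derivations.} Take $f\in C^1(T,T)$. By \eqref{lwo1},
\begin{align*}
\delta^1 f(x_1,x_2,x_3)=[f(x_1),x_2,x_3]-[f(x_2),x_1,x_3]+D(x_1,x_2)f(x_3)-f([x_1,x_2,x_3]).
\end{align*}
I then rewrite each term using the triple product axioms.
\begin{itemize}
\item By skew-symmetry, $-[f(x_2),x_1,x_3]=[x_1,f(x_2),x_3]$.
\item By definition, $D(x_1,x_2)(z)=\theta(x_2,x_1)(z)-\theta(x_1,x_2)(z)=[z,x_2,x_1]-[z,x_1,x_2]$.
\item By the cyclic identity together with skew-symmetry, $[z,x_2,x_1]-[z,x_1,x_2]=[x_1,x_2,z]$. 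Concretely, $[z,x_2,x_1]=-[x_2,z,x_1]$, and $[x_1,x_2,z]=-[x_2,z,x_1]-[z,x_1,x_2]$.
\end{itemize}
Substituting, $\delta^1 f(x_1,x_2,x_3)=[f(x_1),x_2,x_3]+[x_1,f(x_2),x_3]+[x_1,x_2,f(x_3)]-f([x_1,x_2,x_3])$. Hence $\delta^1 f=0$ exactly when $f$ is a derivation. Note that $C^1(T,T)$ is simply the space of all linear maps, because the cochain conditions are vacuous for $n=1$.

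\textbf{Coboundaries are inner derivations.} For $x\otimes u\in C^{-1}(T,T)=T\otimes T$ we have $\delta^{-1}(x\otimes u)(t)=\theta(x,t)(u)=[u,x,t]$. So $\delta^{-1}(x\otimes u)$ is the inner derivation $\mathrm{ad}(u,x):t\mapsto[u,x,t]$. By linearity, the image of $\delta^{-1}$ is spanned by all maps $t\mapsto[a,b,t]$, and this span is exactly $\mathrm{InnDer}(T)$. The inclusion $\mathrm{InnDer}(T)\subset\mathrm{Der}(T)$ (from the third axiom) is consistent with $\delta^1\circ\delta^{-1}=0$, which was shown earlier.

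The only step that needs any care is the second bullet's consequence, identifying $D(x_1,x_2)$ with $[x_1,x_2,\cdot\,]$ for the adjoint representation. I expect this to be the main point to check, though it is just a short sign computation using the cyclic identity. Everything else is direct substitution into the formulas for $\delta^1$ and $\delta^{-1}$.
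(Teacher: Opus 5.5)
Your proof is correct and follows the paper's argument: you expand $\delta^1 f$ for the adjoint representation, use skew-symmetry and the cyclic identity to rewrite $-\theta(x_1,x_3)f(x_2)$ and $D(x_1,x_2)f(x_3)$ as $[x_1,f(x_2),x_3]$ and $[x_1,x_2,f(x_3)]$, and identify $\mathrm{Im}(\delta^{-1})$ with $\mathrm{InnDer}(T)$. Your explicit check that $D(x_1,x_2)=[x_1,x_2,\cdot\,]$ spells out a step the paper leaves implicit.
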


\begin{proof}
    We consider here the adjoint representation $\theta(x,y)(z)=[z,x,y] $. If $f\in C^1(T,T) $, \begin{align*}
    \delta f &= [f(x_1),x_2,x_3]- [f(x_2),x_1,x_3]+ [f(x_3),x_2,x_1]- [f(x_3),x_1,x_2]- f([x_1,x_2,x_3]) \\ &= [f(x_1),x_2,x_3]+ [x_1,f(x_2),x_3]+ [x_1,x_2,f(x_3)]- f([x_1,x_2,x_3]).
\end{align*} Thus, $f\in C^1(T,T)$ is a $1$-cocycle if and only if $f $ is a derivation of the Lie triple system. We can see that $\mathrm{InnDer}(T)=\delta^{-1}C^1(T,T) $ and the result follows.
\end{proof}

\subsubsection{Extensions of modules}

In this subsection, we give an interpretation of the second cohomology group in terms of extensions of modules. It will be shown that $H^2(T,V) $ does not classify extensions of modules, it naturally classifies a weaker notion that will be defined.

\begin{defi}
    Let $T$ be a Lie triple system. A one-dimensional right extension of a module $V$ is an exact sequence of modules and their morphisms \[\begin{tikzcd}
	0 & V & W & {\mathbf{K}} & 0
	\arrow[from=1-1, to=1-2]
	\arrow["f", from=1-2, to=1-3]
	\arrow["g", from=1-3, to=1-4]
	\arrow[from=1-4, to=1-5]
\end{tikzcd}\] where $\mathbf{K}$ has the structure of a trivial $T$-module.
\end{defi}

Two extensions are equivalent if the following diagram is commutative \[\begin{tikzcd}
	& 0 & V & W & {\mathbf{K}} & 0 \\
	& 0 & V & {W'} & {\mathbf{K}} & 0 \\
	{}
	\arrow[from=1-2, to=1-3]
	\arrow["f", from=1-3, to=1-4]
	\arrow["{\mathrm{id}}"', from=1-3, to=2-3]
	\arrow["g", from=1-4, to=1-5]
	\arrow["h", from=1-4, to=2-4]
	\arrow[from=1-5, to=1-6]
	\arrow["{\mathrm{id}}", from=1-5, to=2-5]
	\arrow[from=2-2, to=2-3]
	\arrow["{f'}"', from=2-3, to=2-4]
	\arrow["{g'}"', from=2-4, to=2-5]
	\arrow[from=2-5, to=2-6]
\end{tikzcd}\] where $h:W\to W'$ is a morphism of representations. 

Let \[\begin{tikzcd}
	0 & V & W & {\mathbf{K}} & 0
	\arrow[from=1-1, to=1-2]
	\arrow["f", from=1-2, to=1-3]
	\arrow["g", from=1-3, to=1-4]
	\arrow[from=1-4, to=1-5]
\end{tikzcd}\] be a one-dimensional right extension of $V$. We denote by $(V,\theta) $ and $(W,\theta') $ the representations. We construct an element $\varphi\in C^2(T,V) $ in the following way. Let $u\in W$ such that $g(u)=1 $ and define $\varphi:T\times T\to V$ by $$\varphi(x,y)= f^{-1}(\theta'(x,y)(u)), $$ for all $x,y\in T$. The map $\varphi $ is well defined. Indeed, since  $g$ is a morphism of $T$-modules, $g(\theta'(x,y)(w))=0 $ for all $w\in W$. Hence $\theta'(x,y)(u)\in \mathrm{Ker}(g)$, so it has a pre-image under $f$. The map $\varphi$ is a $2$-cocycle of the Yamaguti cohomology. Indeed, \begin{align*}
    f\delta^2 \varphi(x_1,x_2,x_3,x_4)= &f(\theta (x_3,x_4)\varphi(x_1,x_2)) - f(\theta(x_2,x_4)\varphi(x_1,x_3)) + f(D(x_2,x_3)\varphi(x_1,x_4))\\ &- f(\varphi(x_1,[x_2,x_3,x_4])) \\ =& \theta'(x_3,x_4)\theta'(x_1,x_2)(u)- \theta'(x_2,x_4)\theta'(x_1,x_3)(u) + D'(x_2,x_3)\theta'(x_1,x_4)(u)\\ & - \theta'(x_1,[x_2,x_3,x_4])(u) \\ =& 0, 
\end{align*} because $f$ is a morphism of modules and $\theta'$ is a representation. So $\delta^2\varphi(x_1,x_2,x_3,x_4)=0 $ because $f$ is injective. It remains to show that the class of $\varphi\in H^2(T,V)$ depends only on the equivalence class of the extension.

Let  \[\begin{tikzcd}
	& 0 & V & W & {\mathbf{K}} & 0 \\
	& 0 & V & {W'} & {\mathbf{K}} & 0 \\
	{}
	\arrow[from=1-2, to=1-3]
	\arrow["f", from=1-3, to=1-4]
	\arrow["{\mathrm{id}}"', from=1-3, to=2-3]
	\arrow["g", from=1-4, to=1-5]
	\arrow["h", from=1-4, to=2-4]
	\arrow[from=1-5, to=1-6]
	\arrow["{\mathrm{id}}", from=1-5, to=2-5]
	\arrow[from=2-2, to=2-3]
	\arrow["{f'}"', from=2-3, to=2-4]
	\arrow["{g'}"', from=2-4, to=2-5]
	\arrow[from=2-5, to=2-6]
\end{tikzcd}\] be two equivalent one-dimensional extensions. We denote by $(V,\theta),(W,\theta_W)$ and $(W',\theta_{W'}) $ the representations. Let $\varphi $ and $\varphi'$ be the two cocycles defined as above and let us show that $\varphi $ and $\varphi'$ differ by a $2$-coboundary. Let $u\in W$, $u'\in W'$ such that $g(u)=1$ and $g'(u')=1$. We define $$\varphi(x,y)= f^{-1}(\theta_W(x,y)(u)), \ \varphi'(x,y)=f'^{-1}(\theta_W'(x,y)(u')). $$ The diagram is commutative, so $h\circ f=f' $ and $g'\circ h=g $ and $h$ is a morphism of modules, so $h(\theta_W(x,y)(u))=\theta'_W(x,y)(h(u))$. Thus, \begin{align*}
    f^{-1}(\theta_W(x,y)(u)) &= f'^{-1}(h(\theta_W(x,y)(u)))\\ & = f'^{-1}(\theta_W'(x,y)(h(u)))
\end{align*} and $h(u)$ is such that $g'(h(u))=1$. So, it remains to show that if we have a one-dimensional extension \[\begin{tikzcd}
	0 & V & W & {\mathbf{K}} & 0
	\arrow[from=1-1, to=1-2]
	\arrow["f", from=1-2, to=1-3]
	\arrow["g", from=1-3, to=1-4]
	\arrow[from=1-4, to=1-5]
\end{tikzcd}\] and $u,v\in W$ are such that $g(u)=g(v)=1$, the two cocycles induced are cohomologous. Let $x,y\in T$, $\varphi(x,y)=f^{-1}(\theta_W(x,y)(u)) $ and $\varphi'(x,y)=f^{-1}(\theta_W(x,y)(u')) $. As $g(u)=g(u') $, there exists $v\in V$ such that $f(v)=u-u'$. So \begin{align*}
    \varphi(x,y) -\varphi'(x,y) &= f^{-1}(\theta_W(x,y)(u-u'))\\&= f^{-1}(\theta_W(x,y)(f(v)))\\&= \theta(x,y)(v)\\ & =\delta(v)(x,y).
\end{align*} Finally, we have  $\varphi(x,y)= f'^{-1}(\theta'_W(x,y)(h(u))) $ which is in the same cohomology class as $\varphi' $.  Thus, $\varphi $ and $\varphi'$ are indeed in the same cohomology class. We have constructed a map which associates to each equivalence class of a one-dimensional right extension of a representation $(V,\theta)$  a $2$-cocycle of the cohomology with coefficients in this representation. 

Let $(V,\theta)$ be a representation of $T$ and $\varphi$ be a $2$-cocycle in the Yamaguti cohomology of $T$ with coefficients in $V$. We want to construct a one-dimensional right extension of $V$. We set $W= V\oplus\mathbf{K} $ and we have the exact sequence of vector spaces \[\begin{tikzcd}
	& 0 & V & {V\oplus \mathbf{K}} & {\mathbf{K}} & 0 
	\arrow[from=1-2, to=1-3]
	\arrow["\iota", from=1-3, to=1-4]
	\arrow["\pi", from=1-4, to=1-5]
	\arrow[from=1-5, to=1-6]
\end{tikzcd}\] We have to define a structure of $T$-module on $V\oplus \mathbf{K}$ such that this exact sequence is an exact sequence of $T$-modules. Let $x,y\in T,u\in V,\lambda\in \mathbf{K}$, we define $$\theta'(x,y)((u,\lambda))= (\lambda\varphi(x,y)+ \theta(x,y)(u),0). $$ Then we have, for all $x,y,z,t\in T,u\in V$ and $\lambda\in\mathbf{K}$, \begin{align*}
    \theta'(z,t)\theta&'(x,y)((u,\lambda)- \theta'(y,t)\theta'(x,z)((u,\lambda))- \theta'(x,[y,z,t])((u,\lambda))+ D'(y,z)\theta'(x,t)((u,\lambda))\\ =& \theta'(z,t)(\lambda\varphi(x,y)+\theta(x,y)(u),0)- \theta'(y,t)(\lambda\varphi(x,z)+\theta(x,z)(u),0) \\& - (\lambda\varphi(x,[y,z,t])+\theta(x,[y,z,t])(u),0)+ D'(y,z)(\lambda\varphi(x,t)+\theta(x,t)(u),0)\\ =& (\theta(z,t)(\lambda\varphi(x,y)+\theta(x,y)(u)),0) - (\theta(y,t)( \lambda\varphi(x,z)+\theta(x,z)(u)),0)\\ &-(\lambda\varphi(x,[y,z,t])+\theta(x,[y,z,t])(u),0) + (D(y,z)(\lambda\varphi(x,t)+\theta(x,t)(u)),0)\\ =& (0,0), 
\end{align*} because $\theta$ is a representation and $\varphi$ is a $2$-cocycle. Thus, $\theta'$ satisfies \eqref{rep1}. The condition to be a $2$-cocycle is not sufficient to ensure that $\theta'$ satisfies \eqref{rep2}. So we introduce the following definition.

\begin{defi}
    Let $T$ be a Lie triple system. A pseudo-representation of $T$ is a pair $(V,\theta) $, where $V$ is a vector space and $\theta:T\times T\to \mathrm{End}(V)$ is a bilinear map such that  $$\theta(z,t)\theta(x,y)-\theta(y,t)\theta(x,z)-\theta(x,[y,z,t]) + D(y,z)\theta(x,t)=0$$ for all $x,y,z,t\in T$.
\end{defi}

Morphisms between pseudo-modules are defined in the same way as morphisms of representations.  A one-dimensional right pseudo-extension of a representation $(V,\theta)$ is an exact sequence of pseudo-modules and their morphisms \[\begin{tikzcd}
	0 & V & W & {\mathbf{K}} & 0
	\arrow[from=1-1, to=1-2]
	\arrow["f", from=1-2, to=1-3]
	\arrow["g", from=1-3, to=1-4]
	\arrow[from=1-4, to=1-5]
\end{tikzcd}\] where $\mathbf{K}$ has the structure of a trivial $T$-pseudo-module. Two one-dimensional right pseudo-extensions of $V$ are equivalent if the obvious diagram commutes. We denote by $\mathrm{Ext}(V) $ the set of equivalence classes of one-dimensional right pseudo-extensions of $(V,\theta)$.

Let $\varphi,\varphi'\in C^2(T,V)$ be two $2$-cocycles such that there exists $v\in V$ such that $\varphi(x,y)-\varphi'(x,y)=\theta(x,y)(v). $ We define two $T$-pseudo-module structures on $W=V\oplus \mathbf{K} $ by defining $$\theta_W(x,y)((u,\lambda))=(\lambda\varphi(x,y)+\theta(x,y)(u),0), \ \ \ \theta'(x,y)((u,\lambda))=(\lambda\varphi'(x,y)+\theta(x,y)(u),0). $$ We want to define a map $h:V\oplus \mathbf{K}\to V\oplus\mathbf{K}$ such that the diagram \[\begin{tikzcd}
	& 0 & V & V\oplus \mathbf{K} & {\mathbf{K}} & 0 \\
	& 0 & V & V\oplus\mathbf{K} & {\mathbf{K}} & 0 \\
	{}
	\arrow[from=1-2, to=1-3]
	\arrow["\iota", from=1-3, to=1-4]
	\arrow["{\mathrm{id}}"', from=1-3, to=2-3]
	\arrow["\pi", from=1-4, to=1-5]
	\arrow["h", from=1-4, to=2-4]
	\arrow[from=1-5, to=1-6]
	\arrow["{\mathrm{id}}", from=1-5, to=2-5]
	\arrow[from=2-2, to=2-3]
	\arrow["\iota"', from=2-3, to=2-4]
	\arrow["\pi"', from=2-4, to=2-5]
	\arrow[from=2-5, to=2-6]
\end{tikzcd}\] commutes. We define $h$ by $h(u,\lambda)=(u+\lambda v,\lambda) $. The diagram of vector spaces commutes. Moreover, $$h(\theta_W(x,y)(u,\lambda))=h(\lambda\varphi(x,y)+\theta(x,y)(u),0)= (\lambda\varphi(x,y)+\theta(x,y)(u),0) $$ and $$\theta_W'(x,y)(h(u,\lambda))=\theta_W'(x,y)(u+\lambda v,\lambda)=(\lambda\varphi'(x,y)+\theta(x,y)(u+\lambda v),0) .$$ So  \begin{align*}
    h(\theta_W(x,y)(u,\lambda))- \theta_W'(x,y)(h(u,\lambda))=& (\lambda(\varphi(x,y)- \varphi'(x,y)) + \theta(x,y)(u)-\theta(x,y)(u+\lambda v),0)\\ =& (0,0),  
\end{align*} according to  the definition of $v$. Hence, $h$ is a morphism of $T$-pseudo-modules and this construction leads to a map $H^2(T,V)\to \mathrm{Ext(V)} $ which is inverse to the map $\mathrm{Ext}(V)\to H^2(T,V) $. Thus, we have the following result.

\begin{prop}
    There is a one-to-one correspondence between $H^2(T,V)$ and $\mathrm{Ext}(V) $.
\end{prop}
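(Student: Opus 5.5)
The plan is to build two maps, $\Phi:\mathrm{Ext}(V)\to H^2(T,V)$ and $\Psi:H^2(T,V)\to\mathrm{Ext}(V)$, and show that they are mutually inverse; the text just before the statement already contains most of the ingredients.

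\emph{The map $\Phi$.} Take a one-dimensional right pseudo-extension $0\to V\xrightarrow{f}W\xrightarrow{g}\mathbf{K}\to 0$. Choose $u\in W$ with $g(u)=1$ and put $\varphi(x,y)=f^{-1}(\theta_W(x,y)(u))$.
\begin{enumerate}
\item $\varphi$ is well defined because $g$ is a morphism into the trivial pseudo-module.
\item To see that $\varphi$ is a $2$-cocycle, compute $f\delta^2\varphi$. It becomes exactly the left-hand side of the pseudo-representation identity for $\theta_W$ applied to $u$, which vanishes. Only this identity (\eqref{rep1}) is needed, so the earlier computation done for genuine extensions carries over verbatim.
\item For independence of $u$: two choices differ by some $f(v)$, and the cocycles then differ by $\delta^0 v$.
\item For independence of the representative of the class, an equivalence $h$ with $h\circ f=f'$ and $g'\circ h=g$ sends $u$ to an admissible $u'$, and $\varphi$ is unchanged.
\end{enumerate}

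\emph{The map $\Psi$.} For a $2$-cocycle $\varphi$, equip $V\oplus\mathbf{K}$ with $\theta'(x,y)(u,\lambda)=(\lambda\varphi(x,y)+\theta(x,y)u,0)$. The displayed computation before the definition shows this is a pseudo-representation. The inclusion $\iota$ and projection $\pi$ are then morphisms: $\pi\circ\theta'=0$, and $\theta'$ restricts to $\theta$ on $V$. If $\varphi-\varphi'=\delta^0 v$, the map $h(u,\lambda)=(u+\lambda v,\lambda)$ is an equivalence, as already checked. Hence $\Psi$ is well defined on cohomology classes.

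\emph{Mutual inverses.} For $\Phi\circ\Psi$, choose $u=(0,1)$ in $V\oplus\mathbf{K}$. Then $\theta'(x,y)(0,1)=(\varphi(x,y),0)$, so we recover $\varphi$ exactly. For $\Psi\circ\Phi$, start from $W$ with a chosen $u$. The linear map $k:V\oplus\mathbf{K}\to W$, $(v,\lambda)\mapsto f(v)+\lambda u$, is a bijection compatible with $\iota,f$ and $\pi,g$. It intertwines the actions because
\[
\theta_W(x,y)(f(v)+\lambda u)=f(\theta(x,y)v)+\lambda f(\varphi(x,y)).
\]
So $k$ is an equivalence between $\Psi(\Phi[W])$ and $[W]$.

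\emph{Expected obstacle.} This last step is where care is needed. Equivalence is defined by a commutative diagram with a morphism $h$, and $h$ must be a morphism of pseudo-modules, not merely of vector spaces. Checking this intertwining identity, together with the fact that $f$ is injective so that $f^{-1}$ is unambiguous on $\mathrm{Ker}(g)$, is the only point that goes beyond bookkeeping. Everything else reduces to the identities already verified in the text.
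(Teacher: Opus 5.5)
Your proposal is correct and follows essentially the same route as the paper: the cocycle $\varphi(x,y)=f^{-1}(\theta_W(x,y)(u))$, the pseudo-module structure on $V\oplus\mathbf{K}$ built from a cocycle, and the equivalence $h(u,\lambda)=(u+\lambda v,\lambda)$ for cohomologous cocycles. The only difference is that you explicitly check that the two constructions are mutually inverse, in particular via the intertwining bijection $k(v,\lambda)=f(v)+\lambda u$ for $\Psi\circ\Phi$, whereas the paper only asserts this step.
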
 

\begin{rmq}
    Yamaguti studied extensions of modules of Lie triple systems in \cite{Y68} and, for this purpose, introduced the notion of a weak representations, which is a pair $(V,\theta) $, consisting of a vector space $V$ and a bilinear map $\theta:T\times T\to \mathrm{End}(V)$ satisfying \eqref{rep2}. A new cochain complex adapted to weak representations was introduced. We will explore the adaptation of these notions to the restricted case in a future work.
\end{rmq}

\subsubsection{Extensions of Lie triple systems}

In this subsection, we investigate the link between extensions of Lie triple systems and the third cohomology group of the Yamaguti's cohomology theory.

\begin{defi}
    An ideal of a Lie triple system $T$ is a subspace $I\subset T$ such that $[I,T,T]\subset I $.
\end{defi}

Because of the Jacobi identity of a Lie triple system, this is equivalent to the conditions $[T,T,I]\subset I $ and $[T,I,T]\subset I $. An ideal of a Lie triple system is called abelian if in addition $[T,I,I]=0 $. Again, this implies that $[I,T,I]=0 $ and $[I,I,T]=0 $.

\begin{defi}
    Let $T,
    \mathcal{U},\mathcal{M}$ be Lie triple systems. We say that $T$ is an extension of $\mathcal{U}$ by $\mathcal{M}$ if there exists an exact sequence of Lie triple systems $$0\to \mathcal{M}\to T\to \mathcal{U}\to 0. $$ Two extensions \[\begin{tikzcd}
	0 & \mathcal{M} & T & {\mathcal{U}} & 0
	\arrow[from=1-1, to=1-2]
	\arrow["\iota", from=1-2, to=1-3]
	\arrow["\pi", from=1-3, to=1-4]
	\arrow[from=1-4, to=1-5]
\end{tikzcd}\] and \[\begin{tikzcd}
	0 & \mathcal{M} & T' & {\mathcal{U}} & 0
	\arrow[from=1-1, to=1-2]
	\arrow["\iota'", from=1-2, to=1-3]
	\arrow["\pi'", from=1-3, to=1-4]
	\arrow[from=1-4, to=1-5]
\end{tikzcd}\] of $\mathcal{U}$ by $\mathcal{M}$ are equivalent if there exists a map $F:T\to T'$ such that the diagram \[\begin{tikzcd}
	0 & \mathcal{M} & T & {\mathcal{U}} & 0 \\
	0 & \mathcal{M} & {T'} & {\mathcal{U}} & 0
	\arrow[from=1-1, to=1-2]
	\arrow["\iota", from=1-2, to=1-3]
	\arrow["{\mathrm{id}}", from=1-2, to=2-2]
	\arrow["\pi", from=1-3, to=1-4]
	\arrow["F", from=1-3, to=2-3]
	\arrow[from=1-4, to=1-5]
	\arrow["{\mathrm{id}}", from=1-4, to=2-4]
	\arrow[from=2-1, to=2-2]
	\arrow["{\iota'}", from=2-2, to=2-3]
	\arrow["{\pi'}", from=2-3, to=2-4]
	\arrow[from=2-4, to=2-5]
\end{tikzcd}\] is commutative. 
\end{defi}

An extension \[\begin{tikzcd}
	0 & \mathcal{M} & T & {\mathcal{U}} & 0
	\arrow[from=1-1, to=1-2]
	\arrow["\iota", from=1-2, to=1-3]
	\arrow["\pi", from=1-3, to=1-4]
	\arrow[from=1-4, to=1-5]
\end{tikzcd}\] is called abelian if $\iota(\mathcal{M}) $ is an abelian ideal of $T$. We denote by $\mathrm{Ext}(\mathcal{U},\mathcal{M}) $ the set of equivalence classes of abelian extensions of $\mathcal{U} $ by $\mathcal{M}$. We will only consider abelian extensions. Let $l:\mathcal{U}\to T$ be a section of this exact sequence i.e. a linear map such that $\pi\circ l=\mathrm{id}$. We can define a structure of $\mathcal{U}- $module on $\mathcal{M}$ by: \begin{equation}\label{eqth}
    \theta(u,v)(m)= [m,l(u),l(v)],
\end{equation} for all $m\in \mathcal{M}, u,v\in \mathcal{U}$.

\begin{lem}\cite{Z14}
    With the above notations, $(\mathcal{M},\theta) $ is a representation of $\mathcal{U} $. This representation does not depend on the choice of the section $l$. Moreover, two equivalent abelian extensions give the same representation. 
\end{lem}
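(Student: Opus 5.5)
We need to prove three things: the operators $\theta(u,v)$ satisfy \eqref{rep1} and \eqref{rep2}; $\theta$ does not depend on the section $l$; and equivalent abelian extensions induce the same $\theta$. Throughout we identify $\mathcal{M}$ with $\iota(\mathcal{M})\subset T$. Since $\iota(\mathcal{M})$ is an ideal, $[m,l(u),l(v)]\in\mathcal{M}$, so $\theta$ is well defined and clearly bilinear.

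\textbf{Independence of the section.} We start with independence, because it is used later. Let $l'$ be another section. Then $\pi\circ(l-l')=0$, so $l(u)-l'(u)\in\mathcal{M}$. Expanding $[m,l(u),l(v)]$ with $l=l'+(l-l')$ gives $[m,l'(u),l'(v)]$ plus terms containing at least two entries from $\mathcal{M}$. These terms are of the form $[\mathcal{M},\mathcal{M},T]$ or $[\mathcal{M},T,\mathcal{M}]$, and they vanish because the ideal is abelian, by the remark after the definition of abelian ideals.

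\textbf{Equivalent extensions.} If $F:T\to T'$ realizes an equivalence, then $F\circ l$ is a section of $\pi'$, since $\pi'F=\pi$. Because $F$ is a morphism of Lie triple systems restricting to the identity on $\mathcal{M}$, we get
\[F[m,l(u),l(v)]=[m,Fl(u),Fl(v)].\]
By independence of the section, the right-hand side computes $\theta'(u,v)(m)$.

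\textbf{Representation identities.} We set $X=l(x)$, $Y=l(y)$, and so on. The key fact is
\[l([x,y,z])-[X,Y,Z]\in\mathcal{M},\]
since $\pi$ is a morphism. By the abelian property, any element of $\mathcal{M}$ placed in the second or third slot alongside $m$ gives zero. Hence $\theta(x,[y,z,t])(m)=[m,X,[Y,Z,T]]$. We also need
\[D(x,y)(m)=[m,Y,X]-[m,X,Y]=[X,Y,m],\]
which follows from skew-symmetry and the cyclic identity.

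Identity \eqref{rep2} then reads
\[[[m,X,Y],Z,T]\text{-type terms}\ \Longleftrightarrow\ \text{the derivation property of } \mathrm{ad}(X,Y)=[X,Y,\cdot] \text{ applied to } [m,Z,T],\]
which is exactly axiom 3 of a Lie triple system with $u=X$, $v=Y$. For \eqref{rep1}, the terms become
\[[[m,X,Y],Z,T]-[[m,X,Z],Y,T]-[m,X,[Y,Z,T]]+[Y,Z,[m,X,T]].\]
Expanding $[Y,Z,[m,X,T]]$ via axiom 3 produces $[[Y,Z,m],X,T]+[m,[Y,Z,X],T]+[m,X,[Y,Z,T]]$. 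This cancels the third term, and the remaining expression reduces, using the cyclic identity and skew-symmetry, to the standard identity valid for the adjoint representation.

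The cleanest route is to note that these are precisely the computations showing that the adjoint $\theta(x,y)z=[z,x,y]$ is a representation of $T$, restricted to $m\in\mathcal{M}$. That result was stated in the earlier example. We would then invoke it directly: \eqref{rep1} and \eqref{rep2} hold in $T$ for the adjoint representation, and replacing the elements $X,Y,Z,T\in T$ with images of $l$ is harmless by the key fact above.

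\textbf{Main obstacle.} The main obstacle is the bookkeeping of the correction terms $l([y,z,t])-[Y,Z,T]$. We must check that they always land in a slot where abelianness kills them, i.e. that they always occur together with $m$ inside the same bracket. This is true for each term of \eqref{rep1} and \eqref{rep2} by inspection, so the verification is routine once this point is confirmed.
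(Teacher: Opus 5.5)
Your proposal is correct. The paper gives no proof of this lemma; it quotes it from \cite{Z14}. Your argument is the natural direct verification. Independence of the section and invariance under equivalence both follow from $[\mathcal{M},\mathcal{M},T]=[\mathcal{M},T,\mathcal{M}]=0$, together with $\pi'\circ F=\pi$, which makes $F\circ l$ a section of $T'$. The identities \eqref{rep1} and \eqref{rep2} are the adjoint-representation identities of $T$ evaluated at $m\in\mathcal{M}$ and $l(x),l(y),l(z),l(t)$. The discrepancies $l([y,z,t])-[l(y),l(z),l(t)]\in\mathcal{M}$ always share a bracket with $m$, so they vanish. Your verification of \eqref{rep2} as axiom 3 with $(u,v)=(l(x),l(y))$, using $D(x,y)(m)=[l(x),l(y),m]$, is exactly right.

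One correction concerns your paragraph on \eqref{rep1}. After you expand $[Y,Z,[m,X,\cdot\,]]$ with $(u,v)=(Y,Z)$, the cyclic identity and skew-symmetry alone do not kill the leftover expression. Its four terms have inner brackets on four different variable sets, so a second use of axiom 3 is needed.

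The quick route is to expand with $(u,v)=(m,X)$ instead. Write $W=l(t)$ to avoid the clash with the triple system $T$. Then
\[[m,X,[Y,Z,W]]=[[m,X,Y],Z,W]+[Y,[m,X,Z],W]+[Y,Z,[m,X,W]],\]
and the left-hand side of \eqref{rep1} applied to $m$ collapses to $-[[m,X,Z],Y,W]-[Y,[m,X,Z],W]$. This is $0$ by skew-symmetry.

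You ultimately invoke the fact, stated as an example in the paper, that the adjoint representation is a representation. So the proof is complete as written; the direct computation above just makes that step self-contained.
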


Given a section $l$, we define the map $f:\mathcal{U}\times\mathcal{U}\times \mathcal{U}\to \mathcal{M} $, $$f(x_1,x_2,x_3)= [l(x_1),l(x_2),l(x_3)]-l([x_1,x_2,x_3]) $$ which takes values in $\mathcal{M}$ because $\pi$ is a morphism of Lie triple systems and $\pi\circ l=\mathrm{id} $. The map $f$ lies in $C^3(\mathcal{U},\mathcal{M}) $. 

\begin{lem}\cite{Y60}
    With the above notations, $f $ is a $3$-cocycle of the cohomology defined by the representation defined in \eqref{eqth} and its cohomology class does not depend on the choice of the section.
\end{lem}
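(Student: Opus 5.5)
The plan has three parts: check that $f$ is a legitimate Yamaguti $3$-cochain, show that $\delta^3 f=0$ by reading it off the ternary Jacobi identity, and compare the cochains attached to two sections.

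\emph{$f$ is a cochain.} First I check that $f\in C^3(\mathcal{U},\mathcal{M})$. Skew-symmetry in the first two variables is clear, since both $[l(x_1),l(x_2),l(x_3)]$ and $l([x_1,x_2,x_3])$ are skew in $x_1,x_2$. The cyclic identity \eqref{eqcycl} follows by summing cyclically and applying axiom 2 in $T$ and in $\mathcal{U}$.

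\emph{Cocycle condition.} I will apply the fundamental identity (axiom 3) in $T$ to $l(x_1),\ldots,l(x_5)$ and substitute $[l(a),l(b),l(c)]=l([a,b,c])+f(a,b,c)$ everywhere. Consider, for instance, the term $[l(x_1),l(x_2),[l(x_3),l(x_4),l(x_5)]]$. It becomes
\[
[l(x_1),l(x_2),l([x_3,x_4,x_5])]+[l(x_1),l(x_2),f(x_3,x_4,x_5)].
\]
The first piece equals $l([x_1,x_2,[x_3,x_4,x_5]])+f(x_1,x_2,[x_3,x_4,x_5])$. The second piece is a bracket with one entry in $\mathcal{M}$. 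Using skew-symmetry and the cyclic identity, I rewrite it as $D(x_1,x_2)f(x_3,x_4,x_5)$, where $\theta$ is given by \eqref{eqth}.

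The other three terms of the fundamental identity are treated the same way. Brackets with two entries in $\mathcal{M}$ vanish because the ideal is abelian. Brackets with one entry in $\mathcal{M}$ become $\theta$ or $D$ applied to values of $f$; for example $[f(x_1,x_2,x_3),l(x_4),l(x_5)]=\theta(x_4,x_5)f(x_1,x_2,x_3)$ and $[l(x_3),f(x_1,x_2,x_4),l(x_5)]=-\theta(x_3,x_5)f(x_1,x_2,x_4)$. The $l(\cdot)$ terms cancel by the fundamental identity in $\mathcal{U}$, after applying the injective map $l$. What remains is exactly the formula for $\delta^3 f(x_1,\ldots,x_5)=0$ in the displayed expression for $\delta^3$.

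\emph{Independence of the section.} Let $l'$ be a second section and write $l'=l+g$ with $g:\mathcal{U}\to\mathcal{M}$ linear. I expand $[l'(x_1),l'(x_2),l'(x_3)]$ multilinearly. Terms containing two or more values of $g$ vanish because the ideal is abelian, so
\[
f'-f=[g(x_1),l(x_2),l(x_3)]+[l(x_1),g(x_2),l(x_3)]+[l(x_1),l(x_2),g(x_3)]-g([x_1,x_2,x_3]).
\]
Rewriting each bracket through $\theta$ and $D$ as in \eqref{lwo1} gives $f'-f=\delta^1 g$. The middle term becomes $-\theta(x_1,x_3)g(x_2)$, and the third becomes $D(x_1,x_2)g(x_3)$, using the cyclic identity. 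By the preceding lemma of \cite{Z14}, $\theta$ does not depend on the section, so $\delta^1$ is the same operator for $l$ and $l'$. Hence the classes of $f$ and $f'$ in $H^3(\mathcal{U},\mathcal{M})$ agree.

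\emph{Main obstacle.} I expect the main difficulty to be the sign and ordering bookkeeping. Each bracket with an $\mathcal{M}$-entry must be converted to the form $[m,\cdot,\cdot]$, using skew-symmetry and the cyclic identity, so that it matches $\theta$ or $D(x,y)=\theta(y,x)-\theta(x,y)$. The task is to confirm that the resulting eight terms reproduce $\delta^3$ with the right signs. I would handle this term by term, using $[a,m,b]=-\theta(a,b)m$ and $[a,b,m]=D(a,b)m$ for $m\in\mathcal{M}$.
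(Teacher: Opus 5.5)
Your proposal is correct. It follows the paper's line: the paper defers the cocycle identity to Yamaguti, but your derivation from the fundamental identity is the standard one. Your computation $f'-f=\delta^1 g$ is the same expansion the paper carries out just afterwards for equivalent extensions (with $g=l'-F\circ l$), specialized to $F=\mathrm{id}$.

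Two cosmetic points. Linearity of $l$, not injectivity, is what cancels the $l(\cdot)$ terms. Also, no brackets with two $\mathcal{M}$-entries actually occur in the cocycle computation; they only occur in the section-change expansion.
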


 We show here that its cohomology class depends only on the equivalence class of the abelian extension. Let $T$ and $T'$ be two equivalent abelian extensions \[\begin{tikzcd}
	0 & \mathcal{M} & T & {\mathcal{U}} & 0 \\
	0 & \mathcal{M} & {T'} & {\mathcal{U}} & 0
	\arrow[from=1-1, to=1-2]
	\arrow["\iota", from=1-2, to=1-3]
	\arrow["{\mathrm{id}}", from=1-2, to=2-2]
	\arrow["\pi", from=1-3, to=1-4]
	\arrow["F", from=1-3, to=2-3]
	\arrow[from=1-4, to=1-5]
	\arrow["{\mathrm{id}}", from=1-4, to=2-4]
	\arrow[from=2-1, to=2-2]
	\arrow["{\iota'}", from=2-2, to=2-3]
	\arrow["{\pi'}", from=2-3, to=2-4]
	\arrow[from=2-4, to=2-5]
\end{tikzcd}\] Let $l$ and $l'$ be two sections such that $\pi\circ l=\mathrm{id} $ and $\pi'\circ l'=\mathrm{id} $. We recall that these two abelian extensions define the same representation $\theta:\mathcal{U}\times \mathcal{U}\to \mathcal{M}$. Let $$f(x_1,x_2,x_3)=[l(x_1),l(x_2),l(x_3)]-l([x_1,x_2,x_3]) $$ and $$f'(x_1,x_2,x_3)= [l'(x_1),l'(x_2),l'(x_3)]-l'([x_1,x_2,x_3]) $$ be the two cocycles defined as above. We have $\pi'\circ F\circ l= \pi\circ l=\mathrm{id}= \pi'\circ l' $. So $F\circ l$ is a section of $T' $ and $F\circ l -l' $ takes values in $\mathcal{M}$. Let $g= l' -F\circ l  $. \begin{align*}
    &f'(x_1,x_2,x_3)= [F\circ l(x_1)+g(x_1),F\circ l(x_2)+g(x_2),F\circ l(x_3)+g(x_3)]- F\circ l([x_1,x_2,x_3])\\ & - g([x_1,x_2,x_3]) \\ =& [g(x_1),F\circ l(x_2),F\circ l(x_3)]+[F\circ l(x_1),g(x_2),F\circ l(x_3)]+ [F\circ l(x_1),F\circ l(x_2),g(x_3)]\\ &-g([x_1,x_2,x_3]) + [F\circ l(x_1),F\circ l(x_2),F\circ l(x_3)] - F\circ l([x_1,x_2,x_3])\\ =& \theta(x_2,x_3)(g(x_1)) - \theta(x_1,x_3)(g(x_2)) + D(x_1,x_2)(g(x_3)) - g([x_1,x_2,x_3]) + F\circ f(x_1,x_2,x_3)\\ =& f(x_1,x_2,x_3)+ \delta^1 g(x_1,x_2,x_3),
\end{align*} since $F $ acts as the identity on $\mathcal{M}$. Thus, $f$ and $f'$ are in the same cohomology class.

We know that we can identify $T$ with $\mathcal{M}\times \mathcal{U} $ by $(m,x)\mapsto \iota(m)+l(x) $. In $T$, because $\iota(\mathcal{M}) $ is abelian in $T$, we have the following relation: \begin{align*}
    [m_1+&l(x_1),m_2+l(x_2),m_3+l(x_3)] \\ &=[m_1,l(x_2),l(x_3)] + [l(x_1),m_2,l(x_3)]+[l(x_1),l(x_2),m_3]+ f(x_1,x_2,x_3)+l([x_1,x_2,x_3]).
\end{align*} This leads us to the following definition of a Lie triple system product on $\mathcal{M}\times \mathcal{U} $, where $(\mathcal{M},\theta)$ is a representation of $\mathcal{U}$ and $f$ is a $3$-cocycle of the associated cohomology: \begin{align*}
    [(m_1,x_1)&,(m_2,x_2),(m_3,x_3)] \\ &= (\theta(x_2,x_3)(m_1)- \theta(x_1,x_3)(m_2)+D(x_1,x_2)(m_3)+f(x_1,x_2,x_3),[x_1,x_2,x_3])
\end{align*} and $
\mathcal{M}$ is abelian in $\mathcal{M}\times \mathcal{U} $. Hence, this yields an abelian extension of $\mathcal{U}$ by $\mathcal{M}$: \[\begin{tikzcd}
	0 & \mathcal{M} & {\mathcal{M}\times\mathcal{U}} & {\mathcal{U}} & 0.
	\arrow[from=1-1, to=1-2]
	\arrow[from=1-2, to=1-3]
	\arrow[from=1-3, to=1-4]
	\arrow[from=1-4, to=1-5]
\end{tikzcd}\]

\begin{rmq}
    When the cocycle is equal to zero, it is the semi-direct product of $\mathcal{U}$ by the representation $(\mathcal{M},\theta)$.
\end{rmq}

Zhang showed that two abelian extensions defined with two cocycles $f $ and $f'$ are equivalent if and only if $f$ and $f'$ are in the same cohomology class.

\begin{lem}\cite{Z14}
    Let $\mathcal{U} $ be a Lie triple system and $(\mathcal{M},\theta) $ be a representation of $\mathcal{U}$. Let $f,f'\in C^3(\mathcal{U},m) $ be two $3$-cocycles. Then, the two abelian extension given by $f$ and $f'$ are equivalent if and only if $f$ and $f'$ are in the same cohomology class.
\end{lem}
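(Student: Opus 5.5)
We prove the two implications separately, working throughout on the underlying space $\mathcal{M}\times\mathcal{U}$. Write $T_f$ and $T_{f'}$ for $\mathcal{M}\times\mathcal{U}$ with the brackets defined above from $f$ and $f'$ respectively:
\begin{align*}
[(m_1,x_1),(m_2,x_2),(m_3,x_3)]_f= (&\theta(x_2,x_3)(m_1)-\theta(x_1,x_3)(m_2)+D(x_1,x_2)(m_3)\\ &+f(x_1,x_2,x_3),[x_1,x_2,x_3]).
\end{align*}
Both extensions use the canonical injection $\iota(m)=(m,0)$ and the projection $\pi(m,x)=x$.

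\emph{Cohomologous cocycles give equivalent extensions.} Suppose $f'-f=\delta^1 g$ for some $g\in C^1(\mathcal{U},\mathcal{M})$. Define
$$F:T_f\to T_{f'},\qquad F(m,x)=(m-g(x),x).$$
This map is linear and bijective, with inverse $(m,x)\mapsto(m+g(x),x)$. It is compatible with the diagram: $F\circ\iota=\iota$ and $\pi\circ F=\pi$. It remains to check that $F$ is a morphism of Lie triple systems, i.e. that $F$ carries $[\cdot,\cdot,\cdot]_f$ to $[\cdot,\cdot,\cdot]_{f'}$.
\begin{itemize}
\item Apply $F$ to a bracket in $T_f$. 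The $\mathcal{U}$-component is $[x_1,x_2,x_3]$, and the $\mathcal{M}$-component is the $T_f$ formula minus $g([x_1,x_2,x_3])$.
\item Compute the bracket in $T_{f'}$ of the images $(m_i-g(x_i),x_i)$. The $\mathcal{M}$-component is the same $\theta$/$D$ terms in the $m_i$, plus $f'(x_1,x_2,x_3)$, minus $\theta(x_2,x_3)g(x_1)-\theta(x_1,x_3)g(x_2)+D(x_1,x_2)g(x_3)$.
\item Subtract the two. By formula \eqref{lwo1}, the difference is exactly $f'-f-\delta^1 g$, which vanishes by hypothesis.
\end{itemize}
The sign of $g$ in $F$ must be chosen to match the convention $f'-f=\delta^1g$; with the opposite convention one uses $F(m,x)=(m+g(x),x)$ instead.

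\emph{Equivalent extensions give cohomologous cocycles.} Suppose $F:T_f\to T_{f'}$ makes the diagram commute. From $\pi'F=\pi$ we get that $F(0,x)$ has second component $x$, so $F(0,x)=(-g(x),x)$ for a unique linear $g:\mathcal{U}\to\mathcal{M}$. From $F\iota=\iota'$ we get $F(m,0)=(m,0)$. By linearity, $F(m,x)=(m-g(x),x)$, which is exactly the form used in the first direction. The computation there shows that $F$ preserves brackets if and only if $f'-f=\delta^1 g$. So $f$ and $f'$ are cohomologous.

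An alternative for this direction: $x\mapsto(0,x)$ is a section $l$ of $T_f$ whose associated cocycle is $f$. Apply the computation done before the lemma (comparing the cocycles of equivalent extensions via $F\circ l$) with the section $l'(x)=(0,x)$ of $T_{f'}$; it gives $f'=f+\delta^1(l'-F\circ l)$.

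\emph{Main obstacle.} The only real work is the bracket-compatibility computation for $F$. It reduces to matching the terms linear in $g$ against \eqref{lwo1}. The key point is that the terms quadratic and cubic in $g$ drop out because $\mathcal{M}$ is an abelian ideal: the bracket formula is linear in the $\mathcal{M}$-components, so no products of $g$'s ever appear.
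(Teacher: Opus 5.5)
Your proof is correct and follows the same route as the paper, which cites Zhang for this lemma and, in its restricted analogue, uses the same normal form $F(m,x)=(m+g(x),x)$ forced by the commutative diagram. The paper likewise reduces bracket-compatibility of $F$ to $f-f'=\delta^1 g$, which is your computation with the opposite sign convention for $g$. Your alternative via the sections $l(x)=(0,x)$ and $l'(x)=(0,x)$ is exactly the comparison computation the paper performs just before the lemma.
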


Thus, we have a map from $H^3(\mathcal{U},\mathcal{M}) $ to $\mathrm{Ext}(\mathcal{U},\mathcal{M}) $ which is injective. The map from $\mathrm{Ext}(\mathcal{U},\mathcal{M}) $ to $H^3(\mathcal{U},\mathcal{M}) $ is not well defined because the representation defined by the extension depends on its equivalence class. 

Now, let $\mathcal{U}$ be a Lie triple system and $(\mathcal{M},\theta)$ be a representation of $\mathcal{U}$. Let $$0\to \mathcal{M} \to T\to \mathcal{U}\to 0 $$ be an abelian extension inducing a representation on $\mathcal{M}$ equal to $\theta$, i.e. $$\theta'(u,v)(m):=[m,l(u),l(v)]=\theta(u,v)(m) ,$$ where $l:\mathcal{U}\to T$ is a section of the short exact sequence. Let $f\in C^3(\mathcal{U},\mathcal{M}) $ be the corresponding cocycle. Then, the extension $$0\to \mathcal{M}\to \mathcal{M}\times \mathcal{U}\to \mathcal{U}\to 0 ,$$ where the bracket on $\mathcal{M}\times \mathcal{U} $ is defined by  \begin{align*}
    [(m_1,x_1)&,(m_2,x_2),(m_3,x_3)] \\ &= (\theta(x_2,x_3)(m_1)- \theta(x_1,x_3)(m_2)+D(x_1,x_2)(m_3)+f(x_1,x_2,x_3),[x_1,x_2,x_3])
\end{align*} is equivalent to $$0\to \mathcal{M} \to T\to \mathcal{U}\to 0 $$ with the isomorphism of Lie triple systems $$\psi:\mathcal{M}\times \mathcal{U}\to T, (m,u)\mapsto m+l(u). $$

Then we have the following result.

\begin{thm}
    Let $\mathcal{U} $ be a Lie triple system and $(\mathcal{M},\theta) $ be a representation of $\mathcal{U}$. There is a bijection between equivalence classes of abelian extensions of $\mathcal{U}$ by $\mathcal{M}$ inducing the representation $\theta$ and $H^3(\mathcal{U},\mathcal{M}) $.
\end{thm}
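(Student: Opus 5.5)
We will construct mutually inverse maps between $H^3(\mathcal{U},\mathcal{M})$ and the set $\mathrm{Ext}_\theta(\mathcal{U},\mathcal{M})$ of equivalence classes of abelian extensions of $\mathcal{U}$ by $\mathcal{M}$ whose induced representation \eqref{eqth} equals $\theta$.

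\textbf{Map $\Phi:\mathrm{Ext}_\theta\to H^3$.} Given such an extension, choose a section $l$ and form $f(x_1,x_2,x_3)=[l(x_1),l(x_2),l(x_3)]-l([x_1,x_2,x_3])$. By the lemma of \cite{Y60}, $f$ is a $3$-cocycle for $\theta$, and its class does not depend on $l$. The computation above with $g=l'-F\circ l$ shows that equivalent extensions give cohomologous cocycles, $f'=f+\delta^1 g$. Since the representation is fixed to be $\theta$, the class lies in the single group $H^3(\mathcal{U},\mathcal{M})$. This removes the ambiguity that blocked a global map $\mathrm{Ext}(\mathcal{U},\mathcal{M})\to H^3$, so $\Phi$ is well defined.

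\textbf{Map $\Psi:H^3\to\mathrm{Ext}_\theta$.} Given a cocycle $f$, equip $\mathcal{M}\times\mathcal{U}$ with the bracket displayed above. We must check that it is a Lie triple system. Skew-symmetry in the first two slots follows from $f(x,x,y)=0$ and $D(x,y)=-D(y,x)$. The cyclic identity follows from \eqref{eqcycl}, together with $\theta(y,z)-\theta(z,y)=-D(y,z)$ for the $\mathcal{M}$-components. The fundamental identity is the expected main obstacle. We expand both sides and sort terms by their $\mathcal{M}$-components:
\begin{itemize}
\item terms linear in the $m_i$ cancel by \eqref{rep1} and \eqref{rep2}, exactly as in the semi-direct product proposition;
\item the remaining terms in $f$ are precisely $\delta^3 f(x_1,\dots,x_5)=0$.
\end{itemize}

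The extension so obtained is abelian, since brackets with two entries from $\mathcal{M}$ vanish. Taking $l(x)=(0,x)$ gives $[(m,0),(0,u),(0,v)]=(\theta(u,v)m,0)$, so the induced representation is $\theta$, and the associated cocycle is $f$ itself. By the lemma of \cite{Z14}, cohomologous cocycles give equivalent extensions, so $\Psi$ is well defined on classes. The same lemma shows that $\Psi$ is injective.

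\textbf{The maps are inverse.} The observation just made gives $\Phi\circ\Psi=\mathrm{id}$. For $\Psi\circ\Phi=\mathrm{id}$, take an extension $T$ inducing $\theta$ with cocycle $f$. The map $\psi:(m,u)\mapsto m+l(u)$ is a bijection, and by the relation displayed for brackets in $T$ it preserves brackets. Here we use that $[m,l(u),l(v)]=\theta(u,v)m$ and that $\mathcal{M}$ is abelian, which expresses the other two mixed brackets through $\theta$ and $D$. Finally, $\psi$ commutes with the inclusions and projections, so $T$ is equivalent to $\Psi([f])$. Hence $\Phi$ and $\Psi$ are mutually inverse bijections.
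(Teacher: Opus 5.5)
Your proposal is correct and follows essentially the same route as the paper. Both arguments use the section-induced cocycle $f$ with its independence of $l$ and of the equivalence class, the twisted product on $\mathcal{M}\times\mathcal{U}$, Zhang's lemma for well-definedness and injectivity on classes, and the isomorphism $(m,u)\mapsto m+l(u)$ to recover any extension inducing $\theta$. Your explicit check that the twisted bracket satisfies the Lie triple system axioms also fills in a step the paper only asserts: the fundamental identity's $m$-linear terms reduce to the semi-direct product case, and its $f$-terms reduce to $\delta^3 f=0$.
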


Now, we restrict ourselves to the case of central extensions. 

\begin{defi}
    We say that an abelian extension \[\begin{tikzcd}
	0 & \mathcal{M} & T & {\mathcal{U}} & 0
	\arrow[from=1-1, to=1-2]
	\arrow["\iota", from=1-2, to=1-3]
	\arrow["\pi", from=1-3, to=1-4]
	\arrow[from=1-4, to=1-5]
\end{tikzcd}\] is central if $\iota(\mathcal{M}) $ is included in the center of $T$. We denote by $\mathrm{Ext_c}(\mathcal{U},\mathcal{M}) $ the set of equivalence classes of central extensions of $\mathcal{U}$ by $\mathcal{M}$. 
\end{defi}

In the case of a central extension, the representation \eqref{eqth} induced on $\mathcal{M}$ is trivial. Conversely, given a trivial $\mathcal{U}$-module $\mathcal{M}$ and $f\in C^3(\mathcal{U},\mathcal{M})$ a $3$-cocycle, the extension induced is central. Indeed, the bracket defined on $\mathcal{M}\times \mathcal{U}$ is now $$ [(m_1,x_1),(m_2,x_2),(m_3,x_3)]=(f(x_1,x_2,x_3),[x_1,x_2,x_3]) $$ and it is zero whenever one of the $x_i$ is zero. Thus, we have the following result.

\begin{thm}
    Let $\mathcal{U}$ be a Lie triple system and $\mathcal{M}$ a vector space, seen as a trivial $\mathcal{U}$-module. Then, there is a bijection between equivalence classes of central extension of $\mathcal{U}$ by $\mathcal{M}$ and $H^3(\mathcal{U},\mathcal{M})$.
\end{thm}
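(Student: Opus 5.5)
The plan is to specialize the preceding theorem on abelian extensions inducing a fixed representation to the case $\theta=0$. The whole argument comes down to one observation: central extensions of $\mathcal{U}$ by $\mathcal{M}$ are exactly the abelian extensions whose induced representation \eqref{eqth} is the trivial one.

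\emph{Forward direction.} Let $0\to\mathcal{M}\xrightarrow{\iota}T\xrightarrow{\pi}\mathcal{U}\to0$ be central and let $l$ be a section. Since $\iota(\mathcal{M})\subset Z(T)$, we have $[\iota(\mathcal{M}),T,T]=0$. In particular $[T,\iota(\mathcal{M}),\iota(\mathcal{M})]=0$, so $\iota(\mathcal{M})$ is an abelian ideal and the extension is abelian. Moreover $\theta(u,v)(m)=[m,l(u),l(v)]=0$, so the induced representation is trivial.

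\emph{Converse direction.} Take an abelian extension inducing $\theta=0$. By the construction preceding the previous theorem, it is equivalent to $\mathcal{M}\times\mathcal{U}$ with bracket $[(m_1,x_1),(m_2,x_2),(m_3,x_3)]=(f(x_1,x_2,x_3),[x_1,x_2,x_3])$. When the first slot is $(m,0)$ this gives $(f(0,x_2,x_3),0)=0$ by linearity of $f$, so $\mathcal{M}$ is central. Centrality is also visible directly in $T$: $[\iota(m),a,b]$ with $a=m_2+l(x_2)$ and $b=m_3+l(x_3)$ expands, by abelianness, to $\theta(x_2,x_3)(m)=0$.

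With these two directions in hand, the set of central extensions coincides with the set of abelian extensions inducing the trivial representation on $\mathcal{M}$. Equivalence of extensions is the same relation in both settings, so the equivalence classes coincide as well. The previous theorem, applied with $\theta=0$, then gives the bijection with $H^3(\mathcal{U},\mathcal{M})$. Here the cohomology is taken with the trivial module, where $\delta^1 g(x_1,x_2,x_3)=-g([x_1,x_2,x_3])$ and $\delta^3$ reduces to its bracket terms.

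The only point requiring care is the converse direction, namely checking that triviality of $\theta$ forces $\iota(\mathcal{M})$ into the full center. This needs vanishing of $[m,a,b]$ for all $a,b\in T$, not just for $a,b\in l(\mathcal{U})$. It follows from the expansion above, using bilinearity together with $[\mathcal{M},\mathcal{M},T]=[\mathcal{M},T,\mathcal{M}]=0$. Everything else is bookkeeping inherited from the previous theorem and from Zhang's lemma.
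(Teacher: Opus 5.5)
Your proposal is correct and follows essentially the same route as the paper: you specialize the bijection for abelian extensions inducing a fixed representation to $\theta=0$, using that central extensions are exactly the abelian extensions with trivial induced representation, and that the model bracket $(f(x_1,x_2,x_3),[x_1,x_2,x_3])$ vanishes whenever one entry lies in $\mathcal{M}$. Your additional direct check in $T$, namely $[\iota(m),a,b]=\theta(x_2,x_3)(m)=0$, is a harmless and slightly more careful supplement to the paper's check, which is done only in the model $\mathcal{M}\times\mathcal{U}$.
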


\section{Cohomology of restricted Lie triple systems}\label{s5}
In this section, we introduce a cohomology for restricted Lie triple systems.
\subsection{Restricted Lie triple systems}

Let $T$ be any Lie triple system and $n\geq 3$ any positive odd integer. For elements $x_1,x_2,\dots , x_n $ of $T$, define \begin{center}
    $(x_1,x_2,\dots,x_n)= [[\cdots [[x_1,x_2,x_3],x_4,x_5],\cdots],x_{n-1},x_n] \in T$.
\end{center} For $x,y\in T $, the expression $(x,Zx+y,Zx+y,\dots , Zx+y) $ with $Zx+y$ occurring $p-1$ times, is a polynomial in $Z$ with coefficients in $T$. For $i=1,\dots,p $, we define $s_i(x,y) \in T$ by requiring $is_i(x,y)$ to be the coefficient of $Z^{i-1} $ in $(x,Zx+y,Zx+y,\dots,Zx+y) $. For any injective embedding $L$ of $T$, we have \begin{center}
    $(x,Zx+y,Zx+y,\cdots,Zx+y)= [Zx+y,[Zx+y,[\cdots ,[Zx+y,x]\cdots]]] = (\ad(Zx+y))^{p-1}(x) $.
\end{center} In this case, $is_i(x,y) $ is the coefficient of $Z^{i-1} $ in the expression $((\ad(Zx+y))^{p-1}(x) $, which agrees with the use of the notation $s_i(x,y) $ in the definition of a restricted Lie algebra. 

\begin{defi}\label{D5.1}
    A Lie triple system $T$ over a field of characteristic $p>2$ is restricted if there is a $p$-map $(\cdot)^{[p]}:T\to T $  such that the following conditions are satisfied: \begin{enumerate}        
        \item $(\alpha x)^{[p]}= \alpha^p x^{[p]} $ for all $\alpha$ in $\mathbf{K}$ and all $x$ in $T$
        \item $(x+y)^{[p]}= x^{[p]}+y^{[p]}+ \sum\limits_{i=1}^{p-1}s_i(x,y) $ for all $x,y$ in $T$,
        \item  $[x,y^{[p]},z]=(x,\underbrace{y,\ldots,y}_p,z) $ for all $x,y,z$ in $T$,
    \end{enumerate} where $is_i(x,y)$ is the coefficient of $Z^{i-1} $ in $(x,Zx+y,Zx+y,\dots,Zx+y) $, and $Zx+y $ appears $p-1$ times.
\end{defi}

\begin{ex}
    Let $L$ be a restricted Lie algebra. Then $L$ equipped with the bracket $[x,y,z]=[[x,y],z] $ is a restricted Lie triple system. 
\end{ex}

We have an explicit formula for  $s_i$: $$is_i(x,y)=  \sum_{\substack{x_j\in \left\{x,y \right\}\\\#\left\{j, x_j=x\right\}=i-1}}(x,y,x_1,\ldots,x_{p-2}) ,$$ where $\#\left\{j, x_j=x\right\} $ refers to the number of $x_j$'s equal to $x$. Then we have \begin{align*}
    \sum\limits_{i=1}^{p-1}s_i(x,y)= &\sum\limits_{i=1}^{p-1}\frac{1}{i}\sum_{\substack{x_j\in \left\{x,y \right\}\\\#\left\{j, x_j=x\right\}=i-1}}(x,y,x_1,\ldots,x_{p-2}) \\ =&  \sum_{\substack{x_j\in \left\{x,y \right\}\\x_1=x,x_2=y}}\frac{1}{\#(x)}(x_1,\ldots,x_p).
\end{align*}

\begin{rmq} 
The $s_i$ coincide with those occurring for restricted Lie algebras. 
\end{rmq}

\begin{defi}
    Let $(T,(\cdot)^{[p]})$ and $ (T',(\cdot)^{[p]'})$ be two restricted Lie triple systems. A morphism of restricted Lie triple systems between $T$ and $T'$ is a morphism of Lie triple systems $f:T\to T' $ such that $f(x^{[p]})=f(x)^{[p]'} $ for all $x\in T$.
\end{defi}

\begin{defi}\cite{BM26}
    Let $T$ be a restricted Lie triple system and $(V,\theta)$ be a representation of $T$. A pair $(V,\theta)$ is called a restricted representation if, in addition, we have  \begin{center}
    $\theta(x^{[p]},y)=\theta(x,y) \circ \theta(x,x)^{\frac{p-1}{2}} ,$ 
    \end{center} 
    \begin{center} 
    $\theta(x,y^{[p]})= \theta(y,y)^{\frac{p-1}{2}}\circ\theta(x,y), $
    \end{center} for all $ x, y\in T  $.
\end{defi}

\begin{ex}
    The adjoint representation is a restricted representation.
\end{ex}

\subsection{The restricted complex}

In this subsection, we give an explicit construction of a complex for the cohomology of a restricted Lie triple system up to degree $5$. 

Let $T$ be a restricted Lie triple system and $(V,\theta) $ be a restricted representation of $T$. We define $C^{-1}_\mathrm{res}(T,V)= C^{-1}(T,V) $, $C^0_\mathrm{res}(T,V)=C^0(T,V) $, $C^1_\mathrm{res}(T,V)= C^1(T,V) $, $C^2_\mathrm{res}(T,V)=C^2(T,V) $, $\delta^{-1}_\mathrm{res}=\delta^{-1} $ and $\delta^0_\mathrm{res}=\delta^0$.

\begin{defi}
    Let $\varphi\in C^3(T,V)$ be a $3$-cochain of $T$. We say that $w:T\to V$ has the $\star $-property with respect to $\varphi$ if for all $\lambda\in \mathbf{K}$, $x,y\in T$, \begin{enumerate}
    \item $w(\lambda x)=\lambda^pw(x) $,
    \item \small{\begin{align*}
        &w(x+y)= w(x)+w(y) \\ &+  \sum_{\substack{x_j\in \left\{x,y \right\}\\x_1=x,x_2=y}}\frac{1}{\#(x)}\sum\limits_{k=0}^{\frac{p-1}{2}-1}\theta(x_{p-1},x_p)\cdots\theta(x_{p-2k+1},x_{p-2k+2}) \varphi((x_1,\ldots,x_{p-2k-2}),x_{p-2k-1},x_{p-2k}).
    \end{align*}}
\end{enumerate}
We define $C_{\mathrm{res}}^3(T,V) $ by $$C_{\mathrm{res}}^3(T,V)= \left\{(\varphi,w), \varphi\in C^3(T,V), w  \text{ has the  $\star$-property w.r.t}\, \varphi \right\}. $$
\end{defi}

Let $\psi\in C^1(T,V)$, we define $\tilde{\psi}:T\to V $ by $$\tilde{\psi}(x)=-\psi(x^{[p]})+\theta(x,x)^\frac{p-1}{2}(\psi(x)). $$ 

\begin{prop}\label{prop5.7}
    Let $\psi\in C^1_\mathrm{res}(T,V) $. Then $\tilde{\psi} $ has the $\star$-property with respect to $\delta^1\psi$.
\end{prop}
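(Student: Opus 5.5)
The proof checks the two defining conditions of the $\star$-property for $\tilde{\psi}(x)=-\psi(x^{[p]})+\theta(x,x)^{\frac{p-1}{2}}\psi(x)$ in turn.

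\emph{Homogeneity.} Condition 1 follows from $(\lambda x)^{[p]}=\lambda^p x^{[p]}$ and the linearity of $\psi$. Bilinearity of $\theta$ gives $\theta(\lambda x,\lambda x)^{\frac{p-1}{2}}=\lambda^{p-1}\theta(x,x)^{\frac{p-1}{2}}$. Hence
$$\tilde{\psi}(\lambda x)=-\lambda^p\psi(x^{[p]})+\lambda^{p-1}\lambda\,\theta(x,x)^{\frac{p-1}{2}}\psi(x)=\lambda^p\tilde{\psi}(x).$$

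\emph{Additivity defect.} For condition 2, expand $\tilde{\psi}(x+y)-\tilde{\psi}(x)-\tilde{\psi}(y)$. The first part of $\tilde{\psi}$ contributes
$$-\sum_{i}\psi(s_i(x,y))=-\sum_{\substack{x_j\in\{x,y\}\\ x_1=x,\,x_2=y}}\frac{1}{\#(x)}\,\psi((x_1,\ldots,x_p)),$$
using axiom 2 of Definition~\ref{D5.1} and the explicit formula for the $s_i$. The second part contributes the mixed terms of
$$\theta(x+y,x+y)^{\frac{p-1}{2}}\psi(x+y)=\sum\theta(x_{p-1},x_p)\cdots\theta(x_2,x_3)\,\psi(x_1),$$
with all $x_j\in\{x,y\}$, minus the two pure terms.

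The goal is to show that this total equals
$$\sum\frac{1}{\#(x)}\sum_{k}\theta(x_{p-1},x_p)\cdots\theta(x_{p-2k+1},x_{p-2k+2})\,\delta^1\psi\big((x_1,\ldots,x_{p-2k-2}),x_{p-2k-1},x_{p-2k}\big).$$
Write $u_k=(x_1,\ldots,x_{p-2k-2})$, so that $[u_k,x_{p-2k-1},x_{p-2k}]=u_{k-1}$. By formula \eqref{lwo1}, each term of the inner sum is
$$\Theta_k\Big(\theta(x_{p-2k-1},x_{p-2k})\psi(u_k)-\theta(u_k,x_{p-2k})\psi(x_{p-2k-1})+D(u_k,x_{p-2k-1})\psi(x_{p-2k})-\psi(u_{k-1})\Big),$$
where $\Theta_k$ denotes the prefix product of $\theta$'s. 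The first and last terms telescope in $k$: the term $\Theta_k\theta(\cdot,\cdot)\psi(u_k)$ cancels against $-\Theta_{k+1}\psi(u_k)$ from the next index. What survives is $-\psi((x_1,\ldots,x_p))$ at $k=0$, which matches the $\psi(s_i)$ contribution, together with a boundary term $\Theta\,\theta(x_3,x_4)\psi((x_1,x_2,\ldots))$ at the top index. This telescoping is the first step, and it is routine.

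\emph{Main obstacle.} It remains to show that the surviving terms, namely the boundary term and the middle terms $-\Theta_k\theta(u_k,x_{p-2k})\psi(x_{p-2k-1})+\Theta_k D(u_k,x_{p-2k-1})\psi(x_{p-2k})$, summed over all words with weight $1/\#(x)$, reproduce exactly the mixed part of $\theta(x+y,x+y)^{\frac{p-1}{2}}\psi(x+y)$. This is where the difficulty lies, because $\theta(u_k,z)$ and $D(u_k,z)$ involve nested brackets $u_k$.

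The plan is to first reduce to $\#(x)=1$ (the linear-in-$x$ part, i.e. the coefficient of $Z$ after replacing $x$ by $Zx$ and comparing polynomial coefficients). This is the same trick as in Evans's Lie algebra proof, since the $1/\#(x)$ weights come from differentiating the $Z$-polynomial. In that case $u_k=(x,y,\ldots,y)$, and Propositions~\ref{p5.14} and \ref{p5.16} rewrite $\theta(u_k,y)$ and $\theta((x,y,\ldots,y),z)$ as explicit sums of words in $\theta(y,y)$, $\theta(x,y)$, $\theta(y,x)$ with binomial coefficients. The mixed part of $\theta(Zx+y,Zx+y)^{\frac{p-1}{2}}\psi(Zx+y)$ at order $Z$ consists of the words $\theta(y,y)^a\theta(x,y)\theta(y,y)^b\psi(y)$, $\theta(y,y)^a\theta(y,x)\theta(y,y)^b\psi(y)$, and $\theta(y,y)^{\frac{p-1}{2}}\psi(x)$. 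I then compare coefficients of these words on both sides. The binomial sums $\sum_k\binom{2n}{2k}$ with alternating signs should collapse using $\binom{p-1}{j}\equiv(-1)^j \pmod p$, and the restricted identity $\theta(x,y^{[p]})=\theta(y,y)^{\frac{p-1}{2}}\theta(x,y)$ is used to absorb the term $\theta(x,y^{[p]})$ arising from $s_i$.

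I expect this coefficient bookkeeping to be the main obstacle. I would first verify it for $p=3$ and $p=5$ explicitly, and only then carry out the general induction on $k$.
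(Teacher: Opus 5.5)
Your proposal is correct in outline, and it follows a different route from the paper's proof. The telescoping step coincides with the paper's first step, the iterated form of \eqref{lwo1}; the divergence comes afterwards.

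\textbf{The two routes.} The paper treats all words $(x_1,\ldots,x_p)\in\{x,y\}^p$ simultaneously:
\begin{itemize}
\item it proves the general shuffle expansion of Lemma~\ref{lemcalc} for $\theta((x_1,\ldots,x_{2k+1}),x_m)$ and $D((x_1,\ldots,x_{2k+1}),x_m)$;
\item for each fixed word it counts the occurrences of $\theta(x_1,x_2)\cdots\theta(x_{p-2},x_{p-1})\psi(x_p)$, using $\binom{p-1}{s}\equiv(-1)^s \pmod p$ to cancel the signs and the $\#(x)$ occurrences to cancel the weight $1/\#(x)$.
\end{itemize}
You instead reduce to the single word $(x,y,\ldots,y)$. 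For that word Proposition~\ref{p5.14} suffices, and Proposition~\ref{p5.16} is not needed. Your route replaces the general lemma and the rather delicate relabelling count by a short polarization argument. The paper's route avoids any polynomial bookkeeping in $Z$.

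\textbf{The reduction must be stated correctly.} Comparing the coefficient of $Z$ alone gives only the $\#(x)=1$ part. What works is differentiating at a shifted base point.
\begin{itemize}
\item Let $P(Z)$ be $-\sum_{i=1}^{p-1}Z^i\psi(s_i(x,y))$ plus the part of $\theta(Zx+y,Zx+y)^{\frac{p-1}{2}}\psi(Zx+y)$ of $Z$-degree between $1$ and $p-1$.
\item Let $R(Z)=\sum_w \frac{Z^{\#(x)}}{\#(x)}G(w)$, where $G(w)$ is the inner $k$-sum of the $\star$-property (with $\varphi=\delta^1\psi$) for the word $w$.
\item $P(1)$ and $R(1)$ are the two sides to compare. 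Both polynomials have no constant term and degree at most $p-1$, so $P=R$ as soon as $P'=R'$.
\item By the very definition of the $s_i$, $P'(Z)$ equals $-\psi((x,y',\ldots,y'))$ plus the $x$-linear part of $\theta(x+y',x+y')^{\frac{p-1}{2}}\psi(x+y')$, taken at $y'=Zx+y$ in $T\otimes\mathbf{K}[Z]$. This is exactly your $\#(x)=1$ expression for the pair $(x,y')$.
\item Multilinearity gives $R'(Z)=G(x,Zx+y,\ldots,Zx+y)$. Here you need that the words with $x_2=x$ contribute nothing, which holds since $[x,x,\cdot]=0$ and $\delta^1\psi(x,x,\cdot)=0$.
\item So the $\#(x)=1$ identity, which is a pure consequence of the representation axioms and therefore survives extension of scalars, is indeed enough.
\end{itemize}

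\textbf{Two smaller corrections.}
\begin{itemize}
\item The telescoping boundary term is $\theta(x_{p-1},x_p)\cdots\theta(x_2,x_3)\psi(x_1)$.
\item No term $\theta(x,y^{[p]})$ ever arises, since $s_1(x,y)=(x,y,\ldots,y)$ cancels against $-\psi((x_1,\ldots,x_p))$ in the telescoping. So the restricted identity is never used, and the proposition holds for any representation $\theta$.
\end{itemize}

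\textbf{Closing the main obstacle.} Set $Y=\theta(y,y)$, $A=\theta(x,y)$, $B=\theta(y,x)$, $m=\frac{p-1}{2}$, and let $u_k=(x,y,\ldots,y)$ contain $2(m-1-k)$ copies of $y$. You need
\[
\sum_{a+b=m-1}Y^a(A+B)Y^b=\sum_{k=0}^{m-1}Y^k\bigl(\theta(y,u_k)-2\theta(u_k,y)\bigr)
\]
applied to $\psi(y)$. Use Proposition~\ref{p5.14} together with the hockey-stick identity, summing over the telescoping index.
\begin{itemize}
\item The coefficient of $Y^{m-1-s}BY^{s}$ on the right is $\sum_{n=s}^{m-1}\bigl[\binom{2n}{2s}+2\binom{2n}{2s+1}\bigr]=\binom{p-1}{2s+2}\equiv 1 \pmod p$.
\item The coefficient of $Y^{m-1-s}AY^{s}$ is $-\sum_{n=s}^{m-1}\bigl[\binom{2n}{2s-1}+2\binom{2n}{2s}\bigr]=-\binom{p-1}{2s+1}\equiv 1 \pmod p$.
\end{itemize}
Both coefficients match the left side, so no induction on $k$ is required.
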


\begin{defi}
    Let $\left\{u_1,\ldots,u_r \right\} $, for $r\geq 2$, be a set of integers such that $u_1<u_2<\cdots<u_r $ and $I=\left\{i_1,\ldots,i_s \right\} $, $J= \left\{j_1,\ldots,j_t \right\} $. We write $I+J=\left\{u_1,\ldots,u_r \right\} $ if $I\cup J=\left\{u_1,\ldots,u_r\right\} $, $I\cap J=\varnothing$ and \begin{center}
        $u_1\leq i_1<i_2<\cdots<i_s\leq u_r $ and  $u_1\leq j_1<j_2<\cdots<j_t\leq u_r $.
    \end{center} Either $I$ or $J$ may be empty.
\end{defi} 

\begin{lem}\label{lemcalc}
Let $\theta$ be a representation of $T$. Then, for all $k\in \mathbf{N}^* $, $m>2k+1 $,
    \begin{align*}
        -&\theta((x_1,\ldots,x_{2k+1}),x_m)\\ &= \sum_{\substack{A+B=\left\{2,\ldots,2k+1,m\right\}\\ m\notin B}}(-1)^s\theta(x_{a_{s-1}},x_{a_s})\theta(x_{a_{s-3}},x_{a_{s-2}})\cdots \left\{\begin{matrix}
\theta (x_1,x_{a_1})\theta(x_{b_2},x_{b_1})\cdots\theta(x_{b_t},x_{b_{t-1}}) \\\theta (x_{b_1},x_1)\theta(x_{b_3},x_{b_2})\cdots\theta(x_{b_t},x_{b_{t-1}})
\end{matrix}\right. \end{align*} 
\begin{align*}
    &\theta(x_m,(x_1,\ldots,x_{2k+1}))\\ &= \sum_{\substack{A+B=\left\{2,\ldots,2k+1,m\right\}\\ m\notin A}}(-1)^s\theta(x_{a_{s-1}},x_{a_s})\theta(x_{a_{s-3}},x_{a_{s-2}})\cdots \left\{\begin{matrix}
\theta (x_1,x_{a_1})\theta(x_{b_2},x_{b_1})\cdots\theta(x_{b_{t}},x_{b_{t-1}}) \\\theta (x_{b_1},x_1)\theta(x_{b_3},x_{b_2})\cdots\theta(x_{b_t},x_{b_{t-1}})
\end{matrix}\right. \end{align*}
\end{lem}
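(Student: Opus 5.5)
We argue by induction on $k$, using only the two representation axioms \eqref{rep1} and \eqref{rep2}. These let us rewrite $\theta$ evaluated on a triple bracket in one of its arguments as a combination of products of $\theta$'s with simpler arguments. Set $u=(x_1,\ldots,x_{2k-1})$, so that $(x_1,\ldots,x_{2k+1})=[u,x_{2k},x_{2k+1}]$.

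For the first formula we apply \eqref{rep2} in the form
\[
\theta([u,y,z],t)=D(u,y)\theta(z,t)-\theta(z,t)D(u,y)-\theta(z,[u,y,t]).
\]
We cannot do this directly, since this expresses $\theta([u,y,z],t)$ through $D(u,y)$, which still contains $u$ in a first slot. It is more efficient to use \eqref{rep1} in the form
\[
\theta(x,[y,z,t])=\theta(z,t)\theta(x,y)-\theta(y,t)\theta(x,z)+D(y,z)\theta(x,t),
\]
together with the cyclic identity and skew-symmetry $[u,y,z]=-[y,u,z]$, so as to move the bracket into the second slot. For the second formula, $\theta(x_m,[u,x_{2k},x_{2k+1}])$ is obtained directly from \eqref{rep1} with $x=x_m$:
\begin{align*}
\theta(x_m,[u,x_{2k},x_{2k+1}])={}&\theta(x_{2k},x_{2k+1})\theta(x_m,u)-\theta(u,x_{2k+1})\theta(x_m,x_{2k})\\
&+D(u,x_{2k})\theta(x_m,x_{2k+1}).
\end{align*}
Each remaining term involves $u$ in a single slot of some $\theta$ (after expanding $D=\theta(\cdot,\cdot)^{\mathrm{op}}-\theta$). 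To each such term we then apply the induction hypothesis for $u$, in whichever of the two forms matches the slot: the first formula with $m$ replaced by $x_{2k+1}$, or the second formula with $m$ replaced by $x_m$ or $x_{2k}$. This is exactly why the two formulas must be proved simultaneously.

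The base case $k=1$ is the identity itself:
\begin{align*}
\theta(x_m,[x_1,x_2,x_3])={}&\theta(x_2,x_3)\theta(x_m,x_1)-\theta(x_1,x_3)\theta(x_m,x_2)\\
&+\theta(x_2,x_1)\theta(x_m,x_3)-\theta(x_1,x_2)\theta(x_m,x_3),
\end{align*}
and we check that this matches the sum over decompositions $A+B=\{2,3,m\}$ with $m\notin A$, the two shapes in the brace corresponding to whether $A$ is nonempty (so that $x_1$ pairs with $x_{a_1}$) or empty. For $-\theta([x_1,x_2,x_3],x_m)$ we derive the analogous formula from \eqref{rep2} combined with the previous identity, and compare it with the decompositions having $m\notin B$.

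In the inductive step we must show that the terms produced by the three pieces of the recursion correspond bijectively to the decompositions $A+B=\{2,\ldots,2k+1,m\}$. The recursion appends the pair $(x_{2k},x_{2k+1})$ either to the $A$-chain, as $\theta(x_{2k},x_{2k+1})$ on the left with sign $+$, or to the $B$-chain, as $\theta(x_{2k+1},x_{2k})$ via the $D$ term, or it splits the pair. A split pair arises in the term with $\theta(u,x_{2k+1})\theta(x_m,x_{2k})$, which after applying the hypothesis places $x_{2k}$ and $x_m$ in different chains. The main obstacle is this combinatorial bookkeeping: tracking the sign $(-1)^s$, which counts $|A|$ and so changes parity when an index is added to $A$, and checking that the cases where $x_m$ ends up adjacent to $x_1$ (empty $A$, or $B$ starting at $m$) come out with the correct shape in the brace. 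We would handle it by fixing the last two indices $2k,2k+1$ and $m$, classifying the decompositions according to the membership of $2k$, $2k+1$ and $m$ in $A$ or $B$, and matching each class with exactly one recursive term. Cancellation of the leftover terms should follow from \eqref{rep2}, in the same way as the corresponding manipulations in Proposition \ref{p5.16}.
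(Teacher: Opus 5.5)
Your plan follows the paper's proof: a simultaneous induction on $k$ with $(x_1,\ldots,x_{2k+1})=[u,x_{2k},x_{2k+1}]$. Your expansion of $\theta(x_m,[u,x_{2k},x_{2k+1}])$ by \eqref{rep1} is exactly the paper's.

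The gap is in the first formula. The route you settle on cannot work: \eqref{rep1} plus skew-symmetry and the cyclic identity, used ``to move the bracket into the second slot''. Those identities only permute the entries of $[u,y,z]$. They never change which argument of $\theta$ the bracket occupies, and nothing relates $\theta(a,b)$ to $\theta(b,a)$ except the definition of $D$.

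Your objection to \eqref{rep2} is also misplaced. The term $D(u,y)$ is harmless, since it has $u$ in a single slot and the hypothesis handles it. The real obstruction is $\theta(z,[u,y,t])$, whose bracket is as long as the original one. Expanding that term by \eqref{rep1} cancels the $D(u,y)\theta(z,t)$ terms and gives
\[
\theta([u,y,z],t)=\theta(u,t)\theta(z,y)-\theta(y,t)\theta(z,u)-\theta(z,t)D(u,y).
\]
This is exactly what the paper uses, and it is what your base-case phrase ``\eqref{rep2} combined with the previous identity'' amounts to; it must be used in the inductive step too. With $y=x_{2k}$, $z=x_{2k+1}$, $t=x_m$, it calls for:
\begin{itemize}
\item the second formula with $m$ replaced by $2k$ and by $2k+1$;
\item the first formula with $m$ replaced by $2k$, and with $m$ itself.
\end{itemize}
For the second formula, your list of hypothesis instances also omits the first formula at $2k$. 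That instance comes from $-\theta(u,x_{2k})$ inside $D(u,x_{2k})$.

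With these expansions, the bookkeeping you flag as the main obstacle is an exact bijection. No leftover terms need cancelling via \eqref{rep2} or Proposition~\ref{p5.16}. Take a decomposition $A'+B'=\{2,\ldots,2k+1,m\}$. Since $m>2k+1>2k$ are the three largest indices, they always sit at the ends of their chains. For the first formula ($m\in A'$) there are three cases:
\begin{itemize}
\item If $2k+1\in A'$, the leftmost factor is $\theta(x_{2k+1},x_m)$. The rest ranges over all $A+B=\{2,\ldots,2k\}$, which is precisely what the two halves of $D(u,x_{2k})$ combine into.
\item If $2k\in A'$ and $2k+1\in B'$, the leftmost factor is $\theta(x_{2k},x_m)$. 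The rest is the hypothesis with $2k+1\in B$.
\item If $2k,2k+1\in B'$, the rightmost factor is $\theta(x_{2k+1},x_{2k})$. The rest is the hypothesis with $m\in A$.
\end{itemize}
The second formula splits the same way, into a leftmost $\theta(x_{2k},x_{2k+1})$, a rightmost $\theta(x_m,x_{2k+1})$, or a rightmost $\theta(x_m,x_{2k})$. In every case $|A|$ changes by $0$ or $2$, so the signs $(-1)^s$ take care of themselves.

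Finally, the shape in the brace is decided by the parity of $s=|A|$, not by whether $A$ is empty. For example, at $k=1$ the decomposition $A=\{2,3\}$, $B=\{m\}$ contributes $\theta(x_2,x_3)\theta(x_m,x_1)$.
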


\begin{proof}
    We proceed by induction on $k\geq 1$. The base case is clear. Assume that the two formulas hold for $k-1\geq 1$. Let $x_1,\ldots,x_{2k+1},x_m\in T$, for $m>2k+1$, \begin{align*}
        -\theta&((x_1,\ldots,x_{2k+1}),x_m)= -\theta([(x_1,\ldots,x_{2k-1}),x_{2k},x_{2k+1}],x_m) \\ =& \theta(x_{2k+1},x_m)\theta(x_{2k},(x_1,\ldots,x_{2k-1}))- \theta(x_{2k+1},x_m)\theta((x_1,\ldots,x_{2k-1}),x_{2k})\\ & + \theta(x_{2k},x_m)\theta(x_{2k+1},(x_1,\ldots,x_{2k-1})) - \theta((x_1,\ldots,x_{2k-1}),x_m)\theta(x_{2k+1},x_{2k}) \\ =& \theta(x_{2k+1},x_m)\sum_{\substack{A+B=\left\{2,\ldots,2k-1,2k\right\}\\ 2k\notin A}} C_{A,B}+ \theta(x_{2k+1},x_m)\sum_{\substack{A+B=\left\{2,\ldots,2k-1,2k\right\}\\ 2k\notin B}} C_{A,B} \\ & + \theta(x_{2k},x_m)\sum_{\substack{A+B=\left\{2,\ldots,2k-1,2k+1\right\}\\ 2k+1\notin A}} C_{A,B} + \sum_{\substack{A+B=\left\{2,\ldots,2k-1,m\right\}\\ m\notin B}} C_{A,B}\, \theta(x_{2k+1},x_{2k}) \\ =& \theta(x_{2k+1},x_m)\sum_{\substack{A+B=\left\{2,\ldots,2k\right\}}} C_{A,B} + \theta(x_{2k},x_m)\sum_{\substack{A+B=\left\{2,\ldots,2k-1,2k+1\right\}\\ 2k+1\notin A}}  C_{A,B}\\ &+ \sum_{\substack{A+B=\left\{2,\ldots,2k-1,m\right\}\\ m\notin B}} C_{A,B}\, \theta(x_{2k+1},x_{2k}) \\ =& \sum_{\substack{A+B=\left\{2,\ldots,2k+1,m\right\}\\ m\notin B}}(-1)^s\theta(x_{a_{s-1}},x_{a_s})\theta(x_{a_{s-3}},x_{a_{s-2}})\cdots \left\{\begin{matrix}
\theta (x_1,x_{a_1})\theta(x_{b_2},x_{b_1})\cdots\theta(x_{b_t},x_{b_{t-1}}) \\\theta (x_{b_1},x_1)\theta(x_{b_3},x_{b_2})\cdots\theta(x_{b_t},x_{b_{t-1}})
\end{matrix}\right. 
    \end{align*} where \begin{align*}
        C_{A,B}=(-1)^s\theta(x_{a_{s-1}},x_{a_s})\theta(x_{a_{s-3}},x_{a_{s-2}})\cdots \left\{\begin{matrix}
\theta (x_1,x_{a_1})\theta(x_{b_2},x_{b_1})\cdots\theta(x_{b_t},x_{b_{t-1}}) \\\theta (x_{b_1},x_1)\theta(x_{b_3},x_{b_2})\cdots\theta(x_{b_t},x_{b_{t-1}})
\end{matrix}\right.
    \end{align*} Similarly, \begin{align*}
        \theta(&x_m,(x_1,\ldots,x_{2k+1}))= \theta(x_m,[(x_1,\ldots,x_{2k-1}),x_{2k},x_{2k+1}]) \\ =& \theta(x_{2k},x_{2k+1})\theta(x_m,(x_1,\ldots,x_{2k-1}))-\theta((x_1,\ldots,x_{2k-1}),x_{2k+1})\theta(x_m,x_{2k}) \\ &+ \theta(x_{2k},(x_1,\ldots,x_{2k-1}))\theta(x_m,x_{2k+1}) - \theta((x_1,\ldots,x_{2k-1}),x_{2k})\theta(x_m,x_{2k+1}) \\ =& \theta(x_{2k},x_{2k+1})\sum_{\substack{A+B=\left\{2,\ldots,2k-1,m\right\}\\ m\notin A}}  C_{A,B}+ \sum_{\substack{A+B=\left\{2,\ldots,2k-1,2k+1\right\}\\ 2k+1\notin B}}  C_{A,B}\,\theta(x_m,x_{2k}) \\ &+ \sum_{\substack{A+B=\left\{2,\ldots,2k-1,2k\right\}\\ 2k\notin A}} C_{A,B}\,\theta(x_m,x_{2k+1})+ \sum_{\substack{A+B=\left\{2,\ldots,2k-1,2k\right\}\\ 2k\notin B}} C_{A,B}\,\theta(x_m,x_{2k+1}) \\ =& \theta(x_{2k},x_{2k+1})\sum_{\substack{A+B=\left\{2,\ldots,2k-1,m\right\}\\ m\notin A}} C_{A,B} + \sum_{\substack{A+B=\left\{2,\ldots,2k-1,2k+1\right\}\\ 2k+1\notin B}}  C_{A,B}\,\theta(x_m,x_{2k})\\ & + \sum_{\substack{A+B=\left\{2,\ldots,2k\right\}}} C_{A,B}\,\theta(x_m,x_{2k+1}) \\ =& \sum_{\substack{A+B=\left\{2,\ldots,2k+1,m\right\}\\ m\notin A}}(-1)^s\theta(x_{a_{s-1}},x_{a_s})\theta(x_{a_{s-3}},x_{a_{s-2}})\cdots \left\{\begin{matrix}
\theta (x_1,x_{a_1})\theta(x_{b_2},x_{b_1})\cdots\theta(x_{b_{t}},x_{b_{t-1}}) \\\theta (x_{b_1},x_1)\theta(x_{b_3},x_{b_2})\cdots\theta(x_{b_t},x_{b_{t-1}})
\end{matrix}\right.
    \end{align*}
\end{proof}

In particular, \begin{align*}
    &D((x_1,\cdots,x_{2k+1}),x_m)\\ &=\sum_{\substack{A+B=\left\{2,\ldots,2k+1,m\right\}}}(-1)^s\theta(x_{a_{s-1}},x_{a_s})\theta(x_{a_{s-3}},x_{a_{s-2}})\cdots \left\{\begin{matrix}
\theta (x_1,x_{a_1})\theta(x_{b_2},x_{b_1})\cdots\theta(x_{b_{t}},x_{b_{t-1}}) \\\theta (x_{b_1},x_1)\theta(x_{b_3},x_{b_2})\cdots\theta(x_{b_t},x_{b_{t-1}})
\end{matrix}\right. . \end{align*}

\begin{proof}[proof of the proposition]
The map $\tilde{\psi} $ clearly verify the $p$-homogeneity. Let $x,y\in T$, 

\begin{align*}
    \tilde{\psi}&(x+y)\\=& -\psi((x+y)^{[p]}) +\theta(x+y,x+y)^\frac{p-1}{2}(\psi(x)+\psi(y)) \\ =& -\psi(x^{[p]})- \psi(y^{[p]}) -  \sum_{\substack{x_j\in \left\{x,y \right\}\\x_1=x,x_2=y}}\frac{1}{\#(x)}\psi((x_1,\ldots,x_p)) + \theta(x+y,x+y)^\frac{p-1}{2}(\psi(x)+\psi(y)).
\end{align*}

Using the equation \begin{align*}
    \psi((x_1,\ldots,x_p))=&\theta(x_{p-1},x_p)\psi((x_1,\ldots,x_{p-2}))-\theta((x_1,\ldots,x_{p-2}),x_p)\psi(x_{p-1})\\ &+ D((x_1,\ldots,x_{p-2}),x_{p-1})\psi(x_p)-\delta^1\psi((x_1,\ldots,x_{p-2}),x_{p-1},x_p)
\end{align*} which follows from \eqref{lwo1}, we obtain the following \begin{align*}
    \psi&((x_1,\ldots,x_{p}))\\&=\theta(x_{p-1},x_p)\theta(x_{p-3},x_{p-2}) \cdots\theta(x_2,x_3)\psi(x_1) \\& -\sum\limits_{k=0}^{\frac{p-1}{2}-1}\theta(x_{p-1},x_p)\theta(x_{p-3},x_{p-2}) \cdots\theta(x_{p-2k+1},x_{p-2k+2})\theta((x_1,\ldots,x_{p-2k-2}),x_{p-2k})\psi(x_{p-2k-1}) \\ &+ \sum\limits_{k=0}^{\frac{p-1}{2}-1}\theta(x_{p-1},x_p)\theta(x_{p-3},x_{p-2})\cdots\theta(x_{p-2k+1},x_{p-2k+2})D((x_1,\ldots,x_{p-2k-2}),x_{p-2k-1})\psi(x_{p-2k}) \\ &- \sum\limits_{k=0}^{\frac{p-1}{2}-1}\theta(x_{p-1},x_p)\theta(x_{p-3},x_{p-2})\cdots\theta(x_{p-2k+1},x_{p-2k+2}) \delta\psi([x_1,\ldots,x_{p-2k-2}],x_{p-2k-1},x_{p-2k}) .
\end{align*} So it remains to show  \begin{align*} &\sum_{\substack{x_j\in \left\{x,y \right\}\\x_1=x,x_2=y}}\frac{1}{\#(x)}\theta(x_{p-1},x_p)\theta(x_{p-3},x_{p-2}) \cdots\theta(x_2,x_3)\psi(x_1) \\ 
     &- \sum_{\substack{x_j\in \left\{x,y \right\}\\x_1=x,x_2=y}}\frac{1}{\#(x)}\sum\limits_{k=0}^{\frac{p-1}{2}-1}\theta(x_{p-1},x_p)\cdots\theta(x_{p-2k+1},x_{p-2k+2})\theta((x_1,\ldots,x_{p-2k-2}),x_{p-2k})\psi(x_{p-2k-1}) \\ &+  \sum_{\substack{x_j\in \left\{x,y \right\}\\x_1=x,x_2=y}}\frac{1}{\#(x)}\sum\limits_{k=0}^{\frac{p-1}{2}-1}\theta(x_{p-1},x_p)\cdots\theta(x_{p-2k+1},x_{p-2k+2})D((x_1,\ldots,x_{p-2k-2}),x_{p-2k-1})\psi(x_{p-2k})\\ & -\theta(x+y,x+y)^{\frac{p-1}{2}}(\psi(x)+\psi(y)) = -\theta(x,x)^{\frac{p-1}{2}}\psi(x) -\theta(y,y)^{\frac{p-1}{2}}\psi(y).
\end{align*} We obviously have $$\theta(x+y,x+y)^{\frac{p-1}{2}}(\psi(x)+\psi(y))= \sum\limits_{x_j=x,y}\theta(x_1,x_2)\cdots\theta(x_{p-2},x_{p-1})\psi(x_p).  $$
Using \hyperref[lemcalc]{Lemma~\ref*{lemcalc}}, we obtain  \begin{align*}
    &\sum\limits_{k=0}^{\frac{p-1}{2}-1}\theta(x_{p-1},x_p)\cdots\theta(x_{p-2k+1},x_{p-2k+2})(D((x_1,\ldots,x_{p-2k-2}),x_{p-2k-1})\psi(x_{p-2k}) \\ & \,\,\,\,\,\,\,\,\,\,\,\,\,\,\,\,\,\,\,\,\,\,\,\,\,\,\,\,\,\,\,\,\,\,\,\,\,\,\,\,\,\,\,\,\,\,\,\,\,\,\,\,\,\,\,\,\,\,\,\,\,\,\,\,\,\,\,\,\,\,\,\,\,\,\,\,\,\,\,\,\,\,\,\,\,\,\,\,\,\,\,\,\,\,\,\,\,\,\,\,\,\,\,\,\,\,\,\,\,\,\,\,\,\,\,\,\,\,\,\,\,\,\,\,-\theta((x_1,\ldots,x_{p-2k-2}),x_{p-2k})\psi(x_{p-2k-1})) \\ =&\sum\limits_{k=0}^{\frac{p-1}{2}-1}\sum_{\substack{A+B=\left\{2,\ldots,p\right\}\\p-2k\in B\\ p-2k+1,\ldots,p\notin B}}(-1)^s\theta(x_{a_{s-1}},x_{a_s})\cdots \left\{\begin{matrix}
\theta (x_1,x_{a_1})\theta(x_{b_2},x_{b_1})\cdots\theta(x_{b_{t-1}},x_{b_{t-2}})\psi(x_{b_t}) \\\theta (x_{b_1},x_1)\theta(x_{b_3},x_{b_2})\cdots\theta(x_{b_{t-1}},x_{b_{t-2}})\psi(x_{b_t})
\end{matrix}\right. \\ &+ \sum\limits_{k=0}^{\frac{p-1}{2}-1}\sum_{\substack{A+B=\left\{2,\ldots,p\right\}\\p-2k\notin B\\p-2k-1\in B \\ p-2k+1,\ldots,p\notin B}}(-1)^s\theta(x_{a_{s-1}},x_{a_s})\cdots \left\{\begin{matrix}
\theta (x_1,x_{a_1})\theta(x_{b_2},x_{b_1})\cdots\theta(x_{b_{t-1}},x_{b_{t-2}})\psi(x_{b_t}) \\\theta (x_{b_1},x_1)\theta(x_{b_3},x_{b_2})\cdots\theta(x_{b_{t-1}},x_{b_{t-2}})\psi(x_{b_t})
\end{matrix}\right.\\ =&  \sum_{\substack{A+B=\left\{2,\ldots,p\right\}\\B\neq \varnothing}}(-1)^s\theta(x_{a_{s-1}},x_{a_s})\cdots \left\{\begin{matrix}
\theta (x_1,x_{a_1})\theta(x_{b_2},x_{b_1})\cdots\theta(x_{b_{t-1}},x_{b_{t-2}})\psi(x_{b_t}) \\\theta (x_{b_1},x_1)\theta(x_{b_3},x_{b_2})\cdots\theta(x_{b_{t-1}},x_{b_{t-2}})\psi(x_{b_t})
\end{matrix}\right.
\end{align*} and then \begin{align*} &\sum_{\substack{x_j\in \left\{x,y \right\}\\x_1=x,x_2=y}}\frac{1}{\#(x)}\theta(x_{p-1},x_p)\theta(x_{p-3},x_{p-2}) \cdots\theta(x_2,x_3)\psi(x_1) \\ 
     &- \sum_{\substack{x_j\in \left\{x,y \right\}\\x_1=x,x_2=y}}\frac{1}{\#(x)}\sum\limits_{k=0}^{\frac{p-1}{2}-1}\theta(x_{p-1},x_p)\cdots\theta(x_{p-2k+1},x_{p-2k+2})\theta((x_1,\ldots,x_{p-2k-2}),x_{p-2k})\psi(x_{p-2k-1}) \\ &+  \sum_{\substack{x_j\in \left\{x,y \right\}\\x_1=x,x_2=y}}\frac{1}{\#(x)}\sum\limits_{k=0}^{\frac{p-1}{2}-1}\theta(x_{p-1},x_p)\cdots\theta(x_{p-2k+1},x_{p-2k+2})D((x_1,\ldots,x_{p-2k-2}),x_{p-2k-1})\psi(x_{p-2k})\\ & -\theta(x+y,x+y)^{\frac{p-1}{2}}(\psi(x)+\psi(y))\\ &=\sum\limits_{\substack{x_j\in \left\{x,y \right\}\\x_1=x,x_2=y}}\frac{1}{\#(x)} \sum_{\substack{A+B=\left\{2,\ldots,p\right\}}}(-1)^s\theta(x_{a_{s-1}},x_{a_s})\theta(x_{a_{s-3}},x_{a_{s-2}})\\ & \,\,\,\,\,\,\,\,\,\,\,\, \,\,\,\,\,\,\,\,\,\,\,\, \,\,\,\,\,\,\,\,\,\,\,\, \,\,\,\,\,\,\,\,\,\,\,\, \,\,\,\,\,\,\,\,\,\,\,\, \,\,\,\,\,\,\,\,\,\,\,\, \,\,\,\,\,\,\,\,\,\,\,\, \,\,\,\,\,\,\,\,\,\,\,\, \,\,\,\,\,\,\,\,\,\,\,\, \,\,\,\,\,\,\,\,\,\,\,\,\cdots \left\{\begin{matrix}
\theta (x_1,x_{a_1})\theta(x_{b_2},x_{b_1})\cdots\theta(x_{b_{t-1}},x_{b_{t-2}})\psi(x_{b_t}) \\\theta (x_{b_1},x_1)\theta(x_{b_3},x_{b_2})\cdots\theta(x_{b_{t-1}},x_{b_{t-2}})\psi(x_{b_t})
\end{matrix}\right. \\ & -  \sum\limits_{x_j=x,y}\theta(x_1,x_2)\cdots\theta(x_{p-2},x_{p-1})\psi(x_p).
\end{align*} Fix $(x_1,\ldots,x_p)\in \left\{x,y\right\}^p $ and count how many times $\theta(x_1,x_2)\cdots\theta(x_{p-2},x_{p-1})\psi(x_p)$ occurs in \begin{equation}\label{eqd}\begin{aligned}
&\sum\limits_{\substack{x_j\in \left\{x,y \right\}\\x_1=x,x_2=y}}\frac{1}{\#(x)} \\ &\sum_{\substack{A+B=\left\{2,\ldots,p\right\}}}(-1)^s\theta(x_{a_{s-1}},x_{a_s})\theta(x_{a_{s-3}},x_{a_{s-2}})\cdots \left\{\begin{matrix}
\theta (x_1,x_{a_1})\theta(x_{b_2},x_{b_1})\cdots\theta(x_{b_{t-1}},x_{b_{t-2}})\psi(x_{b_t}) \\\theta (x_{b_1},x_1)\theta(x_{b_3},x_{b_2})\cdots\theta(x_{b_{t-1}},x_{b_{t-2}})\psi(x_{b_t})
\end{matrix}\right..
\end{aligned}   
\end{equation}
 Assume that $\#(x)\neq 0,p $. Then there is at least one occurrence of $x$ and at least one occurrence of $y$ among the $x_i$, say $x_s=x $, with $s$ odd $1\leq s\leq p$ or $x_{s+2}=x$ with $s$ even $0\leq s\leq p-3$. Then, after relabeling the $x_i$, if necessary, in \eqref{eqd}, we have $$\binom{p-1}{s}\equiv (-1)^s[p] $$ choices for the $a_1,\ldots,a_s$. The $(-1)^s$ cancel. This is for each $x$ among the $x_i$ and then $\theta(x_1,x_2)\cdots\theta(x_{p-2},x_{p-1})\psi(x_p)$ occurs $\#(x) $ times and the factor $\frac{1}{\#(x)} $ cancel. Finally, we see that the terms \begin{align*}
     &  \sum_{\substack{x_j\in \left\{x,y \right\}\\x_1=x,x_2=y}}\frac{1}{\#(x)} \\ &\sum_{\substack{A+B=\left\{2,\ldots,p\right\}}}(-1)^s\theta(x_{a_{s-1}},x_{a_s})\theta(x_{a_{s-3}},x_{a_{s-2}})\cdots \left\{\begin{matrix}
\theta (x_1,x_{a_1})\theta(x_{b_2},x_{b_1})\cdots\theta(x_{b_{t-1}},x_{b_{t-2}})\psi(x_{b_t}) \\\theta (x_{b_1},x_1)\theta(x_{b_3},x_{b_2})\cdots\theta(x_{b_{t-1}},x_{b_{t-2}})\psi(x_{b_t})
\end{matrix}\right. \\ & -  \sum\limits_{x_j=x,y}\theta(x_1,x_2)\cdots\theta(x_{p-2},x_{p-1})\psi(x_p)
 \end{align*} cancel in pairs when $\#(x)\neq 0,p $. The remaining terms are $-\theta(x,x)^{\frac{p-1}{2}}\psi(x) -\theta(y,y)^\frac{p-1}{2}\psi(y) $ and we have $\tilde{\psi} $ has the $*$-property with respect to $\delta^1\psi$.
\end{proof}
 
Thus, the map $$\delta_\mathrm{res}^1: C^1_\mathrm{res}(T,V)\to C^3_\mathrm{res}(T,V), \psi\mapsto (\delta^1\psi,\tilde{\psi}) $$ is well defined.

 \begin{thm}
 With the above notation, $\delta_\mathrm{res}^1\circ\delta^{-1}_\mathrm{res}=0 $ and $H_\mathrm{res}^1(T,V)= \mathrm{Ker}(\delta_\mathrm{res}^1)/\mathrm{Im}(\delta^{-1}_\mathrm{res}) $ is well defined. Moreover, $H_\mathrm{res}^1(T,V)$ injects in $H^1(T,V)$.
 \end{thm}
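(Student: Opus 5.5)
We have to show that $\delta^1_\mathrm{res}\circ\delta^{-1}_\mathrm{res}=0$, which amounts to two separate vanishing statements, and then deduce injectivity. Let $x\otimes u\in C^{-1}(T,V)$ and set $\psi=\delta^{-1}(x\otimes u)$, so that $\psi(t)=\theta(x,t)(u)$. Then
\[
\delta^1_\mathrm{res}\psi=(\delta^1\psi,\tilde\psi).
\]
The first component vanishes because $\delta^1\circ\delta^{-1}=0$, which was already checked when the complex was extended to degree $-1$; that computation used only relation \eqref{rep1}. So everything reduces to showing $\tilde\psi=0$. By linearity it suffices to treat a pure tensor $x\otimes u$.

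For the second component, compute for $y\in T$:
\[
\tilde\psi(y)=-\theta(x,y^{[p]})(u)+\theta(y,y)^{\frac{p-1}{2}}\theta(x,y)(u).
\]
The restricted representation axiom gives
\[
\theta(x,y^{[p]})=\theta(y,y)^{\frac{p-1}{2}}\circ\theta(x,y),
\]
so the two terms cancel and $\tilde\psi(y)=0$ for all $y$. The only point needing care is that the sign convention in $\tilde\psi$ places $\psi(y^{[p]})$ in the second slot of $\theta$. This matches the second restricted axiom, not the first, and is exactly why $\delta^{-1}$ was defined with $\theta(x,t)$ rather than $\theta(t,x)$. With this, $\delta^1_\mathrm{res}\circ\delta^{-1}_\mathrm{res}=0$, and $H^1_\mathrm{res}(T,V)$ is a well-defined quotient, since by Proposition \ref{prop5.7} $\delta^1_\mathrm{res}$ is a well-defined linear map into $C^3_\mathrm{res}(T,V)$.

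For injectivity, consider the natural map $H^1_\mathrm{res}(T,V)\to H^1(T,V)$ induced by the identity on cochains. It is well defined because $\mathrm{Ker}(\delta^1_\mathrm{res})\subset\mathrm{Ker}(\delta^1)$, as $\delta^1\psi$ is the first component of $\delta^1_\mathrm{res}\psi$. The coboundary spaces coincide because $\delta^{-1}_\mathrm{res}=\delta^{-1}$ and $C^{-1}_\mathrm{res}=C^{-1}$. Hence if a restricted cocycle $\psi$ is zero in $H^1(T,V)$, then $\psi\in\mathrm{Im}(\delta^{-1})=\mathrm{Im}(\delta^{-1}_\mathrm{res})$, so it is already zero in $H^1_\mathrm{res}(T,V)$. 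Concretely, $H^1_\mathrm{res}$ is identified with the subspace of classes in $H^1(T,V)$ whose representatives $\psi$ satisfy $\tilde\psi=0$. This condition is independent of the representative, by the vanishing of $\tilde\psi$ on $\mathrm{Im}(\delta^{-1})$ shown above.

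No real obstacle is expected. The only step needing attention is the sign and slot bookkeeping in the second component, which is settled by the restricted representation axiom.
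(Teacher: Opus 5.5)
Your proposal is correct and follows essentially the same route as the paper. The first component vanishes by $\delta^1\circ\delta^{-1}=0$, the second by the restricted axiom $\theta(x,y^{[p]})=\theta(y,y)^{\frac{p-1}{2}}\theta(x,y)$, and injectivity holds because $\mathrm{Im}(\delta^{-1}_\mathrm{res})=\mathrm{Im}(\delta^{-1})$; your reduction to pure tensors and the remark that the condition $\tilde\psi=0$ is independent of the representative are small details the paper leaves implicit.
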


\begin{proof} The same proof as in Evans's thesis works here. We already know that $\delta^1\circ \delta^{-1}=0 $. So it remains to prove that for all $x\in T,v\in V$, $\widetilde{\theta(x,\cdot)(v)}=0 $. However, $$\widetilde{\theta(x,\cdot)(v)}(t)=- \theta(x,t^{[p]})(v)+ \theta(t,t)^{\frac{p-1}{2}}\theta(x,t)(v)=0, $$ since $\theta$ is a restricted representation of $T$. Thus, the composition vanishes. Let $\alpha\in H_\mathrm{res}^1(T,V) $ be a restricted cohomology class. Let $\psi\in C^1(T,V) $ be its representative. We have $(\delta^1(\psi),\tilde{\psi})=(0,0) $, so $\psi$ represents an ordinary cohomology class $i(\alpha)\in H^1(T,V) $. If $\psi'\in C^1(T,V)$ represents $\alpha$, then $\psi-\psi'\in \mathrm{Im}(\delta^{-1}_\mathrm{res})=\mathrm{Im}(\delta^{-1}) $ and $\psi' $ represent the same ordinary class $i(\psi') $ as $\psi$. Thus, the map $i:H_\mathrm{res}^1(T,V)\to H^1(T,V) $ is well defined and linear. This map is injective. If $i(\alpha)=0 $, then $\psi$ is in the image of $\delta^{-1}=\delta_\mathrm{res}^{-1} $ and $\alpha=0$.
\end{proof}

We now provide a definition of $C^4_\mathrm{res}(T,V) $. 

\begin{defi}
    Let $\varphi\in C^4(T,V) $ and  $\alpha:T\times T\to V$. We say that $\alpha $ has the $\star'$-property with respect to $\varphi $, if for all $\lambda\in \mathbf{K}$ and $x,y,y_1,y_2\in T$, we have  \begin{enumerate}
    \item $\alpha(\cdot,y):T\to V$ is linear,
    \item $\alpha( x,\lambda y)= \lambda^p\alpha(x,y) $,
    \item {\small\begin{align*}
       & \alpha(x,y_1+y_2)= \alpha(x,y_1)+\alpha(x,y_2)\\ &  +  \sum_{\substack{y_j\in \left\{y_1,y_2 \right\}\\y_1=y_1,y_2=y_2}}\frac{1}{\#(y_1)}\sum\limits_{k=0}^{\frac{p-1}{2}-1}\theta(y_{p-1},y_p)\cdots\theta(y_{p-2k+1},y_{p-2k+2}) \varphi(x,[y_1,\ldots,y_{p-2k-2}],y_{p-2k-1},y_{p-2k}).
    \end{align*}}
\end{enumerate} We define $$C^4_\mathrm{res}(T,V)= \left\{(\psi,\alpha), \psi\in C^4(T,V),\text{ $\alpha$   has the  $\star'$-property w.r.t $\psi$ }\right\}. $$
\end{defi}

Let $\psi\in C^2(T,V) $, we define $\overline{\psi}:T\times T:\to V $ by \begin{equation}\label{psibar}
    \overline{\psi}(x,y)= -\psi(x,y^{[p]})+ \theta(y,y)^{\frac{p-1}{2}}\psi(x,y).
\end{equation} Then we have the following result. 

\begin{prop}\label{propstar'}
    Let $\psi\in C^2(T,V) $. Then $\overline{\psi} $ has the $\star'$-property with respect to $\delta^2\psi$.
\end{prop}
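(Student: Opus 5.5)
The plan is to mimic the proof of Proposition~\ref{prop5.7}, carrying the extra first variable $x$ along as a spectator. Conditions 1 and 2 of the $\star'$-property are immediate. For fixed $y$, both $x\mapsto \psi(x,y^{[p]})$ and $x\mapsto \theta(y,y)^{\frac{p-1}{2}}\psi(x,y)$ are linear in $x$. For $p$-homogeneity in $y$, we use $(\lambda y)^{[p]}=\lambda^p y^{[p]}$ together with the bilinearity of $\theta$, which gives $\theta(\lambda y,\lambda y)^{\frac{p-1}{2}}\psi(x,\lambda y)=\lambda^{p-1}\lambda\,\theta(y,y)^{\frac{p-1}{2}}\psi(x,y)$. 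The whole content is therefore the additivity defect in $y$.

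Fix $x$ and expand $\overline{\psi}(x,y_1+y_2)$ using axiom 2 of Definition~\ref{D5.1}. This gives
\[
-\psi(x,y_1^{[p]})-\psi(x,y_2^{[p]})-\sum_{\substack{y_j\in\{y_1,y_2\}\\ y_1=y_1,y_2=y_2}}\frac{1}{\#(y_1)}\psi(x,(y_1,\ldots,y_p))+\theta(y_1+y_2,y_1+y_2)^{\frac{p-1}{2}}(\psi(x,y_1)+\psi(x,y_2)).
\]
Next, rewrite \eqref{lwo2} with $x_1=x$ as the recursion
\begin{align*}
\psi(x,(y_1,\ldots,y_p))={}&\theta(y_{p-1},y_p)\psi(x,(y_1,\ldots,y_{p-2}))-\theta((y_1,\ldots,y_{p-2}),y_p)\psi(x,y_{p-1})\\
&+D((y_1,\ldots,y_{p-2}),y_{p-1})\psi(x,y_p)-\delta^2\psi(x,(y_1,\ldots,y_{p-2}),y_{p-1},y_p),
\end{align*}
and iterate it $\frac{p-1}{2}$ times, exactly as was done for $\psi((x_1,\ldots,x_p))$ in the proof of Proposition~\ref{prop5.7}. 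The $\delta^2\psi$ terms produce precisely the correction sum in condition 3 of the $\star'$-property, with the correct sign. What remains is the same combinatorial identity as before, now with $\psi(x,\cdot)$ in place of $\psi(\cdot)$. Concretely, we must show
\[
\sum\frac{1}{\#(y_1)}\Big[\theta\cdots\theta(y_2,y_3)\psi(x,y_1)-\sum_k\cdots\theta(\cdot,y_{p-2k})\psi(x,y_{p-2k-1})+\sum_k\cdots D(\cdot,y_{p-2k-1})\psi(x,y_{p-2k})\Big]-\theta(y_1+y_2,y_1+y_2)^{\frac{p-1}{2}}(\psi(x,y_1)+\psi(x,y_2))
\]
equals $-\theta(y_1,y_1)^{\frac{p-1}{2}}\psi(x,y_1)-\theta(y_2,y_2)^{\frac{p-1}{2}}\psi(x,y_2)$.

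The key observation is that in this identity, $\psi(x,\cdot)$ enters only as a linear map $T\to V$ to which products of operators $\theta(\cdot,\cdot)$ are applied. Lemma~\ref{lemcalc} is a statement about operators, valid for any representation, so it applies verbatim. We therefore expand $\theta((y_1,\ldots,y_{2k+1}),y_m)$ and $D((y_1,\ldots,y_{2k+1}),y_m)$ via Lemma~\ref{lemcalc}, merge the two $k$-sums into a single sum over decompositions $A+B=\{2,\ldots,p\}$, and then run the counting argument of Proposition~\ref{prop5.7}. For each word in $\{y_1,y_2\}^p$ that is not constant, the multiplicity $\binom{p-1}{s}\equiv(-1)^s \pmod p$ cancels the sign $(-1)^s$, and the $\#(y_1)$ occurrences cancel the factor $1/\#(y_1)$. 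These terms then cancel against the expansion of $\theta(y_1+y_2,y_1+y_2)^{\frac{p-1}{2}}$, leaving only the two pure terms.

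The main obstacle is bookkeeping rather than new ideas: we must check that the recursion from \eqref{lwo2} has exactly the same shape (signs and placement of $\theta$ versus $D$) as the one from \eqref{lwo1} used before. In particular, in \eqref{lwo2} the first argument is frozen, and the terms $\theta(x_3,x_4)f(x_1,x_2)$, $-\theta(x_2,x_4)f(x_1,x_3)$, $D(x_2,x_3)f(x_1,x_4)$ must line up with $\theta(x_2,x_3)f(x_1)$, $-\theta(x_1,x_3)f(x_2)$, $D(x_1,x_2)f(x_3)$ under the index shift. I would verify this correspondence first. Once it holds, we can formally substitute $\psi(\cdot)\rightsquigarrow\psi(x,\cdot)$ and $\delta^1\psi(\cdot,\cdot,\cdot)\rightsquigarrow\delta^2\psi(x,\cdot,\cdot,\cdot)$, and the earlier proof transfers.
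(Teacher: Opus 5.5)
Your proposal is correct and follows the paper's proof step for step: you expand $\overline{\psi}(x,y_1+y_2)$ with the Jacobson formula, iterate the recursion coming from \eqref{lwo2}, and then apply Lemma~\ref{lemcalc} together with the counting argument of Proposition~\ref{prop5.7}. The correspondence you plan to check in fact holds exactly, since $\delta^2\psi(x,\cdot,\cdot,\cdot)=\delta^1\bigl(\psi(x,\cdot)\bigr)$ and $\overline{\psi}(x,\cdot)=\widetilde{\psi(x,\cdot)}$, so condition 3 is simply Proposition~\ref{prop5.7} applied to the linear map $\psi(x,\cdot)\in C^1(T,V)$ and no recounting is needed.
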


\begin{proof}
For all $y\in T$, the map $\overline{\psi}(\cdot,y)$ is linear and the $p$-homogeneity is verified. Let $x,y_1,y_2\in T$,
\begin{align*}
    \overline{\psi}(x,y_1+y_2)= &-\psi(x,(y_1+y_2)^{[p]})+ \theta(y_1+y_2,y_1+y_2)^{\frac{p-1}{2}}(\psi(x,y_1+y_2)) \\ =&-\psi(x,y_1^{[p]})-\psi(x,y_2^{[p]}) - \sum_{\substack{y_j\in \left\{y_1,y_2 \right\}\\y_1=y_1,y_2=y_2}}\frac{1}{\#(y_1)}\psi(x,(y_1,\ldots,y_p)) \\ & \,\,\, + \theta(y_1+y_2,y_1+y_2)^\frac{p-1}{2}(\psi(x,y_1+y_2)).
\end{align*}
    We use the equation \begin{align*}
        \psi(x,(y_1,\ldots,y_p))=&\theta(y_{p-1},y_p)\psi(x,(y_1,\ldots,y_{p-2}))-\theta((y_1,\ldots,y_{p-2}),y_p)\psi(x,y_{p-1})\\ &+ D((y_1,\ldots,y_{p-2}),y_{p-1})\psi(x,y_p)-\delta^2\psi(x,(y_1,\ldots,y_{p-2}),y_{p-1},y_p),
    \end{align*} that comes from \eqref{lwo2} and  obtain \begin{align*}
         &\psi(x,(y_1,\ldots,y_{p}))\\ &=\theta(y_{p-1},y_p)\theta(y_{p-3},y_{p-2}) \cdots\theta(y_2,y_3)\psi(x,y_1) \\& -\sum\limits_{k=0}^{\frac{p-1}{2}-1}\theta(y_{p-1},y_p)\theta(y_{p-3},y_{p-2}) \cdots\theta(y_{p-2k+1},y_{p-2k+2})\theta((y_1,\ldots,y_{p-2k-2}),y_{p-2k})\psi(x,y_{p-2k-1}) \\ &+ \sum\limits_{k=0}^{\frac{p-1}{2}-1}\theta(y_{p-1},y_p)\theta(y_{p-3},y_{p-2})\cdots\theta(y_{p-2k+1},y_{p-2k+2})D((y_1,\ldots,y_{p-2k-2}),y_{p-2k-1})\psi(x,y_{p-2k}) \\ &- \sum\limits_{k=0}^{\frac{p-1}{2}-1}\theta(y_{p-1},y_p)\theta(y_{p-3},y_{p-2})\cdots\theta(y_{p-2k+1},y_{p-2k+2}) \delta^2\psi(x,[y_1,\ldots,y_{p-2k-2}],y_{p-2k-1},y_{p-2k}). 
    \end{align*} Then we use \hyperref[lemcalc]{Lemma~\ref*{lemcalc}} and count the terms in the same way as for the $\star$-property in the \hyperref[prop5.7]{Proposition~\ref*{prop5.7}}. Hence, the result follows. 
\end{proof}

The \hyperref[propstar']{Proposition~\ref*{propstar'}} allows us to define a coboundary map $$\delta^2_\mathrm{res}:C^2(T,V)\to C^4_\mathrm{res}(T,V), \,\,\,\,\,\, \delta^2_\mathrm{res}(\psi)=(\delta^2\psi, \overline{\psi}). $$ We have the following result.

\begin{thm}
With the above notations, $\delta^2_\mathrm{res}\circ\delta^0_\mathrm{res}=0 $ and $H^2_\mathrm{res}(T,V)= \mathrm{Ker}(\delta^2_\mathrm{res})/\mathrm{Im}(\delta^0_\mathrm{res}) $ is well defined. Moreover, $H^2_\mathrm{res}(T,V) $ injects in $H^2(T,V)$.
\end{thm}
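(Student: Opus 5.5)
The plan mirrors the degree-one theorem proved just above. There are two claims: that $\delta^2_\mathrm{res}\circ\delta^0_\mathrm{res}=0$, and that the natural map $H^2_\mathrm{res}(T,V)\to H^2(T,V)$ is well defined and injective.

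For the first claim, recall that $\delta^0_\mathrm{res}=\delta^0$ and $\delta^2_\mathrm{res}(\psi)=(\delta^2\psi,\overline{\psi})$. Take $v\in V=C^0_\mathrm{res}(T,V)$ and set $\psi=\delta^0 v$, so that $\psi(x,y)=\theta(x,y)(v)$. Yamaguti's theorem already gives $\delta^2\circ\delta^0=0$, so $\delta^2\psi=0$. It remains to show $\overline{\psi}=0$. By \eqref{psibar},
\begin{align*}
\overline{\psi}(x,y)=-\theta(x,y^{[p]})(v)+\theta(y,y)^{\frac{p-1}{2}}\theta(x,y)(v).
\end{align*}
This vanishes by the second axiom of a restricted representation, $\theta(x,y^{[p]})=\theta(y,y)^{\frac{p-1}{2}}\circ\theta(x,y)$. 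Hence $\delta^2_\mathrm{res}\circ\delta^0_\mathrm{res}=0$. Since $\delta^2_\mathrm{res}$ is linear (both $\psi\mapsto\delta^2\psi$ and $\psi\mapsto\overline{\psi}$ are linear in $\psi$), the quotient $H^2_\mathrm{res}(T,V)=\mathrm{Ker}(\delta^2_\mathrm{res})/\mathrm{Im}(\delta^0_\mathrm{res})$ is a well-defined vector space. Proposition \ref{propstar'} guarantees that $\delta^2_\mathrm{res}$ indeed lands in $C^4_\mathrm{res}(T,V)$.

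For the injection, define $i:H^2_\mathrm{res}(T,V)\to H^2(T,V)$ by sending the class of $\psi$ to its ordinary class. This is well defined because of two facts. First, $\delta^2_\mathrm{res}\psi=0$ forces $\delta^2\psi=0$, so $\psi$ is an ordinary $2$-cocycle. Second, two representatives differ by an element of $\mathrm{Im}(\delta^0_\mathrm{res})=\mathrm{Im}(\delta^0)$, which is exactly the ordinary coboundary space $B^2(T,V)$. Linearity of $i$ is clear.

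For injectivity, suppose $i(\alpha)=0$. Then a representative $\psi$ lies in $\mathrm{Im}(\delta^0)=\mathrm{Im}(\delta^0_\mathrm{res})$, so $\alpha=0$. The key point is that the restricted and ordinary complexes share the same cochain space $C^0$ and the same map in the degree just below $2$. This is why injectivity is immediate here, unlike in degree $4$ or in the restricted Lie algebra case of Proposition \ref{propinj}.

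The only genuinely nontrivial step is the vanishing of $\overline{\delta^0 v}$. It reduces directly to the restricted representation axiom, so I expect no real obstacle. One should just check that the order of composition in the axiom, $\theta(y,y)^{\frac{p-1}{2}}$ applied after $\theta(x,y)$, matches the order in \eqref{psibar}; it does.
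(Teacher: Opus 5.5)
Your proof is correct and follows the paper's argument essentially verbatim. First, $\overline{\delta^0 v}=0$ by the axiom $\theta(x,y^{[p]})=\theta(y,y)^{\frac{p-1}{2}}\theta(x,y)$. Second, injectivity holds because $\mathrm{Ker}(\delta^2_\mathrm{res})\subset \mathrm{Ker}(\delta^2)$ while both complexes have the same coboundaries $\mathrm{Im}(\delta^0)$; your well-definedness step (representatives differ by an element of $\mathrm{Im}(\delta^0)$) is phrased more accurately than the paper's.

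One small correction to your closing side remark: in this paper, injectivity can fail in degree $3$, the analogue of degree $2$ for restricted Lie algebras, not in degree $4$.
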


\begin{proof}
    Let $v\in C^0(T,V) $. We already know that $\delta^2\delta^0v=0 $. So we must show that $\overline{\delta^0v}=0 $. Let $x,y\in T$. \begin{align*}
        \overline{\delta^0v}(x,y)=& -\delta^0 v(x,y^{[p]})+ \theta(y,y)^\frac{p-1}{2}\delta^0v(x,y) \\ =& -\theta(x,y^{[p]})(v)+ \theta(y,y)^\frac{p-1}{2}\theta(x,y)(v)\\ =& 0,
    \end{align*} since $\theta$ is a restricted representation. Thus, the composition vanishes. Let $\alpha\in H_\mathrm{res}^2(T,V) $ be a restricted cohomology class. Let $\psi\in C^2(T,V) $ be its representative. We have $(\delta^2(\psi),\overline{\psi})=(0,0) $, so $\psi$ represents an ordinary cohomology class $i(\alpha)\in H^2(T,V) $. If $\psi'\in C^2(T,V)$ represents $\alpha$, then $\delta^{2}(\psi-\psi')=0 $ and $\psi' $ represent the same ordinary class $i(\psi') $ as $\psi$. Thus, the map $i:H_\mathrm{res}^2(T,V)\to H^2(T,V) $ is well defined and linear. This map is injective. If $i(\alpha)=0 $, then $\psi$ is in the image of $\delta^{0}=\delta_\mathrm{res}^{0} $ and $\alpha=0$.
\end{proof}

We now provide a definition of $C^5_\mathrm{res}(T,V)$. We set \[C^5_\mathrm{res}(T,V)= \left\{\begin{aligned}
    &(\psi,\beta), \psi\in C^5(T,V), \beta: T\times T\times T\to V,\beta(\cdot,y,\cdot)\, \text{is bilinear}\, \forall y\in T\, \\ &\text{and  } \, \beta(x,\lambda y,z)=\lambda^p\beta(x,y,z) \forall x,y,z\in T,\lambda\in \mathbf{K}
\end{aligned}
    \right\}.\] 

\begin{rmq}
The above definition of $ C_\mathrm{res}^5(T,V) $ is only provisional. An element of $C_\mathrm{res}^5 (T,V)$ should more naturally be a pair $(\varphi,\beta) $, where $\varphi\in C^5(T,V)$ and $\beta:T\times T\times T\to V $ satisfies an appropriate $\star\star $-property with respect to $\delta^3\varphi$, analogous to that introduced in Evans's thesis. We have not yet determined such a $\star\star $-property. Since our considerations are restricted to $H^3_\mathrm{res}(T,V)$, the present definition is sufficient for our purposes, as the coboundary operator below is well defined.      
\end{rmq}

Let $(\varphi,w)\in C^3_\mathrm{res}(T,V) $ be a $3$-cochain. This pair induces a map $\beta:T\times T\times T\to V $ defined by \begin{align*}
    \beta(x,y,z)=& \varphi(x,y^{[p]},z)- \sum_{i+j=\frac{p-3}{2}}\theta(y,z)\theta(y,y)^i\varphi((x,\underbrace{y,\ldots,y}_{2j}),y,y) - \varphi((x,\underbrace{y,\ldots,y}_{p-1}),y,z)\\&- \theta(x,z)(w(y)).
\end{align*} We define the coboundary operator $$\delta_\mathrm{res}^3:C_\mathrm{res}^3(T,V)\to C_\mathrm{res}^5(T,V), (\varphi,w)\mapsto (\delta^3\varphi,\beta). $$ Then, we have the following result.

\begin{thm}
    With the above notations, $\delta_\mathrm{res}^3\circ \delta_\mathrm{res}^1=0 $ and $H_\mathrm{res}^3(T,V)= \mathrm{Ker}(\delta_\mathrm{res}^3)/\mathrm{Im}(\delta_\mathrm{res}^1) $ is well defined. 
\end{thm}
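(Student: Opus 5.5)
We have to show that for every $\psi\in C^1(T,V)$ one has $\delta^3_\mathrm{res}(\delta^1\psi,\tilde\psi)=(0,0)$. By Proposition~\ref{prop5.7}, $(\delta^1\psi,\tilde\psi)$ lies in $C^3_\mathrm{res}(T,V)$, so the composition is defined. Its first component is $\delta^3\delta^1\psi$, which vanishes by Yamaguti's theorem. What remains is the second component: we must show that the map $\beta$ built from $\varphi=\delta^1\psi$ and $w=\tilde\psi$ is identically zero, i.e. for all $x,y,z\in T$,
\begin{align*}
 0=&\;\delta^1\psi(x,y^{[p]},z)-\sum_{i+j=\frac{p-3}{2}}\theta(y,z)\theta(y,y)^i\,\delta^1\psi((x,y,\ldots,y),y,y)\\
 &-\delta^1\psi((x,y,\ldots,y),y,z)+\theta(x,z)\psi(y^{[p]})-\theta(x,z)\theta(y,y)^{\frac{p-1}{2}}\psi(y).
\end{align*}
Here $y$ occurs $2j$ times in the first bracket and $p-1$ times in the second. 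Once this holds, well-definedness of $H^3_\mathrm{res}$ is automatic, since $\delta^3_\mathrm{res}$ is linear and $C^5_\mathrm{res}$ only imposes linearity and $p$-homogeneity conditions. One should still check briefly that $\beta$ is bilinear in $(x,z)$ and $p$-homogeneous in $y$; this is immediate from the formula and the $\star$-property of $w$ (condition 1).

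The plan is to expand every $\delta^1\psi$ term with \eqref{lwo1}. The expansion produces terms of three kinds.
\begin{enumerate}
\item Terms in $\psi$ of a $p$-fold bracket. From $\delta^1\psi(x,y^{[p]},z)$ we get $-\psi([x,y^{[p]},z])$. From the last $\delta^1$ term we get $+\psi((x,y,\ldots,y,z))$ with $y$ appearing $p$ times. These cancel by axiom 3 of Definition~\ref{D5.1}.
\item Terms involving $\theta(\cdot,y^{[p]})$ or $\theta(y^{[p]},\cdot)$. The term $D(x,y^{[p]})\psi(z)$ also belongs here. These are rewritten with the restricted representation identities
\[\theta(x,y^{[p]})=\theta(y,y)^{\frac{p-1}{2}}\theta(x,y),\qquad \theta(y^{[p]},z)=\theta(y,z)\theta(y,y)^{\frac{p-1}{2}}.\]
With these, $\theta(x,z)\psi(y^{[p]})$ from the $w$-term cancels against $-\theta(x,z)\psi(y^{[p]})$ from $\delta^1\psi(x,y^{[p]},z)$.
\item The remaining terms, polynomial in $\theta$-operators applied to $\psi(x)$, $\psi(y)$, $\psi(z)$.
\end{enumerate}

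For the third kind, we expand $\theta((x,y,\ldots,y),\cdot)$ and $D((x,y,\ldots,y),\cdot)$ in terms of $\theta(y,y)$, $\theta(x,y)$, $\theta(y,x)$, $\theta(y,z)$, $\theta(x,z)$. For this we use Propositions~\ref{p5.14} and \ref{p5.16}, or more generally Lemma~\ref{lemcalc} and the $D$-formula following it. The sum over $i+j=\frac{p-3}{2}$ is designed to telescope. Applying \eqref{lwo1} to $\delta^1\psi((x,y^{2j}),y,y)$ gives
\[\theta(y,y)\psi((x,y^{2j}))-\psi((x,y^{2j+2}))\]
plus terms in $\psi(y)$. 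After multiplying by $\theta(y,y)^i$ and summing over $i$, this collapses to
\[\theta(y,y)^{\frac{p-1}{2}}\psi(x)-\psi((x,y^{p-1})).\]
This in turn cancels the contribution $\theta(y,z)\psi((x,y^{p-1}))$ coming from the last $\delta^1$ term, and also the $\theta(y,z)\theta(y,y)^{\frac{p-1}{2}}\psi(x)$ produced in the second kind.

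The hardest step is the bookkeeping of the $\psi(y)$ terms. These carry binomial coefficients $\binom{2n}{2k}$ and $\binom{2n}{2k+1}$ from Propositions~\ref{p5.14} and \ref{p5.16}, and one has to show that they sum, modulo $p$, to exactly the $\theta(x,z)\theta(y,y)^{\frac{p-1}{2}}\psi(y)$ and $D$-terms needed. The approach would be:
\begin{enumerate}
\item Collect the coefficient of each monomial $\theta(y,y)^a\theta(x,y)\theta(y,y)^b\psi(y)$, and similarly with $\theta(y,x)$, each possibly with a leading $\theta(y,z)$ or $\theta(x,z)$.
\item Reduce the resulting sums of binomials using $\binom{p-1}{m}\equiv(-1)^m \pmod p$ and Pascal-type identities, as in the counting argument of Proposition~\ref{prop5.7}.
\end{enumerate}
If the direct computation becomes unwieldy, a fallback is to check the identity first in the model case $T=L_\mathrm{trip}$ with $\theta(x,y)=\rho(y)\rho(x)$, where everything reduces to the restricted Lie algebra identities of Evans. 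The resulting binomial identities are universal in $\theta$, so that case indicates which cancellations must occur.
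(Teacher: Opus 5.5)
Your proposal is correct and follows the paper's proof essentially step for step. Both expand every $\delta^1\psi$ by \eqref{lwo1}, cancel $\psi([x,y^{[p]},z])$ against $\psi((x,y,\ldots,y,z))$ by axiom~3, cancel the two $\theta(x,z)\psi(y^{[p]})$ terms against each other, telescope the $\psi((x,y,\ldots,y))$ terms exactly as you describe, and then handle the $\psi(z)$ and $\psi(y)$ terms with Propositions~\ref{p5.14} and~\ref{p5.16}. The step you leave as a plan is where the paper does most of its work. For the $\psi(z)$ terms it proves $D((x,y,\ldots,y),y)=D(x,y^{[p]})$ using $\binom{p-1}{2k}+\binom{p-1}{2k+1}=\binom{p}{2k+1}\equiv 0$. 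For the $\psi(y)$ terms it uses $\binom{p-1-m}{r}\equiv(-1)^r\binom{m+r}{r} \pmod p$ to show that every $\psi(y)$-monomial from the sum over $i+j=\frac{p-3}{2}$ has coefficient $-1$; these cancel the $+1$ coefficients in $\theta((x,y,\ldots,y),z)\psi(y)$, and the leftover $\theta(x,z)\theta(y,y)^{\frac{p-1}{2}}\psi(y)$ cancels the $w$-term.
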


\begin{proof}
    Let $\varphi\in C_\mathrm{res}^1(T,V). $ Then $\delta_\mathrm{res}^1(\varphi)=(\delta^1\varphi,\tilde{\varphi}) $ and $\delta_\mathrm{res}^3(\delta^1\varphi,\tilde{\varphi})= (\delta^3\circ\delta^1(\varphi),\beta) $. It is already known that $\delta^3\circ\delta^1(\varphi)=0 $. Thus, it remains to show that $\beta=0 $. The $\beta$ induced by $(\delta^1\varphi,\tilde{\varphi})$ is defined by \begin{equation}\label{eqbet}\begin{aligned}
        \beta(x,y,z)=& \delta^1\varphi(x,y^{[p]},z)- \sum_{i+j=\frac{p-3}{2}}\theta(y,z)\theta(y,y)^i\delta^1\varphi((x,y,\ldots,y),y,y) - \delta^1\varphi((x,y,\ldots,y),y,z) \\ &- \theta(x,z)(\theta(y,y)^{\frac{p-1}{2}}(\varphi(y)) - \varphi(y^{[p]})) \end{aligned}
    \end{equation} for $x,y,z\in T$. Using Equation $$\delta \varphi(x,y,z)= \theta(y,z)(\varphi(x))- \theta(x,z)(\varphi(y))+D(x,y)(\varphi(z))- \varphi([x,y,z]), $$ we rewrite \eqref{eqbet},  \begin{align}
        \beta&(x,y,z)\notag \\&= \theta(y^{[p]},z)\varphi(x)- \uwave{\theta(x,z)\varphi(y^{[p]})}+ D(x,y^{[p]})\varphi(z)-\uline{\varphi([x,y^{[p]},z])}\notag \\ &- \sum\limits_{i+j=\frac{p-3}{2}}\theta(y,z)\theta(y,y)^{i+1}\varphi((x,\underbrace{y,\ldots,y}_{2j}))\label{eqdem1} \\ &+ \sum\limits_{i+j=\frac{p-3}{2}}\theta(y,z)\theta(y,y)^i\theta((x,\underbrace{y,\ldots,y}_{2j}),y)\varphi(y) - \sum\limits_{i+j=\frac{p-3}{2}}\theta(y,z)\theta(y,y)^iD((x,\underbrace{y,\ldots,y}_{2j}),y)\varphi(y)\notag \\ &+ \sum\limits_{i+j=\frac{p-3}{2}}\theta(y,z)\theta(y,y)^i\varphi((x,\underbrace{y,\ldots,y}_{2(j+1)}))\label{eqdem2} \\ &- \theta(y,z)\varphi((x,\underbrace{y,\ldots,y}_{p-1}))+ \theta((x,\underbrace{y,\ldots,y}_{p-1}),z)\varphi(y)-D((x,\underbrace{y,\ldots,y}_{p-1}),y)\varphi(z)+ \uline{\varphi((x,\underbrace{y,\ldots,y}_{p},z))}\notag \\ &- \theta(x,z)\theta(y,y)^{\frac{p-1}{2}}\varphi(y) + \uwave{\theta(x,z)\varphi(y^{[p]})}\notag.
    \end{align} The underlined terms cancel. Moreover, most of the terms in \eqref{eqdem1} and \eqref{eqdem2} cancel and the remaining terms are $\theta(y,z)\varphi((x,\underbrace{y,\ldots,y}_{p-1}))$ and $- \theta(y,z)\theta(y,y)^{\frac{p-1}{2}}\varphi(x)= -\theta(y^{[p]},z)\varphi(x) $. Thus, \begin{align}
        \beta&(x,y,z) \notag \\ &= D(x,y^{[p]})\varphi(z)+ \sum\limits_{i+j=\frac{p-3}{2}}\theta(y,z)\theta(y,y)^i\theta((x,\underbrace{y,\ldots,y}_{2j}),y)\varphi(y) \notag \\ &- \sum\limits_{i+j=\frac{p-3}{2}}\theta(y,z)\theta(y,y)^iD((x,\underbrace{y,\ldots,y}_{2j}),y)\varphi(y) + \theta((x,\underbrace{y,\ldots,y}_{p-1}),z)\varphi(y) \notag\\ &-D((x,\underbrace{y,\ldots,y}_{p-1}),y)\varphi(z) -\theta(x,z)\theta(y,y)^\frac{p-1}{2}\varphi(y) \notag.
    \end{align} Using \hyperref[p5.14]{Proposition~\ref*{p5.14}}, \begin{align*}
        D((x,&\underbrace{y,\ldots,y}_{p-1}),y)\varphi(z)\\ =& \theta(y,(x,\underbrace{y,\ldots,y}_{p-1}))\varphi(z)-\theta((x,\underbrace{y,\ldots,y}_{p-1}),y)\varphi(z)\\ =& \sum\limits_{k=0}^\frac{p-1}{2}\tbinom{p-1}{2k}\theta(y,y)^{\frac{p-1}{2}-k}\theta(y,x)\theta(y,y)^{k}\varphi(z) - \sum\limits_{k=0}^\frac{p-3}{2}\tbinom{p-1}{2k+1}\theta(y,y)^k\theta(x,y)\theta(y,y)^{\frac{p-1}{2}-k}\varphi(z)\\ & -\sum\limits_{k=0}^\frac{p-1}{2}\tbinom{p-1}{2k}\theta(y,y)^k\theta(x,y)\theta(y,y)^{\frac{p-1}{2}-k}\varphi(z)+\sum\limits_{k=0}^\frac{p-3}{2}\tbinom{p-1}{2k+1}\theta(y,y)^{\frac{p-1}{2}-k}\theta(y,x)\theta(y,y)^k\varphi(z)\\ =& -\sum\limits_{k=0}^\frac{p-3}{2}\tbinom{p}{2k+1}\theta(y,y)^k\theta(x,y)\theta(y,y)^{\frac{p-1}{2}-k}\varphi(z) -\theta(y,y)^\frac{p-1}{2}\theta(x,y)\varphi(z) \\ &+ \sum\limits_{k=0}^\frac{p-3}{2}\tbinom{p}{2k+1}\theta(y,y)^{\frac{p-1}{2}-k}\theta(y,x)\theta(y,y)^k\varphi(z)+ \theta(y,x)\theta(y,y)^\frac{p-1}{2}\varphi(z) \\ =& D(x,y^{[p]})\varphi(z).
    \end{align*} Thus, \begin{align}
        \beta&(x,y,z) \notag \\ &=  \sum\limits_{i+j=\frac{p-3}{2}}\theta(y,z)\theta(y,y)^i\theta((x,\underbrace{y,\ldots,y}_{2j}),y)\varphi(y) \notag \\ &- \sum\limits_{i+j=\frac{p-3}{2}}\theta(y,z)\theta(y,y)^iD((x,\underbrace{y,\ldots,y}_{2j}),y)\varphi(y) + \theta((x,\underbrace{y,\ldots,y}_{p-1}),z)\varphi(y) \notag\\ & -\theta(x,z)\theta(y,y)^\frac{p-1}{2}\varphi(y) \notag.
    \end{align} Using again \hyperref[p5.14]{Proposition~\ref*{p5.14}} \begin{align}
        \sum\limits_{i+j=\frac{p-3}{2}}&\theta(y,z)\theta(y,y)^i\theta((x,\underbrace{y,\ldots,y}_{2j}),y)\varphi(y) \notag - \sum\limits_{i+j=\frac{p-3}{2}}\theta(y,z)\theta(y,y)^iD((x,\underbrace{y,\ldots,y}_{2j}),y)\varphi(y) \notag\\ =& 2\sum\limits_{i+j=\frac{p-3}{2}}\theta(y,z)\theta(y,y)^i\sum\limits_{k=0}^j\tbinom{2j}{2k}\theta(y,y)^k\theta(x,y)\theta(y,y)^{j-k}\varphi(y)\label{eqdem3} \\& + \sum\limits_{i+j=\frac{p-3}{2}}\theta(y,z)\theta(y,y)^i\sum\limits_{k=0}^{j-1}\tbinom{2j}{2k+1}\theta(y,y)^k\theta(x,y)\theta(y,y)^{j-k}\varphi(y)\label{eqdem4} \\ &-2 \sum\limits_{i+j=\frac{p-3}{2}}\theta(y,z)\theta(y,y)^i\sum\limits_{k=0}^{j-1}\tbinom{2j}{2k+1}\theta(y,y)^{j-k}\theta(x,y)\theta(y,y)^{k}\varphi(y)\label{eqdem5} \\ & -\sum\limits_{i+j=\frac{p-3}{2}}\theta(y,z)\theta(y,y)^i\sum\limits_{k=0}^j\tbinom{2j}{2k}\theta(y,y)^{j-k}\theta(x,y)\theta(y,y)^{k}\varphi(y)\label{eqdem6}
    \end{align} We rearrange the terms in \eqref{eqdem3}, \begin{align}
        \sum\limits_{i+j=\frac{p-3}{2}}&\theta(y,z)\theta(y,y)^i\sum\limits_{k=0}^j\tbinom{2j}{2k}\theta(y,y)^k\theta(x,y)\theta(y,y)^{j-k}\varphi(y)\notag\\ =& \theta(y,z)\theta(x,y)\theta(y,y)^\frac{p-3}{2}\varphi(y)\notag\\ &+ \sum\limits_{k=1}^\frac{p-3}{2}\tbinom{p-3}{2k}\theta(y,z)\theta(y,y)^k\theta(x,y)\theta(y,y)^{\frac{p-3}{2}-k}\varphi(y) \label{eqdem7}\\ &+ \sum\limits_{u=1}^\frac{p-3}{2}\sum\limits_{s=0}^{\frac{p-3}{2}-u}\tbinom{p-3-2u}{2s}\theta(y,z)\theta(y,y)^{u+s}\theta(x,y)\theta(y,y)^{\frac{p-3}{2}-u-s}\varphi(y)\label{eqdem8}
    \end{align} and we count the occurrences of the term $\theta(y,z)\theta(y,y)^{i-1}\theta(x,y)\theta(y,y)^{\frac{p-1}{2}-i}\varphi(y) $ in \eqref{eqdem7} and \eqref{eqdem8} for $2\leq i\leq \frac{p-1}{2} $. Let $2\leq i\leq \frac{p-1}{2}$, there is one term in \eqref{eqdem7} for $k=i-1$ with coefficient $\tbinom{p-3}{2i-2} $. In \eqref{eqdem8}, for each $1\leq u\leq i-1$, there is one term for $s=i-1-u$ with coefficient $\tbinom{p-3-2u}{2i-2-2u} $. Thus, the coefficient is \begin{align*}
        \sum\limits_{u=0}^{i-1}\tbinom{p-3-2u}{2i-2u-2}= \sum\limits_{k=0}^{i-1}\tbinom{p-1-2i+2k}{2k}.
    \end{align*} Then, \begin{align*}
         \sum\limits_{i+j=\frac{p-3}{2}}&\theta(y,z)\theta(y,y)^i\sum\limits_{k=0}^j\tbinom{2j}{2k}\theta(y,y)^k\theta(x,y)\theta(y,y)^{j-k}\varphi(y)\\ =& \theta(y,z)\theta(x,y)\theta(y,y)^\frac{p-3}{2}\varphi(y) + \sum\limits_{i=2}^\frac{p-1}{2}\sum\limits_{k=0}^{i-1}\tbinom{p-1-2i+2k}{2k}\theta(y,z)\theta(y,y)^{i-1}\theta(x,y)\theta(y,y)^{\frac{p-1}{2}-i}\varphi(y).
    \end{align*} Now, we rearrange the terms in \eqref{eqdem4}, \begin{align}
         \sum\limits_{i+j=\frac{p-3}{2}}&\theta(y,z)\theta(y,y)^i\sum\limits_{k=0}^{j-1}\tbinom{2j}{2k+1}\theta(y,y)^k\theta(x,y)\theta(y,y)^{j-k}\varphi(y)\notag \\ =& \tbinom{p-3}{1}\theta(y,z)\theta(x,y)\theta(y,y)^{\frac{p-3}{2}}\varphi(y) \notag \\ &+\sum\limits_{k=1}^{\frac{p-3}{2}-1}\tbinom{p-3}{2k+1}\theta(y,z)\theta(y,y)^k\theta(x,y)\theta(y,y)^{\frac{p-3}{2}-k}\varphi(y) \label{eqdem9}\\ &+ \sum\limits_{u=1}^{\frac{p-3}{2}-1}\sum\limits_{s=0}^{\frac{p-3}{2}-1-u}\tbinom{p-3-2u}{2s+1}\theta(y,z)\theta(y,y)^{u+s}\theta(x,y)\theta(y,y)^{\frac{p-3}{2}-u-s}\varphi(y)\label{eqdem10}
    \end{align} and we count the occurrences of the term $\theta(y,z)\theta(y,y)^{i-1}\theta(x,y)\theta(y,y)^{\frac{p-1}{2}-i} $ in \eqref{eqdem9} and \eqref{eqdem10} for $2\leq i\leq \frac{p-3}{2} $. As before, we find that the coefficient of $\theta(y,z)\theta(y,y)^{i-1}\theta(x,y)\theta(y,y)^{\frac{p-1}{2}-i} $ is \begin{align*}
        \sum\limits_{u=0}^{i-1}\tbinom{p-3-2u}{2i-1-2u}= \sum\limits_{k=0}^{i-1}\tbinom{p-2-2i+2k+1}{2k+1}.
    \end{align*} Then, \begin{align*}
        \sum\limits_{i+j=\frac{p-3}{2}}&\theta(y,z)\theta(y,y)^i\sum\limits_{k=0}^{j-1}\tbinom{2j}{2k+1}\theta(y,y)^k\theta(x,y)\theta(y,y)^{j-k}\varphi(y)\\ = &\tbinom{p-3}{1}\theta(y,z)\theta(x,y)\theta(y,y)^{\frac{p-3}{2}}\varphi(y) \\&+ \sum\limits_{i=2}^\frac{p-3}{2}\sum\limits_{k=0}^{i-1}\tbinom{p-2-2i+2k+1}{2k+1}\theta(y,z)\theta(y,y)^{i-1}\theta(x,y)\theta(y,y)^{\frac{p-1}{2}-i}\varphi(y),
    \end{align*} and finally, \begin{align*}
        2\sum\limits_{i+j=\frac{p-3}{2}}&\theta(y,z)\theta(y,y)^i\sum\limits_{k=0}^j\tbinom{2j}{2k}\theta(y,y)^k\theta(x,y)\theta(y,y)^{j-k}\varphi(y) \\& + \sum\limits_{i+j=\frac{p-3}{2}}\theta(y,z)\theta(y,y)^i\sum\limits_{k=0}^{j-1}\tbinom{2j}{2k+1}\theta(y,y)^k\theta(x,y)\theta(y,y)^{j-k}\varphi(y)\\ =& -\theta(y,z)\theta(x,y)\theta(y,y)^\frac{p-3}{2}\varphi(y)+ 2\sum\limits_{k=0}^{\frac{p-3}{2}}\tbinom{2k}{2k}\theta(y,z)\theta(y,y)^{\frac{p-3}{2}}\theta(x,y)\varphi(y) \\&+ \sum\limits_{i=2}^\frac{p-3}{2}\sum\limits_{k=0}^{i-1}\left(2\tbinom{p-1-2i+2k}{2k}+\tbinom{p-2-2i+2k+1}{2k+1}\right)\theta(y,z)\theta(y,y)^{i-1}\theta(x,y)\theta(y,y)^{\frac{p-1}{2}-i}\varphi(y).
    \end{align*} In order to compute $$\sum\limits_{k=0}^{i-1}\left(2\tbinom{p-1-2i+2k}{2k}+\tbinom{p-2-2i+2k+1}{2k+1}\right), $$ we use the following identity: for all $m\leq p-1, r\leq p-1-m $, $$\tbinom{p-1-m}{r}\equiv(-1)^r\tbinom{m+r}{r}[p]. $$ So $\tbinom{p-1-2i+2k}{2k}\equiv \tbinom{2i}{2k}[p] $ and $\tbinom{p-1-(2i+1-(2k+1))}{2k+1}\equiv-\tbinom{2i+1}{2k+1}[p] $. Thus, \begin{align*}
        \sum\limits_{k=0}^{i-1}\left(2\tbinom{p-1-2i+2k}{2k}+\tbinom{p-2-2i+2k+1}{2k+1}\right)\equiv &2\sum\limits_{k=0}^{i-1}\tbinom{2i}{2k}- \sum\limits_{k=0}^{i-1}\tbinom{2i+1}{2k+1}[p] \\ \equiv& 2(2^{2i-1}-1)-(2^{2i}-1)[p]\\ \equiv& -1[p]. 
    \end{align*} Thus, \begin{align*}
        2\sum\limits_{i+j=\frac{p-3}{2}}&\theta(y,z)\theta(y,y)^i\sum\limits_{k=0}^j\tbinom{2j}{2k}\theta(y,y)^k\theta(x,y)\theta(y,y)^{j-k}\varphi(y) \\& + \sum\limits_{i+j=\frac{p-3}{2}}\theta(y,z)\theta(y,y)^i\sum\limits_{k=0}^{j-1}\tbinom{2j}{2k+1}\theta(y,y)^k\theta(x,y)\theta(y,y)^{j-k}\varphi(y)\\ =& -\sum\limits_{i=1}^{\frac{p-1}{2}}\theta(y,z)\theta(y,y)^{i-1}\theta(x,y)\theta(y,y)^{\frac{p-1}{2}-i}.
    \end{align*}  Now, we rearrange the terms in \eqref{eqdem5}, \begin{align}
        \sum\limits_{i+j=\frac{p-3}{2}}&\theta(y,z)\theta(y,y)^i\sum\limits_{k=0}^{j-1}\tbinom{2j}{2k+1}\theta(y,y)^{j-k}\theta(x,y)\theta(y,y)^{k}\varphi(y) \notag\\ =& \sum\limits_{i=0}^{\frac{p-3}{2}-1}\sum\limits_{k=1}^{\frac{p-3}{2}-i}\tbinom{p-3-2i}{2k-1} \theta(y,z)\theta(y,y)^{i+k}\theta(y,x)\theta(y,y)^{\frac{p-3}{2}-i-k}\varphi(y)\notag\\ =& \sum\limits_{k=1}^\frac{p-3}{2}\tbinom{p-3}{2k-1}\theta(y,z)\theta(y,y)^k\theta(y,x)\theta(y,y)^{\frac{p-3}{2}-k}\varphi(y) \label{eqdem11}\\ &+ \sum\limits_{u=1}^{\frac{p-3}{2}-1}\sum\limits_{k=1}^{\frac{p-3}{2}-u}\tbinom{p-3-2u}{2k-1} \theta(y,z)\theta(y,y)^{u+k}\theta(y,x)\theta(y,y)^{\frac{p-3}{2}-u-k}\varphi(y),\label{eqdem12}
    \end{align} and count the occurrences of the term $\theta(y,z)\theta(y,y)^{i-1}\theta(y,x)\theta(y,y)^{\frac{p-1}{2}-i} $ in \eqref{eqdem11} and \eqref{eqdem12} for $3\leq i\leq \frac{p-1}{2}$. Let $3\leq i\leq \frac{p-1}{2}$, as before, the coefficient of $\theta(y,z)\theta(y,y)^{i-1}\theta(y,x)\theta(y,y)^{\frac{p-1}{2}-i} $ is \begin{align*}
        \sum\limits_{u=0}^{i-2}\tbinom{p-3-2u}{2i-3-2u}.
    \end{align*} Then, \begin{align*}
         \sum\limits_{i+j=\frac{p-3}{2}}&\theta(y,z)\theta(y,y)^i\sum\limits_{k=0}^{j-1}\tbinom{2j}{2k+1}\theta(y,y)^{j-k}\theta(x,y)\theta(y,y)^{k}\varphi(y)\\ =& \tbinom{p-3}{1}\theta(y,z)\theta(y,y)\theta(y,x)\theta(y,y)^{\frac{p-3}{2}-1}\varphi(y)\\ &+ \sum\limits_{i=3}^{\frac{p-1}{2}}\sum\limits_{u=0}^{i-2}\tbinom{p-3-2u}{2i-3-2u}\theta(y,z)\theta(y,y)^{i-1}\theta(y,x)\theta(y,y)^{\frac{p-1}{2}-i}\varphi(y)
    \end{align*} We rearrange the terms in \eqref{eqdem6}, and as before, we obtain \begin{align*}
        \sum\limits_{i+j=\frac{p-3}{2}}&\theta(y,z)\theta(y,y)^i\sum\limits_{k=0}^j\tbinom{2j}{2k}\theta(y,y)^{j-k}\theta(x,y)\theta(y,y)^{k}\varphi(y)\\ =& \theta(y,z)\theta(y,x)\theta(y,y)^\frac{p-3}{2}\varphi(y) \\ &+ \sum\limits_{i=2}^\frac{p-1}{2} \sum\limits_{u=0}^{i-1}\tbinom{p-3-2u}{2i-2-2u}\theta(y,z)\theta(y,y)^{i-1}\theta(y,x)\theta(y,y)^{\frac{p-1}{2}-i}\varphi(y).
    \end{align*} We obtain \begin{align*}
        -2 \sum\limits_{i+j=\frac{p-3}{2}}&\theta(y,z)\theta(y,y)^i\sum\limits_{k=0}^{j-1}\tbinom{2j}{2k+1}\theta(y,y)^{j-k}\theta(x,y)\theta(y,y)^{k}\varphi(y) \\ & -\sum\limits_{i+j=\frac{p-3}{2}}\theta(y,z)\theta(y,y)^i\sum\limits_{k=0}^j\tbinom{2j}{2k}\theta(y,y)^{j-k}\theta(x,y)\theta(y,y)^{k}\varphi(y)\\ = &- \sum\limits_{i=3}^{\frac{p-1}{2}}\left(2\sum\limits_{u=0}^{i-2}\tbinom{p-3-2u}{2i-3-2u}+\sum\limits_{u=0}^{i-1}\tbinom{p-3-2u}{2i-2-2u}\right)\theta(y,z)\theta(y,y)^{i-1}\theta(y,x)\theta(y,y)^{\frac{p-1}{2}-i}\varphi(y)\\ &-\theta(y,z)\theta(y,x)\theta(y,y)^\frac{p-3}{2}\varphi(y) \\&- \left(\tbinom{p-3}{2}+1+2\tbinom{p-3}{1}\right)\theta(y,z)\theta(y,y)\theta(y,x)\theta(y,y)^{\frac{p-3}{2}-1} .
    \end{align*} Similarly, \begin{align*}
        2\sum\limits_{u=0}^{i-2}\tbinom{p-3-2u}{2i-3-2u}+\sum\limits_{u=0}^{i-1}\tbinom{p-3-2u}{2i-2-2u}&\equiv -(2^{2i-1}-2)+2^{2i-1}-1[p]\\&\equiv 1[p]. 
    \end{align*} We finally obtain \begin{align*}
        -2 \sum\limits_{i+j=\frac{p-3}{2}}&\theta(y,z)\theta(y,y)^i\sum\limits_{k=0}^{j-1}\tbinom{2j}{2k+1}\theta(y,y)^{j-k}\theta(x,y)\theta(y,y)^{k}\varphi(y) \\ & -\sum\limits_{i+j=\frac{p-3}{2}}\theta(y,z)\theta(y,y)^i\sum\limits_{k=0}^j\tbinom{2j}{2k}\theta(y,y)^{j-k}\theta(x,y)\theta(y,y)^{k}\varphi(y)\\ =& -\sum\limits_{i=1}^{\frac{p-1}{2}}\theta(y,z)\theta(y,y)^{i-1}\theta(y,x)\theta(y,y)^{\frac{p-1}{2}-i}\varphi(y).
    \end{align*} Moreover, using \hyperref[p5.14]{Proposition~\ref*{p5.14}}, \begin{align*}
        \theta((x,\underbrace{y,\ldots,y}_{p-1}),z)\varphi(y)=& \sum\limits_{k=1}^{\frac{p-1}{2}}\tbinom{p-1}{2k}\theta(y,z)\theta(y,y)^{k-1}\theta(x,y)\theta(y,y)^{\frac{p-1}{2}-k}\varphi(y)\\&- \sum\limits_{k=0}^{\frac{p-3}{2}}\tbinom{p-1}{2k+1}\theta(y,z)\theta(y,y)^{\frac{p-3}{2}-k}\theta(y,x)\theta(y,y)^k\varphi(y)\\&+\theta(x,z)\theta(y,y)^\frac{p-1}{2}\varphi(y)\\=& \sum\limits_{k=1}^{\frac{p-1}{2}}\theta(y,z)\theta(y,y)^{k-1}\theta(x,y)\theta(y,y)^{\frac{p-1}{2}-k}\varphi(y)\\&+ \sum\limits_{k=0}^{\frac{p-3}{2}}\theta(y,z)\theta(y,y)^{\frac{p-3}{2}-k}\theta(y,x)\theta(y,y)^k\varphi(y)+\theta(x,z)\theta(y,y)^\frac{p-1}{2}\varphi(y).
    \end{align*} Finally, \begin{align*}
        \beta(x,y,z)= 0,
    \end{align*} which concludes the proof.    
\end{proof}

We have the following two commutative diagrams, \[\begin{tikzcd}
	{C^{-1}(T,V)} & {C^1(T,V)} & {C^3(T,V)} & {C^5(T,V)} \\
	{C^{-1}_\mathrm{res}(T,V)} & {C^{1}_\mathrm{res}(T,V)} & {C^{3}_\mathrm{res}(T,V)} & {C^{5}_\mathrm{res}(T,V)}
	\arrow["{\delta^{-1}} ", from=1-1, to=1-2]
	\arrow["{\mathrm{id}}"', from=1-1, to=2-1]
	\arrow["{\delta^1} " ,from=1-2, to=1-3]
	\arrow["{\mathrm{id}}"', from=1-2, to=2-2]
	\arrow["{\delta^3} " ,from=1-3, to=1-4]
	\arrow[from=1-3, to=2-3]
	\arrow[from=1-4, to=2-4]
	\arrow["{\delta_\mathrm{res}^{-1}} "' ,from=2-1, to=2-2]
	\arrow["{\delta_\mathrm{res}^{1}} "',from=2-2, to=2-3]
	\arrow["{\delta_\mathrm{res}^{3}} "' ,from=2-3, to=2-4]
\end{tikzcd}\] and \[\begin{tikzcd}
	{} & {C^0(T,V)} & {C^2(T,V)} & {C^4(T,V)} \\
	& {C^{0}_\mathrm{res}(T,V)} & {C^{2}_\mathrm{res}(T,V)} & {C^{4}_\mathrm{res}(T,V)}
	\arrow["{\delta^{0}} ",from=1-2, to=1-3]
	\arrow["{\mathrm{id}}"', from=1-2, to=2-2]
	\arrow["{\delta^{2}} ",from=1-3, to=1-4]
	\arrow["{\mathrm{id}}"', from=1-3, to=2-3]
	\arrow[from=1-4, to=2-4]
	\arrow["{\delta_\mathrm{res}^{-1}} "',from=2-2, to=2-3]
	\arrow["{\delta_\mathrm{res}^{-1}} "',from=2-3, to=2-4]
\end{tikzcd}\] These diagrams show that there are maps $H^k_\mathrm{res}(T,V)\to H^k(T,V) $ for $k\leq 3$ which are injective for $k=1,2 $. As in the Lie algebra setting, the map $H^3_\mathrm{res}(T,V)\to H^3(T,V) $ fails to be injective in general, and we can give a  necessary and sufficient condition of injectivity. Let $$H^1(T,V)\to \mathrm{Hom}_{p-\mathrm{sl}}(T,V^T), \overline{\varphi}\mapsto \tilde{\varphi}, $$ where $$V^T=\left\{ u\in V\mid \theta(x,y)(u)=0, \forall x,y\in T\right\}.  $$ This map is well-defined. Then, we can prove, similarly to \hyperref[propinj]{Proposition~\ref*{propinj}}, the following result.

\begin{prop}
    The map $H^3_\mathrm{res}(T,V)\to H^3(T,V) $ is injective if and only if the map $H^1(T,V)\to \mathrm{Hom}_{p-\mathrm{sl}}(T,V^T) $ is surjective.
\end{prop}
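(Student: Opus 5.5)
The plan is to transpose the argument of Proposition~\ref{propinj} to the Yamaguti setting, replacing $d^1_\mathrm{res}$ by $\delta^1_\mathrm{res}$, the Lie algebra $M^L$ by $V^T$, and Evans' $\beta$ by the map $\beta$ attached to $\delta^3_\mathrm{res}$. Two preliminary observations drive everything.

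\textbf{Observation (a).} If $(\varphi,w)$ and $(\varphi,w')$ both lie in $C^3_\mathrm{res}(T,V)$, then $w-w'$ has the $\star$-property with respect to $0$. This holds because the correction term in the $\star$-property is linear in $\varphi$. A map with the $\star$-property with respect to $0$ is exactly a $p$-semilinear map.

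\textbf{Observation (b).} For $\varphi=0$ the formula defining $\beta$ reduces to $\beta(x,y,z)=-\theta(x,z)(w(y))$. Hence, for $w$ $p$-semilinear, $(0,w)\in Z^3_\mathrm{res}(T,V)$ if and only if $w$ takes values in $V^T$. Here we read $V^T$ as the common kernel of all $\theta(x,z)$, which is the definition given.

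Using these, I first check that the map $H^1(T,V)\to \mathrm{Hom}_{p-\mathrm{sl}}(T,V^T)$, $\overline{\varphi}\mapsto\tilde{\varphi}$, is well defined. Coboundaries are killed because $\widetilde{\delta^{-1}(x\otimes v)}=0$, as shown in the proof that $\delta^1_\mathrm{res}\circ\delta^{-1}_\mathrm{res}=0$. For a $1$-cocycle $\varphi$, Proposition~\ref{prop5.7} gives that $\tilde\varphi$ has the $\star$-property with respect to $\delta^1\varphi=0$, so $\tilde\varphi$ is $p$-semilinear. Moreover, $\delta^3_\mathrm{res}\delta^1_\mathrm{res}\varphi=0$ by the preceding theorem, so $\delta^3_\mathrm{res}(0,\tilde\varphi)=0$. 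By (b) this gives $\theta(x,z)\tilde\varphi(y)=0$ for all $x,y,z$, that is, $\tilde\varphi$ takes values in $V^T$.

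For ($\Leftarrow$), assume surjectivity. Take $\overline{(\varphi,w)}\in H^3_\mathrm{res}(T,V)$ whose image in $H^3(T,V)$ vanishes, so $\varphi=\delta^1 f$ for some $f$. Then $(\varphi,w)-\delta^1_\mathrm{res}f=(0,w-\tilde f)$. This difference is a restricted $3$-cocycle, since both terms are. By (a), $w-\tilde f$ is $p$-semilinear, and by (b) it is $V^T$-valued. By surjectivity there is $k\in Z^1(T,V)$ with $\tilde k=w-\tilde f$. Then $\delta^1_\mathrm{res}(f+k)=(\delta^1 f,\tilde f+\tilde k)=(\varphi,w)$, so the class is zero. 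This uses that $\psi\mapsto\tilde\psi$ is additive, which holds since $\psi\mapsto\tilde{\psi}$ is linear in $\psi$.

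For ($\Rightarrow$), assume injectivity and let $w:T\to V^T$ be $p$-semilinear. By (a) and (b), $(0,w)\in Z^3_\mathrm{res}(T,V)$, and its image in $H^3(T,V)$ is $0$. Injectivity gives $\psi\in C^1(T,V)$ with $\delta^1_\mathrm{res}\psi=(0,w)$. Thus $\psi$ is a $1$-cocycle with $\tilde\psi=w$, which gives surjectivity.

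The main obstacle is observation (b). One must check carefully that every term of $\beta$ other than $-\theta(x,z)(w(y))$ is linear in $\varphi$ and therefore vanishes when $\varphi=0$. This is immediate from the displayed formula. One should also confirm that the provisional definition of $C^5_\mathrm{res}(T,V)$ imposes nothing further that could obstruct the argument.
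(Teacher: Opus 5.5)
Your proposal is correct and follows the paper's route: the paper simply says the result is proved ``similarly to Proposition~\ref{propinj}'', and you carry out exactly that transposition. The two facts you isolate are the ones that drive it. First, two admissible second components for the same $\varphi$ differ by a $p$-semilinear map. Second, for $\varphi=0$ the map $\beta$ reduces to $-\theta(x,z)(w(y))$, which forces $w$ to take values in $V^T$.
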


Let $(V,\theta)$ and $(W,\theta') $ be two restricted representations of a restricted Lie triple system $T$ and let $f:V\to W $ be an isomorphism of representations. As in the non restricted case, we have natural maps $C^n_\mathrm{res}(T,V)\to C^n_\mathrm{res}(T,W) $ for $-1\leq n\leq 5$. As $C^n_\mathrm{res}(T,V)=C^n(T,V) $ for $n\leq 2$, the maps are the same. For  other degrees, we have $$C^3_\mathrm{res}(T,V)\to C^3_\mathrm{res}(T,W), \,\, (\varphi,w)\mapsto (f\circ \varphi,f\circ w). $$ It remains to show that $f\circ w $ has the $\star$-property with respect to $f\circ \varphi$ and it is a direct verification. The map $C^4_\mathrm{res}(T,V)\to C^4_\mathrm{res}(T,W)$ is the same and it is immediate that $f\circ w$ has the $\star'$-property with respect to $f\circ \varphi$, whenever $w$ has the $\star$'-property with respect to $\varphi$. Then, we define $$C^5_\mathrm{res}(T,V)\to C^5_\mathrm{res}(T,W), \,\, (\varphi,\beta)\mapsto (f\circ \varphi,f\circ \beta). $$ All these maps are isomorphisms. Now, it remains to show that these maps commute with the coboundary operators. According to definitions of these coboundary maps it remains to see that $\widetilde{f\circ \varphi}= f\circ \tilde{\varphi} $ and $\overline{f\circ \psi}= f\circ \overline{\psi} $ for all $\varphi\in C^3(T,V),\psi\in C^4(T,V) $ and that the map $\beta'$ induced by $f\circ \varphi $ and $f\circ w $ is equal to $f\circ\beta$ where $\beta$ is induced by $ \varphi $ and $w $. Since $f$ is a morphism of representations, this is true. Thus, we have the following two commutative diagrams \[\begin{tikzcd}
	{C^{-1}_\mathrm{res}(T,V)} & {C^{1}_\mathrm{res}(T,V)} & {C^{3}_\mathrm{res}(T,V)} & {C^{5}_\mathrm{res}(T,V)} \\
	{C^{-1}_\mathrm{res}(T,W)} & {C^{1}_\mathrm{res}(T,W)} & {C^{3}_\mathrm{res}(T,W)} & {C^{5}_\mathrm{res}(T,W)}
	\arrow["{\delta^{-1}_\mathrm{res}}", from=1-1, to=1-2]
	\arrow["\simeq", from=1-1, to=2-1]
	\arrow["{\delta^{1}_\mathrm{res}}", from=1-2, to=1-3]
	\arrow["\simeq", from=1-2, to=2-2]
	\arrow["{\delta^{3}_\mathrm{res}}", from=1-3, to=1-4]
	\arrow["\simeq", from=1-3, to=2-3]
	\arrow["\simeq", from=1-4, to=2-4]
	\arrow["{\delta^{-1}_\mathrm{res}}"', from=2-1, to=2-2]
	\arrow["{\delta^{1}_\mathrm{res}}"', from=2-2, to=2-3]
	\arrow["{\delta^{3}_\mathrm{res}}"', from=2-3, to=2-4]
\end{tikzcd}\] and \[\begin{tikzcd}
	{C^{0}_\mathrm{res}(T,V)} & {C^{2}_\mathrm{res}(T,V)} & {C^{4}_\mathrm{res}(T,V)} \\
	{C^{0}_\mathrm{res}(T,W)} & {C^{2}_\mathrm{res}(T,W)} & {C^{4}_\mathrm{res}(T,W)}
	\arrow["{\delta^{0}_\mathrm{res}}", from=1-1, to=1-2]
	\arrow["\simeq", from=1-1, to=2-1]
	\arrow["{\delta^{2}_\mathrm{res}}", from=1-2, to=1-3]
	\arrow["\simeq", from=1-2, to=2-2]
	\arrow["\simeq", from=1-3, to=2-3]
	\arrow["{\delta^{0}_\mathrm{res}}"', from=2-1, to=2-2]
	\arrow["{\delta^{2}_\mathrm{res}}"', from=2-2, to=2-3]
\end{tikzcd}\] These diagrams show that there are isomorphisms $H^n_\mathrm{res}(T,V)\simeq H^n_\mathrm{res}(T,W) $ for $n\leq 3$.

\subsection{Connection between cohomology of  restricted Lie algebras and  cohomology of the induced restricted Lie triple systems}

In this subsection, we develop restricted analogs of the constructions introduced in \hyperref[subs4]{Subsection~\ref*{subs4}}. 

Let $(L,(\cdot)^{[p]}) $ be a restricted Lie algebra and let $\rho:L\to \mathrm{End}(M)$ be a restricted representation of $L$. Then $(L_\mathrm{trip},(\cdot)^{[p]}) $ is a restricted Lie triple system and the pair $(\theta,M) $, where $\theta(x,y)(m)=\rho(y)\rho(x)(m) $ for all $x,y\in L,m\in M$, is a restricted representation of $L_\mathrm{trip} $. Let $(\varphi,w)\in C^2_\mathrm{res}(L,M) $ be a restricted $2$-cochain. We define $$\phi_{2,\mathrm{res}}(\varphi,w)=(\phi_2\varphi,-w). $$ We first show that the map $-w $ has the $\star$-property with respect to $\phi_2\varphi$. Indeed, let $x,y\in L$ be elements in $L$, \begin{align*}
    -&w(x+y)\\=& -w(x)-w(y) \\ & + \sum_{\substack{x_j\in \left\{x,y \right\},\\x_1=x,x_2=y}}\frac{1}{\#(x)}\sum\limits_{k=0}^{p-2}(-1)^{k+1}\rho(x_p)\cdots \rho(x_{p-k+1})\varphi([x_1,\ldots,x_{p-k-1}],x_{p-k})\\ =& -w(x)-w(y) \\ &+ \sum_{\substack{x_j\in \left\{x,y \right\},\\x_1=x,x_2=y}}\frac{1}{\#(x)} \sum\limits_{k=0}^{\frac{p-3}{2}}\theta(x_{p-1},x_p)\cdots\theta(x_{p-2k+1},x_{p-2k+2}) \rho(x_{p-2k})\varphi((x_1,\ldots,x_{p-2k-2}),x_{p-2k-1})  \\& \,\,\,\,\,\,\,\,\,\,\,\,\,\,\,\,\,\,\,\,\,\,\,\,\,\,\,\,\,\,\,\,\,\,\,\,\,\,\,\,\,\,\,\,\,-\sum\limits_{k=0}^{\frac{p-
    3}{2}}\theta(x_{p-1},x_p)\cdots\theta(x_{p-2k+1},x_{p-2k+2})\varphi((x_1,\ldots,x_{p-2k-1}),x_{p-2k}) \\ =& -w(x) -w(y) \\&+ \sum_{\substack{x_j\in \left\{x,y \right\},\\x_1=x,x_2=y}}\frac{1}{\#(x)} \sum\limits_{k=0}^{\frac{p-3}{2}}\theta(x_{p-1},x_p)\cdots\theta(x_{p-2k+1},x_{p-2k+2}) \phi_2\varphi((x_1,\ldots,x_{p-2k-2}),x_{p-2k-1},x_{p-2k}).
\end{align*} Hence, for every $\varphi\in C^1_\mathrm{res}(L,M)$, \begin{equation}
    \phi_{2,\mathrm{res}}d^1_\mathrm{res}\varphi= \delta^1_\mathrm{res}\varphi.
\end{equation} Thus, the map $\phi_{2,\mathrm{res}} $ commutes with the coboundary operators $d^1_\mathrm{res},\delta^1_\mathrm{res} $. Moreover, let $(\varphi,\beta)\in C^3_\mathrm{res}(L,M) $ be a restricted $3$-cochain. We define $$\phi_{3,\mathrm{res}}(\varphi,\beta)=(\phi_3\varphi,\tilde{\beta}), $$ where $\tilde{\beta}(x,y,z)=\rho(z)\beta(x,y) $. Thus, for every $(\varphi,w)\in C^2_\mathrm{res}(L,M) $, \begin{equation}
    \phi_{3,\mathrm{res}}(d^2_\mathrm{res}(\varphi,w))=\delta^3_\mathrm{res}(\phi_{2,\mathrm{res}}(\varphi,w)).
\end{equation} Hence, we obtain the following commutative diagram \[\begin{tikzcd}
	{C^1_\mathrm{res}(L,M)} & {Z_\mathrm{res}^2(L,M)} & {C_\mathrm{res}^3(L,M)} \\
	{C^1_\mathrm{res}(L_\mathrm{trip},M)} & {Z_\mathrm{res}^3(L_\mathrm{trip},M)} & {C^{5'}_\mathrm{res}(L_\mathrm{trip},M)}
	\arrow["{d_\mathrm{res}^1}", from=1-1, to=1-2]
	\arrow["{\mathrm{id}}"', from=1-1, to=2-1]
	\arrow["{d_\mathrm{res}^2}", from=1-2, to=1-3]
	\arrow[from=1-2, to=2-2]
	\arrow[from=1-3, to=2-3]
	\arrow["{\delta_\mathrm{res}^1}"', from=2-1, to=2-2]
	\arrow["{\delta_\mathrm{res}^3}"', from=2-2, to=2-3]
\end{tikzcd}\] where \[C^{5'}_\mathrm{res}(L_\mathrm{trip},M)= \left\{\begin{aligned}
    &(\psi,\beta), \psi\in \mathrm{Hom}(L_\mathrm{trip}^{\otimes 5},M), \\&\beta: L_\mathrm{trip}\times L_\mathrm{trip}\times L_\mathrm{trip}\to M,\beta(\cdot,y,\cdot)\, \text{is bilinear}\, \forall y\in L_\mathrm{trip}\, \\ &\text{and  } \, \beta(x,\lambda y,z)=\lambda^p\beta(x,y,z) \forall x,y,z\in L_\mathrm{trip},\lambda\in \mathbf{K}
\end{aligned}
    \right\}.\]  Therefore, $\phi_{2,\mathrm{res}} $ induces a map $\overline{\phi_{2,\mathrm{res}}}: H_\mathrm{res}^2(L,M)\to H^3_\mathrm{res}(L_\mathrm{trip},M),\overline{(\varphi,w)}\mapsto\overline{\phi_{2,\mathrm{res}}{(\varphi,w)}} $.

Let $\overline{(\varphi,w)}\in \mathrm{Ker}(\overline{\phi_{2,\mathrm{res}}}) $. Then, there exists $f\in C^1(L,M)$ such that $$(\phi_2\varphi,-w)=\delta^1_\mathrm{res}f=(\delta^1f,(\tilde{f})_\mathrm{LTS}), $$ where $(\tilde{f})_\mathrm{LTS} $ denotes the map induced by $f$ in the definition of $\delta^1_\mathrm{res} $. Hence, $\phi_2\varphi=\delta^1f $ and $-w=(\tilde{f})_\mathrm{LTS} $. Since $(\tilde{f})_\mathrm{LTS}=-(\tilde{f})_\mathrm{Lie} $, $w=(\tilde{f})_\mathrm{Lie} ,$ where $(\tilde{f})_\mathrm{Lie}$ denotes the map induced by $f$ in the definition of $d^1_\mathrm{res}$. Thus, the second component of $(\varphi,w) $ is already the second component of a restricted $2$-coboundary of $L$. Set $g=\varphi-d^1f $. Since $\phi_2d^1f=\delta^1f $, we have $\phi_2g=0 $. Under the assumptions of \hyperref[propinj1]{Proposition~\ref*{propinj1}}, the cohomology class $\overline{g} $ belongs to $\mathrm{Im}(\iota^*)$. Assume also that $M^L=0 $, then $\overline{g}=0$, so there exists a map $u\in C^1(L,M) $ such that $\varphi-d^1f=d^1u $. Therefore, $(\varphi,w)= (d^1(f+u),\tilde{f}) $. In order to conclude that $(\varphi,w) $ is a restricted coboundary, it remains to prove that $\tilde{u}=0 $. Since $(\varphi,w) - d^1_\mathrm{res}f=(g,0) $, the pair $(g,0)$ is a restricted $2$-cocycle. On the other hand, $(g,\tilde{u})=d^1_\mathrm{res}(u) $ is a restricted $2$-coboundary and hence also a restricted $2$-cocycle. Therefore, the difference $(0,\tilde{u}) $ is a restricted $2$-cocycle. The second component of $d^2_\mathrm{res}(0,\tilde{u}) $ is given by $$(x,y)\mapsto\rho(x)\tilde{u}(y) $$ for all $x,y\in L $. Hence, $\rho(x)\tilde{u}(y)=0 $ for all $x,y\in L$ and therefore $\tilde{u} $ takes values in $M^L $. Since $M^L=0$, we obtain $\tilde{u}=0$. Consequently, $(\varphi,w)=d^1_\mathrm{res}(f+u)$. Hence, $\overline{(\varphi,w)}=0 $ in $H^2_\mathrm{res}(L,M)$.

\begin{prop}
     If $L=[L,L]+Z(L) $, $\mathrm{char}(\mathbf{K})> 3$ and $M^L=0$, then the map $H_\mathrm{res}^2(L,M)\to H_\mathrm{res}^3(L_\mathrm{trip},M)$ is injective.
\end{prop}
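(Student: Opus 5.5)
We show that every class in $\mathrm{Ker}(\overline{\phi_{2,\mathrm{res}}})$ vanishes, following the discussion just before the statement. Let $\overline{(\varphi,w)}\in H^2_\mathrm{res}(L,M)$ with $\overline{\phi_{2,\mathrm{res}}(\varphi,w)}=0$ in $H^3_\mathrm{res}(L_\mathrm{trip},M)$.

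\emph{Step 1: adjust by a restricted coboundary.} By definition of $\delta^1_\mathrm{res}$ there is $f\in C^1(L,M)$ with $\phi_2\varphi=\delta^1f$ and $-w=(\tilde f)_\mathrm{LTS}$. We need the sign relation $(\tilde f)_\mathrm{LTS}=-(\tilde f)_\mathrm{Lie}$. For this we compute $\theta(x,x)^{\frac{p-1}{2}}=(\rho(x)^2)^{\frac{p-1}{2}}=\rho(x)^{p-1}$, so that
$(\tilde f)_\mathrm{LTS}(x)=-f(x^{[p]})+x^{p-1}f(x)=-(\tilde f)_\mathrm{Lie}(x)$.
Hence $w=(\tilde f)_\mathrm{Lie}$, and $(\varphi,w)-d^1_\mathrm{res}f=(g,0)$ with $g=\varphi-d^1f\in Z^2(L,M)$. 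Moreover \eqref{eqcobor1} gives $\phi_2g=\phi_2\varphi-\delta^1f=0$, so $g$ satisfies \eqref{eqI}.

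\emph{Step 2: kill the ordinary class.} Since $L=[L,L]+Z(L)$ and $\mathrm{char}(\mathbf K)>3$ (so in particular $\neq3$), Proposition~\ref{propinj1} shows that $\overline g$ lies in the image of $H^2(L,M^L)\to H^2(L,M)$. As $M^L=0$, this image is zero, so $g=d^1u$ for some $u\in C^1(L,M)$.

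\emph{Step 3: control the $p$-part.} Now $(g,0)$ and $d^1_\mathrm{res}u=(g,\tilde u)$ are both restricted $2$-cocycles, so their difference $(0,\tilde u)$ is one as well. The second component of $d^2_\mathrm{res}(0,\tilde u)$ is $(x,y)\mapsto\rho(x)\tilde u(y)$, because every $\varphi$-term vanishes when $\varphi=0$. Hence $\tilde u$ takes values in $M^L=0$, and therefore $(\varphi,w)=d^1_\mathrm{res}(f+u)$.

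The main obstacle is Step 1. The identification $\theta(x,x)^{\frac{p-1}{2}}=\rho(x)^{p-1}$ must be checked carefully with the convention $\theta(x,y)=\rho(y)\rho(x)$, together with the sign of $\tilde\psi$ in the triple-system definition. We must also confirm that $\phi_{2,\mathrm{res}}$ is well defined on cohomology, which requires the commutation $\phi_{2,\mathrm{res}}d^1_\mathrm{res}=\delta^1_\mathrm{res}$ stated earlier. We also use that Proposition~\ref{propinj1} only needs $\mathrm{char}\neq3$, which the hypothesis $p>3$ guarantees.
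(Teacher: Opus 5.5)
Your proposal is correct and follows the paper's argument essentially step by step. You subtract $d^1_\mathrm{res}f$ using $(\tilde f)_{\mathrm{LTS}}=-(\tilde f)_{\mathrm{Lie}}$, kill the ordinary class of $g=\varphi-d^1f$ via Proposition~\ref{propinj1} and $M^L=0$, and then show $\tilde u=0$ because $(0,\tilde u)$ is a restricted $2$-cocycle whose second coboundary component is $\rho(x)\tilde u(y)$. Your check that $\theta(x,x)^{\frac{p-1}{2}}=\rho(x)^{p-1}$ under the convention $\theta(x,y)=\rho(y)\rho(x)$ makes explicit the sign relation that the paper states without computation.
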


We summarize  our results in the following way: Assume that $H^2(L,M)\to H^3(L_\mathrm{trip},M) $ is injective and that $H^1(L,M)\to \mathrm{Hom}_{p-\mathrm{sl}}(L,M^L) $ is surjective. Let $(\varphi,w)\in Z^2_\mathrm{res}(L,M) $ such that $\overline{(\phi_2\varphi,-w)}=0 $ in $H^3_\mathrm{res}(L_\mathrm{trip},M)$. Then, there exists $h\in C^1(L,M) $ such that $\varphi=d^1h $. Hence, $(\varphi,w)-d^1_\mathrm{res}h=(0,w-\tilde{h}) $ and we can, as in the proof of \hyperref[propinj]{Proposition~\ref*{propinj}}, find $u\in Z^1(L,M) $ such that $\tilde{u}=w-\tilde{h} $. Thus, $\overline{(\varphi,w)}=0 $ in $H^2_\mathrm{res}(L,M)$.

\begin{thm}
    Assume that $H^2(L,M)\to H^3(L_\mathrm{trip},M) $ and $H^2_\mathrm{res}(L,M)\to H^2(L,M) $ are injective. Then, $H^2_\mathrm{res}(L,M)\to H^3_\mathrm{res}(L_\mathrm{trip},M) $ is injective.
\end{thm}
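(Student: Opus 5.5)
The proof will be a short diagram chase through the commutative square relating the restricted and ordinary complexes of $L$ and of $L_\mathrm{trip}$. Let $\overline{(\varphi,w)}\in H^2_\mathrm{res}(L,M)$ lie in the kernel of $\overline{\phi_{2,\mathrm{res}}}$. We must show that $\overline{(\varphi,w)}=0$ in $H^2_\mathrm{res}(L,M)$.

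First we pass to ordinary cohomology. By definition of the kernel, there is $f\in C^1(L,M)$ with $(\phi_2\varphi,-w)=\delta^1_\mathrm{res}f=(\delta^1 f,(\tilde f)_\mathrm{LTS})$. In particular $\phi_2\varphi=\delta^1 f=\phi_2 d^1 f$, by \eqref{eqcobor1}. Hence the ordinary class $\overline{\varphi}\in H^2(L,M)$ maps to $0$ under $\overline{\phi_2}:H^2(L,M)\to H^3(L_\mathrm{trip},M)$. This map is injective by hypothesis, so $\overline{\varphi}=0$ in $H^2(L,M)$.

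Next we use that the forgetful map is compatible with these classes. The map $H^2_\mathrm{res}(L,M)\to H^2(L,M)$ sends $\overline{(\varphi,w)}$ to $\overline{\varphi}$, which we have just shown is $0$. Since this forgetful map is injective by the second hypothesis, $\overline{(\varphi,w)}=0$ in $H^2_\mathrm{res}(L,M)$.

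The argument is therefore formal. The only point to check is that $\overline{\phi_{2,\mathrm{res}}}$ is compatible with the forgetful maps: the first component of $\phi_{2,\mathrm{res}}(\varphi,w)$ is $\phi_2\varphi$, and the ordinary part of $\delta^1_\mathrm{res}f$ is $\delta^1 f$. Both facts are immediate from the definitions.

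If one wants to avoid the abstract injectivity hypothesis, Proposition~\ref{propinj} lets one replace it by the surjectivity of $H^1(L,M)\to\mathrm{Hom}_{p-\mathrm{sl}}(L,M^L)$. In that version one writes $\varphi=d^1h$, notes that $(\varphi,w)-d^1_\mathrm{res}h=(0,w-\tilde h)$ is a restricted cocycle with $w-\tilde h$ $p$-semilinear and $M^L$-valued, and lifts $w-\tilde h$ to some $\tilde u$ with $u\in Z^1(L,M)$. This is the argument sketched just before the statement. Verifying that $w-\tilde h$ lands in $M^L$ is the only step requiring care. It follows from the second component of $d^2_\mathrm{res}(0,w-\tilde h)$, which is $(x,y)\mapsto\rho(x)(w-\tilde h)(y)$, together with the $\ast$-property forcing additivity.
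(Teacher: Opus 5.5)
Your argument is correct and follows the same diagram chase as the paper. From $\phi_2\varphi=\delta^1 f$ you get $\overline{\varphi}=0$ in $H^2(L,M)$ by injectivity of $\overline{\phi_2}$, and you then conclude by injectivity of the forgetful map. The paper does the last step through the equivalent surjectivity condition of Proposition~\ref{propinj}: it writes $\varphi=d^1h$ and lifts $w-\tilde h$ to $\tilde u$ with $u\in Z^1(L,M)$, which is exactly your alternative version. Using the injectivity hypothesis directly, as you do, is the more economical reading of the statement as written.
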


\begin{cor}
     If $M^{Z(L)}=0 $, then the map $H_\mathrm{res}^2(L,M)\to H_\mathrm{res}^3(L_\mathrm{trip},M) $ is injective.
\end{cor}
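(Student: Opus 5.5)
The corollary should follow by combining the preceding theorem with the earlier proposition stating that $M^{Z(L)}=0$ implies injectivity of $H^2(L,M)\to H^3(L_\mathrm{trip},M)$. The theorem needs two hypotheses: injectivity of $H^2(L,M)\to H^3(L_\mathrm{trip},M)$, which is exactly that proposition, and injectivity of $H^2_\mathrm{res}(L,M)\to H^2(L,M)$. So the only real work is the second hypothesis.

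For the second hypothesis I would use Proposition~\ref{propinj}: it suffices that $H^1(L,M)\to \mathrm{Hom}_{p-\mathrm{sl}}(L,M^L)$ is surjective. Since $Z(L)\subset L$, we have $M^L\subset M^{Z(L)}=0$. Hence $\mathrm{Hom}_{p-\mathrm{sl}}(L,M^L)=0$, and any map into the zero space is surjective. Therefore $H^2_\mathrm{res}(L,M)\to H^2(L,M)$ is injective.

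Alternatively, one can bypass the theorem and argue directly. Take $(\varphi,w)\in Z^2_\mathrm{res}(L,M)$ whose image in $H^3_\mathrm{res}(L_\mathrm{trip},M)$ vanishes, so that $(\phi_2\varphi,-w)=\delta^1_\mathrm{res}f$.
\begin{enumerate}
\item As in the discussion preceding the proposition on $L=[L,L]+Z(L)$, we get $w=\tilde f$ and $g=\varphi-d^1f$ with $\phi_2 g=0$, i.e. $g$ satisfies \eqref{eqI}.
\item The argument leading to \eqref{eqjsp} uses only $M^{Z(L)}=0$, and it shows $g=0$ identically.
\item Hence $(\varphi,w)=d^1_\mathrm{res}f$, so the class is zero.
\end{enumerate}
This route avoids even the $\tilde u$ step.

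There is no real obstacle. The one point to check is the inclusion $M^L\subset M^{Z(L)}$, which holds because invariance under all of $L$ implies invariance under $Z(L)$.
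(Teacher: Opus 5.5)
Your proposal is correct and follows the paper's route: the corollary comes from the preceding theorem together with two ingredients. The first is the earlier proposition giving injectivity of $H^2(L,M)\to H^3(L_\mathrm{trip},M)$ when $M^{Z(L)}=0$; the second is Proposition~\ref{propinj}, applied via $M^L\subset M^{Z(L)}=0$, so that $\mathrm{Hom}_{p-\mathrm{sl}}(L,M^L)=0$. Your direct alternative is also sound: once $\phi_2 g=0$, the argument behind \eqref{eqjsp} forces $g=0$, so $(\varphi,w)=d^1_\mathrm{res}f$ without needing the auxiliary $u$.
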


We can similarly define a map $H^1_\mathrm{res}(L,M)\to H^2_\mathrm{res}(L_\mathrm{trip},M)$. Since $C_\mathrm{res}^0(L,M)=C^0(L,M)=C_\mathrm{res}^0(L_\mathrm{trip},M)=C^0(L_\mathrm{trip},M) , C^1_\mathrm{res}(L,M)=C^1(L,M)$ and $C^2_\mathrm{res}(L_\mathrm{trip},M)=C^2(L_\mathrm{trip},M) $, it remains to show that $\phi_1\varphi $ is a restricted $2$-cocycle for every $\varphi\in C^1(L,M) $. Let $\varphi\in C^1(L,M) $. Then we only have to check that the second component of $\delta^2_\mathrm{res}(\phi_1\varphi) $ vanishes. It is equal to \begin{align*}
        -\phi_1\varphi(x,y^{[p]})+\theta(y,y)^\frac{p-1}{2}\phi_1\varphi(x,y)&= \rho(y^{[p]})\varphi(x) - \rho(y)^\frac{p-1}{2}\rho(y)\varphi(x)\\ &= 0.
 \end{align*} Thus, $\phi_1 $ induces a map $\overline{\phi_1}:H^1_\mathrm{res}(L,M)\to H^2_\mathrm{res}(L_\mathrm{trip},M) $. The same injectivity criterion as in the non-restricted case remains valid.

\begin{prop}
    The map $H_\mathrm{res}^1(L,M)\to H_\mathrm{res}^2(L_\mathrm{trip},M)$ is injective if and only if the map $H_\mathrm{res}^1(L,M^L)\to H_\mathrm{res}^1(L,M) $ is zero.
\end{prop}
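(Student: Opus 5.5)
The plan is to mimic the argument given for Proposition \ref{propinj2} in the non-restricted case, checking at each step that the objects stay inside the restricted complexes. Recall $C^1_\mathrm{res}(L,M)=C^1(L,M)$, $C^0_\mathrm{res}=C^0$ and $C^2_\mathrm{res}(L_\mathrm{trip},M)=C^2(L_\mathrm{trip},M)$. Hence $H^1_\mathrm{res}(L,M)$ is the subspace of $H^1(L,M)$ of classes of $1$-cocycles $\varphi$ with $\tilde{\varphi}=0$, that is, $\varphi(g^{[p]})=\rho(g)^{p-1}\varphi(g)$ (this is independent of the representative, since $\widetilde{d^0m}=0$). Likewise $H^2_\mathrm{res}(L_\mathrm{trip},M)$ sits inside $H^2(L_\mathrm{trip},M)$ with the same coboundaries $\mathrm{Im}(\delta^0)$. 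So the goal is the identity
$$\mathrm{Ker}\big(\overline{\phi_1}:H^1_\mathrm{res}(L,M)\to H^2_\mathrm{res}(L_\mathrm{trip},M)\big)=\mathrm{Im}\big(H^1_\mathrm{res}(L,M^L)\to H^1_\mathrm{res}(L,M)\big),$$
from which the equivalence is immediate.

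\emph{Inclusion $\subseteq$.} Let $\varphi\in Z^1_\mathrm{res}(L,M)$ with $\phi_1\varphi=\delta^0 m$. As in the non-restricted case, $g(x)=\varphi(x)+\rho(x)(m)$ is a $1$-cocycle with values in $M^L$, by \eqref{eqML}, and $g-\varphi=-d^0 m$. We must check that $g$ is restricted, as a cochain with values in the restricted module $M^L$ (a restricted submodule, with trivial action). We have
$$\tilde g(x)=\tilde\varphi(x)+\widetilde{d^0(-m)}(x)=0,$$
because $\widetilde{d^0(\cdot)}=0$ for a restricted module and $\tilde\varphi=0$. Computed in $M^L$, $\tilde g(x)=g(x^{[p]})-0$, since the action on $M^L$ is trivial and $p-1\ge1$. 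This equals the same expression computed in $M$, because $\rho(x)^{p-1}g(x)=0$ there too. Hence $g\in Z^1_\mathrm{res}(L,M^L)$, and its class maps to $\overline{\varphi}$.

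\emph{Inclusion $\supseteq$.} If $g\in Z^1_\mathrm{res}(L,M^L)$, then $\iota\circ g$ is a restricted cocycle of $L$ in $M$, since $\widetilde{\iota g}=\iota\tilde g=0$. Moreover $\phi_1(\iota g)(x,y)=-\rho(y)g(x)=0$, so its image under $\overline{\phi_1}$ is zero.

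The only delicate point is the compatibility of the tilde operator with $\iota$ and with the trivial action on $M^L$, together with confirming that $H^1_\mathrm{res}(L,M^L)\to H^1_\mathrm{res}(L,M)$ is well defined. Both follow from $\iota$ being a morphism of restricted modules, as in the isomorphism-invariance discussion above. Everything else is the non-restricted computation verbatim.
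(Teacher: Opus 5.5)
Your proof is correct and follows the paper's approach. The paper only asserts that the non-restricted argument of Proposition~\ref{propinj2} carries over. You supply the checks it leaves implicit: the adjusted representative $g=\varphi-d^0m$ still satisfies $\tilde g=0$ because $\widetilde{d^0m}=0$ for a restricted module, and $\iota$ commutes with the tilde operator. Together these give $\mathrm{Ker}(\overline{\phi_1})=\mathrm{Im}\big(H^1_\mathrm{res}(L,M^L)\to H^1_\mathrm{res}(L,M)\big)$ in the restricted setting.
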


\section{Algebraic interpretations}\label{s6}

In this section, we develop the analogs of the algebraic interpretations of the first cohomology groups of the Yamaguti cohomology discussed in Section \eqref{s4} in the restricted setting. 

\subsection{Restricted derivations}

\begin{defi}
    Let $T$ be a restricted Lie triple system. Let $D:T\to T$ be a derivation of $T$. We say that $D$ is a restricted derivation if, in addition, $$D(x^{[p]})=(D(x),x,\ldots,x) $$ for all $x\in T$. We denote by $\mathrm{Der}_\mathrm{res}(T) $ the space of all restricted derivations of $T$.
\end{defi}

\begin{lem}
    The inner derivations are restricted.
\end{lem}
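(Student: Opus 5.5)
By linearity it suffices to treat $D=[a,b,\cdot]$ for fixed $a,b\in T$, since $x\mapsto (D(x),x,\ldots,x)$ is linear in $D$ for fixed $x$. Write $R(x,x)=[\cdot,x,x]$, so that the required identity reads
$$[a,b,x^{[p]}]=R(x,x)^{\frac{p-1}{2}}[a,b,x].$$
The plan has three steps: rewrite the left-hand side so that $x^{[p]}$ sits in the middle slot, apply the axiom for the $p$-map, and compare both sides in an enveloping Lie algebra.

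First, move $x^{[p]}$ into the middle slot. Using skew-symmetry and the cyclic identity,
$$[a,b,c]=[c,b,a]-[c,a,b]=[a,c,b]-[b,c,a].$$
Take $c=x^{[p]}$ and apply condition 3 of Definition~\ref{D5.1}, $[u,y^{[p]},z]=(u,y,\ldots,y,z)$. This gives
$$D(x^{[p]})=(a,\underbrace{x,\ldots,x}_p,b)-(b,\underbrace{x,\ldots,x}_p,a).$$

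Second, compare both sides inside a Lie algebra. Embed $T$ into a Lie algebra $L$ (for instance the standard envelope $T\oplus[T,T]$), with $[u,v,w]=[[u,v],w]$, as recalled in the introduction. There $(u,x,\ldots,x)$ with $2m$ copies of $x$ equals $(\ad x)^{2m}u$, because $[[u,x],x]=(\ad x)^2u$. Hence $(a,x,\ldots,x,b)$ with $p$ copies of $x$ equals $[[(\ad x)^{p-1}a,x],b]=-[(\ad x)^p a,b]$, and the left-hand side becomes
$$-[(\ad x)^p a,b]+[(\ad x)^p b,a]=-[(\ad x)^pa,b]-[a,(\ad x)^pb].$$
The right-hand side is $(\ad x)^{p-1}[[a,b],x]=-(\ad x)^p[a,b]$. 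Since $\ad x$ is a derivation of $L$ and $\mathrm{char}\,\mathbf K=p$, the Leibniz rule with $\binom{p}{k}\equiv 0$ for $0<k<p$ shows that $(\ad x)^p$ is again a derivation. Therefore both sides agree.

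The main obstacle is justifying the passage to $L$. We need that iterated brackets $(u,x,\ldots,x,v)$ computed in $T$ coincide with the corresponding Lie expressions, which holds since the embedding is injective and respects the triple product. Note that the $p$-map itself is never needed in $L$; it enters only through condition 3. If one prefers to avoid embeddings, the alternative is to stay intrinsic. In that case one shows that $D(x,x)^p$, with $D(x,x)=[x,x,\cdot]$, is a derivation of $T$ by the same binomial argument. One then expands both sides using Proposition~\ref{p5.14} for the adjoint representation. I would try the embedding argument first, as it reduces everything to the one-line Leibniz computation above.
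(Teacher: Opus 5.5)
Your proof is correct, and it takes a different route from the paper's. The paper's proof is one line: for $\phi=[x,y,\cdot]$ it writes $\phi(z^{[p]})=[x,y,z^{[p]}]=(x,y,z,\ldots,z)=(\phi(z),z,\ldots,z)$ and treats the middle equality as immediate. That equality has $z^{[p]}$ in the \emph{third} slot, so it is not literally condition 3 of Definition~\ref{D5.1}. What it actually uses is the identity $\theta(y,z^{[p]})=\theta(z,z)^{\frac{p-1}{2}}\theta(y,z)$ for the adjoint representation, i.e.\ the Example (from \cite{BM26}) asserting that the adjoint representation is restricted. Your argument proves exactly this identity. The cyclic identity moves $x^{[p]}$ into the middle slot, and condition 3 then eliminates the $p$-map. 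What remains, $(a,x,\ldots,x,b)-(b,x,\ldots,x,a)=([a,b,x],x,\ldots,x)$, is a pure Lie triple system identity in characteristic $p$. You verify it in the standard envelope using the fact that $(\ad x)^p$ is a derivation; I checked the signs, $[[u,x],x]=(\ad x)^2u$ and $[(\ad x)^{p-1}a,x]=-(\ad x)^pa$, and they are right. The paper's route is shorter but rests on a fact stated without proof in this paper. Yours is self-contained and in effect shows that the restrictedness of the adjoint representation follows from condition 3 alone. Its only external input is Jacobson's embedding, which, as you note, needs only injectivity and compatibility with the triple product.
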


\begin{proof}
    Let $x,y\in T$ and consider the inner derivation $\phi:z\mapsto [x,y,z] $. Then, $\phi(z^{[p]})= [x,y,z^{[p]}]=(x,y,z,\ldots,z) = (\phi(z),z,\ldots,z) $.
\end{proof}

We consider here the restricted cohomology with coefficient in $T$ with the adjoint representation $\theta(x,y)(z)= [z,x,y] $.  We have the following result.

\begin{prop}
  The first cohomology group $H_\mathrm{res}^1(T,T) $ is equal to the space $\mathrm{Der}_\mathrm{res}(T)/\mathrm{InnDer}(T) $.
\end{prop}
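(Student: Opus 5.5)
The plan mirrors the non-restricted proof given earlier in Section \ref{s4}. By definition,
\[
H^1_\mathrm{res}(T,T)=\mathrm{Ker}(\delta^1_\mathrm{res})/\mathrm{Im}(\delta^{-1}_\mathrm{res}),
\qquad
\delta^1_\mathrm{res}f=(\delta^1 f,\tilde f),
\]
and $\delta^{-1}_\mathrm{res}=\delta^{-1}$. So the argument has two parts: identify the kernel with $\mathrm{Der}_\mathrm{res}(T)$, and identify the image with $\mathrm{InnDer}(T)$.

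\emph{The image.} With the adjoint representation, $\delta^{-1}(x\otimes y)$ is the map $t\mapsto\theta(x,t)(y)=[y,x,t]$. Hence $\mathrm{Im}(\delta^{-1})$ is spanned by the maps $t\mapsto [y,x,t]$, which is exactly $\mathrm{InnDer}(T)$. This was already observed in the non-restricted proposition. By the preceding lemma these inner derivations are restricted, so $\mathrm{InnDer}(T)\subset \mathrm{Der}_\mathrm{res}(T)$ and the quotient makes sense.

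\emph{The kernel.} First, the computation in the non-restricted case shows that $\delta^1 f=0$ if and only if $f$ is a derivation. It remains to unwind the condition $\tilde f=0$:
\[
\tilde f(x)=-f(x^{[p]})+\theta(x,x)^{\frac{p-1}{2}}(f(x)).
\]
For the adjoint representation, $\theta(x,x)(z)=[z,x,x]$. Iterating $\frac{p-1}{2}$ times gives
\[
\theta(x,x)^{\frac{p-1}{2}}(f(x))=(f(x),\underbrace{x,\ldots,x}_{p-1}),
\]
where the order of application is consistent with the left-normed bracket notation, since each new factor $\theta(x,x)$ applies $[\,\cdot\,,x,x]$ on the outside. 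Therefore $\tilde f=0$ is precisely the condition $f(x^{[p]})=(f(x),x,\ldots,x)$ for all $x$. So $\mathrm{Ker}(\delta^1_\mathrm{res})=\mathrm{Der}_\mathrm{res}(T)$, and the quotient is $\mathrm{Der}_\mathrm{res}(T)/\mathrm{InnDer}(T)$.

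The only point needing care is the bookkeeping in the identification of $\theta(x,x)^{\frac{p-1}{2}}(f(x))$ with $(f(x),x,\ldots,x)$. One must check that the number of $x$'s is $p-1$, which makes the total length $p$ and hence odd, so the bracket is well defined. One must also check that the iterated adjoint action nests in the same order as $(x_1,\ldots,x_n)$. Both follow directly from the definitions, so no real obstacle is expected.
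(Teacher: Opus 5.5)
Your proposal is correct and follows essentially the same route as the paper. You identify $\mathrm{Ker}(\delta^1_\mathrm{res})$ with $\mathrm{Der}_\mathrm{res}(T)$ by combining the non-restricted computation of $\delta^1$ with the fact that, for the adjoint representation, $\tilde f=0$ unwinds to $f(x^{[p]})=(f(x),x,\ldots,x)$, and you identify $\mathrm{Im}(\delta^{-1}_\mathrm{res})=\mathrm{Im}(\delta^{-1})$ with $\mathrm{InnDer}(T)$. You are slightly more explicit than the paper, since you check both inclusions for the kernel and spell out $\theta(x,x)^{\frac{p-1}{2}}(f(x))=(f(x),x,\ldots,x)$, which the paper leaves implicit.
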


\begin{proof}
    The $1$-cocycles for the restricted cohomology with coefficient in $T$ are the restricted derivations. Indeed, let $\psi\in C^1(T,V)$ such that $\delta_\mathrm{res}^1(\psi)=0 $. Thus, $\delta^1(\psi)=0 $ and $\psi $ is a derivation. Moreover, $\tilde{\psi}=0 $ so $\psi(x^{[p]})= \theta(x,x)^\frac{p-1}{2}(\psi(x)) $, which means that $\psi$ is a restricted derivation. As in the non-restricted case, $\delta^{-1}_\mathrm{res}(C^{-1}_\mathrm{res}(T,V))= \mathrm{InnDer}(T) $. Thus, the result is proved.
\end{proof}

\subsection{Restricted extensions of modules}

\begin{defi}
    We say that a pseudo-module $(W,\theta)$ is restricted if $$\theta(x,y^{[p]})= \theta(y,y)^\frac{p-1}{2}\theta(x,y), $$ for all $x,y\in T$.
\end{defi}

A morphism between two restricted pseudo-modules is just a morphism between pseudo-modules.

\begin{defi}
A one-dimensional right pseudo-extension of a restricted module $V$ is an exact sequence of restricted pseudo-modules and their morphisms \[\begin{tikzcd}
	0 & V & W & {\mathbf{K}} & 0
	\arrow[from=1-1, to=1-2]
	\arrow["f", from=1-2, to=1-3]
	\arrow["g", from=1-3, to=1-4]
	\arrow[from=1-4, to=1-5]
\end{tikzcd}\] where $\mathbf{K}$ has the structure of a trivial $T$-module.
\end{defi} 

The definition of equivalence between one-dimensional pseudo-extensions of a restricted module is the same as in the non-restricted case. We will show that there is a one-to-one correspondence between equivalence classes of one-dimensional right pseudo-extension of a restricted module $(V,\theta)$ and its second restricted cohomology group $H^2_\mathrm{res}(T,V) $.

Let \[\begin{tikzcd}
	0 & V & W & {\mathbf{K}} & 0
	\arrow[from=1-1, to=1-2]
	\arrow["f", from=1-2, to=1-3]
	\arrow["g", from=1-3, to=1-4]
	\arrow[from=1-4, to=1-5]
\end{tikzcd}\] be a one-dimensional right pseudo-extension of a restricted module $(V,\theta)$, then we have seen that we can construct a cocycle $\varphi\in C^2(T,V) $ and that the cohomology class of $\varphi$ depends only on the equivalence class of the extension. It remains to see that $\varphi $ is a restricted $2$-cocycle, i.e. $\overline{\varphi}=0 $ and that its restricted cohomology class depends uniquely on the equivalence class of the extension. We recall that $\varphi$ is defined by $\varphi(x,y)=f^{-1}(\theta'(x,y)(u)) $, where $u\in W $ such that $g(u)=1$ and $(W,\theta') $ is a restricted representation of $T$. We have  \begin{align*}
   f( \overline{\varphi}(x,y))=&f( -\varphi(x,y^{[p]})+ \theta(y,y)^{\frac{p-1}{2}}\varphi(x,y))\\ =& -\theta'(x,y^{[p]})(u)+ f(\theta(y,y)^\frac{p-1}{2}(f^{-1}(\theta'(x,y)(u))))\\ =& -\theta'(y,y)^\frac{p-1}{2}\theta'(x,y)(u) +  \theta'(y,y)^\frac{p-1}{2}\theta'(x,y)(u)\\ =& 0.
\end{align*} So $\overline{\varphi}=0$ since $f$ is injective. Thus, $\varphi $ is a restricted $2$-cocycle. We have seen that, if we have two equivalent extensions, then the $2$-cocycles defined are in the same cohomology class, so it is the same for  restricted $2$-cocycles since $\delta^0_\mathrm{res}=\delta^0 $. So, we have constructed a map $\mathrm{Ext}_\mathrm{res}(V)\to H^2_\mathrm{res}(T,V) $. Now, we consider the inverse construction. 

Let $\varphi\in C^2_\mathrm{res}(T,V) $ be a restricted $2$-cocycle. We have already defined a pseudo-extension  \[\begin{tikzcd}
	& 0 & V & {V\oplus \mathbf{K}} & {\mathbf{K}} & 0 
	\arrow[from=1-2, to=1-3]
	\arrow["\iota", from=1-3, to=1-4]
	\arrow["\pi", from=1-4, to=1-5]
	\arrow[from=1-5, to=1-6]
\end{tikzcd}\] with the structure defined on $V\oplus \mathbf{K}$ by the semi-direct product and the representation $\theta':T\times T\to V\oplus \mathbf{K} $ by $\theta'(x,y)((u,\lambda))= (\lambda\varphi(x,y)+ \theta(x,y)(u),0) $. The map $\theta'$ is a restricted pseudo-representation. Let $x,y\in T,u\in V,\lambda\in \mathbf{K} $. On one hand, we have  \begin{align*}
    \theta'(x,y^{[p]})(u,\lambda)=& (\lambda\varphi(x,y^{[p]})+ \theta(x,y^{[p]})(u),0)\\ =& (\lambda\varphi(x,y^{[p]})+\theta(y,y)^\frac{p-1}{2}\theta(x,y)(u),0).
\end{align*} On the other hand, \begin{align*}
    \theta'(y,y)^\frac{p-1}{2}\theta'(x,y)(u,\lambda)=& \theta'(y,y)^\frac{p-1}{2}((\lambda\varphi(x,y)+ \theta(x,y)(u),0))\\ =&( \lambda\theta(y,y)^\frac{p-1}{2}\varphi(x,y)+\theta(y,y)^\frac{p-1}{2}\theta(x,y)(u),0).
\end{align*} Thus, $\theta'$ is a restricted pseudo-representation and  is a one-dimensional right pseudo-extension of the restricted module $V$. Let $\varphi,\varphi'\in C^2(T,V) $ be two restricted $2$-cocycles such that there exists $v\in V$ with $\varphi(x,y)-\varphi'(x,y)=\theta(x,y)(v) $. We have seen that in this case, the two one-dimensional right pseudo-extensions are equivalent. So, the two one-dimensional right pseudo-extension of the restricted module $V$ are equivalent. As the two constructions are inverse of each other, we have the following result.

\begin{prop}
    We have a one-to-one correspondence between $H^2_\mathrm{res}(T,V) $ and $\mathrm{Ext}_\mathrm{res}(V) $. 
\end{prop}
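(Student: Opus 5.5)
The plan is to assemble the two constructions just described into mutually inverse maps between $\mathrm{Ext}_\mathrm{res}(V)$ and $H^2_\mathrm{res}(T,V)=\mathrm{Ker}(\delta^2_\mathrm{res})/\mathrm{Im}(\delta^0_\mathrm{res})$. Both maps are already built in the non-restricted setting; the only new task is to check that the restricted conditions pass through in each direction.

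\textbf{The map $\Phi:\mathrm{Ext}_\mathrm{res}(V)\to H^2_\mathrm{res}(T,V)$.}
\begin{enumerate}
\item Start from a restricted pseudo-extension $0\to V\xrightarrow{f}W\xrightarrow{g}\mathbf{K}\to 0$ and choose $u\in W$ with $g(u)=1$. Set $\varphi(x,y)=f^{-1}(\theta'(x,y)(u))$.
\item By the non-restricted computation, $\delta^2\varphi=0$.
\item The calculation above gives $\overline{\varphi}=0$. It uses only that $W$ is a restricted pseudo-module and that $f$ is an injective morphism. Hence $\delta^2_\mathrm{res}\varphi=(\delta^2\varphi,\overline{\varphi})=0$.
\item Changing $u$ by $f(v)$ changes $\varphi$ by $\delta^0 v=\delta^0_\mathrm{res}v$.
\item An equivalence $h$ of extensions transports the chosen $u$ to an admissible choice in the other extension, giving the same cocycle.
\end{enumerate}
So $\Phi$ is well defined. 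No new coboundary condition appears, because $\delta^0_\mathrm{res}=\delta^0$ and $C^2_\mathrm{res}=C^2$.

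\textbf{The map $\Psi:H^2_\mathrm{res}(T,V)\to \mathrm{Ext}_\mathrm{res}(V)$.}
\begin{enumerate}
\item Given a restricted $2$-cocycle $\varphi$, put $\theta'(x,y)(u,\lambda)=(\lambda\varphi(x,y)+\theta(x,y)(u),0)$ on $V\oplus\mathbf{K}$.
\item The earlier computation shows $\theta'$ satisfies the pseudo-representation identity \eqref{rep1}, using $\delta^2\varphi=0$.
\item The restricted condition $\theta'(x,y^{[p]})=\theta'(y,y)^{\frac{p-1}{2}}\theta'(x,y)$ reduces to $\varphi(x,y^{[p]})=\theta(y,y)^{\frac{p-1}{2}}\varphi(x,y)$. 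This is exactly $\overline{\varphi}=0$ by \eqref{psibar}, which holds because $\varphi$ is a restricted cocycle. It is also worth checking that the $\mathbf{K}$-component of $\theta'(y,y)^{\frac{p-1}{2}}\theta'(x,y)$ vanishes (it does, since $\theta'$ lands in $V\oplus 0$), and that $\iota,\pi$ are morphisms of restricted pseudo-modules with $\mathbf{K}$ trivial.
\item If $\varphi-\varphi'=\delta^0 v$, the map $h(u,\lambda)=(u+\lambda v,\lambda)$ is an equivalence, as shown earlier. So $\Psi$ is well defined on classes.
\end{enumerate}

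\textbf{Inverse check.}
\begin{itemize}
\item $\Phi\Psi=\mathrm{id}$: taking $u=(0,1)$ in $V\oplus\mathbf{K}$ recovers $\varphi$ exactly.
\item $\Psi\Phi=\mathrm{id}$: given $W$ with chosen $u$, the linear isomorphism $V\oplus\mathbf{K}\to W$, $(v,\lambda)\mapsto f(v)+\lambda u$, intertwines the structures. Indeed $\theta'_W(x,y)(f(v)+\lambda u)=f(\theta(x,y)v+\lambda\varphi(x,y))$, and the right-hand side lies in $f(V)$ because $g$ kills the action. This map also commutes with the inclusions and projections, so it is an equivalence.
\end{itemize}

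Nothing here is a genuine obstacle, since all ingredients were verified just above. The only points needing care are the following.
\begin{itemize}
\item A restricted pseudo-module imposes only the second-slot $p$-condition. This is precisely what $\overline{\varphi}$ measures, so no first-slot condition (as in the definition of restricted representation) is needed for $W$.
\item The trivial module $\mathbf{K}$ is automatically restricted.
\end{itemize}
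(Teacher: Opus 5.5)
Your proposal is correct and follows essentially the same route as the paper: you reuse the non-restricted correspondence and check that $\overline{\varphi}=0$ corresponds exactly to the second-slot restrictedness of $W$ in both directions, with no new coboundary condition since $\delta^0_\mathrm{res}=\delta^0$. Your explicit equivalence $V\oplus\mathbf{K}\to W$, $(v,\lambda)\mapsto f(v)+\lambda u$, spells out what the paper leaves implicit when it says the two constructions are inverse to each other.
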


\subsection{Formal deformations of restricted Lie triple systems}

Formal deformations of restricted Lie algebras were introduced and studied in \cite{EM25}. In this subsection, we define formal deformations of a restricted Lie triple system $T$ and study their relationship with $H^3_\mathrm{res}(T,T) $. 

\begin{defi}
    Let $(T,[\cdot,\cdot,\cdot],(\cdot)^{[p]}) $ be a restricted Lie triple system. A formal deformation of $T$ is given by two maps \[
\begin{array}{c@{\qquad\text{and}\qquad}c}
\begin{array}{rcl}
[\cdot,\cdot,\cdot]_t : T\times T\times T & \longrightarrow & T[[t]] \\
(x,y,z) & \longmapsto & \sum\limits_{i\geq 0}t^i\varphi_i(x,y,z)
\end{array}
&
\begin{array}{rcl}
(\cdot)^{[p]}_t : T & \longrightarrow & T[[t]] \\
x & \longmapsto & \sum\limits_{i\geq 0}t^iw_i(x)
\end{array}
\end{array}
\] with $[\cdot,\cdot,\cdot]_0=[\cdot,\cdot,\cdot] $,$(\cdot)^{[p]}_0= (\cdot)^{[p]} $ and such that $(T[[t]],[\cdot,\cdot,\cdot]_t,(\cdot)^{[p]}_t) $ is a restricted Lie triple system.
\end{defi}

\begin{rmq}
    \begin{enumerate}
        \item The bracket $[\cdot,\cdot,\cdot]_t $ extends to $T[[t]] $ by $\mathbf{K}[[t]] $-linearity.
        \item The map  $(\cdot)^{[p]}_t $ extends to $T[[t]] $ by $p$-homogeneity and because of the identity \begin{align*}
        (x+y)^{[p]}_t= (x)^{[p]}_t+ (y)^{[p]}_t+ \sum_{\substack{x_j\in \left\{x,y \right\}\\x_1=x,x_2=y}}\frac{1}{\#(x)}(x_1,\ldots,x_p)_t,  
    \end{align*}
    \end{enumerate}
\end{rmq}

\begin{prop}
    Let $([\cdot,\cdot,\cdot]_t,(\cdot)^{[p]}_t) $ be a formal deformation of $(T,[\cdot,\cdot,\cdot],(\cdot)^{[p]}) $. Then, $(\varphi_1,w_1)\in C^3_\mathrm{res}(T,T) $, which means that $w_1 $ has the $\star$-property with respect to $\varphi_1 $.
\end{prop}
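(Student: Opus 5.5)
We compare coefficients of $t$ in the identities that make $(T[[t]],[\cdot,\cdot,\cdot]_t,(\cdot)^{[p]}_t)$ a restricted Lie triple system. First, $p$-homogeneity: expanding $(\lambda x)^{[p]}_t=\lambda^p x^{[p]}_t$ for $\lambda\in\mathbf{K}$ and reading off the coefficient of $t$ gives $w_1(\lambda x)=\lambda^p w_1(x)$. Next, $\varphi_1\in C^3(T,T)$: the coefficient of $t$ in the skew-symmetry $[x,y,z]_t=-[y,x,z]_t$ and in the cyclic identity gives $\varphi_1(x,y,z)=-\varphi_1(y,x,z)$ and $\varphi_1(x,y,z)+\varphi_1(y,z,x)+\varphi_1(z,x,y)=0$. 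Since $p>2$, the first yields $\varphi_1(x,x,y)=0$, so $\varphi_1$ satisfies the Yamaguti cochain conditions.

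The substantive step is the additivity defect. The defining axiom of a restricted Lie triple system in $T[[t]]$ gives
\[(x+y)^{[p]}_t=x^{[p]}_t+y^{[p]}_t+\sum_{\substack{x_j\in\{x,y\}\\x_1=x,x_2=y}}\frac{1}{\#(x)}(x_1,\ldots,x_p)_t,\]
where $(x_1,\ldots,x_p)_t$ is the iterated bracket for $[\cdot,\cdot,\cdot]_t$. Taking the coefficient of $t$ on the left gives $w_1(x+y)-w_1(x)-w_1(y)$. On the right we need the first-order term of $(x_1,\ldots,x_p)_t=[(x_1,\ldots,x_{p-2})_t,x_{p-1},x_p]_t$. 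Writing $(x_1,\ldots,x_n)_t=(x_1,\ldots,x_n)+t\,E_n+O(t^2)$, we get the recursion
\[E_p=\varphi_1((x_1,\ldots,x_{p-2}),x_{p-1},x_p)+[E_{p-2},x_{p-1},x_p]=\varphi_1((x_1,\ldots,x_{p-2}),x_{p-1},x_p)+\theta(x_{p-1},x_p)E_{p-2}\]
for the adjoint representation $\theta(a,b)(c)=[c,a,b]$. We iterate down to $E_3=\varphi_1(x_1,x_2,x_3)$, which corresponds to $(x_1,\ldots,x_1)$ with an empty bracket, read as $x_1$ by convention. This unrolls to
\[E_p=\sum_{k=0}^{\frac{p-1}{2}-1}\theta(x_{p-1},x_p)\cdots\theta(x_{p-2k+1},x_{p-2k+2})\,\varphi_1((x_1,\ldots,x_{p-2k-2}),x_{p-2k-1},x_{p-2k}),\]
which is exactly the summand in the $\star$-property. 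Summing with the weights $\frac{1}{\#(x)}$ then gives the required formula.

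The main obstacle is bookkeeping. We must check that the indexing in the unrolled recursion matches the paper's convention, in particular the $k=\frac{p-3}{2}$ term, where the inner bracket degenerates to $x_1$. We must also confirm that no other first-order contributions arise. This holds because the deformed bracket is $\mathbf{K}[[t]]$-linear and the $x_j$ are constant in $t$, so at first order only one bracket in the nest is replaced by $\varphi_1$. Together with homogeneity, this shows $(\varphi_1,w_1)\in C^3_\mathrm{res}(T,T)$.
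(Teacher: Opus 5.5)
Your proposal is correct and takes essentially the same route as the paper: $p$-homogeneity from the coefficient of $t$, the first-order expansion of $(x_1,\ldots,x_p)_t$ through the adjoint representation, and then comparing $t$-coefficients in the deformed Jacobson formula. Your recursion $E_p=\varphi_1((x_1,\ldots,x_{p-2}),x_{p-1},x_p)+\theta(x_{p-1},x_p)E_{p-2}$ and your explicit check that $\varphi_1$ satisfies the Yamaguti cochain conditions spell out steps the paper only states.
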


\begin{proof}
   It is well known that $\varphi\in C^3(T,T)$. Let $x,x_1,\ldots,x_p,y\in T,\lambda\in \mathbf{K} $.
Since $(\lambda x)^{[p]_t}= \lambda^px^{[p]_t} $, we see that $w_1(\lambda x)=\lambda^pw_1(x) $. The following computations are carried out modulo $t^2 $.
We have \begin{align*}
    &(x_1,\ldots,x_p)_t\\=& (x_1,\ldots,x_p)+ t\left(\sum\limits_{k=0}^\frac{p-3}{2}(\varphi_1([x_1,\ldots,x_{p-2k-2}],x_{p-2k-1},x_{p-2k}),x_{p-2k+1},\ldots,x_p)\right)\\ =& (x_1,\ldots,x_p) + t \left(\sum\limits_{k=0}^{\frac{p-3}{2}}\theta(x_{p-1},x_p)\cdots\theta(x_{p-2k+1},x_{p-2k+2}) \varphi_1([x_1,\ldots,x_{p-2k-2}],x_{p-2k-1},x_{p-2k}) \right),
\end{align*} where $\theta(x,y)(z)=[z,x,y] $ is the adjoint representation. The map $(\cdot)^{[p]}$ being a $p$-map, we have the identity \begin{align*}
    (x+y)^{[p]_t}=& x^{[p]_t}+y^{[p]_t}+  \sum_{\substack{x_j\in \left\{x,y \right\}\\x_1=x,x_2=y}}\frac{1}{\#(x)}(x_1,\ldots,x_p)_t. 
\end{align*} Comparing the constant term and the coefficient of $t$, we obtain \begin{align*}
    w_1&(x+y)\\ =&w_1(x)+w_1(y)\\ & + \sum_{\substack{x_j\in \left\{x,y \right\}\\x_1=x,x_2=y}}\frac{1}{\#(x)}\sum\limits_{k=0}^{\frac{p-3}{2}}\theta(x_{p-1},x_p)\cdots\theta(x_{p-2k+1},x_{p-2k+2}) \varphi_1([x_1,\ldots,x_{p-2k-2}],x_{p-2k-1},x_{p-2k}).
\end{align*} So, $w_1$ has the $\star$-property with respect to $\varphi_1$.
\end{proof}

\begin{prop}
    Let $([\cdot,\cdot,\cdot]_t,(\cdot)^{[p]}_t) $ be a formal deformation of $(T,[\cdot,\cdot,\cdot],(\cdot)^{[p]}) $. Then, $(\varphi_1,w_1)\in C^3_\mathrm{res}(T,T) $ is a restricted $3$-cocycle.
\end{prop}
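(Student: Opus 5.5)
We must show $\delta^3_\mathrm{res}(\varphi_1,w_1)=(\delta^3\varphi_1,\beta)=0$, where $\beta$ is built from $(\varphi_1,w_1)$ by the formula defining $\delta^3_\mathrm{res}$. The plan is to expand the axioms of a restricted Lie triple system for $(T[[t]],[\cdot,\cdot,\cdot]_t,(\cdot)^{[p]}_t)$ modulo $t^2$ and read off the coefficient of $t$. Throughout we use the adjoint representation $\theta(x,y)(z)=[z,x,y]$ and $D(x,y)=\theta(y,x)-\theta(x,y)$.

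\textbf{The component $\delta^3\varphi_1=0$.} This is the classical statement for ordinary deformations of Lie triple systems, due to Yamaguti. Write the fundamental identity
\[
[u,v,[x,y,z]_t]_t=[[u,v,x]_t,y,z]_t+[x,[u,v,y]_t,z]_t+[x,y,[u,v,z]_t]_t
\]
and take the coefficient of $t$. One obtains exactly the eight terms of $\delta^3\varphi_1(u,v,x,y,z)$, after rewriting brackets of the form $[\cdot,a,b]$ as $\theta(a,b)$ and $[a,b,\cdot]$ as $D(a,b)$. I will only indicate this matching, since it is the standard computation.

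\textbf{The component $\beta=0$.} Use the third axiom of Definition \ref{D5.1} for the deformed structure:
\[
[x,y^{[p]_t},z]_t=(x,\underbrace{y,\ldots,y}_p,z)_t .
\]
On the left, modulo $t^2$, the coefficient of $t$ is
\[
\varphi_1(x,y^{[p]},z)+[x,w_1(y),z]=\varphi_1(x,y^{[p]},z)-\theta(x,z)w_1(y).
\]
On the right, $(x,y,\ldots,y,z)_t$ is an iterated bracket $(p+1)/2$ levels deep. Its $t$-coefficient is a sum over the level at which $\varphi_1$ replaces the bracket; the later brackets act through the adjoint $\theta$, exactly as in the computation of $(x_1,\ldots,x_p)_t$ in the previous proposition. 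The last level gives $\varphi_1((x,y,\ldots,y),y,z)$ with $p-1$ copies of $y$ inside. A level $j<(p-1)/2$ gives
\[
\theta(y,z)\,\theta(y,y)^{i}\,\varphi_1((x,\underbrace{y,\ldots,y}_{2j}),y,y),\qquad i+j=\tfrac{p-3}{2}.
\]
Here the outermost bracket $[\cdot,y,z]$ contributes $\theta(y,z)$ and the intermediate brackets $[\cdot,y,y]$ contribute $\theta(y,y)^i$. Equating the two $t$-coefficients gives precisely $\beta(x,y,z)=0$.

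\textbf{Main obstacle.} The main difficulty is bookkeeping the ordering and grouping of the iterated bracket $(x,y,\ldots,y,z)$ so that each term matches the formula for $\beta$. The convention is left-nested, with the final triple ending in $z$. I would verify the index ranges, namely $\frac{p-1}{2}$ levels in total, carefully on the case $p=3$ or $p=5$ first. I would also check that no $p$-map correction terms arise, since the third axiom is linear in the bracket.
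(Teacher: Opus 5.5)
Your proposal is correct and follows the paper's proof essentially verbatim. You invoke the known fact $\delta^3\varphi_1=0$, then equate the $t$-coefficients of $[x,y^{[p]_t},z]_t$ and $(x,y,\ldots,y,z)_t$, using $[x,w_1(y),z]=-\theta(x,z)w_1(y)$, to read off $\beta(x,y,z)=0$. One small slip: the iterated bracket has $\frac{p+1}{2}$ levels in total, of which $\frac{p-1}{2}$ are of the form $[\cdot,y,y]$; your index range $i+j=\frac{p-3}{2}$ already reflects this correctly.
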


\begin{proof}

We already know that $\delta^3\varphi_1=0 $, so it remains to show that the induced map $\beta$ vanishes. Let $x,y,z\in T$. The following computations are carried out modulo $t^2$. On  one hand, we have \begin{align*}
    (x,y,\ldots,y,z)_t=& (x,y,\ldots,y,z) +t  \sum_{i+j=\frac{p-3}{2}}\theta(y,z)\theta(y,y)^i\varphi_1((x,y,\ldots,y),y,y) \\ &+t \varphi_1((x,y,\ldots,y),y,z).
\end{align*} On the other hand, \begin{align*}
    [x,(y)_t^{[p]},z]_t= [x,y^{[p]},z]+ t[x,w_1(y),z] + t\varphi_1(x,y^{[p]},z).
\end{align*} Then, we obtain  \begin{align*}
    &\varphi_1(x,y^{[p]},z)-  \sum_{i+j=\frac{p-3}{2}}\theta(y,z)\theta(y,y)^i\varphi_1((x,y,\ldots,y),y,y) - \varphi_1((x,y,\ldots,y),y,z) + [x,w_1(y),z]\\ &=0
\end{align*} which means that $(\varphi_1,w_1) $ is a $3$-cocycle.
\end{proof}

Let $\psi_t:T[[t]]\to T[[t]] $ be a formal automorphism defined on $T$ by $$\psi_t(x)= \sum\limits_{i\geq 0}t^i\psi_i(x), $$ where $\psi_i:T\to T$ are linear maps, $\psi_0=\mathrm{id}$ and then $\psi_t$ is extended to $T[[t]]$ by $\mathbf{K}[[t]] $-linearity. 

\begin{defi}
    Let $T$ be a restricted Lie triple system. We say that two formal deformations $([\cdot,\cdot,\cdot]_t,(\cdot)^{[p]}_t)$ and $([\cdot,\cdot,\cdot]'_t,(\cdot)'^{[p]}_t) $ are equivalent if there exists a formal automorphism $\psi_t:T[[t]]\to T[[t]] $, which is a morphism of restricted Lie triple systems, i.e. $$\psi_t([x,y,z]_t)=[\psi_t(x),\psi_t(y),\psi_t(z)]'_t \,\,\,\,\,\, \text{and} \,\,\,\,\,\, \psi_t((x)^{[p]}_t)=(\psi_t(x))'^{[p]}_t, $$ for all $x,y,z\in T$.
\end{defi}

\begin{thm}
    Let $([\cdot,\cdot,\cdot]_t,(\cdot)^{[p]}_t)$ and $([\cdot,\cdot,\cdot]'_t,(\cdot)'^{[p]}_t $ be two equivalent formal deformations of \newline$(T,[\cdot,\cdot,\cdot],(\cdot)^{[p]}) $. Then, $(\varphi_1,w_1), (\varphi'_1,w'_1) $ are in the same cohomology class in $H^3_\mathrm{res}(T,T) $. More precisely, $(\varphi_1,w_1)- (\varphi'_1,w'_1)=\delta^1_\mathrm{res}(\psi_1) $.
\end{thm}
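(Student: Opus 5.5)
We expand the two defining identities of the equivalence $\psi_t$ modulo $t^2$ and compare the coefficients of $t$, exactly as in the non-restricted case, but now also for the $p$-map. Write $\psi_t=\mathrm{id}+t\psi_1+O(t^2)$.

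\emph{The bracket component.} The identity $\psi_t([x,y,z]_t)=[\psi_t(x),\psi_t(y),\psi_t(z)]'_t$ gives, at order $t$,
\[
\psi_1([x,y,z])+\varphi_1(x,y,z)=[\psi_1(x),y,z]+[x,\psi_1(y),z]+[x,y,\psi_1(z)]+\varphi_1'(x,y,z).
\]
In the adjoint representation $\theta(a,b)(c)=[c,a,b]$, the three bracket terms are exactly $\theta(y,z)\psi_1(x)-\theta(x,z)\psi_1(y)+D(x,y)\psi_1(z)$, as in the proof that $H^1(T,T)=\mathrm{Der}(T)/\mathrm{InnDer}(T)$. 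Comparing with \eqref{lwo1}, this yields
\[
\varphi_1-\varphi'_1=\delta^1\psi_1,
\]
which is the first component of $\delta^1_\mathrm{res}(\psi_1)$.

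\emph{The $p$-map component.} From $\psi_t(x^{[p]}_t)=(\psi_t(x))'^{[p]}_t$, the left side is $x^{[p]}+t\bigl(w_1(x)+\psi_1(x^{[p]})\bigr)$ mod $t^2$. On the right we need $(x+t\psi_1(x))'^{[p]}_t$, which we expand with the sum formula of the Remark following the definition of formal deformations, taking $y=t\psi_1(x)$. Every term containing $y$ at least twice is $O(t^2)$, and $(t\psi_1(x))^{[p]}=t^p\psi_1(x)^{[p]}$ is negligible. The sum $\sum_i s_i(x,y)$ then reduces mod $t^2$ to the coefficient of $Z^0$, namely $s_1(x,y)$, the one with a single $y$. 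This gives
\[
(x,y,x,\ldots,x)=[[\cdots[x,y,x],x,x]\cdots],
\]
with $p-2$ further copies of $x$, so the right side is $x^{[p]}+tw_1'(x)+t\,(x,\psi_1(x),x,\ldots,x)$.

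It then remains to identify $(x,\psi_1(x),x,\ldots,x)$ with $-\theta(x,x)^{\frac{p-1}{2}}\psi_1(x)$, i.e.\ with $-(\psi_1(x),x,\ldots,x)$. This holds because $[x,a,x]=-[a,x,x]=-\theta(x,x)a$, and after this first step all subsequent brackets are applications of $\theta(x,x)$. Hence
\[
w_1(x)-w'_1(x)=-\psi_1(x^{[p]})+\theta(x,x)^{\frac{p-1}{2}}\psi_1(x)=\tilde{\psi_1}(x),
\]
and with the bracket component this gives $(\varphi_1,w_1)-(\varphi_1',w_1')=\delta^1_\mathrm{res}(\psi_1)$.

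\emph{Main obstacle.} The hardest step is justifying the reduction of $\sum_i s_i(x,t\psi_1(x))$ modulo $t^2$. One must check that the $s_i$ are computed with the deformed bracket, so that their $t$-corrections enter only at order $t^2$ because $y$ is already $O(t)$. One must also verify the sign, and that the $\frac{1}{\#(x)}$ normalization gives coefficient exactly $1$ for $s_1$. I would do this directly from the polynomial definition of $s_i$: the coefficient of $Z^0$ in $(x,Zx+y,\ldots,Zx+y)$ with $Zx+y$ occurring $p-1$ times. Expanding to first order in $y$, every term in which $y$ sits in a slot other than the second contains $[x,x,\cdot]=0$, so only $(x,y,x,\ldots,x)$ survives. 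This fits the convention already implicit in the $\star$-property, and the sign and normalization should be cross-checked against it.
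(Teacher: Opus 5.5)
Your strategy is the same as the paper's (compare the $t$-coefficients of the two identities defining $\psi_t$), and your bracket component $\varphi_1-\varphi_1'=\delta^1\psi_1$ is fine. The $p$-map component, however, contains a concrete error.

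\textbf{Where the error is.} With $y=t\psi_1(x)$, the term of $\sum_i s_i(x,y)$ that is linear in $y$ is not $s_1$. By definition $s_1(x,y)=(x,y,y,\ldots,y)$ is the coefficient of $Z^0$, contains $p-1$ copies of $y$, and is therefore $O(t^{p-1})$. The single-$y$ term is $s_{p-1}$, the coefficient of $Z^{p-2}$. It carries the normalization $\frac{1}{i}$ with $i=p-1$:
\[
(p-1)\,s_{p-1}(x,y)=(x,y,x,\ldots,x).
\]
Equivalently, it is the $\#(x)=p-1$ term of $\sum\frac{1}{\#(x)}(x_1,\ldots,x_p)$. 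So the $t$-coefficient of $(x+t\psi_1(x))'^{[p]}_t$ is
\[
w_1'(x)+\frac{1}{p-1}(x,\psi_1(x),x,\ldots,x)=w_1'(x)-(x,\psi_1(x),x,\ldots,x)=w_1'(x)+\theta(x,x)^{\frac{p-1}{2}}\psi_1(x).
\]
This uses $p-1\equiv -1$ together with your correct identity $(x,a,x,\ldots,x)=-(a,x,\ldots,x)$. This is exactly the paper's computation, and it gives $w_1-w_1'=\tilde{\psi_1}$.

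\textbf{Why your argument does not reach the conclusion.} You put coefficient $1$ on $(x,\psi_1(x),x,\ldots,x)=-\theta(x,x)^{\frac{p-1}{2}}\psi_1(x)$. Comparing $t$-coefficients then gives
\[
w_1(x)-w_1'(x)=-\psi_1(x^{[p]})-\theta(x,x)^{\frac{p-1}{2}}\psi_1(x),
\]
which is not $\tilde{\psi_1}(x)$. Your final display flips this sign without justification. The normalization you flagged as something to check gives $\frac{1}{p-1}\equiv-1$, not $1$, and that factor is precisely what produces the correct sign.

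A quick check in a restricted Lie algebra viewed as a Lie triple system confirms this. There $(x+ty)^{[p]}=x^{[p]}+t\,(\mathrm{ad}\,x)^{p-1}(y)+O(t^2)$, while $(\mathrm{ad}\,x)^{p-1}(y)=(y,x,\ldots,x)=-(x,y,x,\ldots,x)$.
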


\begin{proof}
     Let $\psi_t:T[[t]] \to T[[t]] $ be a morphism of restricted Lie triple systems between $(T[[t]],[\cdot,\cdot,\cdot]_t,(\cdot)^{[p]}_t) $ and $(T[[t]],[\cdot,\cdot,\cdot]_t',(\cdot)'^{[p]}_t ) $. It is already known that it implies that $\varphi_1-\varphi_1'=\delta^1(\psi_1) $. Now, let us consider $w_1-w_1' $. The following computations are carried out modulo $t^2$, $$\psi_t(x^{[p]}+tw_1(x))= \psi_t(x)^{[p]}+tw_1'(\psi_t(x)). $$ So $$x^{[p]}+tw_1(x)+t\psi_1(x^{[p]})= (x+t\psi_1(x))^{[p]}+ tw_1'(x). $$ In \begin{align*}
        (x+t\psi_1(x))^{[p]}= x^{[p]}+ t^p\psi_1(x)^{[p]}+ \sum_{\substack{x_j\in \left\{x,t\psi_1(x) \right\}\\x_1=x,x_2=t\psi_1(x)}}\frac{1}{\#(x)}(x_1,\ldots,x_p),  
    \end{align*} the only term with coefficient $t$ in the sum is when $\#(x)=p-1 $. So, it is $\frac{1}{p-1}(x,\psi_1(x),x,\ldots,x)= (\psi_1(x),x,\ldots,x) $. Finally, we have $w_1(x)+\psi_1(x^{[p]})= (\psi_1(x),x,\ldots,x)+ w_1'(x) $, which means that $w_1-w_1'= \tilde{\psi_1} $. Thus, $(\varphi_1-\varphi_1',w_1-w_1') = (\delta^1(\psi_1),\tilde{\psi_1})=\delta_\mathrm{res}^1(\psi_1) $ and the proof is completed.
\end{proof}

\begin{defi}
    Let $([\cdot,\cdot,\cdot]_t,(\cdot)^{[p]}_t) $ be a formal deformation of a restricted Lie triple system $(T,[\cdot,\cdot,\cdot],(\cdot)^{[p]}) $. We say that this deformation is trivial if it is equivalent to the trivial deformation $([\cdot,\cdot,\cdot],(\cdot)^{[p]})$ that is, there exists a formal automorphism $\psi_t$ such that $$\psi_t([x,y,z])=[\psi_t(x),\psi_t(y),\psi_t(z)]_t \,\,\,\,\,\, \text{and} \,\,\,\,\,\, \psi_t((x)^{[p]})=(\psi_t(x))^{[p]}_t ,$$ for all $x,y,z\in T$.
\end{defi}

\begin{cor}
    Assume the formal deformation $([\cdot,\cdot,\cdot]_t,(\cdot)^{[p]}_t) $ is trivial. Then, $(\varphi_1,w_1) $ is a restricted coboundary.
\end{cor}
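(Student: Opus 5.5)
The corollary should follow directly from the preceding theorem by specializing one of the two deformations to the trivial one. Let $([\cdot,\cdot,\cdot]_t,(\cdot)^{[p]}_t)$ be trivial, witnessed by a formal automorphism $\psi_t=\sum_{i\geq 0}t^i\psi_i$ with $\psi_0=\mathrm{id}$. The trivial deformation $([\cdot,\cdot,\cdot],(\cdot)^{[p]})$ is itself a formal deformation of $T$ whose first-order terms vanish: $\varphi_1=0$ and $w_1=0$. By definition, $\psi_t$ is a morphism of restricted Lie triple systems from this trivial deformation to the given one. So the two deformations are equivalent in the sense of the definition above, with the trivial deformation in the first (unprimed) slot.

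Applying the preceding theorem with the unprimed deformation trivial and the primed one equal to the given deformation, I obtain
\begin{equation*}
(0,0)-(\varphi_1,w_1)=\delta^1_\mathrm{res}(\psi_1).
\end{equation*}
Hence $(\varphi_1,w_1)=\delta^1_\mathrm{res}(-\psi_1)$, which is a restricted $3$-coboundary because $\delta^1_\mathrm{res}$ is linear. In particular, its class in $H^3_\mathrm{res}(T,T)$ is zero.

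The only point needing care is the direction and sign conventions in the theorem. There, $\psi_t$ goes from the unprimed deformation to the primed one, so the trivial deformation must be placed on the unprimed side. If it were placed the other way, I would instead use $\psi_t^{-1}$, which is again a formal automorphism with constant term $\mathrm{id}$ and first-order term $-\psi_1$, and it is again a morphism of restricted Lie triple systems. Either way the conclusion is the same up to sign. I do not expect any genuine obstacle; the argument is a direct corollary.
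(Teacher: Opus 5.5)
Your proposal is correct and matches the paper, which gives no separate proof and treats the corollary as an immediate specialization of the preceding theorem. Putting the trivial deformation in the unprimed (source) slot, as the paper's definition of triviality does, gives $(\varphi_1,w_1)=\delta^1_\mathrm{res}(-\psi_1)$ exactly as you found, so your remark about $\psi_t^{-1}$ is not needed.
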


\begin{cor}
    Let $([\cdot,\cdot,\cdot]+t\varphi_1,(\cdot)^{[p]}+tw_1) $ be an infinitesimal deformation of a restricted Lie triple system $(T,[\cdot,\cdot,\cdot],(\cdot)^{[p]}) $. Then, it is a trivial deformation if and only if $(\varphi_1,w_1) $ is a restricted coboundary.
\end{cor}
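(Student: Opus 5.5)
The statement has two directions. The forward one is immediate from the preceding corollary, so the work is in the converse.

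For the forward direction, if the infinitesimal deformation $([\cdot,\cdot,\cdot]+t\varphi_1,(\cdot)^{[p]}+tw_1)$ is trivial, the previous corollary (equivalently, the preceding theorem with $(\varphi_1',w_1')=(0,0)$) gives $(\varphi_1,w_1)=\delta^1_\mathrm{res}(\psi_1)$. Hence $(\varphi_1,w_1)$ is a restricted coboundary.

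For the converse, suppose $(\varphi_1,w_1)=\delta^1_\mathrm{res}(\psi)=(\delta^1\psi,\tilde{\psi})$ for some $\psi\in C^1(T,T)$. I would take the formal automorphism $\psi_t=\mathrm{id}+t\psi$, extended $\mathbf{K}[[t]]$-linearly. Modulo $t^2$ it is invertible with inverse $\mathrm{id}-t\psi$. The goal is to check, modulo $t^2$, the two identities
\[
\psi_t([x,y,z])=[\psi_t(x),\psi_t(y),\psi_t(z)]_t, \qquad \psi_t(x^{[p]})=(\psi_t(x))^{[p]}_t,
\]
up to fixing the sign convention that relates $\psi_1$ to $\psi$.

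For the bracket, I expand both sides to first order. The left side is $[x,y,z]+t\psi([x,y,z])$. The right side is $[x,y,z]+t\big([\psi x,y,z]+[x,\psi y,z]+[x,y,\psi z]+\varphi_1(x,y,z)\big)$. With the adjoint representation, $\delta^1\psi(x,y,z)=[\psi x,y,z]+[x,\psi y,z]+[x,y,\psi z]-\psi([x,y,z])$, as computed in the proof that $H^1(T,T)=\mathrm{Der}(T)/\mathrm{InnDer}(T)$. So the bracket condition reduces to $\varphi_1=-\delta^1\psi$. This means I should take $\psi_t=\mathrm{id}-t\psi$, matching the sign in the preceding theorem, where $(\varphi_1,w_1)-(0,0)=\delta^1_\mathrm{res}(\psi_1)$ with $\psi_1$ the first-order coefficient of $\psi_t$ taken in the appropriate direction. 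The $p$-map condition is then handled exactly as in the proof of the preceding theorem, read backwards. By the $p$-map axiom (Definition \ref{D5.1}), $(x+t\psi_1 x)^{[p]}$ modulo $t^2$ equals $x^{[p]}+t(\psi_1(x),x,\ldots,x)$, since only the summand with $\#(x)=p-1$ contributes and the factor $\frac{1}{p-1}$ is absorbed by antisymmetry, giving $(\psi_1(x),x,\ldots,x)=\theta(x,x)^{\frac{p-1}{2}}\psi_1(x)$. The first-order part of the $p$-map condition then reads $w_1(x)=(\psi_1(x),x,\ldots,x)-\psi_1(x^{[p]})=\tilde{\psi_1}(x)$, which is precisely the hypothesis.

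There is a subtlety: the $p$-map condition needs to hold on all of $T[[t]]$, not just on $T$. Both sides, viewed as maps $T\to T[[t]]/(t^2)$, satisfy $p$-homogeneity and the same addition formula, because $\psi_t$ intertwines the brackets by the previous step. Both are therefore determined by their values on $T$ (and on a basis), so checking the condition on $T$ suffices.

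I expect the main obstacle to be bookkeeping rather than anything conceptual. The first-order expansion of $(x+t\psi_1 x)^{[p]}$ has to be handled carefully through the $s_i$, and the sign convention between $\psi_1$ and $\psi$ must be fixed consistently for both components at once. The fact that the preceding theorem already produced $(\delta^1\psi_1,\tilde{\psi_1})$ with a single sign suggests the two components line up. I would therefore follow that theorem's computation verbatim, with $\psi_1=\pm\psi$ chosen so that the bracket component matches.
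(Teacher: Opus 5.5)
Your proposal is correct and is essentially the paper's proof: the forward direction is the preceding corollary, and for the converse the paper simply asserts that $\psi_t=\mathrm{id}-t\psi$ is a morphism of restricted Lie triple systems modulo $t^2$ (hence an automorphism), which is exactly the first-order check you carry out. The one thing to tidy is the direction of the map: in the direction of your bracket check (from the undeformed to the deformed structure), $\psi_t=\mathrm{id}+t\psi_1$ must satisfy $\varphi_1=-\delta^1\psi_1$ and $w_1=-\tilde{\psi_1}$, whereas the identity $w_1=\tilde{\psi_1}$ you wrote is the condition for a map from the deformed to the undeformed structure, so either take $\psi_1=-\psi$ in both checks or take $\psi_1=\psi$ in the reverse direction and then invert.
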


\begin{proof}
    It remains to show that if $(\varphi_1,w_1) $ is a restricted coboundary, then the deformation is trivial. Assume that $(\varphi_1,w_1) $ is a restricted coboundary and let $\psi:T\to T \in C^1(T,T)$ such that $(\varphi_1,w_1)=(\delta^1\psi,\tilde{\psi}) $. Then $\psi_t= \mathrm{id}-t\psi:(T[[t]]/(t^2),[\cdot,\cdot,\cdot],(\cdot)^{[p]}) \to (T[[t]]/(t^2),[\cdot,\cdot,\cdot]+t\varphi_1, (\cdot)^{[p]}+tw_1)$ is a morphism of restricted Lie triple systems. Since $t^2=0$, this morphism is an automorphism. 
\end{proof}

Let $T,[\cdot,\cdot,\cdot],(\cdot)^{[p]}) $ be a restricted Lie triple system, let $n\geq 1$. We say that a deformation $([\cdot,\cdot,\cdot]_t,(\cdot)^{[p]}_t) $ of $T$ is of order $n$ if $(\varphi_i,w_i)=(0,0) $ for $i> n$ and if the deformation identities are satisfied modulo $t^{n+1}$. A $n$-th order deformation $([\cdot,\cdot,\cdot]_t,(\cdot)^{[p]}_t) $ is said to be extendable to order $n+1$ if there exists a pair $(\varphi_{n+1},w_{n+1})\in C^3_\mathrm{res}(T,T) $ such that $([\cdot,\cdot,\cdot]_t+ t^{n+1}\varphi_{n+1},(\cdot)^{[p]}_t+t^{n+1}w_{n+1} ) $ is an $(n+1)$-th order deformation. 

\begin{defi}
    Let $([\cdot,\cdot,\cdot]_t,(\cdot)^{[p]}_t) $ be a deformation of order $n\geq 1$. We define two maps by \begin{align*}
        \mathrm{Obs}_{n+1}^{(1)}(u,v,x,y,z)=& \sum\limits_{i=1}^{n}\varphi_i(\varphi_{n+1-i}(u,v,x),y,z)-\varphi_i(x,\varphi_{n+1-i}(u,v,y),z)\\&\,\,\,\,\,\,\,\,\,\,\,\,\,\, -\varphi_i(x,y,\varphi_{n+1-i}(u,v,z))-\varphi_i(u,v,\varphi_{n+1-i}(x,y,z))
    \end{align*} and \begin{align*}
        \mathrm{Obs}_{n+1}^{(2)}(x,y,z)=& \sum_{\substack{0\leq i_k\leq n \\ i_1+\cdots+i_{\frac{p+1}{2}}=n+1}}\varphi_{i_\frac{p+1}{2}}(\varphi_{i_{\frac{p-1}{2}}}(\cdots(\varphi_{i_2}(\varphi_{i_1}(x,y,y),y,y)\cdots),y,z)\\& \,\,\,\,\,\,\,\,\,\,\,\,\,- \sum\limits_{i=1}^n\varphi_i(x,w_{n+1-i}(y),z),
    \end{align*} for all $x,y,z,u,v\in T$.
\end{defi}

\begin{prop}\label{propObs}
    Let $([\cdot,\cdot,\cdot]_t,(\cdot)^{[p]}_t) $ be a $n$-order deformation of $T$. Let $(\varphi_{n+1},w_{n+1})\in C^3_\mathrm{res}(T,T) $ such that $([\cdot,\cdot,\cdot]_t+t^{n+1}\varphi_{n+1} ,(\cdot)^{[p]}_t+t^{n+1}w_{n+1}) $ is a deformation of order $n+1$. Then,  $$(\mathrm{Obs}_{n+1}^{(1)},\mathrm{Obs}_{n+1}^{(2)})=\delta^3_\mathrm{res}(\varphi_{n+1},w_{n+1}). $$ 
\end{prop}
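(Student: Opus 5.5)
The plan is to expand both defining identities of a restricted Lie triple system for the order-$(n+1)$ deformation. We keep only the coefficient of $t^{n+1}$ and use that all lower coefficients already vanish, because the original deformation has order $n$. The two components of $\delta^3_\mathrm{res}(\varphi_{n+1},w_{n+1})=(\delta^3\varphi_{n+1},\beta)$ will be matched separately, the first against $\mathrm{Obs}^{(1)}_{n+1}$ and the second against $\mathrm{Obs}^{(2)}_{n+1}$. As in the earlier propositions on infinitesimal deformations, $\theta$ denotes the adjoint representation $\theta(x,y)(z)=[z,x,y]$, and $\varphi_0=[\cdot,\cdot,\cdot]$, $w_0=(\cdot)^{[p]}$.

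\emph{First component.} Write the fundamental identity $[u,v,[x,y,z]_t]_t=[[u,v,x]_t,y,z]_t+[x,[u,v,y]_t,z]_t+[x,y,[u,v,z]_t]_t$ with $[\cdot,\cdot,\cdot]_t=\sum_{i=0}^{n+1}t^i\varphi_i$ and extract the coefficient of $t^{n+1}$. The terms with one index equal to $0$ and the other equal to $n+1$ are precisely the terms of $\delta^3\varphi_{n+1}(u,v,x,y,z)$ for the adjoint representation, up to an overall sign. This is the standard computation from the ordinary theory of Lie triple system deformations. The remaining terms, with both indices in $\{1,\ldots,n\}$, assemble into $\mathrm{Obs}^{(1)}_{n+1}$. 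Rearranging gives $\delta^3\varphi_{n+1}=\mathrm{Obs}^{(1)}_{n+1}$, provided the sign convention in $\mathrm{Obs}^{(1)}_{n+1}$ is the one that makes the identity hold. I will fix that convention by comparing with the explicit formula for $\delta^3$.

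\emph{Second component.} Use the third axiom of Definition~\ref{D5.1} for the deformed structure, namely $[x,y^{[p]_t},z]_t=(x,y,\ldots,y,z)_t$, and take coefficients of $t^{n+1}$. The left side is $\sum_{i+j=n+1}\varphi_i(x,w_j(y),z)$, and its index-$(0,n+1)$ and index-$(n+1,0)$ parts are $[x,w_{n+1}(y),z]=-\theta(x,z)w_{n+1}(y)$ and $\varphi_{n+1}(x,y^{[p]},z)$. The right side is an iterated bracket of $\frac{p+1}{2}$ nested ternary products. Its $t^{n+1}$ coefficient is the sum over $i_1+\cdots+i_{\frac{p+1}{2}}=n+1$ of the corresponding nested composition of the $\varphi_{i_k}$. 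The terms where exactly one index equals $n+1$ and all others are $0$ are computed exactly as in the proof that $(\varphi_1,w_1)$ is a restricted $3$-cocycle. They give $\sum_{i+j=\frac{p-3}{2}}\theta(y,z)\theta(y,y)^i\varphi_{n+1}((x,y,\ldots,y),y,y)+\varphi_{n+1}((x,y,\ldots,y),y,z)$. Moving the single-index terms to one side produces exactly $\beta(x,y,z)$ from the definition of $\delta^3_\mathrm{res}$. The mixed terms, those with all indices at most $n$, and the terms $\varphi_i(x,w_{n+1-i}(y),z)$ with $1\le i\le n$, form $\mathrm{Obs}^{(2)}_{n+1}$.

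\emph{Main obstacle.} I expect the hardest part to be the bookkeeping in the second component. One must check that the nested sum in $\mathrm{Obs}^{(2)}_{n+1}$, with the restriction $0\le i_k\le n$, captures exactly the mixed terms and nothing of the single-index contributions. One must also check that the single-index contributions, where $\varphi_{n+1}$ sits at level $k$ of the nesting with the outer levels being the original bracket, rewrite as $\theta(y,y)^i$ and $\theta(y,z)$ applied to $\varphi_{n+1}$ at the inner levels. I would handle this by induction on the nesting depth. The outer brackets $[\,\cdot\,,y,y]$ act as $\theta(y,y)$, and the outermost $[\,\cdot\,,y,z]$ acts as $\theta(y,z)$. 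Finally, I would verify that the signs agree with the stated definition of $\mathrm{Obs}^{(2)}_{n+1}$, with the $w$-terms entering with a minus sign.
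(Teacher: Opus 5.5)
Your proposal is correct and follows the paper's argument. In both, you extract the $t^{n+1}$ coefficient of the fundamental identity and of $[x,y^{[p]_t},z]_t=(x,y,\ldots,y,z)_t$. You then separate the terms containing $\varphi_{n+1}$ or $w_{n+1}$, which assemble into $\delta^3\varphi_{n+1}$ and $\beta$, from the mixed terms. Your account of the single-index nested terms as $\theta(y,z)\theta(y,y)^i\varphi_{n+1}(\ldots)$ is actually more explicit than the paper's.

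Your caution about the sign convention in the first component is justified. The extraction forces
\[
\mathrm{Obs}^{(1)}_{n+1}(u,v,x,y,z)=\sum_{i=1}^{n}\bigl(\varphi_i(u,v,\varphi_{n+1-i}(x,y,z))-\varphi_i(\varphi_{n+1-i}(u,v,x),y,z)-\varphi_i(x,\varphi_{n+1-i}(u,v,y),z)-\varphi_i(x,y,\varphi_{n+1-i}(u,v,z))\bigr).
\]
The paper's printed definition therefore has the signs of its first and last summands swapped. The second component, by contrast, matches $\mathrm{Obs}^{(2)}_{n+1}$ exactly as defined.
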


\begin{proof}
    Let $[\cdot,\cdot,\cdot]'_t=\sum\limits_{0\leq i\leq n+1}t^i\varphi_i $, and $(\cdot)'^{[p]}_t= \sum\limits_{0\leq i\leq n+1}w_i $. Assume $(T[[t]],[\cdot,\cdot,\cdot]'_t,(\cdot)'^{[p]}_t) $ is a restricted Lie triple system, thus \begin{equation}\label{eqobs}
        [u,v,[x,y,z]'_t]'_t=[[u,v,x]'_t,y,z]'_t+ [x,[u,v,y]'_t,z]'_t + [x,y,[u,v,z]'_t]'_t.
    \end{equation}
    for all $x,y,z,u,v\in T$.  The coefficient of  $t^{n+1}$ in \eqref{eqobs} is \begin{align*}
        t^{n+1}\sum\limits_{i=0}^{n+1}&\varphi_i(u,v,\varphi_{n+1-i}(x,y,z))\\=& t^{n+1}\sum\limits_{i=0}^{n+1}\varphi_i(\varphi_{n+1-i}(u,v,x),y,z)+ t^{n+1}\sum\limits_{i=0}^{n+1}\varphi_i(x,\varphi_{n+1-i}(u,v,y),z)\\ &+t^{n+1}\sum\limits_{i=0}^{n+1}\varphi_i(x,y,\varphi_{n+1-i}(u,v,z)).
    \end{align*} This gives us \begin{align*}
        [u,v,\varphi_{n+1}&(x,y,z)]+\varphi_{n+1}(u,v,[x,y,z])+\sum\limits_{i=1}^{n}\varphi_i(u,v,\varphi_{n+1-i}(x,y,z))\\=& [\varphi_{n+1}(u,v,x),y,z]+ \varphi_{n+1}([u,v,x],y,z)+\sum\limits_{i=1}^{n}\varphi_i(\varphi_{n+1-i}(u,v,x),y,z)\\ &+ [x,\varphi_{n+1}(u,v,y),z]+ \varphi_{n+1}(x,[u,v,y],z) + \sum\limits_{i=1}^{n}\varphi_i(x,\varphi_{n+1-i}(u,v,y),z)\\ &+ [x,y,\varphi_{n+1}(u,v,z)]+\varphi_{n+1}(x,y,[u,v,z])+ \sum\limits_{i=1}^{n}\varphi_i(x,y,\varphi_{n+1-i}(u,v,z)),
    \end{align*} which means \begin{align*}
        \delta^3\varphi_{n+1}(u,v,x,y,z)=\mathrm{Obs}_{n+1}^{(1)}(u,v,x,y,z).
    \end{align*} Moreover, $(\cdot)'^{[p]}_t $ being a $p$-map means that \begin{equation}\label{eqobs2}
        [x,(y)'^{[p]}_t,z]'_t= (x,\underbrace{y,\ldots,y}_p,z)'_t.
    \end{equation} By expanding and collecting the terms in $t^{n+1} $ in the two sides of \eqref{eqobs2}, we obtain the equality \begin{align*}
        \sum\limits_{i=0}^{n+1}\varphi_i(x,w_{n+1-i}(y),z)= \sum_{\substack{0\leq i_k\leq n+1 \\ i_1+\cdots+i_\frac{p+1}{2}=n+1}}\varphi_{i_\frac{p+1}{2}}(\varphi_{i_\frac{p-1}{2}}(\cdots(\varphi_{i_1}(x,y,y),y,y),\cdots),y,z),
    \end{align*} which gives, by isolating $\varphi_{n+1} $, \begin{align*}
        \mathrm{Obs}_{n+1}^{(2)}= \beta(x,y,z),
    \end{align*} where $\beta$ is induced by $(\varphi_{n+1},w_{n+1}) $.  
\end{proof}

If $(\sum\limits_{i=0}^nt^i\varphi_i, \sum\limits_{i=0}^nt^iw_i) $ is a formal deformation, then $\mathrm{Obs}_{n+1}^{(1)}=0$ and $\mathrm{Obs}_{n+1}^{(2)}=0$. In this case, \hyperref[propObs]{Proposition~\ref*{propObs}} gives the following result: Let $(\varphi_{n+1},w_{n+1})\in C^3_\mathrm{res}(T,T) $, such that $(\sum\limits_{i=0}^{n+1}t^i\varphi_i, \sum\limits_{i=0}^{n+1}t^iw_i) $ is a formal deformation. Then, $\delta^3_\mathrm{res}(\varphi_{n+1},w_{n+1})=(0,0)$.

\subsection{Extensions of restricted Lie triple systems}

In this subsection, we investigate extensions of restricted Lie triple systems and their relationship with the third cohomology group of the cohomology defined in section \eqref{s5}.

\begin{defi}
    Let $T,\mathcal{U},\mathcal{M} $ be restricted Lie triple systems. We say that $T$ is a restricted extension of $\mathcal{U}$ by $\mathcal{M}$ if there exists an exact sequence of restricted Lie triple systems $$0\to \mathcal{M}\to T\to \mathcal{U}\to 0. $$ Two extensions are equivalent if the obvious diagram is commutative. 
\end{defi}

An extension \[\begin{tikzcd}
	0 & \mathcal{M} & T & {\mathcal{U}} & 0
	\arrow[from=1-1, to=1-2]
	\arrow["\iota", from=1-2, to=1-3]
	\arrow["\pi", from=1-3, to=1-4]
	\arrow[from=1-4, to=1-5]
\end{tikzcd}\] is called strongly abelian if $\iota(\mathcal{M}) $ is a strongly abelian ideal of $T$ that is, $[T,\iota(\mathcal{M}),\iota(\mathcal{M})]=0 $ and $\iota(\mathcal{M})^{[p]}=0 $. We denote by $\mathrm{Ext}_\mathrm{res}(\mathcal{U},\mathcal{M}) $ the set of equivalence classes of restricted strongly abelian extensions of $\mathcal{U} $ by $\mathcal{M}$. We define a $\mathcal{U}-$module structure on $\mathcal{M}$ as in the non-restricted setting. Let $l $ be a section of this exact sequence. Then, we define $$\theta(u,v)(m)=[m,l(u),l(v)]. $$ We already know that $(\mathcal{M},\theta)$ is a representation of $\mathcal{U}$. The pair $(\mathcal{M},\theta)$ is also a restricted representation. Let $x,y\in \mathcal{U},m\in \mathcal{M}$. Since $\iota(\mathcal{M})$ is strongly abelian in $T$ and $\pi(l(y))^{[p]}=\pi(l(y^{[p]})) $, we have \begin{align*}
    \theta(x,y^{[p]})(m)=& [m,l(x),l(y^{[p]})]\\ =& [m,l(x),l(y)^{[p]}]\\ =& (m,l(x),l(y),\ldots,l(y))\\ =& \theta(y,y)^\frac{p-1}{2}\theta(x,y)(m)
\end{align*} and similarly $\theta(x^{[p]},y)(m)= \theta(x,y)\theta(x,x)^\frac{p-1}{2}(m) $. We can now define as in the non-restricted case an element of $C^3_\mathrm{res}(\mathcal{U},\mathcal{M}) $ associated to this restricted representation. Let $x,x_1,x_2,x_3\in T $, we define $$f(x_1,x_2,x_3)= [l(x_1),l(x_2),l(x_3)]-l([x_1,x_2,x_3]) $$ and $$w(x)= l(x)^{[p]}-l(x^{[p]}). $$ 
\begin{prop}
    With the above notations, $w$ has the $\star$-property with respect to $f$.
\end{prop}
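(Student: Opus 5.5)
We need to show: $w(\lambda x)=\lambda^p w(x)$, and additivity defect of $w$ equals the $\star$-sum built from $f$ and the representation $\theta(u,v)(m)=[m,l(u),l(v)]$.

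\textbf{Homogeneity.} Since $l$ is linear, $l(\lambda x)^{[p]}=\lambda^p l(x)^{[p]}$ by the first axiom of Definition~\ref{D5.1}, and $l((\lambda x)^{[p]})=\lambda^p l(x^{[p]})$. Hence $w(\lambda x)=\lambda^pw(x)$.

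\textbf{Additivity defect.} Expand $w(x+y)=l(x+y)^{[p]}-l((x+y)^{[p]})$ using the second axiom of Definition~\ref{D5.1} in $T$ for $l(x)^{[p]}$, $l(y)^{[p]}$, and in $\mathcal{U}$ for $(x+y)^{[p]}$, with the explicit formula for $\sum_i s_i$. This gives
\[
w(x+y)-w(x)-w(y)=\sum_{\substack{x_j\in\{x,y\}\\x_1=x,x_2=y}}\frac{1}{\#(x)}\Big((l(x_1),\ldots,l(x_p))-l((x_1,\ldots,x_p))\Big).
\]
It therefore suffices to prove, for any $x_1,\ldots,x_p\in\mathcal{U}$ (with $p$ odd), the identity
\[
(l(x_1),\ldots,l(x_n))-l((x_1,\ldots,x_n))=\sum_{k=0}^{\frac{n-3}{2}}\theta(x_{n-1},x_n)\cdots\theta(x_{n-2k+1},x_{n-2k+2})\,f((x_1,\ldots,x_{n-2k-2}),x_{n-2k-1},x_{n-2k})
\]
for all odd $n\ge 3$, and then apply it with $n=p$.

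\textbf{Proof of the identity, by induction on $n$.} For $n=3$ this is the definition of $f$. For the inductive step, write $(l(x_1),\ldots,l(x_n))=[(l(x_1),\ldots,l(x_{n-2})),l(x_{n-1}),l(x_n)]$ and substitute
\[
(l(x_1),\ldots,l(x_{n-2}))=l((x_1,\ldots,x_{n-2}))+m, \qquad m\in\iota(\mathcal{M}),
\]
where $m$ is given by the induction hypothesis. Bilinearity then splits the bracket into two pieces:
\begin{itemize}
\item[] $[m,l(x_{n-1}),l(x_n)]=\theta(x_{n-1},x_n)m$, which produces the terms with $k\ge 1$;
\item[] $[l((x_1,\ldots,x_{n-2})),l(x_{n-1}),l(x_n)]=f((x_1,\ldots,x_{n-2}),x_{n-1},x_n)+l((x_1,\ldots,x_n))$, which produces the $k=0$ term.
\end{itemize}
No quadratic terms in $\mathcal{M}$ arise here, since only one slot contains $m$. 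Note also that $\theta$ is independent of the section, so it agrees with the representation used to define $C^3_\mathrm{res}(\mathcal{U},\mathcal{M})$.

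\textbf{Expected obstacle.} The only delicate step is the index bookkeeping in the inductive step: checking that applying $\theta(x_{n-1},x_n)$ to the level-$(n-2)$ sum reproduces exactly the $k\ge1$ terms of the level-$n$ sum. Once that is verified, summing the identity over the words with weights $\frac{1}{\#(x)}$ gives precisely the $\star$-property of $w$ with respect to $f$.
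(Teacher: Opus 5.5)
Your proposal is correct and follows the paper's proof. Like the paper, you expand $w(x+y)$ using the Jacobson formula in both $T$ and $\mathcal{U}$, then unwind $l((x_1,\ldots,x_p))$ via $l([x,y,z])=[l(x),l(y),l(z)]-f(x,y,z)$ to obtain the $\theta$-weighted sum of $f$-terms. The paper asserts that unwinding identity in a single line; your explicit induction on odd $n$ and your check of $p$-homogeneity just fill in steps the paper leaves implicit.
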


\begin{proof}
    
Let $x,y\in \mathcal{U}$, \begin{align*}
    w(x+y)=& (l(x)+l(y))^{[p]}- l((x+y)^{[p]})\\ =& w(x)+w(y) +  \sum_{\substack{x_j\in \left\{x,y \right\}\\x_1=x,x_2=y}}\frac{1}{\#(x)}((l(x_1),\ldots,l(x_p))-l(x_1,\ldots,x_p)).
\end{align*} We use the equation $$l([x,y,z])= [l(x),l(y),l(z)]-f(x,y,z) $$ and we obtain  \begin{align*}
    &l((x_1,\ldots,x_p))\\ &= (l(x_1),\ldots,l(x_p))-\sum\limits_{k=0}^{\frac{p-3}{2}}\theta(x_{p-1},x_p)\cdots\theta(x_{p-2k+1},x_{p-2k+2})f((x_1,\ldots,x_{p-2k-2}),x_{p-2k-1},x_{p-2k}).
\end{align*} Therefore, \begin{align*}
    w(x&+y)\\=&w(x)+w(y)\\ &+ \sum_{\substack{x_j\in \left\{x,y \right\}\\x_1=x,x_2=y}}\frac{1}{\#(x)} \sum\limits_{k=0}^{\frac{p-3}{2}}\theta(x_{p-1},x_p)\cdots\theta(x_{p-2k+1},x_{p-2k+2})f((x_1,\ldots,x_{p-2k-2}),x_{p-2k-1},x_{p-2k}).
\end{align*} Thus, $w $ has the $\star$-property with respect to $f$ and $(f,w)\in C^3_\mathrm{res}(\mathcal{U},\mathcal{M}) $.
\end{proof}

\begin{prop}
  The pair $(f,w)$ is a restricted $3$-cocycle.
\end{prop}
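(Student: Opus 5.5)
We need $\delta^3_\mathrm{res}(f,w)=(\delta^3 f,\beta)=(0,0)$. The first component vanishes by the non-restricted result recalled in Section~\ref{s4}: Yamaguti's lemma states that $f$ is a $3$-cocycle for the representation $\theta(u,v)(m)=[m,l(u),l(v)]$. So the whole task is to show that the induced map
\begin{align*}
\beta(x,y,z)=& f(x,y^{[p]},z)- \sum_{i+j=\frac{p-3}{2}}\theta(y,z)\theta(y,y)^if((x,\underbrace{y,\ldots,y}_{2j}),y,y) - f((x,\underbrace{y,\ldots,y}_{p-1}),y,z)\\&- \theta(x,z)(w(y))
\end{align*}
vanishes identically for $x,y,z\in\mathcal{U}$. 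I would mirror the formal-deformation computation, where $\beta=0$ came from comparing both sides of axiom~3 of Definition~\ref{D5.1}.

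The plan is to start from the identity $[l(x),l(y)^{[p]},l(z)]=(l(x),l(y),\ldots,l(y),l(z))$ in $T$ and expand both sides with $l([a,b,c])=[l(a),l(b),l(c)]-f(a,b,c)$, keeping in mind that $\iota(\mathcal M)$ is strongly abelian: brackets containing two elements of $\mathcal M$ vanish, and brackets with one element of $\mathcal M$ are given by $\theta$ (with the sign conventions $[m,l(u),l(v)]=\theta(u,v)m$, $[l(u),m,l(v)]=-\theta(u,v)m$, $[l(u),l(v),m]=D(u,v)m$).

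For the left side, write $l(y)^{[p]}=l(y^{[p]})+w(y)$. Then
\[
[l(x),l(y)^{[p]},l(z)]=[l(x),l(y^{[p]}),l(z)]-\theta(x,z)w(y)=l([x,y^{[p]},z])+f(x,y^{[p]},z)-\theta(x,z)w(y).
\]
For the right side, iterate outward. Set $X_k=(x,y,\ldots,y)$ with $2k$ copies of $y$, so that $(l(x),l(y),\ldots,l(y))_{2k}=l(X_k)+E_k$ with $E_k\in\mathcal M$. The recursion is
\[
E_{k+1}=\theta(y,y)E_k+f(X_k,y,y),\qquad E_0=0,
\]
which gives $E_{(p-1)/2}=\sum_{i+j=\frac{p-3}{2}}\theta(y,y)^i f(X_j,y,y)$. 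The last bracket with $l(y),l(z)$ then produces $l((x,y,\ldots,y,z))+\theta(y,z)E_{(p-1)/2}+f(X_{(p-1)/2},y,z)$. Since $[x,y^{[p]},z]=(x,y,\ldots,y,z)$ holds in $\mathcal U$, the $l(\cdot)$ terms cancel. Subtracting the two sides leaves exactly $\beta(x,y,z)=0$. This is the same bookkeeping already carried out in the proof that $w$ has the $\star$-property.

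The main obstacle is getting the signs right and making sure the order of operators matches the definition of $\beta$. In the definition, the factor $\theta(y,z)$ sits outermost and the $\theta(y,y)^i$ follow it, which agrees with the recursion above. I would also check that the term $[w(y)\text{-part}]$ enters as $-\theta(x,z)w(y)$, since the middle-slot convention gives $[l(x),m,l(z)]=-\theta(x,z)m$; this is consistent with the sign in $\beta$. Finally, the computation is independent of the choice of section $l$, since only the identity in $T$ is used.
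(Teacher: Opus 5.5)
Your proposal is correct and follows essentially the same route as the paper. Both arguments reduce to showing $\beta=0$ by substituting $l(y)^{[p]}=l(y^{[p]})+w(y)$ and $l([a,b,c])=[l(a),l(b),l(c)]-f(a,b,c)$, then using the axiom $[a,b^{[p]},c]=(a,b,\ldots,b,c)$ in both $T$ and $\mathcal{U}$; your recursion $E_{k+1}=\theta(y,y)E_k+f(X_k,y,y)$ just spells out the expansion of the iterated bracket that the paper states in one line.
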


\begin{proof}
We already know that $f $ is a $3$-cocycle of the Yamaguti cohomology. So in order to see that $(f,w) $ is a $3$-cocycle of the restricted cohomology it remains to show that the induced map $\beta $ vanishes. Let $x,y,z\in \mathcal{U},$ \begin{align*}
    \beta(&x,y,z)\\ =& f(x,y^{[p]},z)- \sum\limits_{i+j=\frac{p-3}{2}}\theta(y,z)\theta(y,y)^if((x,\underbrace{y,\ldots,y}_{2j}),y,y) - f((x,\underbrace{y,\ldots,y}_{p-1}),y,z)- \theta(x,z)(w(y))\\ =& [l(x),l(y^{[p]}),l(z)]-l([x,y^{[p]},z])- \sum\limits_{i+j=\frac{p-3}{2}}(f((x,\underbrace{y,\ldots,y}_{2j}),y,y),\underbrace{l(y),\ldots,l(y)}_{2i},l(y),l(z)) \\ &-f((x,y\ldots,y),y,z) - [l(y)^{[p]},l(x),l(z)]+ [l(y^{[p]}),l(x),l(z)].
\end{align*} Using again the equation $$l([x,y,z])= [l(x),l(y),l(z)]-f(x,y,z), $$ we obtain that \begin{align*}
    - \sum\limits_{i+j=\frac{p-3}{2}}&(f((x,\underbrace{y,\ldots,y}_{2j}),y,y),\underbrace{l(y),\ldots,l(y)}_{2i},l(y),l(z)) -f((x,y\ldots,y),y,z) - [l(y)^{[p]},l(x),l(z)]\\ = & l((x,y,\ldots,y,z)\\ =& l([x,y^{[p]},z]).
\end{align*} Thus, $$\beta(x,y,z)=0 ,$$ which concludes the proof.

\end{proof}

\begin{lem}
    Two different sections of a restricted extension of a restricted Lie triple system $\mathcal{U}$ give the same restricted cohomology class.
\end{lem}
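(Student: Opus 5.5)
Let $l$ and $l'$ be two sections of the restricted strongly abelian extension $0\to\mathcal{M}\to T\to\mathcal{U}\to 0$. Let $(f,w)$ and $(f',w')$ be the associated restricted $3$-cocycles. Set $g=l'-l:\mathcal{U}\to\mathcal{M}$, which is well defined since $\pi\circ l=\pi\circ l'$. The plan is to show that
$$(f',w')-(f,w)=\pm\delta^1_\mathrm{res}(g)=\pm(\delta^1 g,\tilde g),$$
so the two cocycles define the same class in $H^3_\mathrm{res}(\mathcal{U},\mathcal{M})$.

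\textbf{First component.} The non-restricted computation given earlier for equivalent abelian extensions applies verbatim with $F=\mathrm{id}$. Expand $[l(x_1)+g(x_1),l(x_2)+g(x_2),l(x_3)+g(x_3)]$ and use $[T,\mathcal{M},\mathcal{M}]=0$ to discard the terms with two or more entries in $\mathcal{M}$. This gives $f'=f+\delta^1 g$. We also note that $\theta$ does not depend on the section, so both cocycles live in the same complex.

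\textbf{Second component.} This is the main obstacle. We have
$$w'(x)-w(x)=(l(x)+g(x))^{[p]}-l(x)^{[p]}-g(x^{[p]}).$$
To evaluate $(l(x)+g(x))^{[p]}$, apply axiom 2 of Definition~\ref{D5.1} with $a=l(x)$ and $b=g(x)$. Since $g(x)\in\mathcal{M}$ is strongly abelian, $b^{[p]}=0$. Moreover, every bracket $(x_1,\dots,x_p)$ containing $b$ at least twice vanishes, because $[T,\mathcal{M},\mathcal{M}]=0$ together with $\mathcal{M}$ being an ideal (including when both copies of $b$ sit in the same inner bracket). So only the words with exactly one $b$ survive, and since $x_2=b$ is forced, that word is $(a,b,a,\dots,a)$ with $\#(a)=p-1$. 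Its contribution is
$$\tfrac{1}{p-1}(l(x),g(x),l(x),\dots,l(x))=-\tfrac{1}{p-1}(g(x),l(x),\dots,l(x))=(g(x),l(x),\dots,l(x)).$$
This is the same mechanism as the $\#(x)=p-1$ computation in the proof that equivalent formal deformations give cohomologous cocycles. Rewriting via the module structure, the word equals $\theta(x,x)^{\frac{p-1}{2}}g(x)$. Hence
$$w'(x)-w(x)=\theta(x,x)^{\frac{p-1}{2}}g(x)-g(x^{[p]})=\tilde g(x).$$

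\textbf{Conclusion and sign check.} Combining the two components gives $(f',w')-(f,w)=(\delta^1 g,\tilde g)=\delta^1_\mathrm{res}(g)$, which is a restricted coboundary, and the lemma follows. The point requiring care is the sign convention. The factor $\frac{1}{p-1}\equiv-1$ and the antisymmetry $[a,b,\cdot]=-[b,a,\cdot]$ must combine so that the sign matches that of $\delta^1 g$ in the first component, in accordance with the definition $\tilde\psi(x)=-\psi(x^{[p]})+\theta(x,x)^{\frac{p-1}{2}}\psi(x)$. This is a direct check; the same cancellation already occurs in the formal-deformation theorem above.
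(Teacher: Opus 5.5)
Your proof is correct and follows the paper's argument. You set $g=l'-l$, reuse the non-restricted computation $f'-f=\delta^1 g$, and expand $(l(x)+g(x))^{[p]}$ with the Jacobson formula; strong abelianness kills $g(x)^{[p]}$ and every word with two entries in $\mathcal{M}$, which leaves $w'-w=\tilde g$.

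The only difference is cosmetic. The paper takes $x_1=g(x)$, $x_2=l(x)$, so the surviving word $(g(x),l(x),\ldots,l(x))$ appears with coefficient $1$ directly. Your ordering gives $\frac{1}{p-1}(l(x),g(x),l(x),\ldots,l(x))$, which you correctly convert using $\frac{1}{p-1}\equiv -1$ and antisymmetry. That computation already fixes the sign, so your final ``sign check'' paragraph and the $\pm$ in your plan are unnecessary.
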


\begin{proof}
Let $l,l'$ be two sections and $g=l'-l $. The map $g$ takes values in $\mathcal{M}$. Let $(f,w) $ be a cocycle defined by $l$ and $(f',w')$ be a cocycle defined by $l'$. We already know that $f'-f= \delta^1 g $. Let $x\in \mathcal{U}$, \begin{align*}
    w'(x)&= l'(x)^{[p]}-l'(x^{[p]}) \\ &= (g(x)+l(x))^{[p]}- g(x^{[p]})-l(x^{[p]})\\ &= w(x) + g(x)^{[p]} - g(x^{[p]}) + \sum_{\substack{x_j\in \left\{g(x),l(x) \right\}\\x_1=g(x),x_2=l(x)}}\frac{1}{\#g(x)} (x_1,\ldots,x_p) \\ &= w(x) - g(x^{[p]})+ (g(x),l(x),\ldots,l(x))\\ &= w(x) + \tilde{g}(x)
\end{align*} because $\iota(\mathcal{M})$ is strongly abelian in $T$. Thus, $(f',w')= (f,w)+\delta^1_\mathrm{res}(g) $ and they are in the same restricted cohomology class. 
\end{proof}

\begin{prop}
    Two equivalent restricted extensions of a restricted Lie triple system $\mathcal{U}$ correspond to the same restricted cohomology class.
\end{prop}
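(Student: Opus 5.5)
We follow the non-restricted argument given earlier for abelian extensions and add control of the $p$-map component. Consider two equivalent restricted strongly abelian extensions
$$0\to\mathcal{M}\xrightarrow{\iota}T\xrightarrow{\pi}\mathcal{U}\to 0 \quad\text{and}\quad 0\to\mathcal{M}\xrightarrow{\iota'}T'\xrightarrow{\pi'}\mathcal{U}\to 0,$$
related by a morphism of restricted Lie triple systems $F:T\to T'$ with $F\circ\iota=\iota'$ and $\pi'\circ F=\pi$. Choose sections $l$ of $\pi$ and $l'$ of $\pi'$, and let $(f,w)$ and $(f',w')$ be the associated restricted $3$-cocycles. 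First, the two extensions induce the same restricted representation $\theta$ on $\mathcal{M}$; this was already noted in the non-restricted setting, and $F$ is the identity on $\mathcal{M}$. Second, $F\circ l$ is again a section of $\pi'$, because $\pi'\circ F\circ l=\pi\circ l=\mathrm{id}$. By the preceding lemma, any two sections of the same restricted extension give cohomologous restricted cocycles. It therefore suffices to compare the cocycle built from $l$ in $T$ with the cocycle built from $F\circ l$ in $T'$, and to show that these two cocycles are actually equal.

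For the ternary component, apply $F$ to $f(x_1,x_2,x_3)=[l(x_1),l(x_2),l(x_3)]-l([x_1,x_2,x_3])$. Since $F$ is a morphism of Lie triple systems and acts as the identity on $\mathcal{M}$, we obtain
$$f(x_1,x_2,x_3)=F(f(x_1,x_2,x_3))=[Fl(x_1),Fl(x_2),Fl(x_3)]'-Fl([x_1,x_2,x_3]),$$
which is exactly the ternary cocycle defined by the section $F\circ l$. For the $p$-map component, we use that $F$ commutes with the $p$-maps, that is, $F(l(x)^{[p]})=(Fl(x))^{[p]'}$. This gives
$$w(x)=F(w(x))=(Fl(x))^{[p]'}-Fl(x^{[p]}),$$
which is the second component of the cocycle defined by $F\circ l$. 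Hence the cocycle of $T$ with respect to $l$ coincides with the cocycle of $T'$ with respect to $F\circ l$.

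The lemma then gives $(f',w')=(f,w)+\delta^1_\mathrm{res}(l'-F\circ l)$. Concretely, with $g=l'-F\circ l$, which takes values in $\mathcal{M}$, the ternary part is $\delta^1 g$ as computed in the non-restricted case, and the $p$-part is $\tilde g$ as computed in the lemma using strong abelianness of $\iota'(\mathcal{M})$ in $T'$. The only point that needs care is to check that the lemma applies verbatim to $T'$ with the pair of sections $F\circ l$ and $l'$; this holds because $T'$ is itself a restricted strongly abelian extension, so $\iota'(\mathcal{M})^{[p]'}=0$ and $[T',\iota'(\mathcal{M}),\iota'(\mathcal{M})]=0$. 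Beyond this check, the argument only requires that $F$ preserves both the bracket and the $p$-map and fixes $\mathcal{M}$ pointwise.
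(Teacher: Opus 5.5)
Your proof is correct and is essentially the paper's argument: the paper also sets $g=l'-F\circ l$, uses that $f$ and $f'$ are already cohomologous, and computes $w'=w+\tilde g$ from the facts that $F$ preserves the $p$-map, fixes $\mathcal{M}$, and that $\iota'(\mathcal{M})$ is strongly abelian in $T'$. The only difference is packaging: you first note that $(T,l)$ and $(T',F\circ l)$ give literally the same cocycle and then cite the section-independence lemma, whereas the paper carries out that lemma's computation inline.
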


\begin{proof}
Let $T$ and $T'$ be two equivalent abelian extensions \[\begin{tikzcd}
	0 & \mathcal{M} & T & {\mathcal{U}} & 0 \\
	0 & \mathcal{M} & {T'} & {\mathcal{U}} & 0
	\arrow[from=1-1, to=1-2]
	\arrow["\iota", from=1-2, to=1-3]
	\arrow["{\mathrm{id}}", from=1-2, to=2-2]
	\arrow["\pi", from=1-3, to=1-4]
	\arrow["F", from=1-3, to=2-3]
	\arrow[from=1-4, to=1-5]
	\arrow["{\mathrm{id}}", from=1-4, to=2-4]
	\arrow[from=2-1, to=2-2]
	\arrow["{\iota'}", from=2-2, to=2-3]
	\arrow["{\pi'}", from=2-3, to=2-4]
	\arrow[from=2-4, to=2-5]
\end{tikzcd}\] Let $l$ and $l'$ be two sections such that $\pi\circ l=\mathrm{id} $ and $\pi'\circ l'=\mathrm{id} $. We recall that these two abelian extensions define the same representation $\theta:\mathcal{U}\times \mathcal{U}\to \mathcal{M}$. Let $x,x_1,x_2,x_3\in \mathcal{U}$, let $$f(x_1,x_2,x_3)=[l(x_1),l(x_2),l(x_3)]-l([x_1,x_2,x_3]) ,\ \ w(x)= l(x)^{[p]}-l(x^{[p]}) $$and $$f'(x_1,x_2,x_3)= [l'(x_1),l'(x_2),l'(x_3)]-l'([x_1,x_2,x_3]), \ \ w'(x)=l'(x)^{[p]}-l'(x^{[p]}) $$ be the two cocycles defined above. As in the non-restricted case, $F\circ l $ is a section of $T'$ and $l'-F\circ l  $ takes values in $\mathcal{M}$, which is strongly abelian. Let $g=l'-F\circ l$. We have already seen that $f' $ and $f$ are in the same cohomology class. \begin{align*}
    w'(x) &=  (g(x)+F\circ l(x))^{[p]} - F\circ l(x^{[p]})- g(x^{[p]})\\ &= F( l(x)^{[p]}- l(x^{[p]}))+ \sum_{\substack{x_j\in \left\{g(x),F\circ l(x) \right\}\\x_1=g(x),x_2=F\circ l(y)}}\frac{1}{\#g(x)} (x_1,\ldots,x_p)  -g(x^{[p]}) \\ &= w(x) +(g(x),F(l(x)),\ldots,F(l(x))) - g(x^{[p]}) \\ &= w(x) + \tilde{g}(x).
\end{align*} Thus, the cohomology class of $(f,w) $ depends only on the equivalence class of the restricted abelian extension. 
\end{proof}

Let $\mathcal{U} $ be a Lie triple system, $\mathcal{M} $ a restricted $\mathcal{U} $-module and $(f,w)\in C^3_\mathrm{res}(\mathcal{U},\mathcal{M}) $ a restricted $3$-cocycle. We have seen that we can define on $\mathcal{M}\times \mathcal{U} $ the Lie triple system bracket \begin{align*}
    [(m_1,x_1)&,(m_2,x_2),(m_3,x_3)] \\ &= (\theta(x_2,x_3)(m_1)- \theta(x_1,x_3)(m_2)+D(x_1,x_2)(m_3)+f(x_1,x_2,x_3),[x_1,x_2,x_3]). 
\end{align*} Similarly, we can define a restricted structure on $\mathcal{M}\times \mathcal{U} $ by $$(m,x)^{[p]}= (w(x)+\theta(x,x)^{\frac{p-1}{2}}(m),x^{[p]}), $$ for all $(m,x)\in \mathcal{M}\times \mathcal{U}$.

\begin{prop}\label{prop6.18}
    Let $(m,x),(q,y)\in \mathcal{M}\times \mathcal{U} $ in the Lie triple system defined above. Then we have \begin{align*}
       & \bigg((m,x),(q,y),\ldots,(q,y)\bigg)\\ &= \bigg(\theta(y,y)^n(m)- \sum\limits_{k=0}^{n-1}\tbinom{2n}{2k+1}\theta(y,y)^k\theta(x,y)\theta(y,y)^{n-1-k}(q)\\ & +\sum\limits_{k=0}^{n-1}\tbinom{2n}{2(k+1)}\theta(y,y)^{n-k-1}\theta(y,x)\theta(y,y)^k(q) + \sum\limits_{i+j=n-1}\theta(y,y)^if((x,\underbrace{y,\ldots,y}_{2j}),y,y), (x,y\ldots,y)\bigg),
    \end{align*} where $(q,y) $ occurs $2n$ times in the bracket for $n\geq 1$.
\end{prop}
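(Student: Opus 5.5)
We argue by induction on $n\geq 1$. The $\mathcal{U}$-component is immediate: it is computed by the bracket of $\mathcal{U}$, so it equals $(x,y,\ldots,y)$. Only the $\mathcal{M}$-component needs work. Write $X_n=(x,y,\ldots,y)$ with $2n$ copies of $y$, and let $m_n$ be the $\mathcal{M}$-component of $\big((m,x),(q,y),\ldots,(q,y)\big)$ with $2n$ copies of $(q,y)$. By definition of the iterated bracket,
$$\big((m,x),(q,y),\ldots,(q,y)\big)_{2n+2}=\big[(m_n,X_n),(q,y),(q,y)\big].$$
The bracket on $\mathcal{M}\times\mathcal{U}$ then gives the recursion
$$m_{n+1}=\theta(y,y)(m_n)-\theta(X_n,y)(q)+D(X_n,y)(q)+f(X_n,y,y)=\theta(y,y)(m_n)+\theta(y,X_n)(q)-2\theta(X_n,y)(q)+f(X_n,y,y).$$

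For the base case $n=1$ this reads $m_1=\theta(y,y)m+\theta(y,x)q-2\theta(x,y)q+f(x,y,y)$. This matches the claimed formula, since $\binom{2}{1}=2$ and $\binom{2}{2}=1$.

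For the inductive step, substitute the induction hypothesis for $m_n$ and sort the terms into three groups.

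\textbf{The $m$-term and the $f$-terms.} $\theta(y,y)$ applied to $\theta(y,y)^n m$ gives $\theta(y,y)^{n+1}m$. Applying $\theta(y,y)$ to $\sum_{i+j=n-1}\theta(y,y)^i f(X_j,y,y)$ and adding the new term $f(X_n,y,y)$ gives $\sum_{i+j=n}\theta(y,y)^i f(X_j,y,y)$. Both are immediate.

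\textbf{The $q$-terms.} Rewrite $\theta(y,X_n)(q)$ and $\theta(X_n,y)(q)$ with Proposition~\ref{p5.14}. Each becomes a sum of words of the form $\theta(y,y)^a\theta(x,y)\theta(y,y)^b$ or $\theta(y,y)^a\theta(y,x)\theta(y,y)^b$ with $a+b=n$, weighted by binomial coefficients in $2n$.

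\textbf{Comparing coefficients.} Shift the inductive $q$-terms by one power of $\theta(y,y)$ on the left, then compare the coefficients of each word.
\begin{itemize}
\item For words $\theta(y,y)^k\theta(x,y)\theta(y,y)^{n-k}$ we need
$$\tbinom{2n}{2k-1}+\tbinom{2n}{2k+1}+2\tbinom{2n}{2k}=\tbinom{2n+2}{2k+1},$$
which is Pascal's rule applied twice, with the boundary cases $k=0$ and $k=n$ checked separately.
\item For words $\theta(y,y)^{n-k}\theta(y,x)\theta(y,y)^k$ we need
$$\tbinom{2n}{2k+2}+\tbinom{2n}{2k}+2\tbinom{2n}{2k+1}=\tbinom{2n+2}{2k+2},$$
which is the same identity.
\end{itemize}

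The main obstacle is this last step. The two contributions coming from Proposition~\ref{p5.14} place the powers of $\theta(y,y)$ on different sides of the middle factor, and with opposite orientations of $\theta(x,y)$ versus $\theta(y,x)$. So the real work is re-indexing each sum so that the monomials line up, and tracking the signs (the $-2$ on $\theta(X_n,y)$ against the minus sign in the odd-binomial sum) together with the edge indices.
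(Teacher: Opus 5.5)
Your proof is correct and follows the same route as the paper. You use induction on $n$, the recursion $m_{n+1}=\theta(y,y)m_n-\theta(X_n,y)q+D(X_n,y)q+f(X_n,y,y)$ coming from the bracket on $\mathcal{M}\times\mathcal{U}$, and Proposition~\ref{p5.14} to expand $\theta(y,X_n)$ and $\theta(X_n,y)$. The paper only asserts the resulting $(n+1)$ formula, while you make the coefficient matching explicit via $\binom{2n}{j-1}+2\binom{2n}{j}+\binom{2n}{j+1}=\binom{2n+2}{j+1}$, and your signs and boundary terms are all right.
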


\begin{proof}
    We proceed by induction on $n$. The formula is true for $n=1$. Assume that the formula is true for $n\geq 1$. Let $(m,x),(q,y)\in \mathcal{M}\times \mathcal{U}$, we have \begin{align*}
        ((m,x)&,\underbrace{(q,y),\ldots,(q,y)}_{2n},(q,y),(q,y))\\=& [((m,x),\underbrace{(q,y),\ldots,(q,y)}_{2n}),(q,y),(q,y)]\\ =& \bigg(\theta(y,y)( \theta(y,y)^n(m)- \sum\limits_{k=0}^{n-1}\tbinom{2n}{2k+1}\theta(y,y)^k\theta(x,y)\theta(y,y)^{n-1-k}(q)\\ & +\sum\limits_{k=0}^{n-1}\tbinom{2n}{2(k+1)}\theta(y,y)^{n-k-1}\theta(y,x)\theta(y,y)^k(q) + \sum\limits_{i+j=n-1}\theta(y,y)^if((x,y,\ldots,y),y,y)) \\ &- \theta((x,y,\ldots,y),y)(q) + D((x,y,\ldots,y),y)(q)+ f((x,y,\ldots,y),y,y),(x,y,\ldots,y,y,y)\bigg)\\ =& \bigg(\theta(y,y)^{n+1}(m)- \sum\limits_{k=0}^{n}\tbinom{2(n+1)}{2k+1}\theta(y,y)^k\theta(x,y)\theta(y,y)^{n-k}(q)\\ & +\sum\limits_{k=0}^{n}\tbinom{2(n+1)}{2(k+1)}\theta(y,y)^{n-k}\theta(y,x)\theta(y,y)^k(q) + \sum\limits_{i+j=n}\theta(y,y)^if((x,y,\ldots,y),y,y), (x,y\ldots,y)\bigg).
    \end{align*} Thus the result follows.
\end{proof}

\begin{prop}
     Let $(m,x),(q,y),(q',z)\in \mathcal{M}\times \mathcal{U} $ in the Lie triple system defined above. Then, we have \begin{align*}
        & ((m,x),(q,y),\ldots,(q,y),(q',z))\\=& \bigg(\theta(y,z)\theta(y,y)^n(m)-\theta(y,z)\sum\limits_{k=1}^{n}\tbinom{2n+1}{2k}\theta(y,y)^{k-1}\theta(x,y)\theta(y,y)^{n-k}(q)\\+ & \theta(y,z)\sum\limits_{k=0}^{n-1}\tbinom{2n+1}{2(k+1)}\theta(y,y)^{n-k-1}\theta(y,x)\theta(y,y)^k(q) + \theta(y,z)\sum\limits_{i+j=n-1}\theta(y,y)^if((x,\underbrace{y,\ldots,y}_{2j}),y,y)\\ -& \theta(x,z)\theta(y,y)^n(q) + \sum\limits_{k=0}^n\tbinom{2n+1}{2k+1}\theta(y,y)^{n-k}\theta(y,x)\theta(y,y)^k(q') \\-& \sum\limits_{k=0}^n\tbinom{2n+1}{2k+1}\theta(y,y)^k\theta(x,y)\theta(y,y)^{n-k}(q')  + f((x,y\ldots,y),y,z), (x,y\ldots,y,z)\bigg),
     \end{align*} where $(q,y) $ occurs $2n+1$ times in the bracket. In particular, \begin{align}
           ((&m,x),(n,y),\ldots,(n,y),(q,z))\notag\\=&\bigg(\theta(y,z)\theta(y,y)^{\frac{p-1}{2}}(m)+ \theta(y,z)\sum\limits_{i+j=\frac{p-3}{2}}\theta(y,y)^if((x,y,\ldots,y),y,y) + f((x,y\ldots,y),y,z)\notag \\&-\theta(x,z)\theta(y,y)^\frac{p-1}{2}(n)+ \theta(y,x)\theta(y,y)^\frac{p-1}{2}(q)-\theta(y,y)^\frac{p-1}{2}\theta(x,y)(q), (x,y\ldots,y,z)\bigg)\label{eq6.23},
     \end{align} where $(n,y) $ occurs $p$ times in the bracket.
\end{prop}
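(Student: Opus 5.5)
The plan is to reduce the statement to Proposition~\ref{prop6.18} by peeling off the last two entries of the iterated bracket. We have
\[((m,x),\underbrace{(q,y),\ldots,(q,y)}_{2n+1},(q',z)) = \big[((m,x),\underbrace{(q,y),\ldots,(q,y)}_{2n}),(q,y),(q',z)\big].\]
Proposition~\ref{prop6.18} gives the inner element as $(M_n,(x,y,\ldots,y))$, where $M_n$ is the explicit $\mathcal{M}$-component listed there. The semi-direct bracket with cocycle $f$ then gives
\[\Big(\theta(y,z)M_n-\theta((x,\underbrace{y,\ldots,y}_{2n}),z)(q)+D((x,\underbrace{y,\ldots,y}_{2n}),y)(q')+f((x,\underbrace{y,\ldots,y}_{2n}),y,z),\ (x,y,\ldots,y,z)\Big).\]

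I will expand the three $\theta$-type terms as follows.
\begin{itemize}
\item $\theta(y,z)M_n$ reproduces $\theta(y,z)\theta(y,y)^n(m)$ and the $f$-sum. It also contributes the $q$-terms with coefficients $\binom{2n}{2k+1}$ and $\binom{2n}{2k+2}$.
\item The term $-\theta((x,y,\ldots,y),z)(q)$ is expanded by Proposition~\ref{p5.16}. It yields $-\theta(x,z)\theta(y,y)^n(q)$, together with further terms $\theta(y,z)\theta(y,y)^{k-1}\theta(x,y)\cdots$ with coefficient $-\binom{2n}{2k}$ and $\theta(y,z)\theta(y,y)^{n-k-1}\theta(y,x)\cdots$ with coefficient $+\binom{2n}{2k+1}$.
\item Reindexing and adding these to the terms from $\theta(y,z)M_n$, Pascal's rule $\binom{2n}{2k-1}+\binom{2n}{2k}=\binom{2n+1}{2k}$ (and its analogue for the $\theta(y,x)$ terms) gives the coefficients $\binom{2n+1}{2k}$ and $\binom{2n+1}{2k+2}$ in the claim.
\item For $D((x,y,\ldots,y),y)(q')=\theta(y,(x,\ldots))(q')-\theta((x,\ldots),y)(q')$, I apply both formulas of Proposition~\ref{p5.14}. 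Grouping the $\theta(y,y)^{n-k}\theta(y,x)\theta(y,y)^k$ terms and the $\theta(y,y)^k\theta(x,y)\theta(y,y)^{n-k}$ terms, Pascal's rule again collapses $\binom{2n}{2k}+\binom{2n}{2k+1}$ to $\binom{2n+1}{2k+1}$, with boundary terms at $k=n$ handled separately. This is the same manipulation as in the proof that $\delta^3_\mathrm{res}\circ\delta^1_\mathrm{res}=0$.
\end{itemize}

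The main obstacle is bookkeeping the index shifts in these two Pascal combinations, especially the extreme values of $k$ where one binomial vanishes. I will check the $n=1$ case by hand first to fix conventions.

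For the particular case \eqref{eq6.23}, set $2n+1=p$, so $n=\frac{p-1}{2}$, and reduce mod $p$: $\binom{p}{j}\equiv 0$ for $0<j<p$ and $\binom{p}{p}=1$.
\begin{itemize}
\item In the two $q$-sums, only the terms with $2k=p$ or $2k+2=p$ could survive; these are impossible since $p$ is odd, so both sums vanish.
\item In the $q'$-sums, only $k=n$ survives. This gives $\theta(y,x)\theta(y,y)^{\frac{p-1}{2}}(q')-\theta(y,y)^{\frac{p-1}{2}}\theta(x,y)(q')$.
\item The remaining terms are exactly those displayed in \eqref{eq6.23}.
\end{itemize}
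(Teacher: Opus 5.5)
Your proposal is correct and follows the paper's route; the paper only says the formula is ``a direct computation using Proposition~\ref{prop6.18}''. Your argument supplies the omitted details: peeling off the last bracket, expanding $-\theta((x,y,\ldots,y),z)(q)$ by Proposition~\ref{p5.16} and $D((x,y,\ldots,y),y)(q')$ by Proposition~\ref{p5.14}, the two Pascal-rule merges (including the $k=n$ boundary terms), and the reduction $\binom{p}{j}\equiv 0$ for $0<j<p$ when $2n+1=p$, which together yield exactly the stated coefficients.
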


\begin{proof}
    A direct computation using \hyperref[prop6.18]{Proposition~\ref*{prop6.18}} gives the result.
\end{proof}

\begin{prop}
    Let $u_1=(m_1,x_1),\ldots,u_{2n+1}=(m_{2n+1},x_{2n+1})\in \mathcal{M}\times \mathcal{U} $, where $n\geq 1$ is an integer. Then, we have {\small\begin{align*}
       & (u_1,\ldots,u_{2n+1})\\ &= \bigg( \sum\limits_{k=0}^{n-1}\theta(x_{2n},x_{2n+1})\cdots\theta(x_{2n-2k+2},x_{2n-2k+3}) f((x_1,\ldots,x_{2n-2k-1}),x_{2n-2k},x_{2n-2k+1})\\ &+\sum_{\substack{A+B=\left\{2,\ldots,2n+1\right\}}}(-1)^s\theta(x_{a_{s-1}},x_{a_s})\theta(x_{a_{s-3}},x_{a_{s-2}})\cdots \left\{\begin{matrix}
\theta (x_1,x_{a_1})\theta(x_{b_2},x_{b_1})\cdots\theta(x_{b_{t-1}},x_{b_{t-2}})(m_{b_t}) \\\theta (x_{b_1},x_1)\theta(x_{b_3},x_{b_2})\cdots\theta(x_{b_{t-1}},x_{b_{t-2}})(m_{b_t})\\ \theta(x_{a_1},x_{a_2})(m_1) \,\,\,\,\,\,\text{if $B=\varnothing$}
\end{matrix}\right. \\ & \,\,\,\,\,\,\,\,\,\,\,\,\,\,\,\,\,\,\,\,\,\,\,\, , (x_1,\ldots,x_{2n+1})\bigg).
    \end{align*} }
\end{prop}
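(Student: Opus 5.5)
The plan is an induction on $n\geq 1$, using only the definition of the bracket on $\mathcal{M}\times\mathcal{U}$ and the defining recursion $(u_1,\ldots,u_{2n+1})=[(u_1,\ldots,u_{2n-1}),u_{2n},u_{2n+1}]$. The second component is immediate, since the bracket on $\mathcal{M}\times\mathcal{U}$ projects onto the bracket of $\mathcal{U}$. All the work is in the $\mathcal{M}$-component.

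\textbf{Base case $n=1$.} By definition,
\[
[(m_1,x_1),(m_2,x_2),(m_3,x_3)]=\big(\theta(x_2,x_3)m_1-\theta(x_1,x_3)m_2+D(x_1,x_2)m_3+f(x_1,x_2,x_3),[x_1,x_2,x_3]\big).
\]
I will check this against the right-hand side of the statement.
\begin{itemize}
\item The $f$-sum has only the term $k=0$, namely $f(x_1,x_2,x_3)$.
\item In the decompositions $A+B=\{2,3\}$, the case $B=\varnothing$ gives $\theta(x_2,x_3)m_1$.
\item The decompositions with $m_{b_t}=m_3$, i.e.\ $B=\{3\}$ or $B=\{2,3\}$, must produce $D(x_1,x_2)m_3=\theta(x_2,x_1)m_3-\theta(x_1,x_2)m_3$.
\item The decomposition $B=\{2\}$ must produce $-\theta(x_1,x_3)m_2$.
\end{itemize}
This fixes and confirms the sign and ordering conventions for the case $B\neq\varnothing$. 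These are exactly the conventions of Lemma~\ref{lemcalc}, with the last factor $\theta(x_{b_t},x_{b_{t-1}})$ replaced by its action on $m_{b_t}$.

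\textbf{Inductive step.} Assume the formula holds for $n-1$, and write $(u_1,\ldots,u_{2n-1})=(M,X)$ with $X=(x_1,\ldots,x_{2n-1})$. Then the $\mathcal{M}$-component of $(u_1,\ldots,u_{2n+1})$ is
\[
\theta(x_{2n},x_{2n+1})M-\theta(X,x_{2n+1})m_{2n}+D(X,x_{2n})m_{2n+1}+f(X,x_{2n},x_{2n+1}).
\]
I will treat the four pieces as follows.
\begin{enumerate}
\item Applying $\theta(x_{2n},x_{2n+1})$ to the $f$-part of $M$ shifts the index $k$ by one. Together with the new term $f(X,x_{2n},x_{2n+1})$, which is the $k=0$ term, this reproduces the full $f$-sum for $n$.
\item Applying $\theta(x_{2n},x_{2n+1})$ to the $m$-part of $M$ gives exactly the decompositions $A+B=\{2,\ldots,2n+1\}$ with $2n,2n+1\in A$ as the last pair of $A$. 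This includes the $B=\varnothing$ term.
\item For $-\theta(X,x_{2n+1})m_{2n}$, I expand $-\theta((x_1,\ldots,x_{2n-1}),x_{2n+1})$ using the first formula of Lemma~\ref{lemcalc} with $m=2n+1$, and let it act on $m_{2n}$. This gives the decompositions with $b_t=2n$.
\item For $D(X,x_{2n})m_{2n+1}$, I use the $D$-formula stated just after Lemma~\ref{lemcalc} with $m=2n$. This gives the decompositions with $b_t=2n+1$.
\end{enumerate}

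The main obstacle is the bookkeeping: showing that pieces 2--4 partition the set of all decompositions $A+B=\{2,\ldots,2n+1\}$, with matching signs $(-1)^s$ and matching choices between the two alternative branches of the braces. The partition must be checked according to where the largest indices $2n,2n+1$ land:
\begin{itemize}
\item both in $A$ (piece 2);
\item $2n+1\in B$ as the last element of $B$ (piece 4);
\item $2n\in B$ last and $2n+1\in A$ (piece 3).
\end{itemize}
I would first confirm that $2n+1$ cannot be an interior element of $B$, which holds because elements of $B$ are ordered increasingly. I would also verify that the parity of $|A|$ correctly selects the top or bottom branch. This is the same combinatorial matching already performed in the proof of Proposition~\ref{prop5.7}, so I would mirror that argument and cite it rather than recompute.
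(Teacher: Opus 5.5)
Your proposal is correct and follows the paper's proof essentially verbatim. The paper also proceeds by induction on $n$, expands $[(u_1,\ldots,u_{2n-1}),u_{2n},u_{2n+1}]$ via the bracket on $\mathcal{M}\times\mathcal{U}$, and rewrites $-\theta(X,x_{2n+1})(m_{2n})$ and $D(X,x_{2n})(m_{2n+1})$ using Lemma~\ref{lemcalc}. Your explicit partition of the decompositions $A+B=\{2,\ldots,2n+1\}$ according to where $2n$ and $2n+1$ land is precisely the bookkeeping the paper leaves implicit with ``the result follows''; it is the same kind of re-indexing as in the inductive step of Lemma~\ref{lemcalc}.
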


\begin{proof}
   We proceed by induction on $n$. The formula is true for $n=1$. Assume that the result is true for $n\geq 1$. Let $u_1,\ldots,u_{2n+3}\in \mathcal{M}\times \mathcal{U} $, we have  \begin{align*}
       & (u_1,\ldots,u_{2n+3})\\&=[(u_1,\ldots,u_{2n+1}),u_{2n+2},u_{2n+3}]\\ &= \bigg(\theta(x_{2n+2},x_{2n+3})\bigg(\sum\limits_{k=0}^{n-1}\theta(x_{2n},x_{2n+1})\cdots\theta(x_{2n-2k+2},x_{2n-2k+3})\\ & \,\,\,\,\,\,\,\,\,\,\,\,\,\,\,\,\,\,\,\,\,\,\,\,\,\,\,\,\,\,\,\,\,\,\,\,\,\,\,\,\,\,\,\,\,\,\,\,\,\,\,\,\,\,\,\,\,\,\,\,\,\,\,\,\,\,\,\,\,\,\,\,\,\,\,\,\,\,\,\,\,\,\,\,\,\,\,\,\,\,\,\,\,\,\, \,\,\,\,\,\,\,\,\,\,\,\,\,\,\,\,\,\,\,\,\,\,\,\,\,\,\,\,\,\,\,\,\,\,\,\,\,\,\,\,\,\,\,\,\,\,\,\,\,f((x_1,\ldots,x_{2n-2k-1}),x_{2n-2k},x_{2n-2k+1})\\ &+\sum_{\substack{A+B=\left\{2,\ldots,2n+1\right\}}}(-1)^s\theta(x_{a_{s-1}},x_{a_s})\theta(x_{a_{s-3}},x_{a_{s-2}})\\ & \,\,\,\,\,\,\,\,\,\,\,\,\,\,\,\,\,\,\,\,\,\,\,\,\,\,\,\,\,\,\,\,\,\,\,\,\,\,\,\,\,\,\,\,\,\,\,\,\,\,\,\,\,\,\,\,\,\,\,\,\,\,\,\,\,\,\,\,\,\,\,\,\,\,\,\,\,\,\,\,\,\,\,\,\,\,\,\,\,\,\,\,\,\,\,\,\,\,\,\,\,\,\,\,\,\,\,\,\,\,\,\,\,\,\,\,\,\,\,\,\,\,\,\,\,\,\cdots \left\{\begin{matrix}
\theta (x_1,x_{a_1})\theta(x_{b_2},x_{b_1})\cdots\theta(x_{b_{t-1}},x_{b_{t-2}})(m_{b_t}) \\\theta (x_{b_1},x_1)\theta(x_{b_3},x_{b_2})\cdots\theta(x_{b_{t-1}},x_{b_{t-2}})(m_{b_t})\\ \theta(x_{a_1},x_{a_2})(m_1) \,\,\,\,\,\,\text{if $B=\varnothing$}
\end{matrix}\right.\bigg)\\ &- \theta((x_1,\ldots,x_{2n+1},x_{2n+3})(m_{2n+2})+D((x_1,\ldots,x_{2n+1}),x_{2n+2})(m_{2n+3}) \\ &+ f((x_1,\ldots,x_{2n+1}),x_{2n+2},x_{2n+3}), (x_1,\ldots,x_{2n+3})\bigg)\\ &= \bigg(\sum\limits_{k=0}^{n}\theta(x_{2n+2},x_{2n+3})\cdots\theta(x_{2(n+1)-2k+2},x_{2(n+1)-2k+3}) f([x_1,\ldots,x_{2(n+1)-2k-1}],x_{2(n+1)-2k},x_{2(n+1)-2k+1})\\ &+ \theta(x_{2n+2},x_{2n+3})\sum_{\substack{A+B=\left\{2,\ldots,2n+1\right\}}}(-1)^s\theta(x_{a_{s-1}},x_{a_s})\theta(x_{a_{s-3}},x_{a_{s-2}})\\ & \,\,\,\,\,\,\,\,\,\,\,\,\,\,\,\,\,\,\,\,\,\,\,\,\,\,\,\,\,\,\,\,\,\,\,\,\,\,\,\,\,\,\,\,\,\,\,\,\,\,\,\,\,\,\,\,\,\,\,\,\,\,\,\,\,\,\,\,\,\,\,\,\,\,\,\,\,\,\,\,\,\,\,\,\,\,\,\,\,\,\,\,\,\,\,\,\,\,\,\,\,\,\,\,\,\,\,\,\,\,\,\,\,\,\,\,\,\,\,\,\,\,\,\,\,\,\cdots \left\{\begin{matrix}
\theta (x_1,x_{a_1})\theta(x_{b_2},x_{b_1})\cdots\theta(x_{b_{t-1}},x_{b_{t-2}})(m_{b_t}) \\\theta (x_{b_1},x_1)\theta(x_{b_3},x_{b_2})\cdots\theta(x_{b_{t-1}},x_{b_{t-2}})(m_{b_t})\\ \theta(x_{a_1},x_{a_2})(m_1) \,\,\,\,\,\,\text{if $B=\varnothing$}
\end{matrix}\right.\\ &+ \sum_{\substack{A+B=\left\{\small
    2,\ldots,2n+1, 2n+3 \right\}\\ 2n+3\notin B}}(-1)^s\theta(x_{a_{s-1}},x_{a_s})\theta(x_{a_{s-3}},x_{a_{s-2}})\\ &\,\,\,\,\,\,\,\,\,\,\,\,\,\,\,\,\,\,\,\,\,\,\,\,\,\,\,\,\,\,\,\,\,\,\,\,\,\,\,\,\,\,\,\,\,\,\,\,\,\,\,\,\,\,\,\,\,\,\,\,\,\,\,\,\,\,\,\,\,\,\,\,\,\,\,\,\,\,\,\,\,\,\,\,\,\,\,\,\,\,\,\,\,\,\,\,\,\,\,\,\,\,\,\,\,\,\,\,\,\,\,\,\,\,\,\,\,\, \cdots \left\{\begin{matrix}
\theta (x_1,x_{a_1})\theta(x_{b_2},x_{b_1})\cdots\theta(x_{b_t},x_{b_{t-1}}) \\\theta (x_{b_1},x_1)\theta(x_{b_3},x_{b_2})\cdots\theta(x_{b_t},x_{b_{t-1}})\\ 
\end{matrix}\right.(m_{2n+2})\\ &+ \sum_{\substack{A+B=\left\{2,\ldots,2n+2\right\}}}(-1)^s\theta(x_{a_{s-1}},x_{a_s})\theta(x_{a_{s-3}},x_{a_{s-2}}) \\ &\,\,\,\,\,\,\,\,\,\,\,\,\,\,\,\,\,\,\,\,\,\,\,\,\,\,\,\,\,\,\,\,\,\,\,\,\,\,\,\,\,\,\,\,\,\,\,\,\,\,\,\,\,\,\,\,\,\,\,\,\,\,\,\,\,\,\,\,\,\,\,\,\,\,\,\cdots \left\{\begin{matrix}
\theta (x_1,x_{a_1})\theta(x_{b_2},x_{b_1})\cdots\theta(x_{b_{t}},x_{b_{t-1}}) \\\theta (x_{b_1},x_1)\theta(x_{b_3},x_{b_2})\cdots\theta(x_{b_t},x_{b_{t-1}})
\end{matrix}\right.(m_{2n+3}),(x_1,\ldots,x_{2n+3})\bigg).
    \end{align*} The result follows.
\end{proof}

\begin{thm}\label{thm6.21}
    The Lie triple system $\mathcal{M}\times \mathcal{U}$ defined by  \begin{align*}
    [(m_1,x_1)&,(m_2,x_2),(m_3,x_3)] \\ &= (\theta(x_2,x_3)(m_1)- \theta(x_1,x_3)(m_2)+D(x_1,x_2)(m_3)+f(x_1,x_2,x_3),[x_1,x_2,x_3])
\end{align*} and the following  $p$-map \begin{equation}\label{eq6.21}
    (m,x)^{[p]}= (w(x)+\theta(x,x)^{\frac{p-1}{2}}(m),x^{[p]})
\end{equation}  is a restricted Lie triple system.
\end{thm}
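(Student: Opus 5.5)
We have to check the three axioms of Definition~\ref{D5.1} for the $p$-map \eqref{eq6.21}. The Lie triple system axioms on $\mathcal{M}\times\mathcal{U}$ are already known, since $f$ is a Yamaguti $3$-cocycle. The plan is to handle the axioms in increasing order of difficulty: $p$-homogeneity, then the compatibility $[u,v^{[p]},w]=(u,v,\ldots,v,w)$, and finally the additivity formula with the $s_i$.

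\textbf{$p$-homogeneity.} This is immediate. We have $w(\lambda x)=\lambda^p w(x)$ by the $\star$-property. We also have $\theta(\lambda x,\lambda x)^{\frac{p-1}{2}}(\lambda m)=\lambda^{p-1}\lambda\,\theta(x,x)^{\frac{p-1}{2}}(m)$, and $(\lambda x)^{[p]}=\lambda^p x^{[p]}$.

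\textbf{Compatibility $[u,v^{[p]},w]=(u,v,\ldots,v,w)$.} Take $u=(m,x)$, $v=(n,y)$, $w=(q,z)$. The left side is
\[
[(m,x),(w(y)+\theta(y,y)^{\frac{p-1}{2}}(n),y^{[p]}),(q,z)].
\]
Its $\mathcal{M}$-component is
\[
\theta(y^{[p]},z)(m)-\theta(x,z)\bigl(w(y)+\theta(y,y)^{\frac{p-1}{2}}n\bigr)+D(x,y^{[p]})(q)+f(x,y^{[p]},z).
\]
The right side is given by \eqref{eq6.23}. I would compare the two term by term. The $\mathcal{U}$-components agree by axiom 3 in $\mathcal{U}$. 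The $m$-terms match because $\theta(y^{[p]},z)=\theta(y,z)\theta(y,y)^{\frac{p-1}{2}}$, using that the representation is restricted. The $n$-terms are identical on both sides. The $q$-terms match because $D(x,y^{[p]})=\theta(y,x)\theta(y,y)^{\frac{p-1}{2}}-\theta(y,y)^{\frac{p-1}{2}}\theta(x,y)$, again by the restricted representation axioms. After these cancellations, what remains is exactly the vanishing of the induced map $\beta(x,y,z)$, which holds because $(f,w)$ is a restricted $3$-cocycle.

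\textbf{Additivity formula.} This is the main obstacle. We need
\[
\bigl((m,x)+(q,y)\bigr)^{[p]}=(m,x)^{[p]}+(q,y)^{[p]}+\sum_{\substack{u_j\in\{(m,x),(q,y)\}\\u_1=(m,x),\,u_2=(q,y)}}\frac{1}{\#(m,x)}(u_1,\ldots,u_p).
\]
The $\mathcal{U}$-components agree by axiom 2 in $\mathcal{U}$. For the $\mathcal{M}$-components, I would first split the left side as $w(x+y)+\theta(x+y,x+y)^{\frac{p-1}{2}}(m+q)$. The term $w(x+y)$ is expanded using the $\star$-property of $w$ with respect to $f$. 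On the right side, each bracket $(u_1,\ldots,u_p)$ is expanded with the formula for $(u_1,\ldots,u_{2n+1})$ from the last proposition, taking $2n+1=p$. The $f$-parts of that expansion reproduce exactly the correction terms in the $\star$-property of $w$, since the sums over $x_j\in\{x,y\}$ coincide.

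The remaining identity involves only $\theta$ acting on $m$ and $q$. It reads
\[
\theta(x+y,x+y)^{\frac{p-1}{2}}(m+q)=\theta(x,x)^{\frac{p-1}{2}}m+\theta(y,y)^{\frac{p-1}{2}}q+\sum\frac{1}{\#(x)}\sum_{A+B}(\cdots).
\]
This is precisely the combinatorial identity already established in the proof of Proposition~\ref{prop5.7}, with $\psi(x_j)$ replaced by $m_j\in\{m,q\}$ according to whether $x_j=x$ or $x_j=y$. The counting argument carries over unchanged: the coefficients $\binom{p-1}{s}\equiv(-1)^s$ mod $p$, and the factor $\#(x)$ cancels the weight $1/\#(x)$.

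One extra case needs attention: the $B=\varnothing$ case, where $m_1=m$ appears. These terms are checked to give the words $\theta(x_1,x_2)\cdots\theta(x_{p-2},x_{p-1})$ applied to $m$ with $x_p=x$. I expect the delicate points to be the bookkeeping for this case and the verification that the term counting matches for both $m$ and $q$. Once that is done, the three axioms are established and $\mathcal{M}\times\mathcal{U}$ is a restricted Lie triple system.
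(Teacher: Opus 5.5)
Your proposal is correct and follows the paper's proof essentially step by step. For the compatibility axiom, both use the restricted-representation identities for $\theta(y^{[p]},z)$ and $D(x,y^{[p]})$ and the vanishing of $\beta$ to match \eqref{eq6.23}; for the Jacobson formula, both use the $\star$-property of $w$ and the expansion of $(u_1,\ldots,u_p)$ to reduce to the counting argument of Proposition~\ref{prop5.7}, and your only addition is the explicit $p$-homogeneity check that the paper leaves implicit.
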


\begin{proof}
Let $(m,x),(n,y),(q,z)\in \mathcal{M}\times \mathcal{U}$. We have  \begin{align*}
    [(m,x),&(n,y)^{[p]},(q,z)]\\ =& [(m,x),(w(y)+\theta(y,y)^\frac{p-1}{2}(n),y^{[p]}),(q,z)] \\ =& (\theta(y^{[p]},z)(m)-\theta(x,z)(w(y)+\theta(y,y)^\frac{p-1}{2}(n))+D(x,y^{[p]})(q)+f(x,y^{[p]},z),[x,y^{[p]},z])\\ =& (\theta(y,z)\theta(y,y)^\frac{p-1}{2}(m)-\theta(x,z)(w(y))-\theta(x,z)\theta(y,y)^\frac{p-1}{2}(n)+\theta(y,x)\theta(y,y)^\frac{p-1}{2}(q)\\ &-\theta(y,y)^\frac{p-1}{2}\theta(x,y)(q)+f(x,y^{[p]},z), (x,y,\ldots,y,z)).
\end{align*} $(f,w)$ is a $3$-cocycle. So $$f(x,y^{[p]},z)= \sum\limits_{i+j=\frac{p-3}{2}}\theta(y,z)\theta(y,y)^if((x,y\ldots,y),y,y)+f((x,y\ldots,y),y,z)+\theta(x,z)(w(y)). $$ Thus, using the value of $((m,x),\underbrace{(n,y),\ldots,(n,y)}_p,(q,z))$ in \eqref{eq6.23}, we have
    $$[(m,x),(n,y)^{[p]},(q,z)]=((m,x),\underbrace{(n,y),\ldots,(n,y)}_p,(q,z)).$$ Now, we show that the Jacobson identities hold for this $p$-map. Let $(m,x),(n,y)\in \mathcal{M}\times \mathcal{U} $. We have\begin{align*}
       & (m+n,x+y)^{[p]}\\ &= (w(x+y)+\theta(x+y,x+y)^\frac{p-1}{2}(m+n),(x+y)^{[p]})\\ &= \bigg(w(x)+w(y) \\ & +  \sum_{\substack{x_j\in \left\{x,y \right\}\\x_1=x,x_2=y}}\frac{1}{\#(x)}\sum\limits_{k=0}^{\frac{p-1}{2}-1}\theta(x_{p-1},x_p)\cdots\theta(x_{p-2k+1},x_{p-2k+2}) f((x_1,\ldots,x_{p-2k-2}),x_{p-2k-1},x_{p-2k}) \\ & +  \sum_{\substack{x_j\in \left\{x,y \right\}}}\theta(x_{p-1},x_{p-2})\cdots\theta(x_2,x_1)(m)+  \sum_{\substack{x_j\in \left\{x,y \right\}}}\theta(x_{p-1},x_{p-2})\cdots\theta(x_2,x_1)(n), x^{[p]}+y^{[p]}\\ &+  \sum_{\substack{x_j\in \left\{x,y \right\}\\x_1=x,x_2=y}}\frac{1}{\#(x)}(x_1,\ldots,x_p) \bigg)\\ & = (m,x)^{[p]}+(n,y)^{[p]}\\ & +\bigg( \sum_{\substack{x_j\in \left\{x,y \right\}\\x_1=x,x_2=y}}\frac{1}{\#(x)}\sum\limits_{k=0}^{\frac{p-1}{2}-1}\theta(x_{p-1},x_p)\cdots\theta(x_{p-2k+1},x_{p-2k+2}) f((x_1,\ldots,x_{p-2k-2}),x_{p-2k-1},x_{p-2k})\\ & +  \sum_{\substack{x_j\in \left\{x,y \right\}}}\theta(x_{p-1},x_{p-2})\cdots\theta(x_2,x_1)(m) \\ &+  \sum_{\substack{x_j\in \left\{x,y \right\}}}\theta(x_{p-1},x_{p-2})\cdots\theta(x_2,x_1)(n), \sum_{\substack{x_j\in \left\{x,y \right\}\\x_1=x,x_2=y}}\frac{1}{\#(x)}(x_1,\ldots,x_p) \bigg).
    \end{align*}

Thus, it remains to show that \begin{align*}
        \sum_{\substack{u_j\in \left\{(m,x),(n,y) \right\}\\u_1=(m,x),u_2=(n,y)}}&\frac{1}{\#(m,x)}\\ &\sum_{\substack{A+B=\left\{2,\ldots,p\right\}}}(-1)^s\theta(x_{a_{s-1}},x_{a_s})\theta(x_{a_{s-3}},x_{a_{s-2}}) \\ & \,\,\,\,\,\,\,\,\,\,\,\,\,\,\,\,\,\,\,\,\,\,\,\,\,\,\,\,\,\,\,\,\,\,\,\,\,\,\,\,\,\,\,\,\,\,\,\,\,\,\,\,\,\,\,\,\,\,\,\,\,\,\,\,\,\,\,\,\,\,\,\,\,\,\,\,\,\,\,\,\,\,\,\,\,\cdots \left\{\begin{matrix}
\theta (x_1,x_{a_1})\theta(x_{b_2},x_{b_1})\cdots\theta(x_{b_{t-1}},x_{b_{t-2}})(m_{b_t}) \\\theta (x_{b_1},x_1)\theta(x_{b_3},x_{b_2})\cdots\theta(x_{b_{t-1}},x_{b_{t-2}})(m_{b_t})
\end{matrix}\right.\\ &= \sum_{\substack{u_j\in \left\{(m,x),(n,y) \right\}\\ \#(m,x)\neq 0,p}}\theta(x_{p},x_{p-1})\cdots\theta(x_3,x_2)(m_1).
    \end{align*} The same counting as in the proof of \hyperref[prop5.7]{Proposition~\ref*{prop5.7}} shows that these two terms are equal. Then, \begin{align*}
        ((m,x)+(n,y))^{[p]}= (m,x)^{[p]}+(n,y)^{[p]}+\sum_{\substack{u_j\in \left\{(m,x),(n,y) \right\}\\ u_1=(m,x),u_2=(n,y)}}\frac{1}{\#(m,x)}(u_1,\ldots,u_p)
    \end{align*} and $\mathcal{M}\times \mathcal{U} $ is a restricted Lie triple system.

\end{proof}

Hence, the structure defined above is a restricted Lie triple system structure. In particular, we have the following result, which improves our previous result in \cite{BM26}.

\begin{thm}
     Let $T$ be a restricted Lie triple system over a field $\mathbf{K} $ of characteristic $p>2 $. Let $(V,\theta)$ be a restricted representation of $T$. Define the operation $[\cdot,\cdot,\cdot]_V: (T\oplus V)\times (T\oplus V)\times (T\oplus V)\to T\oplus V  $ by \begin{center}
        $[(x,a),(y,b),(z,c)]_V=([x,y,z], \theta(y,z)(a)-\theta(x,z)(b)+ D(x,y)(c)) .$ 
    \end{center}  Then, there exists a $p$-map on $T\oplus V$ given by the formula \begin{center}
        $(x,v)^{[p]}=(x^{[p]},\theta(x,x)^{\frac{p-1}{2}}(v))$,
    \end{center} which endows $T\oplus V $ with a structure of restricted Lie triple system.
\end{thm}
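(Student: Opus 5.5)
This statement is the special case $f=0$, $w=0$ of Theorem~\ref{thm6.21}, with the two factors written in the opposite order. The plan is therefore to reduce to that theorem rather than recheck the axioms of Definition~\ref{D5.1} directly.

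First we check that $(0,0)\in C^3_\mathrm{res}(T,V)$ and that it is a restricted $3$-cocycle. The zero map is trivially in $C^3(T,V)$. It is $p$-homogeneous, and in the $\star$-property every correction term involves $\varphi=0$, so $w=0$ has the $\star$-property with respect to $\varphi=0$. Clearly $\delta^3 0=0$. The induced map $\beta$ is built linearly from $\varphi$ and $w$, so it vanishes as well. Hence $\delta^3_\mathrm{res}(0,0)=(0,0)$.

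Next we apply Theorem~\ref{thm6.21} with $\mathcal{U}=T$ and $\mathcal{M}=V$, taking the restricted representation $(V,\theta)$ and the cocycle $(f,w)=(0,0)$. The theorem says that $V\times T$ is a restricted Lie triple system. Its bracket is
\[
[(m_1,x_1),(m_2,x_2),(m_3,x_3)]=\bigl(\theta(x_2,x_3)(m_1)-\theta(x_1,x_3)(m_2)+D(x_1,x_2)(m_3),[x_1,x_2,x_3]\bigr),
\]
and its $p$-map is $(m,x)^{[p]}=(\theta(x,x)^{\frac{p-1}{2}}(m),x^{[p]})$. By the Remark following the non-restricted construction, this bracket is exactly the semi-direct product bracket.

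Finally, the swap $\sigma:T\oplus V\to V\times T$, $(x,v)\mapsto(v,x)$, is a linear bijection. It carries $[\cdot,\cdot,\cdot]_V$ to the bracket above, and it carries the proposed map $(x,v)\mapsto(x^{[p]},\theta(x,x)^{\frac{p-1}{2}}(v))$ to the $p$-map of Theorem~\ref{thm6.21}. All three axioms of Definition~\ref{D5.1} are expressed purely in terms of the bracket and the $p$-map, so they transfer along $\sigma$. This gives the result.

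The only real content sits in Theorem~\ref{thm6.21}, in particular the Jacobson identity for the sum, which uses the counting argument of Proposition~\ref{prop5.7}. That is taken as given, so no step here is an obstacle beyond checking that the hypotheses match. In particular we must confirm that Theorem~\ref{thm6.21} needs only a restricted representation and a restricted $3$-cocycle, and this was verified above.
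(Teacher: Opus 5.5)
Your proposal is correct and matches the paper, which obtains this statement as the special case $(f,w)=(0,0)$ of Theorem~\ref{thm6.21}, where $(0,0)$ is trivially a restricted $3$-cocycle. Your explicit swap $T\oplus V\cong V\times T$ is bookkeeping that the paper leaves implicit.
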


\hyperref[thm6.21]{Theorem~\ref*{thm6.21}} gives us the following restricted abelian extension of $\mathcal{U}$ by $\mathcal{M}$: \[\begin{tikzcd}
	0 & \mathcal{M} & {\mathcal{M}\times\mathcal{U}} & {\mathcal{U}} & 0.
	\arrow[from=1-1, to=1-2]
	\arrow[from=1-2, to=1-3]
	\arrow[from=1-3, to=1-4]
	\arrow[from=1-4, to=1-5]
\end{tikzcd}\] where $\mathcal{M}\times\mathcal{U} $ has the $p$-map \eqref{eq6.21}.

\begin{lem}
    Let $\mathcal{U} $ be a restricted Lie triple system and $(\mathcal{M},\theta) $ be a restricted representation of $\mathcal{U}$. Let $(f,w),(f',w')\in C_\mathrm{res}^3(\mathcal{U},\mathcal{M}) $ be two $3$-cocycles. Then, the two restricted abelian extensions given by $(f,w)$ and $(f',w')$ are equivalent if and only if $(f,w)$ and $(f',w')$ are in the same cohomology class.
\end{lem}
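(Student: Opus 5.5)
The plan is to prove both implications directly, reducing the bracket part to the non-restricted lemma of Zhang and handling the $p$-map part by the same computation used above to show that two sections give cohomologous restricted cocycles. Throughout, $E_{(f,w)}$ and $E_{(f',w')}$ denote $\mathcal{M}\times\mathcal{U}$ with the bracket of \hyperref[thm6.21]{Theorem~\ref*{thm6.21}} built from $f$, resp. $f'$, and the $p$-map \eqref{eq6.21} built from $w$, resp. $w'$.

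First assume $(f',w')-(f,w)=\delta^1_\mathrm{res}(g)$ for some $g\in C^1(\mathcal{U},\mathcal{M})$, so that $f'-f=\delta^1g$ and $w'-w=\tilde g$. The candidate map is $F:E_{(f,w)}\to E_{(f',w')}$, $F(m,x)=(m-g(x),x)$ (the sign is fixed so that $\delta^1 g$ has the right sign; if the bookkeeping demands it we use $+g$ instead). It is the identity on $\mathcal{M}$, it induces the identity on $\mathcal{U}$, and it is bijective. The non-restricted lemma already gives that $F$ preserves the bracket. It remains to check that $F((m,x)^{[p]})=F(m,x)^{[p]}$. The left side is $(w(x)+\theta(x,x)^{\frac{p-1}{2}}m-g(x^{[p]}),x^{[p]})$. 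The right side is $(w'(x)+\theta(x,x)^{\frac{p-1}{2}}(m-g(x)),x^{[p]})$. Their difference is $w'(x)-w(x)-\bigl(-g(x^{[p]})+\theta(x,x)^{\frac{p-1}{2}}g(x)\bigr)=w'(x)-w(x)-\tilde g(x)$, which vanishes by hypothesis.

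Conversely, let $F:E_{(f,w)}\to E_{(f',w')}$ be an equivalence of restricted extensions. We take the canonical sections $l(x)=(0,x)$ in both extensions. By construction they reproduce the cocycles: $[l(x_1),l(x_2),l(x_3)]-l([x_1,x_2,x_3])=(f(x_1,x_2,x_3),0)$, and $l(x)^{[p]}-l(x^{[p]})=(w(x),0)$. Then $F\circ l$ is a section of the second extension. The earlier proposition, that two equivalent restricted extensions yield the same restricted cohomology class (proved via $g=l'-F\circ l$ together with the strong abelianity of $\mathcal{M}$), shows that the pair computed from $F\circ l$ is cohomologous to $(f,w)$. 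The lemma on independence of the section shows that it is also cohomologous to $(f',w')$. Hence the two cocycles are cohomologous.

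The main obstacle is to make sure that $\mathcal{M}$ is genuinely strongly abelian in $E_{(f,w)}$, since both earlier results use this. The bracket condition $[E,\mathcal{M},\mathcal{M}]=0$ is immediate from the bracket formula. For $(m,0)^{[p]}=(w(0)+\theta(0,0)^{\frac{p-1}{2}}m,0)$ we use $w(0)=0$, which follows from $p$-homogeneity of $w$, and $\theta(0,0)=0$. A second point to check is that, in the converse, the $p$-part of the cocycle attached to the section $l$ is exactly $w$, with the same sign convention as in the definition of $\tilde g$. Once these are confirmed, the argument closes.
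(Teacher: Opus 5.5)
Your proof is correct. The direction ``cohomologous $\Rightarrow$ equivalent'' is the paper's argument, and you write out the $p$-map verification, which the paper only asserts. The sign hedge there is unnecessary. For $F(m,x)=(m+h(x),x)$, the bracket condition reads $f-f'=\delta^1h$ and the $p$-map condition reads $w-w'=\tilde h$. Both are therefore the single equation $(f,w)-(f',w')=\delta^1_\mathrm{res}h$, so $h=-g$ is the right choice for both at once.

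For ``equivalent $\Rightarrow$ cohomologous'' you take a different route. The paper notes that commutativity of the diagram forces $F(m,x)=(m+g(x),x)$ and quotes Zhang for $f-f'=\delta^1g$. It then reads off $w-w'=\tilde g$ by comparing $F((0,x)^{[p]_w})=(w(x)+g(x^{[p]}),x^{[p]})$ with $F(0,x)^{[p]_{w'}}=(w'(x)+\theta(x,x)^{\frac{p-1}{2}}g(x),x^{[p]})$.

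You instead observe that the canonical sections $x\mapsto(0,x)$ return exactly $(f,w)$ and $(f',w')$. You then invoke the already proved well-definedness of the class attached to a restricted extension. This obliges you to check two hypotheses. First, $\mathcal{M}$ must be strongly abelian in $E_{(f,w)}$; you verify this. Second, $\mathcal{M}$ must induce the representation $\theta$, which is the one-line computation $[(m,0),(0,u),(0,v)]=(\theta(u,v)m,0)$.

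The paper's argument is shorter and self-contained. Yours reuses the earlier machinery and makes transparent that the ``only if'' part is just the existence of a left inverse $\mathrm{Ext}_\mathrm{res}(\mathcal{U},\mathcal{M})\to H^3_\mathrm{res}(\mathcal{U},\mathcal{M})$ to the construction. Your two-step appeal can also be shortened. The pair computed from $F\circ l$ is actually equal to $(f,w)$, not merely cohomologous, because $F$ is a restricted morphism fixing $\mathcal{M}$ pointwise. Moreover, the earlier proposition already allows arbitrary sections on both sides, so the separate appeal to section-independence is redundant.
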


\begin{proof}
    Assume that the restricted abelian extensions induced by $(f,w),(f',w') $ are equivalent. We have the following commutative diagram \[\begin{tikzcd}
	0 & \mathcal{M} & (\mathcal{M}\times \mathcal{U},[\cdot,\cdot,\cdot]_{f},(\cdot)^{[p]_w}) & {\mathcal{U}} & 0 \\
	0 & \mathcal{M} & {(\mathcal{M}\times \mathcal{U},[\cdot,\cdot,\cdot]_{f'},(\cdot)^{[p]_{w'}})} & {\mathcal{U}} & 0
	\arrow[from=1-1, to=1-2]
	\arrow["\iota", from=1-2, to=1-3]
	\arrow["{\mathrm{id}}", from=1-2, to=2-2]
	\arrow["\pi", from=1-3, to=1-4]
	\arrow["F", from=1-3, to=2-3]
	\arrow[from=1-4, to=1-5]
	\arrow["{\mathrm{id}}", from=1-4, to=2-4]
	\arrow[from=2-1, to=2-2]
	\arrow["{\iota'}", from=2-2, to=2-3]
	\arrow["{\pi'}", from=2-3, to=2-4]
	\arrow[from=2-4, to=2-5]
\end{tikzcd}\] There exists a linear map $g:\mathcal{U}\to \mathcal{M}$ such that $F(m,x)= (m+g(x),x)$. Zhang showed in \cite{Z14} that $f-f'=\delta^1 g$. Thus, it remains to show that $w-w'=\tilde{g}$. The map $F$ is a morphism of restricted Lie triple systems. So, $F((0,x)^{[p]_w})=F(0,x)^{[p]_{w'}}  $. By definition of the induced $p$-map, $$F((0,x)^{[p]_w})= F(w(x),x^{[p]})= (w(x)+g(x^{[p]}),x^{[p]}) $$ and $$F(0,x)^{[p]_{w'}}=(g(x),x)^{[p]_{w'}}=(w'(x)+\theta(x,x)^\frac{p-1}{2}(g(x)),x^{[p]}) .$$ So $$w(x)-w'(x)=\theta(x,x)^\frac{p-1}{2}(g(x))-g(x^{[p]})=\tilde{g}(x). $$ Thus, $(f,w)-(f',w')=\delta^1_\mathrm{res}g $. Hence, $(f,w)$ and $(f',w')$ are in the same cohomology class.

Conversely, assume that there is a linear map $g:\mathcal{U}\to \mathcal{M}$ such that $w-w'=\tilde{g} $ and $f-f'=\delta^1 g $. Then, the map $$
\begin{array}[t]{lrcl}
F : & \mathcal{M}\times \mathcal{U} &  \longrightarrow & \mathcal{M}\times \mathcal{U} \\
   & (m,x) &  \longmapsto & (m+g(x),x) \end{array}
$$ is a morphism of restricted Lie triple systems and the two restricted abelian extensions are equivalent.
\end{proof}

Thus, we have a well-defined map $H^3_\mathrm{res}(\mathcal{U},\mathcal{M})\to \mathrm{Ext}_\mathrm{res}(\mathcal{U},\mathcal{M}) ,$ which is injective. 

Now, let $\mathcal{U}$ be a restricted Lie triple system and $(\mathcal{M},\theta)$ be a restricted representation of $\mathcal{U}$. Let $$0\to \mathcal{M} \to T\to \mathcal{U}\to 0 $$ be a restricted abelian extension inducing a representation on $\mathcal{M}$ equal to $\theta$, i.e. $$\theta'(u,v)(m):=[m,l(u),l(v)]=\theta(u,v)(m), $$ where $l:\mathcal{U}\to T$ is a section. Let $(f,w)\in C^3_\mathrm{res}(\mathcal{U},\mathcal{M}) $ be the corresponding cocycle. Then, the extension $$0\to \mathcal{M}\to \mathcal{M}\times \mathcal{U}\to \mathcal{U}\to 0 ,$$ where the bracket on $\mathcal{M}\times \mathcal{U} $ is defined by  \begin{align*}
    [(m_1,x_1)&,(m_2,x_2),(m_3,x_3)]_f \\ &= (\theta(x_2,x_3)(m_1)- \theta(x_1,x_3)(m_2)+D(x_1,x_2)(m_3)+f(x_1,x_2,x_3),[x_1,x_2,x_3])
\end{align*} and the $p$-map is defined by $$(m,x)^{[p]_w}=(w(x)+\theta(x,x)^\frac{p-1}{2}(m),x^{[p]}), $$ is equivalent to $$0\to \mathcal{M} \to T\to \mathcal{U}\to 0 $$ with the isomorphism of restricted Lie triple systems $$F:\mathcal{M}\times \mathcal{U}\to T, (m,x)\mapsto m+l(x). $$ Indeed, \begin{align*}
    F((m,x)^{[p]_w})=& F(w(x)+\theta(x,x)^\frac{p-1}{2}(m),x^{[p]})\\ =& w(x)+\theta(x,x)^\frac{p-1}{2}(m)+l(x^{[p]})
\end{align*} and \begin{align*}
    (m+l(x))^{[p]}=& m^{[p]}+l(x)^{[p]}+\sum_{\substack{x_j\in \left\{m,l(x) \right\}\\x_1=m,x_2=l(x)}}\frac{1}{\#(m)}(x_1,\ldots,x_p)\\ =& l(x)^{[p]}+ \theta(x,x)^\frac{p-1}{2}(m),
\end{align*} since $\mathcal{M}$ is strongly abelian in $T$. Thus, $F$ is a morphism of restricted Lie triple systems according to the definition of $w$.

Thus, the two constructions are inverse of each other and finally, we have the following result. 

\begin{thm}
    Let $\mathcal{U} $ be a restricted Lie triple system and $(\mathcal{M},\theta) $ be a restricted representation of $\mathcal{U}$. There is a bijection between equivalence classes of restricted abelian extensions of $\mathcal{U}$ by $\mathcal{M}$ inducing the representation $\theta$ and $H^3_\mathrm{res}(\mathcal{U},\mathcal{M}) $.
\end{thm}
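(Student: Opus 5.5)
We assemble the bijection from the pieces already established, exactly parallel to the non-restricted theorem on abelian extensions inducing a fixed representation. Fix the restricted representation $(\mathcal{M},\theta)$ of $\mathcal{U}$, and let $\mathcal{E}$ denote the set of equivalence classes of restricted (strongly) abelian extensions $0\to\mathcal{M}\to T\to\mathcal{U}\to 0$ whose induced representation $[m,l(u),l(v)]$ equals $\theta$. We construct maps $\Phi:\mathcal{E}\to H^3_\mathrm{res}(\mathcal{U},\mathcal{M})$ and $\Psi:H^3_\mathrm{res}(\mathcal{U},\mathcal{M})\to\mathcal{E}$ and show they are mutually inverse.

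For $\Phi$: given an extension with section $l$, we take $(f,w)$ with $f=[l,l,l]-l[\cdot,\cdot,\cdot]$ and $w(x)=l(x)^{[p]}-l(x^{[p]})$. The earlier proposition shows $w$ has the $\star$-property with respect to $f$, so $(f,w)\in C^3_\mathrm{res}(\mathcal{U},\mathcal{M})$. The following proposition shows it is a restricted $3$-cocycle, the vanishing of $\beta$ being exactly the axiom $[x,y^{[p]},z]=(x,y,\ldots,y,z)$ transported through $l$. The lemma on change of section and the proposition on equivalent extensions then show that the class of $(f,w)$ depends only on the equivalence class. Hence $\Phi$ is well defined.

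For $\Psi$: given a restricted $3$-cocycle $(f,w)$, Theorem~\ref{thm6.21} endows $\mathcal{M}\times\mathcal{U}$ with bracket $[\cdot,\cdot,\cdot]_f$ and $p$-map $(\cdot)^{[p]_w}$, making it a restricted Lie triple system. The inclusion $m\mapsto(m,0)$ and the projection are restricted morphisms, since $(m,0)^{[p]_w}=(0,0)$ and $w(0)=0$ by $p$-homogeneity. The image of $\mathcal{M}$ is strongly abelian, and the induced representation with section $x\mapsto(0,x)$ is $\theta$. The lemma just proved (equivalence of the extensions given by $(f,w)$ and $(f',w')$ if and only if they are cohomologous) shows $\Psi$ is well defined and injective.

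It remains to check that $\Psi\circ\Phi=\mathrm{id}$ and $\Phi\circ\Psi=\mathrm{id}$. For $\Phi\circ\Psi$, the section $x\mapsto(0,x)$ in $\mathcal{M}\times\mathcal{U}$ recovers precisely $f$ and $w$, since $(0,x)^{[p]_w}-(0,x^{[p]})=(w(x),0)$. For $\Psi\circ\Phi$, we use the map $F(m,x)=m+l(x)$ verified just above the statement. The bracket compatibility comes from the non-restricted case, and $F((m,x)^{[p]_w})=F(m,x)^{[p]}$ uses strong abelianness of $\mathcal{M}$: in the Jacobson expansion of $(m+l(x))^{[p]}$, only terms with exactly one $m$ survive, giving $\theta(x,x)^{\frac{p-1}{2}}(m)$, while $m^{[p]}=0$. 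Thus $F$ is an equivalence.

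The main obstacle is this last point: one must check carefully that among the summands $\frac{1}{\#(m)}(x_1,\ldots,x_p)$ with $x_1=m,x_2=l(x)$, exactly the single term $(m,l(x),\ldots,l(x))$ remains, with coefficient $1$. We would first verify that brackets containing two entries from $\mathcal{M}$ vanish because $[T,\mathcal{M},\mathcal{M}]=0$ together with the ideal symmetries. Once this is settled, the two constructions are mutually inverse and the bijection follows.
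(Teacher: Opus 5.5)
Your proposal is correct and follows essentially the same route as the paper: you build the cocycle $(f,w)$ from a section, use Theorem~\ref{thm6.21} for the reverse construction, rely on the equivalence-iff-cohomologous lemma, and close the loop with $F(m,x)=m+l(x)$, using strong abelianness to collapse the Jacobson expansion. The extra checks you make explicit fill in details the paper only asserts with ``the two constructions are inverse of each other.'' These are that the inclusion is restricted, that the image of $\mathcal{M}$ is strongly abelian, that the induced representation is $\theta$, and that the section $x\mapsto(0,x)$ recovers $(f,w)$.
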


We now consider the particular case of central extensions of $\mathcal{U} $ by $\mathcal{M}$. As observed above, the representation on $\mathcal{M}$ induced by a central extension is trivial. Hence, the map $\mathrm{Ext_c}(\mathcal{U},\mathcal{M})\to H^3(\mathcal{U},\mathcal{M}) $ is well-defined. Let $\mathcal{U} $ be a restricted Lie triple system and let $\mathcal{M} $ be a vector space regarded as a trivial $\mathcal{U}$-module. For $(\varphi,w)\in C_\mathrm{res}^3(\mathcal{U},\mathcal{M}) $, the structure on $\mathcal{M}\times \mathcal{U} $ is given by $$[(m_1,x_1),(m_2,x_2),(m_3,x_3)]= (\varphi(x_1,x_2,x_3),[x_1,x_2,x_3]) $$ and $$(m,x)^{[p]}=(w(x),x^{[p]}). $$ The resulting abelian extension is central. The two constructions are inverse of each other and we obtain the following result.

\begin{thm}
     Let $\mathcal{U}$ be a restricted Lie triple system and $\mathcal{M}$ be a vector space regarded as a trivial $\mathcal{U}$-module. Then, there is a bijection between equivalence classes of restricted central extensions of $\mathcal{U}$ by $\mathcal{M}$ and $H^3_\mathrm{res}(\mathcal{U},\mathcal{M})$.
\end{thm}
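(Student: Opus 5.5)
The plan is to specialize the bijection of the previous theorem to the trivial representation, and then check that central extensions are exactly the abelian extensions that induce it.

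\emph{From central extensions to cohomology.} Given a restricted central extension $0\to\mathcal{M}\xrightarrow{\iota}T\xrightarrow{\pi}\mathcal{U}\to 0$, centrality gives $[\iota(\mathcal{M}),T,T]=0$. The Lie triple system identities then give $[T,\iota(\mathcal{M}),T]=0$ and $[T,T,\iota(\mathcal{M})]=0$. Since we are working with restricted strongly abelian extensions, we also have $\iota(\mathcal{M})^{[p]}=0$. For any section $l$, the induced representation is $\theta(u,v)(m)=[m,l(u),l(v)]=0$, so it is the trivial one. This trivial representation is restricted, because both sides of the defining identities vanish. By the propositions above, the pair $f(x_1,x_2,x_3)=[l(x_1),l(x_2),l(x_3)]-l([x_1,x_2,x_3])$ and $w(x)=l(x)^{[p]}-l(x^{[p]})$ is a restricted $3$-cocycle. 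By the lemma on change of section and the proposition on equivalent extensions, its class in $H^3_\mathrm{res}(\mathcal{U},\mathcal{M})$ depends only on the equivalence class of the extension. This defines the map $\mathrm{Ext_c}\to H^3_\mathrm{res}(\mathcal{U},\mathcal{M})$.

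\emph{From cohomology to central extensions.} Conversely, take a restricted $3$-cocycle $(\varphi,w)$ with $\theta=0$. Theorem~\ref{thm6.21} says that $\mathcal{M}\times\mathcal{U}$ is a restricted Lie triple system, with bracket $(\varphi(x_1,x_2,x_3),[x_1,x_2,x_3])$ and $p$-map $(w(x),x^{[p]})$. I then check centrality. The bracket depends only on the $\mathcal{U}$-components, so any bracket with an entry of the form $(m,0)$ vanishes. Also $(m,0)^{[p]}=(w(0),0)=0$, because $w$ is $p$-homogeneous. Hence $\mathcal{M}$ is a central, strongly abelian ideal, and the extension is central.

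\emph{The two maps are mutually inverse.} By the preceding lemma, two cocycles give equivalent extensions if and only if they are cohomologous, so the map from cohomology to extensions is well defined and injective. For surjectivity, start from a central extension $T$ with section $l$. The map $F(m,x)=m+l(x)$ is an isomorphism from the extension built on its cocycle $(f,w)$ onto $T$; this was already shown in the general abelian case, and the computation simplifies here because $\theta=0$. Composing the two constructions therefore gives the identity in both directions.

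The only step needing real care is showing that every central extension induces exactly the trivial restricted representation. This is what lets us apply the previous theorem with no "inducing $\theta$" caveat. The key point is that the induced representation depends on the extension only through its equivalence class, so it is forced to be trivial for every central extension. Everything else is a direct specialization of the earlier results.
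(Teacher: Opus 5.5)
Your proposal is correct and follows the paper's route. You specialize the bijection for restricted abelian extensions inducing a fixed $\theta$ to the case $\theta=0$, using two facts: central extensions induce the trivial restricted representation, and the extension built on $\mathcal{M}\times\mathcal{U}$ from a cocycle with trivial coefficients is central. One small point: the induced representation is trivial because $[m,l(u),l(v)]=0$ by centrality (as you computed at the start), not because it is independent of the equivalence class, as your closing remark suggests.
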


\end{document}